\DeclareMathOperator*{\argmin}{arg\,min}
\DeclareMathOperator{\tr}{tr}
\newcommand{\trs}[1]{\operatorname{tr}(#1)}    
\theoremstyle{plain}
\newtheorem{theorem}{Theorem}[section]
\newtheorem{lemma}[theorem]{Lemma}
\newtheorem{proposition}[theorem]{Proposition}
\newtheorem*{assumptions}{Assumptions}
\theoremstyle{definition}
\newtheorem{remark}[theorem]{Remark}
\begin{document}

\begin{frontmatter}
\title{Implicit vs. explicit regularization for high-dimensional gradient descent}
\runtitle{Implict vs. Explicit Regularization}
\runauthor{T. Stark and L. Steinberger}

\begin{aug}
\author[A]{\fnms{Thomas}~\snm{Stark} \ead[label=e1]{thomas.stark@math.au.dk}\orcid{0009-0001-8154-1129}}\and
\author[B]{\fnms{Lukas}~\snm{Steinberger}\ead[label=e2]{lukas.steinberger@univie.ac.at}\orcid{0000-0002-2376-114X}}
\address[A]{Department of Mathematics,
Aarhus University\printead[presep={ ,\ }]{e1}}

\address[B]{Department of Statistics and Operations Research, University of Vienna \printead[presep={,\ }]{e2}}
\end{aug}

\begin{abstract}
In this paper we investigate the generalization error of gradient descent (GD) applied to an $\ell_2$-regularized OLS objective function in the linear model. Based on our analysis we develop new methodology for computationally tractable and statistically efficient linear prediction in a high-dimensional and massive data scenario (large-$n$, large-$p$). Our results are based on the surprising observation that the generalization error of optimally tuned regularized gradient descent approaches that of an optimal benchmark procedure \emph{monotonically} in the iteration number $t$. On the other hand standard GD for OLS (without explicit regularization) can achieve the benchmark only in degenerate cases. This shows that (optimal) explicit regularization can be nearly statistically efficient (for large $t$) whereas implicit regularization by (optimal) early stopping can not. 

To complete our methodology, we provide a fully data driven and computationally tractable choice of the $\ell_2$ regularization parameter $\lambda$ that is computationally cheaper than cross-validation. On this way, we follow and extend ideas of \citet{Dicker2014} to the non-gaussian case, which requires new results on high-dimensional sample covariance matrices that might be of independent interest.
\end{abstract}

\begin{keyword}[class=MSC]
\kwd[Primary]{62J07}
\kwd{62-08}
\kwd[; secondary ]{60B20}
\end{keyword}

\begin{keyword}
\kwd{High-dimensional asymptotics}
\kwd{ridge regression}
\kwd{gradient descent}
\kwd{random matrix theory}
\end{keyword}

\end{frontmatter}

\section{Introduction}

A common observation in applications of modern high-dimensional machine learning methods is the fact that terminating a learning algorithm early often leads to better generalization performance than running the algorithm until convergence \citep[cf.][]{Bengio2016, HasTibFri2009}. The benefits of \emph{early stopping} have attracted substantial theoretical interest, and a number of optimal, data-driven stopping rules have been devised \citep[see, for example,][]{HuckRei2024, BlaHofRei2018, zhang2005boosting, SonLokReb2024, hu2022early}. Intuitively, particularly in overparameterized settings, \emph{early stopping} acts as a form of implicit regularization by preventing the algorithm from overfitting the data.
More formally, in a linear model, one can observe that the bias of iterates of, for example, full-batch gradient descent (GD) for solving the least squares problem decreases with increasing iteration number, while the variance increases. This interplay results in the typical U-shaped behavior of the quadratic generalization error (see the blue curve in Figure~\ref{fig:main}).

The common intuition that overfitting or even interpolation of the training data will have detrimental effects on generalization performance -- and therefore has to be avoided, for instance, by early stopping -- has recently been challenged by a rapidly growing literature on benign overfitting \citep[see, for example,][]{Belkin2019, Oravkin2021, Hastie2022, Bartlett2020}. Here, we do not follow this intriguing line of research, which, to some extent, is in opposition with the idea of early stopping, but we rather study a scenario of dense signals in linear data generating models, where $\ell_2$-regularized least squares regression (aka. ridge regression) with a certain non-vanishing regularization parameter $\lambda^*>0$ is provably optimal in terms of the generalization risk \citep[see, for instance,][]{AliKolTib2019}. Hence, we consider a scenario in which the natural benchmark procedure is not interpolating. Our goal is to develop methodology that is both computationally feasible in high-dimensional, large-scale settings (large $n$, large $p$) and statistically efficient in the sense of approaching the generalization performance of the benchmark procedure.

Another reason why early stopping of GD can be understood as a kind of \emph{implicit regularization} is the fact that for an appropriate choice of iteration number its risk is very close to that of the optimal benchmark, which in our setting is explicitly $\ell_2$ regularized ridge regression. More precisely, \citet{AliKolTib2019} showed that for certain dense signals in a linear model the generalization risk of gradient descent applied to ordinary least squares when stopped at an appropriate iteration is at most $1.69$ times that of ridge regression.
However, aside from the practical difficulty of implementing this theoretical stopping rule, it can also be shown (see Proposition~\ref{corTh1} below) that the generalization error of GD-OLS, except in trivial cases, never reaches that of the benchmark procedure. Moreover, an exact implementation of the ridge regression benchmark procedure -- e.g., via LU decomposition combined with cross-validation for tuning parameter selection -- is often computationally prohibitive in large-$n$, large-$p$ settings, which is precisely why iterative algorithms are employed in the first place.

In this paper, we move from implicit to explicit regularization to develop a computationally tractable iterative algorithm that provably approximates the benchmark generalization error to arbitrary precision. Our approach relies on the surprising phenomenon that the typical U-shape of the generalization error of GD-OLS as a function of the iteration number disappears when there is an appropriately chosen explicit $\ell_2$-regularization term included in the objective function being minimized by GD (see the green curve in Figure~\ref{fig:main}). Thus, we shift the problem from finding an optimal data driven stopping rule to the optimal (and computationally tractable) selection of the regularization parameter. Moreover, the generalization error of the proposed procedure monotonically approaches that of optimal ridge regression in the large iteration limit, thereby making early stopping unnecessary and statistically inefficient. In other words, we provide a formal argument for the naturally appealing intuition that increasing computation time should also lead to increased accuracy of a learning algorithm.

\begin{figure}
	\centering
	\includegraphics[scale=0.55]{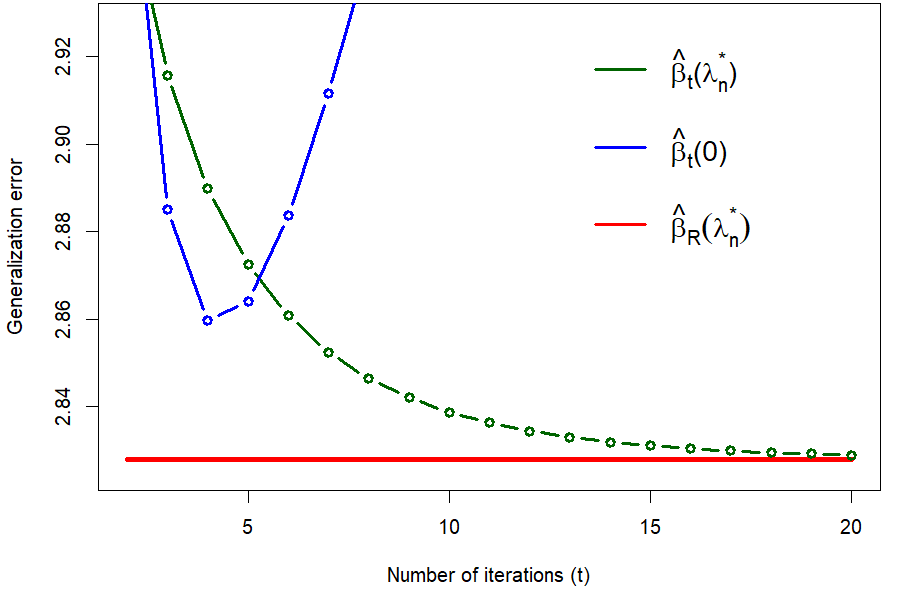}
	\caption{
			Generalization errors of different estimators plotted against the number of iterations $t$ from 1000 Monte-Carlo runs. 
			\textbf{Red}: Generalization error of the \emph{optimally-tuned} ridge estimator. 
			\textbf{Blue}: Generalization error of the gradient descent estimator with no penalization parameter ($\lambda = 0$). 
			\textbf{Green}: Generalization error of the \emph{optimally-tuned} gradient descent estimator ($\lambda_n^* = \frac{\sigma^2}{\tau^2} \frac{p}{n}$). 
			The simulation was performed for $\beta \sim \mathcal{N}(0, \tau^2 I_p)$ with $\tau^2 = 2$, $\sigma^2 = 4$, $p = 1000$, $n = 500$, and $\lambda_n^* = 1$. 
			The entries of $X$ are i.i.d.\ standard normal. 
			The step-size is set as $\hat{\eta}_n(\lambda) = (\hat{s}_1 + \lambda)^{-1}$ for some $\lambda \ge 0$, where $\hat{s}_1$ is an estimate of the largest eigenvalue of $n^{-1}X^\top X $ obtained via power iterations.}
	\label{fig:main}
\end{figure}

\subsection{Our contributions}

In this paper, we investigate the generalization risk of constant step-size, full-batch regularized gradient descent (RGD), that is, gradient descent applied to the $\ell_2$-regularized objective function
\begin{align}\label{eq:obj-f}
	L(b) = \frac{1}{2n}\|y-Xb\|_2^2 + \frac{\lambda}{2}\|b\|_2^2, \quad \lambda \ge 0,
\end{align}
for independent data $(x_i,y_i)_{i=1}^n$ following the standard linear model
\begin{equation*}
	y_i = \beta^\top x_i + u_i, \quad i=1,\dots,n,
\end{equation*}
with $\mathbb{E}(u_i)=0$ and $\mathbb{E}(u_i^2)=\sigma^2$ for all $i \in \{1,\dots,n\}$.
Notice that we analyze the actual numerical iterates of RGD directly, rather than a continuous-time approximation, which typically conflates the effects of step-size (learning rate) and iteration number \citep{AliDobTib2020,Lolas2020,HuckReiSta2025}. 
Accordingly, our methods are fully data-driven and can be implemented without additional tuning.

We focus on the scenario where the true signal $\beta\in \mathbb{R}^p$ is not correlated with an extreme eigenvector of the covariance matrix of the features, which we formalize using a common \emph{random effects assumption} which states that $\mathbb{E}(\beta)=0$ and $\mathbb{E}(\beta\beta^\top)=\tau^2p^{-1} I_p$, for some $\tau>0$. See Section~\ref{sec:RandEff} for more context on this assumption. 

In particular, our contributions are the following.
\begin{itemize}
	\item We provide a precise finite-sample analysis of the (out-of-sample) generalization error of RGD (cf. Section~\ref{sec:GenError}).
	
	\item In particular, we show that the generalization error of RGD decreases monotonically in the iteration index $t$ whenever the tuning parameter $\lambda$ satisfies $\lambda \ge \lambda_n^*$, where the optimal full-ridge choice is $\lambda_n^* \coloneqq \frac{\sigma^{2}}{\tau^{2}}\,\gamma_n$ with $\gamma_n \coloneqq \frac{p}{n}$ (cf. Theorem~\ref{Th1}). Furthermore, for $\lambda = \lambda_n^*$, the generalization error of RGD converges monotonically to that of the optimal ridge benchmark as $t \to \infty$.

	\item Extending results of \citet{Dicker2014} we develop consistent estimators for the error variance $\sigma^2$ and the signal strength $\tau^2=\mathbb{E}(\|\beta\|_2^2)$. In particular, we completely drop the assumption of Gaussian design. This readily leads to an estimator $\hat{\lambda}_n$ for $\lambda_n^*$ that is consistent under mild assumptions on the design distribution and in the full range of $\gamma_n\to \gamma \in(0,\infty)$. The computational bottleneck of this estimation is the computation of $\trs{\hat{\Sigma}_n^2}$, where $\hat{\Sigma}_n\coloneqq n^{-1}X^\top X$ is the sample covariance matrix (cf. Section~\ref{sec:ConsistRand}).
	
	\item We show that the generalization error of RGD tuned with $\hat{\lambda}_n$ is uniformly close (in $t$) to that of RGD tuned with $\lambda_n^*$ as $\gamma_n\to \gamma\in(0,\infty)$. Analogously, we show that ridge regression tuned with $\hat{\lambda}_n$ asymptotically achieves the lower bound of optimally tuned ridge regression (cf. Section~\ref{sec:TunedRGD}).
	
	\item We replace the random effects assumption by a more intuitive deterministic condition on $\beta$, also used by \citet{Dicker2014}, and reprove the consistency result for $\sigma^2$ and $\tau^2 := \|\beta\|_2^2$ without relying on Gaussianity. Consequently, in the setting where $p=o(n^{5/4})$, we provide an extension of the results by \citet{Dicker2014} to non-Gaussian designs. This is achieved through novel approximations of $\tr(\hat{\Sigma}_n^2)$ and $\beta^\top \hat{\Sigma}_n^2 \beta$ in a non-Gaussian setting, which may be of independent interest. (cf. Section~\ref{sec:NoRand}).
	
\item Throughout this work, we consider a setting in which both the training and test data are independent but not necessarily identically distributed. Our assumptions require only certain uniform moment bounds and a common covariance structure among the features in the training and test set. This framework can be interpreted in the context of transfer learning in the absence of a covariance shift (cf. \citet{sur2024}). Along the way, we generalize a result of \citet[Theorem~2.1]{DobWag2018} under substantially weaker distributional assumptions and also provide a version of the classical result of \citet[Theorem~2]{LedPeche2011}.

\end{itemize}

\subsection{Notation and definitions}

We denote the $p \times p$ identity matrix by $I_p$, the Moore-Penrose pseudoinverse of the symmetric $p\times p$ matrix $A=(a_1,...,a_p)$ with $A^\dagger$, where $a_1,...,a_p$ are the column vectors of $A$. The largest and smallest nonzero eigenvalues of a matrix $A$ are denoted by $s_{\max}(A)$ and $s_{\min}(A)$, respectively. The indicator function on the set B is denoted by $x \mapsto \mathbf{1}_B(x)$. The column space of a matrix $A$ is denoted by $im(A)$ and the nullspace by $\mathcal{N}(A)$. We represent the $A$-seminorm by $\|x\|_A^2=x^\top A x$ for a positive semidefinite matrix $A$ and a vector $x$. We write $\|A\|_2$ for the spectral norm of a symmetric matrix $A$ and denote the $2$-norm of a vector $x$ by $\|x\|_2$. The expression $\|A\|_F$ denotes the Frobenius norm, $x\land y=\min\{x,y\}$ and $x\lor y=\max\{x,y\}$ for $x,y\in \mathbb{R}$. For a cumulative distribution function $F$, we write $F(\{x\})=F(x)-F(x^-)$, where $F(x^-)\coloneqq\lim_{t\uparrow x}F(t)$.

 \section{Finite sample properties of regularized gradient descent}
\label{sec:FiniteSample}

\subsection{Definition and deterministic convergence}

For an $n$-dimensional response vector $y$ and an $n\times p$ design matrix $X$ the (full) ridge-estimator $\hat{\beta}_R(\lambda) =(X^\top X+n\lambda I_p)^{-1}X^\top y$ is the unique minimizer of \eqref{eq:obj-f} whenever $\lambda>0$. In the case where $\lambda=0$, we define $\hat{\beta}_R(0) =(X^\top X)^{\dagger}X^\top y$ (i.e., the minimum-norm estimator). This is a reasonable extension, since $\hat{\beta}_R(0)$ minimizes $b\mapsto\|y-Xb\|_2^2$ and $\hat{\beta}_R(0)= (X^\top X)^\dagger X^\top y =\lim_{\lambda \downarrow 0}(X^\top X+n\lambda I_p)^{-1}X^\top y $. For instance, using the LU decomposition, computing the ridge regression solution requires $O(np \,(n \wedge p))$ floating-point operations (flops).
Here, the minimum $n\land p$ arises because, when $p>n$, a practitioner would typically consider the dual representation of the ridge estimator $\hat{\beta}_R(\lambda)=X^\top (XX^\top +n\lambda I_n)^{-1} y$, which follows directly from a rearrangement of the ridge normal equations. Applying gradient descent with a constant step-size $\eta>0$ and initialized at $\hat{\beta}_{0}(\lambda,\eta)=\theta\in \mathbb{R}^p$ to \eqref{eq:obj-f}, the iterations take the following form:

\setlength{\jot}{15pt}
\begin{align}\label{Def_GD}
	\begin{split}
		\hat{\beta}_t(\lambda,\eta) &= \hat{\beta}_{t-1}(\lambda,\eta) - \eta \, \nabla L\big(\hat{\beta}_{t-1}(\lambda,\eta)\big),\\
		\nabla L(\beta) &= \frac{1}{n} \Big( -X^\top (y - X \beta) + n \lambda \beta \Big).
	\end{split}
\end{align}
As seen from \eqref{Def_GD}, each RGD iteration requires $O(np)$ flops. Hence, for $t < n \wedge p$, computing the RGD iterates is computationally more efficient than directly computing the full ridge estimator. We want to point out that the RGD-estimator $\hat{\beta}_{t}(\lambda,\eta)$ depends on three tuning parameters: the number of iterations $t$, the step-size or learning rate $\eta$ and the penalty parameter $\lambda$. Our objective is to find data-driven choices for $t,\eta$ and $\lambda$ that are both computationally tractable and statistically optimal. Statistical optimality, of course, depends on the performance measure and the data-generating process we introduce below. Before addressing these issues, we begin with a purely deterministic result on the RGD iterates. 

\begin{proposition}\label{prop1}
If we initialize $\hat{\beta}_0(\lambda,\eta) = \theta \in \mathbb{R}^p$ and perform gradient descent on \eqref{eq:obj-f} with constant step size $\eta > 0$ and regularization parameter $\lambda \ge 0$, then, for all $t \in \mathbb{N}$, the iterates can be written as
\begin{align}\label{repGD}
	\begin{split}
		\hat{\beta}_t(\lambda,\eta) 
		= \tilde{\beta}_t(\lambda,\eta) + A^t \theta,
	\end{split}
\end{align}
where
$A = A(\lambda,\eta) \coloneqq I_p - \eta (\hat{\Sigma}_n + \lambda I_p)$ and $\tilde{\beta}_t(\lambda,\eta) \coloneqq (I_p - A^t) \hat{\beta}_R(\lambda)$.

\end{proposition}
    \begin{remark}[On the convergence of the iterates]\label{rem:GD}\,
	\begin{enumerate}
		\item Note that, for arbitrary \(\lambda \ge 0\) and \(\eta > 0\), the eigenvalues of $A(\lambda,\eta)$ have the form $a_j=a_j(\lambda,\eta)\coloneqq 1-\eta(s_j+\lambda)$ for $j\in\{1,...,p\}$, where $0\leq s_p\leq...\leq s_1=s_{max}(\hat{\Sigma}_n)$ are the ordered eigenvalues of $\hat{\Sigma}_n=n^{-1}X^\top X$. Assume for the moment that at least $s_1>0$. Then, whenever \(0 < \eta < 2/(s_1+\lambda)\), we have \(|a_j| < 1\) for all \(j \in \{1,\dots,p\}\), and consequently \(a_j^t \to 0\) as \(t \to \infty\).
		In particular, using \eqref{repGD}, for a fixed $\lambda > 0$, the RGD iterates converge to the corresponding ridge estimator as $t \to \infty$; that is, $
		\hat{\beta}_t(\lambda,\eta) \to \hat{\beta}_R(\lambda)$,
		provided that the step-size satisfies $0 < \eta < 2/(s_1 + \lambda)$.

		\item For the case $\lambda=0$, consider the spectral decomposition $X/\sqrt{n}= V\Lambda^{1/2}U^\top$. Note that for $\lambda=0$ we have $a_j(0,\eta)=1-\eta s_j$, if $s_j>0$ and $a_j(0,\eta)=1$, otherwise. If we define a diagonal matrix $B$, with the $j$-th diagonal entry equal to one if $s_j=0$ and zero if $s_j>0$, we then have that $A(0,\eta)^t\to P\coloneqq UBU^\top$ as $t\to\infty$ and $\eta\in(0,2/s_1)$. We notice that $P=I_p-X^\dagger X$ and  $P$ is the orthogonal projector onto $\mathcal{N}(X)$, hence $\hat{\beta}_t(0,\eta)\to\hat{\beta}_R(0)+P\theta$ as $t\to\infty$, $\eta\in(0,2/s_1)$ and  $P \theta =0$ if $\theta\in im(X^\top)$.
		
		\item The step size that minimizes the number of iterations $t$ required for convergence, for a given $\lambda \ge 0$, is obtained by minimizing the spectral radius of the matrix
		$A(\lambda, \eta)$
		with respect to $\eta$ over the admissible interval
		$0 < \eta < 2/(s_1 + \lambda)$.
	Minimizing the spectral radius of $A(\lambda,\eta)$ in $\eta$ is equivalent to
	\[
	\argmin_{0 < \eta < 2/(s_1+\lambda)} \max_{1 \le j \le p} |a_j| = \argmin_{0 < \eta < 2/(s_1+\lambda)} \max \{|a_1|, |a_p|\},
	\]
	and the minimum is attained when $|a_1| = |a_p|$. Two cases arise. If $a_1$ and $a_p$ have the same sign, then $s_1 = s_p$, $a_j = 1 - \eta(s_1 + \lambda)$ for $j=1,\dots,p$, and choosing $\eta_{\mathrm{opt}}(\lambda) = 1/(s_1 + \lambda)=2/(s_1+s_p+2\lambda)$ yields $A(\lambda, \eta_{\mathrm{opt}}(\lambda)) = 0$. If instead $a_1 < 0 < a_p$, the condition $|a_1| = |a_p|$ implies that $-a_1=a_p$, which after straightforward algebra gives
		\[
		\eta_{\mathrm{opt}}(\lambda) = \frac{2}{ s_1 + s_p+2\lambda} \le \frac{2}{s_1 + \lambda},
		\]
		where the inequality is strict whenever at least one of $\lambda$ or $s_p$ is strictly positive.
	\end{enumerate}
\end{remark}

	\subsection{Risk measure}
Consider independent training observations $\{(x_i, y_i)\}_{i=1}^n$ and a test pair $(x_0, y_0)$  independent of the training data $\{(x_i, y_i)\}_{i=1}^n$. For each $i \in \{0,1,\dots,n\}$, the feature vector $x_i$ takes values in $\mathbb{R}^p$ and the response $y_i$ takes values in $\mathbb{R}$. The data is generated according to the linear model
\begin{equation}\label{set-1}
	y_i = x_i^{\top} \beta + u_i, \qquad i \in \{0,1,\dots,n\},
\end{equation}
where $\beta \in \mathbb{R}^p$ is an unknown parameter vector. The noise variables $\{u_i\}_{i=0}^n$ satisfy $\mathbb{E}(u_i) = 0$ and $\mathbb{E}(u_i^2) = \sigma^2$. The test feature vector $x_0$ satisfies $\mathbb{E}(x_0) = 0$ and $\mathbb{E}(x_0 x_0^\top) = \Sigma$, where $\Sigma \in \mathbb{R}^{p \times p}$ is a symmetric positive semidefinite matrix. Moreover, $x_0$ is independent of $\{u_i\}_{i=0}^n$. Stacking together the training observations, let $y = (y_1,\dots,y_n)^\top$ denote the response vector, $X = (x_1,\dots,x_n)^\top$ the design matrix, and $u = (u_1,\dots,u_n)^\top$ the noise vector. In this section on finite-sample properties (Section~\ref{sec:FiniteSample}), we treat the design matrix $X$ as fixed and non-random, and we emphasize this throughout by conditioning on $X$. 
We assess the performance of an estimator  $\hat{\beta}:=\hat{\beta}(X,y)$ based on the training data $(X,y)$ in terms of the (out-of-sample) generalization error

\begin{align*}
	\begin{split}
\mathrm{Risk}_{\mathrm{out}}(\hat{\beta})
&= \mathbb{E}\!\left[
(x_0^{\top}\hat{\beta} - y_0)^{2}
\,\middle|\, X
\right] 
= \mathbb{E}\!\left[
\bigl(x_0^{\top}(\hat{\beta} - \beta) - u_0\bigr)^{2}
\,\middle|\, X
\right] \\
&= \mathbb{E}\!\left[
(\hat{\beta} - \beta)^{\top}\Sigma(\hat{\beta} - \beta)
\,\middle|\, X
\right] + \sigma^{2},
	\end{split}
\end{align*}
where the expectation is taken with respect to all random quantities, conditional on $X$.
Since the irreducible error term $\sigma^2$ does not depend on $\hat{\beta}$, we will analyze 
\begin{equation}\label{pred_risk}	
	\mathrm{R}_{\Sigma}(\hat{\beta})\coloneqq\mathbb{E}((\hat{\beta}-\beta)^\top\Sigma(\hat{\beta}-\beta)|X),
\end{equation}  
which captures the part of the generalization error attributable to estimating $\beta$. In the rest of the paper, we will refer to $\mathrm{R}_{\Sigma}(\hat{\beta})$ as the generalization error, keeping in mind that it excludes the irreducible error term $\sigma^2$.

\subsection{The random effects assumption}			
\label{sec:RandEff}
In the main part of our work, we analyze the quantity $\mathrm{R}_{\Sigma}(\hat{\beta})$ under the \emph{random-effects} assumption on the coefficient vector $\beta$, which has become standard in the high-dimensional learning literature; see, for example, \citet{DobWag2018, AliKolTib2019, Hastie2022, dobriban2020wonder}. This assumption states that the unknown signal $\beta = (b_1, \dots, b_p)^\top$ is random, $\beta$, 
$X$ and $u$ are jointly independent, and $\beta$ follows an isotropic prior distribution:
\begin{equation}
	\mathbb{E}(\beta) = 0, \quad \mathbb{E}(\beta\beta^\top) = \frac{\tau^2}{p} I_p
	\tag{A*}  \label{set-2}
\end{equation}
where $\tau^2 = \mathbb{E}(\|\beta\|_2^2)$ can be interpreted as the expected signal strength. 
\setcounter{equation}{6}

The significance of this condition in the literature seems to be somewhat ambiguous. For example, it is used in \citet{DobWag2018} and \citet{Hastie2022} -- and a deterministic version of it in \citet{Dicker2014} (see also Section~\ref{sec:tuning}, below) -- as a technical aid to analyze convergence of quadratic forms $\beta^\top M \beta$, which under Assumption~\eqref{set-2} behave like $\tau^2p^{-1}\tr(M)$ in expectation. We also run into this kind of challenge here. Essentially, what is required, both in our work and in the cited literature, is to avoid situations where the true signal $\beta$ is strongly aligned with the eigenvectors corresponding to the extreme eigenvalues of $\hat{\Sigma}_n$. Indeed, the boundedness and monotonicity results in Proposition~\ref{corTh1} and Theorem~\ref{Th1} no longer hold if, for example, $\beta$ is parallel to the leading eigenvector of $\hat{\Sigma}_n$. Extensions addressing such scenarios will be considered and discussed in future work.

The random effects assumption is also crucial in \citet{AliKolTib2019} to relate the Bayes risk of ridge regression to that of \emph{gradient-flow}. In particular, under Assumption~\eqref{set-2} on $\beta$, ridge regression with the optimal tuning parameter $\lambda_n^* = \frac{\sigma^2}{\tau^2}\gamma_n$ constitutes a Bayes estimator. Consequently, its Bayes risk provides a lower bound for the Bayes risk of any other estimator of $\beta$.
\citep[cf. the proof of Theorem~3 in][]{AliKolTib2019}. This fact makes optimally tuned ridge regression a natural benchmark in the present setting. We also use the generalization error of optimally tuned ridge regression as a benchmark for the generalization error of RGD.

Finally, we point out that the random-effects assumption can also be seen as quantifying the size of a set $B \subseteq \mathbb{R}^p$ of favorable signals $\beta$ for which our results hold. Suppose, for example, that Assumption~\eqref{set-2} allows us to establish the asymptotic negligibility of a remainder term $R_n(X,\beta,u)$, that is,
$R_n(X,\beta,u) = o_{\mathbb{P}}(1)$ as $n \to \infty$.
As a concrete example, let $\hat{\lambda}_n = \hat{\lambda}_n(X,\beta,u)$ be a consistent estimator of $\lambda_n^*$; in this case, one may take
\(
R_n(X,\beta,u) = |\hat{\lambda}_n - \lambda_n^*|
\)
(see Subsection~\ref{sec:TunedRGD} below). 
Let $\nu$ denote the marginal distribution of $\beta$. For some $\varepsilon>0$ and $\delta>0$, define
\[
B := \bigl\{ b \in \mathbb{R}^p : \mathbb{P}(R_n(X,b,u)>\varepsilon) < \delta \bigr\},
\]
the set of all deterministic signals for which the remainder term is small with high probability. Now, Markov's inequality yields that this set is large in terms of the measure $\nu$, that is,
\[
\nu(B^c) \le \frac{1}{\delta}\, \mathbb{P}(R_n(X,\beta,u)>\varepsilon) \longrightarrow 0\quad\text{as }n\to\infty.
\]
In other words, for most deterministic signals $\beta$, the remainder term is small with high probability. Furthermore, since $\mathrm{R}_{\Sigma_n}(\hat{\beta})$ involves the true signal $\beta$ only through quadratic forms (see \eqref{pred_risk}), one could also quantify sets of favorable $\beta$'s using concentration inequalities for quadratic forms. Consider, for example,
\[
R_n(X,\beta,u) = \|\hat{\beta}(\hat{\lambda}_n) - \beta\|_{\Sigma_n}^2 - \|\hat{\beta}(\lambda_n^*) - \beta\|_{\Sigma_n}^2,
\]
where $\hat{\beta}$ is an estimator tuned with $\hat{\lambda}_n$ and $\lambda_n^*$
(cf. Theorem~\ref{Ridgetuned} and Theorem~\ref{GDtuned} below).
Such an approach would even allow for a finite-sample analysis. Since these are technical, but conceptually straightforward, alternative views on the random-effects assumption, we do not include the details here.

\subsection{Generalization error of RGD}	
\label{sec:GenError}

In this section we present our first main result on the generalization properties of RGD. Among other things, it establishes that, for a suitable choice of the regularization parameter $\lambda$ and step-size $\eta$, the generalization error of the RGD estimator decreases monotonically with the number of iterations $t$. Related results were also obtained by \citet[Corollary~3]{Lolas2020} and \citet[Proposition~3.11]{HuckReiSta2025} in the idealized setting of \emph{gradient-flow}. Moreover, the monotonicity appears to contradict common intuition about the benefits of early stopping, which is motivated by the reasoning that the bias is decreasing in $t$ while the variance is increasing in $t$, leading to the characteristic U-shaped behaviour of the generalization error. This apparent contradiction is resolved as follows. Initializing the RGD procedure with $\hat{\beta}_0(\lambda,\eta)=\theta\in\mathbb{R}^p$ 
and using the representation of $\hat{\beta}_t(\lambda,\eta)$ from Proposition~\ref{prop1}, under the data model \eqref{set-1} and Assumption~\eqref{set-2}, we obtain,
\begin{align}\label{decA1}
	\begin{split}
		\mathrm{R}_\Sigma(\hat{\beta}_t(\lambda,\eta)) &= \mathrm{R}_\Sigma(\tilde{\beta}_t(\lambda,\eta)) + \mathbb{E}(\theta^\top A^t \Sigma A^t \theta \mid X)\\[2mm]
		&= \mathbb{E}\!\left[
		\bigl\|
		\mathbb{E}\!\left[\tilde{\beta}_t(\lambda,\eta) \mid X, \beta\right] - \beta
		\bigr\|_{\Sigma}^{2}
		\,\middle|\, X
		\right]\\[1mm]
		&\quad + \operatorname{tr}\!\left\{
		\Sigma \,
		\mathbb{E}\!\left[
		\operatorname{Cov}\!\left(\tilde{\beta}_t(\lambda,\eta) \mid X, \beta\right)
		\,\middle|\, X
		\right]
		\right\} + \mathbb{E}(\theta^\top A^t \Sigma A^t \theta \mid X)\\[1mm]
		&= B^2_\Sigma(\lambda, \eta, t) + V_\Sigma(\lambda, \eta, t) + \mathbb{E}(\theta^\top A^t \Sigma A^t \theta \mid X).
	\end{split}
\end{align}
In this decomposition, \(B_\Sigma(\lambda,\eta,t)\) and \(V_\Sigma(\lambda,\eta,t)\) are referred to as the bias and variance components of the generalization error associated with the zero-initialized estimator \(\tilde{\beta}_t(\lambda,\eta)\), while the remaining term in the last line of \eqref{decA1} captures the additional contribution due to an arbitrary initialization \(\theta\). It can be shown that, for a suitably chosen step size \(\eta\) and for every \(\lambda \ge 0\), the squared bias \(B_\Sigma^2(\lambda,\eta,t)\) is monotonically decreasing in \(t\), whereas the variance term \(V_\Sigma(\lambda,\eta,t)\) is monotonically increasing in \(t\). Nevertheless, the sum \(B_\Sigma(\lambda,\eta,t) + V_\Sigma(\lambda,\eta,t)\) is monotonically decreasing in \(t\) when RGD operates in the over-regularized regime, namely when \(\lambda \ge \lambda_n^* = \frac{\sigma^2}{\tau^2}\gamma_n\) (cf. Theorem~\ref{Th1}\ref{Th1.i} below). In this regime, the bias term decreases more rapidly than the variance term increases as \(t\) grows. In order to ensure that the additional contribution induced by a general initialization \(\theta\) of \(\hat{\beta}_t(\lambda,\eta)\) in \eqref{decA1} also decreases monotonically in \(t\), we require an assumption on \(\theta\) analogous to the random effects Assumption~\eqref{set-2} on \(\beta\). Under zero initialization, this term is zero for all \(t\).

Throughout the chapter, we omit the dependence of $\lambda^*_n$ on $n$ for the sake of readability. Before stating the main theorem of this section, we present a lemma that plays a central role in the proof of Proposition~\ref{corTh1} and Theorem~\ref{Th1}.

\begin{lemma}\label{LemforTh1}
Under the data model \eqref{set-1} and Assumption~\eqref{set-2}, and using the notation of Remark~\ref{rem:GD} with $\gamma_n = p/n$, if we initialize $\hat{\beta}_{0}(\lambda,\eta) = 0$, then for every $t\in \mathbb{N}$, $\lambda \ge 0$ and $\eta>0$ the following holds: 
	\begin{enumerate}[label=(\alph*)]
		\item \label{dec1} The generalization error of the RGD estimator can be expressed as
		\begin{align*}
			\mathrm{R}_\Sigma(\hat{\beta}_{t}(\lambda,\eta)) &= \tr(\Sigma E_t), 
		\end{align*}
		where
		\begin{align*}
			E_t=E_t(\lambda,\eta) = \frac{\tau^2}{p} \Bigl( P + (\hat{\Sigma}_n + \lambda I_p)^\dagger (\lambda I_p + A^t \hat{\Sigma}_n)^2 \Bigr)
			+ \frac{\sigma^2}{n} \Bigl( (\hat{\Sigma}_n + \lambda I_p)^\dagger \Bigr)^2 (I_p - A^t)^2 \hat{\Sigma}_n,
		\end{align*}
		and $P = P(\lambda) = I_p - (\hat{\Sigma}_n + \lambda I_p)^\dagger (\hat{\Sigma}_n + \lambda I_p)$.
		
		The eigenvalues $\{e_i(t,\lambda,\eta)\}_{i=1}^p$ of $E_t$ are given by
		\begin{align*}
			e_i(t,\lambda,\eta)
			= \begin{cases}
				\displaystyle \frac{1}{p}\frac{\sigma^2\gamma_n}{s_i+\lambda^*}
				+
				\frac{1}{p} \frac{s_i}{(s_i+\lambda)^2}
				\biggl(
				\frac{\bigl(\frac{\lambda}{\lambda^*}-1\bigr)\sigma^2\gamma_n}
				{\sqrt{\tau^2 (s_i+\lambda^*)}}
				\;+\;
				\sqrt{\tau^2 (s_i+\lambda^*)}\, a_i^t
				\biggr)^{\!2}, & s_i > 0,\\[1.5mm]
				\displaystyle \frac{\tau^2}{p}, & s_i = 0.
			\end{cases}
		\end{align*}
		\item\label{dec2} The generalization error of the ridge estimator can be expressed as
		\begin{align*}
			\mathrm{R}_\Sigma(\hat{\beta}_{R}(\lambda)) = \tr(\Sigma F),
		\end{align*}
		where
		\begin{align*}
			F=F(\lambda) = \frac{\tau^2}{p} \bigl( P + \lambda (\hat{\Sigma}_n + \lambda I_p)^\dagger \bigr)^2 
			+ \frac{\sigma^2}{n} \bigl( (\hat{\Sigma}_n + \lambda I_p)^\dagger \bigr)^2 \hat{\Sigma}_n,
		\end{align*}
		and $P = P(\lambda) = I_p - (\hat{\Sigma}_n + \lambda I_p)^\dagger (\hat{\Sigma}_n + \lambda I_p)$.
		
		The eigenvalues $\{f_i\}_{i=1}^p$ of $F$ are given by
		\begin{align*}
			f_i=f_i(\lambda) =
			\begin{cases}
				\displaystyle \frac{1}{p} \frac{\sigma^2 \gamma_n}{s_i + \lambda^*} 
				+ \frac{1}{p} \frac{s_i}{(s_i + \lambda)^2} 
				\frac{\bigl(\frac{\lambda}{\lambda^*} - 1\bigr)^2 (\sigma^2 \gamma_n)^2}{\tau^2 (s_i + \lambda^*)}, & s_i > 0,\\[1.5mm]
				\displaystyle \frac{\tau^2}{p}, & s_i = 0.
			\end{cases}
		\end{align*}
	\end{enumerate}
\end{lemma}

\begin{proposition}\label{corTh1}
	Consider the data model \eqref{set-1} and Assumption~\eqref{set-2}, and using the notation of Remark~\ref{rem:GD}, suppose we initialize 
	$\hat{\beta}_0(\lambda,\eta) = \theta \in \mathbb{R}^p$. Then, for all $t \in \mathbb{N}$, $\lambda \ge 0$, and $\eta > 0$, it holds that
	\begin{align*}
		\mathrm{R}_\Sigma(\hat{\beta}_t(\lambda,\eta)) \ge \mathrm{R}_\Sigma(\hat{\beta}_R(\lambda^*)).
	\end{align*}
	
	In the special case $\lambda = 0$, the inequality is strict provided that 
	$\hat{\Sigma}_n$ has at least two positive and distinct eigenvalues, 
	$s_j, s_k > 0$ with $s_j \neq s_k$, and that the corresponding diagonal entries of
	$(U^\top \Sigma U)_{jj}$ and $(U^\top \Sigma U)_{kk}$ are strictly positive, 
	where $U$ is an orthogonal matrix whose columns are the normalized eigenvectors of $\hat{\Sigma}_n$.
\end{proposition}

	\begin{theorem}\label{Th1}
	Consider the same assumptions as in Proposition~\ref{corTh1}, together with the additional assumption that $\theta$ is random, $\theta$ is conditionally independent of $\beta$ and $u$ given $X$, and satisfies $\mathbb{E}(\theta\theta^\top|X)=\rho I_p$, for some $\rho\geq 0$. Then it holds that 		
	\begin{enumerate}[label=(\alph*), itemsep=0.7em, leftmargin=*]
		\item\label{Th1.i}  $\mathrm{R}_\Sigma(\hat{\beta}_{t}(\lambda,\eta))$ is monotonically decreasing in $t$, if $\lambda \geq \lambda^*$ and $\eta \in \left(0,1/(s_{1}+\lambda)\right]$,
		
		\item \label{Th1.ii} $\mathrm{R}_\Sigma(\hat{\beta}_{t}(\lambda,\eta)) \to \mathrm{R}_\Sigma(\hat{\beta}_R(\lambda)) $ for $t\to \infty$, if $\lambda \geq 0$ and $\eta \in (0,2/(s_{1}+\lambda))$.

	\end{enumerate}
	
\end{theorem}

By Proposition~\ref{corTh1}, we observe that ---regardless of the design $X$ and the choice of tuning parameters $t$, $\lambda$ and  $\eta$--- 
$\mathrm{R}_\Sigma(\hat{\beta}_R(\lambda^*))$ serves as a natural benchmark for the performance of RGD, including the unregularized case $\lambda=0$.\footnote{In the model \eqref{set-1} and under the random effects Assumption~\eqref{set-2}, \citet{AliKolTib2019} proved the even stronger statement that $\hat{\beta}_R(\lambda^*)$ minimizes $\mathrm{R}_\Sigma(\hat{\beta})$ over all measurable estimators $\hat{\beta} = \hat{\beta}(X,y)$; see \citet[Theorem~3]{AliKolTib2019}.}
Moreover, the second statement of Proposition~\ref{corTh1} implies that, when $\Sigma$ is positive definite, the generalization error of unregularized gradient descent, $\mathrm{R}_\Sigma(\hat{\beta}_t(0,\eta))$, can never exactly attain this lower bound, except in two trivial cases: either all eigenvalues $s_j$ are zero, that is, when the design matrix $X$ is identically zero, or there is exactly one nonzero eigenvalue, possibly with algebraic multiplicity greater than one.

It follows from Theorem~\ref{Th1}\ref{Th1.ii} that, in general, the lower bound can only be attained exactly in the high-iteration limit ($t\to\infty$) by an RGD-estimator with $\lambda=\lambda^*$ and step-size $\eta\in(0,2/(s_1+\lambda^*))$. Hence, if we want statistical optimality, we have to use some explicit regularization $\lambda>0$. 
Furthermore, the monotonicity result in Theorem~\ref{Th1}\ref{Th1.i} shows that for certain levels of regularization ---including the optimal one--- stopping the RGD algorithm early is superfluous. Early stopping in this setting does not improve statistical accuracy relative to utilizing the full computational resources. Therefore, to achieve statistical optimality under computational constraints, one should set $\lambda = \lambda^*$, select an appropriate step-size $\eta$ and run the RGD algorithm until the entire available computational budget is exhausted.

Another interesting observation is the following. Although \(\hat{\beta}_t(0,\eta)\) converges to \(\hat{\beta}(0) + P \theta\) as \(t \to \infty\) (see Remark~\ref{rem:GD}), where \(P = I_p - X^\dagger X\), the generalization error satisfies
\[
\mathrm{R}_\Sigma(\hat{\beta}_t(0,\eta)) \;\longrightarrow\; 
\mathrm{R}_\Sigma(\hat{\beta}(0)) \quad \text{as } t \to \infty,
\]
whenever \(0 < \eta < 2/s_1\) (cf. Theorem~\ref{Th1}\ref{Th1.ii}). We emphasize that this convergence holds not only in the idealized setting of Theorem~\ref{Th1}, where $\theta$ is treated as random—a device used solely to ensure that $\mathbb{E}(\theta^\top A^t \Sigma A^t \theta \mid X)$ decreases monotonically in $t$—but also for arbitrary fixed $\theta \in \mathbb{R}^p$. To see this, fix an arbitrary initialization $\theta \in \mathbb{R}^p$ and note that, by \eqref{decA1}, the initialization $\theta$ enters the error decomposition of $\mathrm{R}_\Sigma(\hat{\beta}_t(0,\eta))$ only through the term
\[
\mathbb{E}\!\left(\theta^\top A^t \Sigma A^t \theta \mid X\right)
= \theta^\top A^t \Sigma A^t \theta.
\]
Moreover, since $\theta^\top A^t \Sigma A^t \theta \to 0$ as $t \to \infty$ provided that $0 < \eta < 2/(s_1+\lambda)$, the effect of the initialization vanishes in the large-iteration limit. In other words, the null-space component $P\theta \in \mathcal{N}(X) = im(X^\top)^\perp$ does not influence predictive performance. From a statistical perspective, this observation is important, as it shows that RGD with $\lambda = 0$ automatically concentrates on the learnable directions $im(X^\top)$ while ignoring components in the null space that cannot be identified from the data.


\begin{remark}[On the choice of step size]\label{rem:stepGD}\,
	\begin{enumerate}
		\item Note that Theorem~\ref{Th1}\ref{Th1.i} was stated for \(\eta \in (0, 1/(s_1 + \lambda)]\), whereas, for any \(\lambda > 0\) and \(\eta \in (0, 2/(s_1 + \lambda))\), we have \(\hat{\beta}_t(\lambda, \eta) \to \hat{\beta}_R(\lambda)\) as \(t \to \infty\) (cf. Remark~\ref{rem:GD}). The monotonicity result of Theorem~\ref{Th1}\ref{Th1.i} can be extended to \(\eta \in (0, 2/(s_1 + \lambda))\) by restricting to even iteration indices \(t\); this follows from the decomposition in Lemma~\ref{LemforTh1}\ref{dec1} and the fact that \(a_i^{2t} = (1 - \eta (s_i + \lambda))^{2t} \in (0,1)\) for all \(i = 1, \dots, p\), \(\eta \in (0, 2/(s_1 + \lambda))\), and \(t \in \mathbb{N}\).

		\item The same arguments as in Remark~\ref{rem:GD}, regarding the minimal number of iterations \(t\) required for the RGD estimator \(\hat{\beta}_t(\lambda^*,\eta)\) to converge to the corresponding ridge solution \(\hat{\beta}_R(\lambda^*)\), can be applied to the convergence of the generalization error \(\mathrm{R}_\Sigma(\hat{\beta}_t(\lambda^*,\eta))\), thanks to the decomposition in Lemma~\ref{LemforTh1}\ref{dec1}.
		This decomposition leads to the same optimal choice of step size \(\eta_{\mathrm{opt}}(\lambda^*)\).
		However, since \(\eta_{\mathrm{opt}}(\lambda^*) > (s_1 + \lambda^*)^{-1}\) except in the degenerate case \(s_1 = s_p\), Theorem~\ref{Th1}\ref{Th1.i} is not applicable to \(\mathrm{R}_\Sigma(\hat{\beta}_t(\lambda^*, \eta_{\mathrm{opt}}(\lambda^*)))\) outside of this trivial scenario.
		 Nevertheless, as argued above, monotonicity still holds along even \(t \in \mathbb{N}\). If we restrict \(\eta\) to the interval \((0, 1/(s_1+\lambda^*)]\), where Theorem~\ref{Th1}\ref{Th1.i} applies, the fastest convergence in \(t\) of \(\mathrm{R}_\Sigma(\hat{\beta}_t(\lambda^*,\eta))\) toward the corresponding ridge generalization error \(\mathrm{R}_\Sigma(\hat{\beta}_R(\lambda^*))\) is achieved by choosing
		 \[
		 \eta = \argmin_{\eta \in (0, (s_i + \lambda^*)^{-1}]} \max \{|a_1(\lambda^*,\eta)|,\dots, |a_p(\lambda^*,\eta)|\}= \frac{1}{s_1 + \lambda^*}.
		 \]
	\end{enumerate} 
\end{remark}
Regarding the monotonicity result in Theorem~\ref{Th1}\ref{Th1.i}, we note that \citet{Lolas2020} establish a similar property for the continuous-time analogue of RGD, namely the \textit{penalized gradient-flow} estimator. To fully appreciate the differences between the result of \citet{Lolas2020} and the present work, we introduce the continuous-time analogue of RGD formally. 

Applying gradient descent to the objective function in \eqref{eq:obj-f} admits a simple numerical interpretation. In particular, for any $\lambda \ge 0$, gradient descent corresponds to an explicit Euler discretization of the initial value problem
\begin{equation}\label{Showalter}
	\dot{\beta}(\alpha) = -\nabla L(\beta(\alpha)), \qquad \beta(0) = 0, \qquad \alpha \ge 0.
\end{equation}
In the case $\lambda=0$, the initial value problem \eqref{Showalter} is referred to in the inverse problems literature as \emph{Showalter's method} or \emph{asymptotic regularization} (see, e.g., Example~4.7 of \citet{EngHanNeu1996}). 
A closed-form solution to \eqref{Showalter} is given by
\begin{equation}\label{gradientflow}
	\hat{\beta}^{GF}_\lambda(\alpha) \coloneqq 
	\frac{1}{n} (\hat{\Sigma}_n + \lambda I_p)^{\dagger} 
	\Bigl(I_p - \exp\{-\alpha (\hat{\Sigma}_n + \lambda I_p)\}\Bigr) X^\top y,
\end{equation}
which we refer to as the \emph{regularized gradient-flow} (RGF) estimator.  
Comparing $\hat{\beta}^{GF}_\lambda(\alpha)$ in \eqref{gradientflow} with the RGD estimator
$\hat{\beta}_t(\lambda,\eta)$ in \eqref{repGD}, we observe that the two parameters,
the step size $\eta$ and the iteration number $t$, are absorbed into a single continuous-time
parameter $\alpha$ in the gradient-flow formulation.
This reformulation places RGD within the broader framework of
continuous-time dynamical systems.
More generally, replacing discrete-time algorithms by a continuous-time analogues
is a common device in numerical analysis and optimization, as the resulting differential
equations often retain the essential structure of the original algorithm while admitting
a more tractable theoretical analysis; see, for example, \citet{Helmke}, \citet{Candes2016},
and \citet{Bach2017}.

\citet{Lolas2020} show that the map
\[
\alpha \mapsto \mathrm{R}_\Sigma(\hat{\beta}^{GF}_\lambda(\alpha))
\]
is monotonically decreasing in $\alpha$ whenever $\lambda \ge \lambda^*$.
 This result also relies on the Assumption~\eqref{set-2}. Our result provides a more nuanced understanding of the generalization error of RGD, as the decomposition derived in Lemma~\ref{LemforTh1}---which is the key ingredient in the proof of Proposition~\ref{corTh1} and Theorem~\ref{Th1}\ref{Th1.i}---enables an exact analysis of all three tuning parameters $t,\lambda$ and $\eta$, that effect the generalization error of RGD.

For completeness, we also mention the recent work of \citet{HuckReiSta2025}. In contrast to \citet{Lolas2020} and the present study, \citet[Proposition~3.11]{HuckReiSta2025} establish a monotonicity result for the penalized gradient-flow estimator without relying on the random effects Assumption~\eqref{set-2}. Their result differs from the aforementioned works in that, rather than using $\Sigma$ in \eqref{pred_risk}, they consider $\hat{\Sigma} + \lambda I_p$; that is, they analyze a penalized version of the \emph{in-sample} generalization error. Specifically, they show that the map
\[
\alpha \mapsto \mathrm{R}_{\hat{\Sigma}+\lambda I_p}(\hat{\beta}^{GF}_\lambda(\alpha))
\]
is monotonically decreasing in $\alpha$ whenever
\begin{align*}
	\lambda \sum_{j=i}^p s_j (v_j^\top \beta)^2 \ge \frac{\sigma^2}{n} \sum_{j=i}^p s_j, \quad \forall i \in \{1,\dots,p\},
\end{align*}
where $v_1,\dots,v_p$ denote the normalized eigenvectors corresponding to the eigenvalues $s_1,\dots,s_p$.

A key advantage of the methodology developed here is that RGF is inherently a continuous-time object and, as such, cannot be implemented directly on a computer. Consequently, the results established in this work can be applied directly, without depending on approximations that guarantee the \textit{closeness} of the RGF estimator evaluated at a given time $\alpha$ to a corresponding RGD iterate (cf. \citet[Lemma 2]{AliKolTib2019}).

\section{Computationally efficient tuning parameter selection}\label{sec:tuning}
As discussed in Section~\ref{sec:FiniteSample}, approximating the benchmark
generalization error $\mathrm{R}_{\Sigma}(\hat{\beta}_{R}(\lambda^{*}))$
requires implementing RGD with the optimal tuning parameter
\(
\lambda_{n}^{*} = \tfrac{\sigma^2}{\tau^2}\,\gamma_{n},
\)
and running the algorithm for as many iterations $t$ as computationally
feasible.
Since the optimal tuning parameter $\lambda_n^{*}$ depends on unknown population
quantities, these must be estimated from the observed data. Such an estimation
should, however, remain compatible with computational tractability, which
motivates the use of an iterative algorithm in the first place.

A classical approach for tuning parameter selection is cross-validation. \citet{Hastie2022} prove that leave-one-out cross-validation (LOOCV) achieves optimal tuning of the ridge penalty in a large-$p$, large-$n$ setting where $\gamma_n = p/n\to \gamma \in(0,\infty)$ (see \citet[Theorem~7]{Hastie2022}). More precisely, they choose the ridge tuning parameter $\lambda$ minimizing the LOOCV error
\begin{align}\label{CV}
	\text{CV}_n(\lambda)=\frac{1}{n}\sum_{i=1}^n(y_i-\hat{f}^{-i}_\lambda(x_i))^2=\frac{1}{n}\sum_{i=1}^n\bigg(\frac{y_i-\hat{f}_{\lambda}(x_i)}{1-(S_{\lambda})_{ii}}\bigg)^2,
\end{align}
where $\hat{f}_\lambda(x_i)=x_i^\top \hat{\beta}_R(\lambda)$ is the ridge predictor, $\hat{f}^{-i}_\lambda(x_i)$ is the ridge predictor trained on the whole training set except the $i$-th observation and $S_{\lambda}=X(X^\top X +n\lambda I_p)^{-1}X^\top$. In the high-dimensional case $p>n$, it is more convenient to consider the dual-representation of the ridge estimator, $\hat{\beta}_R(\lambda)=X^\top (XX^\top +n\lambda I_n)^{-1}y$, in which case $S_\lambda=XX^\top (XX^\top +n\lambda I_n)^{-1}$. The second equality of \eqref{CV} is the well-known short-cut formula of the LOOCV error, which can be easily derived using the Sherman-Morrison-Woodbury formula. \citet{Hastie2022} then show that
\begin{align*}
	\bigl| \mathrm{R}_{I_p}(\hat{\beta}_R(\hat{\lambda}_{CV})) 
	- \mathrm{R}_{I_p}(\hat{\beta}_R(\lambda^*)) \bigr|
	\;\overset{a.s.}{\longrightarrow}\; 0,
	\quad \text{ as }\gamma_n \to \gamma \in (0,\infty),
\end{align*} 
where $\hat{\lambda}_{CV}=\arg \min_{\lambda\in [\lambda_1,\lambda_2]}CV_n(\lambda)$ and $\lambda^*\coloneqq \frac{\sigma^2}{\tau^2}\gamma\in [\lambda_1,\lambda_2]$. Thus, the minimization of $CV_n$ is done on a pre-specified compact interval that is known to contain the optimal tuning parameter $\lambda^*$. 
A general problem with cross-validation -- even in this simple setup where there is the short-cut formula available -- is the computational cost. 

We distinguish two cases. For $n \ge p$, one first computes $X^\top X$, which costs $O(n p^2)$ flops; for each $\lambda$ in a grid of $r$ candidate values, one inverts $X^\top X + n \lambda I_p$, which incurs $O(r p^3)$ flops, and computes the diagonal entries of the matrix $S_\lambda$, $(X (X^\top X + n \lambda I_p)^{-1} X^\top)_{ii} = x_i^\top (X^\top X + n \lambda I_p)^{-1} x_i$, for $i=1,\dots,n$, at a cost of $O(r n p^2)$; all other operations are of a lower order. Hence, the total cost is $O( r n p^2)$. For $p > n$, one first computes $X X^\top$, costing $O(n^2 p)$ flops; for each $\lambda$ in the grid, the $n \times n$ matrix $X X^\top + n \lambda I_n$ must be inverted, incurring $O(r n^3)$ flops, and using the identity $S_\lambda = X X^\top (X X^\top + n \lambda I_n)^{-1} = I_n - n \lambda (X X^\top + n \lambda I_n)^{-1}$, the diagonal entries can then be computed at a negligible cost, while computing $X^\top (X X^\top + n \lambda I_n)^{-1}$ $r$ times costs $O(r n^2 p)$. All remaining operations are of a lower order, giving a total cost of $O(r n^2 p)$.
In practice, 
$r$ is typically large when using a fine grid of candidate $\lambda$ values. 

In this paper we take a more direct approach to estimate $\lambda^*$. Since $\lambda_n^* = \frac{\sigma^2}{\tau^2}\gamma_n$ depends explicitly on the signal strength $\tau^2$ and on the noise level $\sigma^2$, we propose to estimate these parameters consistently from the training data $(X,y)$. To this end, we build on and extend results of \citet{Dicker2014}. For a response vector $y$ and a design matrix $X$, we define
\begin{align*}
	\hat{m}_1 &\coloneqq \frac{1}{p}\,\text{tr}(\hat{\Sigma}_n), \quad
	\hat{m}_2 \coloneqq \frac{1}{p}\,\text{tr}(\hat{\Sigma}_n^2), \quad
	\tilde{m}_2 \coloneqq \hat{m}_2 - \gamma_n \hat{m}_1^2.
\end{align*}
Provided that $\tilde{m}_2 > 0$, we define the estimators
\begin{align}\label{est:def}
	\begin{split}
	\hat{\tau}_n^2 &\coloneqq
	\frac{1}{\tilde{m}_2}\frac{\|X^\top y\|_2^2}{n^2}
	- \frac{\gamma_n \hat{m}_1}{\tilde{m}_2}\frac{\|y\|_2^2}{n}, \\[0.5em]
	\hat{\sigma}_n^2 &\coloneqq
	\frac{\|y\|_2^2}{n}\!\left(1+\gamma_n \frac{\hat{m}_1^2}{\tilde{m}_2}\right)
	- \frac{\|X^\top y\|_2^2}{n^2}\frac{\hat{m}_1}{\tilde{m}_2}
	= \frac{1}{\tilde{m}_2}\!\left(
	\hat{m}_2\frac{\|y\|_2^2}{n}
	- \frac{\hat{m}_1\|X^\top y\|_2^2}{n^2}
	\right).
	\end{split}
\end{align}
These estimators for $\tau^2$ and $\sigma^2$ are linear combinations of $n^{-2}\|X^Ty\|_2^2$ and $n^{-1}\|y\|_2^2$, with coefficients determined by $\gamma_n$, $\hat{m}_1$ and $\hat{m}_2$. Thus, the highest computational cost of the proposed estimators arises from calculating $(n^{-1} X^\top X)^2$ or $(n^{-1} X X^\top)^2$, depending on whether $p$ or $n$ is larger, which requires $O(n p \, (n \wedge p))$ flops and therefore has the same computational complexity as computing a single ridge estimator.
In comparison, LOOCV incurs a total computational cost of $O(r np (n\wedge p))$ flops, as established above; hence, the proposed procedure achieves roughly an $r$-fold reduction in the leading computational cost relative to LOOCV.

To establish the consistency of these estimators in a large-$n$, large-$p$ setting, we impose several technical assumptions on the data-generating process, which are listed below. While we provide the complete set of assumptions for clarity, not all of them are required in every result. In particular, in Subsection~\ref{sec:NoRand}, we replace Assumption~\ref{A5} with the deterministic condition in Assumption~\ref{A6} and the error condition in Assumption~\ref{A7}.
In the subsequent arguments, we consider, for each $n \in \mathbb{N}$, the linear model
\begin{equation}\label{linmod}
	y=X\beta +u,
\end{equation}
where $X=Z\Sigma_n^{1/2}$,  $\Sigma_n^{1/2}$ is the unique symmetric positive semidefinite square-root of the covariance matrix $\Sigma_n$ and $Z$ is an $n\times p$ matrix. Throughout the remainder of the paper, we assume that $n, p\geq 2$. We define the empirical spectral distribution function of a symmetric matrix $A$ as,
\begin{equation}\label{empdist}
	F_{A}(x)=\frac{1}{p}\sum_{i=1}^p1_{\{s_i(A)\leq x\}},
\end{equation} 
where $\{s_i(A)\}_{i=1}^p$ are the eigenvalues of $A$.
\begin{assumptions} In the following, all random elements are defined on a common probability space $(\Omega,\mathcal{A},\mathbb{P})$ and we consider the assumptions:
	\begin{enumerate}[label=(A\arabic*)]
		\item\label{A1} $p=p_n$ and $\gamma_n = p/n\to \gamma \in (0,\infty)$ as $n \to \infty$.
		
		\item\label{A2} $\{\Sigma_n\}_{n\geq 1}$ is a sequence of $p \times p$ symmetric positive semidefinite matrices with uniformly bounded eigenvalues from above (i.e., $\sup_{n \in \mathbb{N}}\|\Sigma_n\|_{2}<\infty$) and $F_{\Sigma_n}(0)\neq 1$, for every $n\in\mathbb N$.
		
		\item\label{A3} We assume that the entries of $Z=Z_n=(z_{ij})_{1\leq i\leq n, 1\leq j\leq p}$ are, for each $n\in \mathbb{N}$, real-valued independently distributed random variables satisfying $\mathbb{E}(z_{ij})=0$, $\mathbb{E}(z_{ij}^2)=1$ and \begin{align}\label{X:unif4}\sup_{n\ge 1}\,\max_{1\le i\le n,\,1\le j\le p_n} \mathbb E\bigl(|z_{ij}|^{4+\varepsilon}\bigr) < \infty, 
		\end{align} for some $\varepsilon>0$. Furthermore, the distributions of the $z_{ij}$ are assumed to be absolutely continuous with respect to Lebesgue measure. 
		
		\item\label{A4} The spectral distribution function $F_{\Sigma_n}$ converges weakly to a cumulative distribution function $H$ supported on $[0,\infty)$, as n $\to \infty$. Additionally we assume that $H(0)\neq 1$.

		\item\label{A5} For each $n\in\mathbb N$, let $\beta$ be a $p$-dimensional random vector with independent entries satisfying 
		$\mathbb E(\beta_i)=0$, $\mathbb E(\beta_i^2)=p^{-1}\tau^2$ for some $\tau>0$ independent of $n$, and
		\[
		\sup_{n\ge 1} \max_{1\le i\le p_n} p^2 \,\mathbb E(\beta_i^4) \le \nu_{4,\beta}.
		\]
		 Similarly, for each $n\in\mathbb N$, $u$ is an $n$-dimensional random vector with independent entries satisfying $\mathbb{E}(u_i)=0$, $\mathbb{E}(u_i^2)=\sigma^2$ for some $\sigma>0$ independent of $n$, and
		 \[  \sup_{n\geq 1}\max_{1\leq i\leq n}\mathbb{E}(u_i^{4})\leq \nu_{4,u}<\infty.\] Furthermore, we assume that $u$, $Z$ and $\beta$ are jointly independent for every $n$.
		
		\item\label{A6} For the sequence of vectors $\beta = \beta_n$ appearing in \eqref{linmod}, we write $\tau_n = \|\beta\|_2$. We assume that $\tau_n>0$ for all $n$ and $\{\tau_n\}_{n\geq 1}$ is uniformly bounded, that is, $\sup_{n\ge 1} \tau_n < \infty$.
		Moreover, for $k \in \{1,2\}$,
		\begin{equation*}
		\Delta_k \coloneqq \frac{\beta^\top \Sigma_n^{\,k} \beta}{\|\beta\|_2^{2}}
		- \frac{1}{p}\operatorname{tr}\!\left(\Sigma_n^{\,k}\right)
		\longrightarrow\; 0, \quad\text{as } n\to\infty.
		\end{equation*}

		\item\label{A7} For each $n\in\mathbb N$, $u$ is an $n$-dimensional random vector with independent entries satisfying $\mathbb{E}(u_i)=0$, $\mathbb{E}(u_i^2)=\sigma_n^2$, where $\sigma_n> 0$ and
		\[ \sup_{n\geq 1}\max_{1\leq i\leq n}\mathbb{E}(u_i^{4})\leq \nu_{4,u} < \infty.\] Additionally, we assume that $u$ is independent of $Z$ for all $n$.		
	\end{enumerate}		
\end{assumptions}

 \begin{remark}[Discussion of the assumption set]\label{rem:Assumption}
Assumptions \ref{A1}--\ref{A4} are fairly standard in the random matrix literature, with few exceptions. First, the absolute continuity of the entries of $Z_n$ with respect to Lebesgue measure, as required in Assumption~\ref{A3}, is not widely used. This condition, together with Assumption~\ref{A2}, ensures that $\tilde m_2 = \hat m_2 - \gamma_n \hat m_1^2 > 0$, almost surely (see Lemma~\ref{denomnonzero}), and hence guarantees that the estimators $\hat \tau_n^2$ and $\hat \sigma_n^2$ are well-defined with probability one.
Second, the condition $F_{\Sigma_n}(0) \neq 1$ for all $n$ in Assumption~\ref{A2} is somewhat uncommon and is often replaced by the stronger requirement that the sequence $\{\Sigma_n\}_{n\ge 1}$ consists of symmetric positive definite matrices whose eigenvalues are uniformly bounded away from zero (see, for example, \citet{LedPeche2011} and \citet{BODNAR2014}).
Third, we impose a similar restriction in the limit in Assumption~\ref{A4}, namely $H(0) \neq 1$. We also note that Assumption~\ref{A3} allows for the distributions of the entries $z_{ij}$ of $Z_n$ to vary across $i$, $j$, and $n$. Moreover, by Assumptions~\ref{A2} and \ref{A4}, and the Portmanteau theorem, it follows that for any continuous function $g:\mathbb{R} \to \mathbb{R}$,
\begin{equation}\label{convtrgSig}
\frac{1}{p} \tr(g(\Sigma_n)) = \frac{1}{p} \sum_{i=1}^p g(t_i) = \int_0^C g(t)\, dF_{\Sigma_n}(t) \;\longrightarrow\; \int_0^C g(x)\, dH(x) < \infty \quad \text{as } n \to \infty,
\end{equation}
where $\{t_i\}_{i=1}^p$ denote the eigenvalues of $\Sigma_n$.
It follows directly from Assumption~\ref{A5} that Assumption~\eqref{set-2} holds for all $n$. 
Assumptions~\ref{A6} and \ref{A7} are used only in Subsection~\ref{sec:NoRand} to establish the consistency of $\hat{\sigma}_n^2$ and $\hat{\tau}_n^2$ in a setting where the coefficient vector $\beta$ is deterministic.  
Similarly to the random effects case, we make no distributional assumptions on the error terms $u_i$, and we even allow the variances $\sigma_n^2$ and $\tau_n^2 = \|\beta\|_2^2$ to depend on $n$ in Assumptions~\ref{A6} and~\ref{A7}.  
Finally, observe that when $\Sigma_n = I_p$ for all $n$, the condition imposed on
$\Delta_k$ in Assumption~\ref{A6} is trivially satisfied.
\end{remark} 
First, we verify that the estimators $\hat{\sigma}_n^2$ and $\hat{\tau}_n^2$ are well defined in our setting by showing that $\tilde{m}_2>0$, almost surely.
\begin{lemma}\label{denomnonzero}
	Under Assumptions \ref{A2} and \ref{A3} 
	\begin{align*}
		\tilde{m}_2 = \hat{m}_2-\gamma_n \hat{m}_1^2 > 0,\,\, \text{almost surely}.
	\end{align*}		
	\end{lemma}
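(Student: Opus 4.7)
The plan is to rewrite $\tilde m_2$ as a scaled empirical variance of the eigenvalues of the dual Gram matrix $XX^\top/n$, which is nonnegative by construction, and then to argue that the equality case is confined to a Lebesgue null set. The strict inequality relies on absolute continuity of the entries of $Z$ (as in \ref{A3}); this appears to be implicit in the statement of the lemma.

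First, I would use that the nonzero eigenvalues of $\hat\Sigma_n=X^\top X/n$ agree with those of the $n\times n$ matrix $K:=XX^\top/n$. Writing $\mu_1,\dots,\mu_n\geq 0$ for the eigenvalues of $K$, I have $\operatorname{tr}(\hat\Sigma_n^k)=\operatorname{tr}(K^k)=\sum_{i=1}^n\mu_i^k$ for $k\in\{1,2\}$. Substituting into the definition of $\tilde m_2$ gives
\[
\tilde m_2 \;=\; \frac{1}{p}\sum_{i=1}^n\mu_i^2 \;-\; \frac{1}{np}\Bigl(\sum_{i=1}^n\mu_i\Bigr)^2 \;=\; \frac{1}{\gamma_n}\,V_n,
\]
where $V_n\geq 0$ is the empirical variance of $\mu_1,\dots,\mu_n$. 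Equivalently, Cauchy--Schwarz applied to the $n$ eigenvalues of $K$ yields $\tilde m_2\geq 0$ pathwise, with equality if and only if $\mu_1=\cdots=\mu_n$, i.e.\ $K=cI_n$ for some $c\geq 0$.

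Next I would rule out this exceptional event. The condition $K=cI_n$ with $c>0$ is a system of polynomial equations in the entries of $Z$ -- all diagonal entries of $K=Z\Sigma_n Z^\top/n$ equal and all off-diagonals zero -- cutting out a proper algebraic subvariety of $\R^{n\times p}$, hence a Lebesgue null set. The remaining case $c=0$ forces $X=Z\Sigma_n^{1/2}=0$; since $\Sigma_n\neq 0$ by \ref{A2} (because $F_{\Sigma_n}(0)\neq 1$), $\ker(\Sigma_n)$ is a proper subspace of $\R^p$, so each row of $Z$ must lie in a Lebesgue null set. By absolute continuity of $Z$, both events have probability zero, giving $\tilde m_2>0$ almost surely.

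The main obstacle is making the null-set step precise when $\Sigma_n$ is rank-deficient, so that $X=Z\Sigma_n^{1/2}$ already lives in a proper subspace of $\R^{n\times p}$ and the ambient Lebesgue measure is not the correct reference. I would handle this by a case split: if $\operatorname{rank}(\Sigma_n)<n$ then $\operatorname{rank}(K)<n$ forces $c=0$, reducing to the $X=0$ case already treated; if $\operatorname{rank}(\Sigma_n)\geq n$, one works inside the image of $Z\mapsto Z\Sigma_n^{1/2}$, where the polynomial constraints remain of positive codimension with respect to the natural Lebesgue measure on this image, and the pushforward of the density of $Z$ still charges such subvarieties with probability zero.
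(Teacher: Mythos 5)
Your argument is correct and reaches the same conclusion as the paper by a genuinely different route. The paper's proof splits into the cases $n\ge p$ and $p>n$, applies Jensen's inequality to the empirical spectral measure (or its companion), and then invokes \citet[Theorem~1]{Okamoto1973} to conclude that the non-zero eigenvalues of $Z\Sigma_n Z^\top$ are almost surely pairwise distinct, hence not all equal. You instead pass immediately to the dual Gram matrix $K=XX^\top/n$, rewrite $\tilde m_2 = \gamma_n^{-1}V_n$ with $V_n$ the empirical variance of the $n$ eigenvalues of $K$, and treat $K=cI_n$ as the (Lebesgue-null) equality set by hand. This unifies the two cases and is more elementary in that it replaces Okamoto's theorem by a direct polynomial-degeneracy argument; it also makes explicit that nothing stronger than ``not all eigenvalues are equal'' is needed, whereas the paper uses (and pays for) the much stronger pairwise-distinctness of Okamoto. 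On the other hand, your null-set argument can be streamlined: for any $n\ge2$ and regardless of the rank of $\Sigma_n$, the off-diagonal constraint $K_{12}=z_1^\top\Sigma_n z_2/n=0$ is a nonzero polynomial in the entries of $Z$ (because $\Sigma_n\ne0$ by \ref{A2}), so its zero set already has Lebesgue measure zero in $\R^{n\times p}$; there is no need to push forward the density onto the image of $Z\mapsto Z\Sigma_n^{1/2}$, and your rank case-split, while valid, is not required. You also correctly observe that both proofs use absolute continuity of $Z$ from \ref{A3} even though it is not listed among the lemma's hypotheses -- the paper's invocation of Okamoto relies on exactly the same ingredient, so the omission is in the lemma statement, not in your argument.
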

We emphasize that Lemma~\ref{denomnonzero} does not require any assumption on $\gamma_n$; it only relies on the condition $n,p\geq 2$.
In the next step, we establish the consistency of $\hat{\sigma}^2_n$ and $\hat{\tau}^2_n$ under Assumption~\ref{A5} (cf. Subsection~\ref{sec:ConsistRand}). In order to be consistent with our results in Subsection~\ref{sec:GenError}, 
we show that the difference between the generalization error of RGD tuned with 
an estimator $\hat{\lambda}_n$ (based on $\hat{\sigma}_n^2$ and $\hat{\tau}_n^2$) 
and the generalization error of optimally tuned RGD, with 
$\lambda_n^* = \tfrac{\sigma^2}{\tau^2}\gamma_n$, converges to zero as $n \to \infty$.
 (cf.\ Subsection~\ref{sec:TunedRGD}).   
On this way, we also prove a \textit{plug-in} result for the generalization error of ridge regression tuned with $\lambda_n^*$. Specifically, we show that the generalization error of ridge regression, tuned with $\hat{\lambda}_n$, attains the asymptotically optimal benchmark generalization error in a setting where $\gamma_n \to \gamma \in (0,\infty)$ as $n \to \infty$.
Finally, in Subsection~\ref{sec:NoRand}, we replace Assumption~\ref{A5} with Assumptions~\ref{A6} and \ref{A7} and show that consistent estimation of the noise variance $\sigma_n^2$ and the signal strength $\tau_n^2 = \|\beta\|_2^2$ remains possible, even in settings where $\gamma_n \to 0$ or $\gamma_n \to \infty$.

\subsection{Consistent estimators with random effects}
\label{sec:ConsistRand}
In this section we proof consistency of the estimators $\hat{\sigma}_n^2$ and $\hat{\tau}_n^2$ under Assumption~\ref{A5}. In addition, we derive an auxiliary result of independent interest that serves as a key component in the consistency proof. Specifically, we relate the first two moments of the empirical spectral distribution function of $\hat{\Sigma}_n$ to their deterministic counterparts, which depend only on $\Sigma_n$ and $\gamma_n$. 
\begin{lemma}\label{momqf1}
	Under Assumptions \ref{A2} and \ref{A3}, we have 
	\begin{align*}
		\Bigg|\frac{1}{p^{1/2}}\operatorname{tr}(\hat{\Sigma}_n)
		- \frac{1}{p^{1/2}}\operatorname{tr}(\Sigma_n)\Bigg|
		&= O_{\mathbb{P}}\left(\frac{1}{n^{1/2}}\right), \\[4pt]
		\Bigg|\frac{1}{p}\operatorname{tr}(\hat{\Sigma}_n^2)
		- \Bigg(\frac{1}{p}\operatorname{tr}(\Sigma_n^2)
		+ \gamma_n \Big(\frac{1}{p}\operatorname{tr}(\Sigma_n)\Big)^2\Bigg)\Bigg|
		&= O_{\mathbb{P}}\left(\frac{1}{n^{1/2}} \,\vee\, \frac{p}{n^{3/2}} \right).
	\end{align*}
\end{lemma}	
\citet[Theorem 3.2]{BODNAR2014} prove a similar result  although under stronger assumptions. They obtain convergence almost surely to zero for both expressions in Lemma~\ref{momqf1}, in a setting where $n\to \infty$ and $\gamma_n\to \gamma\in(0,\infty)$. Here, we obtain convergence rates and most notably we do not require $\limsup_{n\to\infty}\gamma_n<\infty$, to prove convergence in probability to zero. It is somewhat surprising that, irrespective of the growth rate of $\gamma_n$, 
the convergence rate in the first display of Lemma~\ref{momqf1} is $n^{-1/2}$, 
which ensures that $p^{-1/2}\operatorname{tr}(\hat{\Sigma}_n)$ is asymptotically 
equivalent to its population counterpart. In the second statement, this is not the case in general. To be more specific, $p^{-1}\operatorname{tr}(\hat{\Sigma}_n^2)$ is asymptotically 
equivalent to $p^{-1}\operatorname{tr}(\Sigma_n^2)$ when $\gamma_n \to 0$, 
whereas if $\gamma_n \to \gamma \in (0,\infty]$ (for example, in regimes with 
$\gamma_n = o(n^{1/2})$), the excess term
$\gamma_n\big(p^{-1}\operatorname{tr}(\Sigma_n)\big)^2$ need not to vanish asymptotically.

A trivial consequence of Lemma~\ref{momqf1}, together with Assumption~\ref{A4}, is that 
\begin{equation}\label{convhatm1}
	\hat{m}_1=\frac{1}{p} \tr(\hat{\Sigma}_n)\overset{\mathbb{P}}{\longrightarrow}\int_0^\infty t\, dH(t)\neq 0\quad \text{as}\,n\to\infty.
\end{equation}
Similarly, using both statements of Lemma~\ref{momqf1}, we obtain
\begin{equation*}
	\tilde{m}_2 = \frac{1}{p}\tr(\hat{\Sigma}_n^2)
- \gamma_n \Biggl(\frac{1}{p}\tr(\hat{\Sigma}_n)\Biggr)^2
= \frac{1}{p}\tr(\Sigma_n^2) + o_{\mathbb{P}}(1),
\end{equation*}
under the condition $\gamma_n = o(n^{1/2})$; a detailed proof is deferred to
Appendix~\ref{App:C}. Furthermore, together with Assumption~\ref{A4}, the preceding
display implies that
\begin{equation}\label{convtildem2}
\tilde{m}_2 \;\overset{\mathbb{P}}{\longrightarrow}\; \int_0^\infty t^2 \, dH(t) \;\neq 0
\quad \text{as } n\to\infty.
\end{equation}
We are now going to present the main contribution of this subsection. 
\begin{theorem}\label{est1}
	Consider Assumptions  \ref{A1}--\ref{A5}. Then it holds that
	\begin{equation*}
		\hat{\sigma}_n^2 \;\overset{\mathbb{P}}{\longrightarrow}\; \sigma^2>0
		\quad\text{and}\quad
		\hat{\tau}_n^2 \;\overset{\mathbb{P}}{\longrightarrow}\; \tau^2>0 \quad \text{as}\,\,n\to\infty.
	\end{equation*}
\end{theorem}	
Theorem~\ref{est1} can be viewed as a version of \citet[Proposition 2.(i) and Lemma~2]{Dicker2014} under Assumption~\ref{A5}. For a more detailed discussion on this result, we refer to Subsection~\ref{sec:NoRand}, where we prove a similar result in a different setting where Assumption~\ref{A5} is replaced by Assumptions~\ref{A6} and~\ref{A7}. Nevertheless, Theorem~\ref{est1} will play a prominent role in the next chapter, where we discuss fully data-driven \textit{plug-in} procedures for ridge regression and RGD.

\subsection{Optimally tuned RGD}
\label{sec:TunedRGD}
Since $\lambda_n^*=\frac{\sigma^2}{\tau^2}\gamma_n$ we get by Theorem~\ref{est1} and the continuous mapping theorem that \[\biggl|\frac{\hat{\sigma}_n^2}{\hat{\tau}_n^2}\gamma_n-\lambda_n^*\biggr|\overset{\mathbb{P}}{\to} 0.\] Based on this fact, we present \emph{plug-in} results in two settings: first, for the 
generalization error of ridge regression with tuning parameter $\lambda_n^*$, and 
second, for the generalization error of RGD with tuning parameter $\lambda_n^*$. In addition, we provide two results of independent interest that play a crucial role 
in the proof of the \emph{plug-in} procedure for ridge regression.
Before presenting the main results of this section, we briefly review some key concepts from random matrix theory.

We define the Stieltjes transform for any cumulative distribution function $G$ supported on $[0,\infty)$ as
\begin{align*}
	m_G(z)\coloneqq\int_0^\infty \frac{dG(s)}{s-z},
\end{align*}
where $z\in \mathbb{C}\setminus \mathbb{R}_{\geq 0}$. The Stieltjes transform of the empirical spectral distribution $F_{\underline{\hat{\Sigma}}_n}$, with $\underline{\hat{\Sigma}}_n\coloneqq n^{-1}XX^\top$, is given by
\begin{align*}
	v_n(z)\coloneqq m_{F_{\underline{\hat{\Sigma}}_n}}(z)=\int_0^\infty \frac{1}{s-z} dF_{\underline{\hat{\Sigma}}_n}(s), \,\,\text{for}\,\,z \in \mathbb{C}\setminus\mathbb{R}_{\geq 0}. 
\end{align*}     
From the connection between weak convergence and the pointwise convergence of 
Stieltjes transforms (see \citet[Proposition~2.2]{Hachem2007} 
or \citet[Theorem~B.9]{Silversteinbook2010}), weak convergence of the sequence of cumulative distribution functions 
$\{F_{\underline{\hat{\Sigma}}_n}\}_{n\ge 1}$ to a nonrandom limiting distribution 
function $\bar{F}$, except on a null set $N$, can be established by showing 
\begin{equation*}
v_n(z) = m_{F_{\underline{\hat{\Sigma}}_n}}(z,\omega) 
	\;\longrightarrow\; m_{\bar{F}}(z)\quad \text{as}\,\, n\to\infty,
\end{equation*} 
for every $z \in \mathbb{C}^+ \coloneqq \{ z \in \mathbb{C} : \Im(z) > 0 \}$ 
and fixed $\omega \in N^c$.

\begin{theorem}[\citet{MarPas1967}, \citet{Pan2010}]\label{Pan2010}
	Consider Assumptions \ref{A1}--\ref{A4}.
	Then it holds, with probability one, that the sequence $\{F_{\underline{\hat{\Sigma}}_n}\}_{n \geq1}$ converges weakly to a nonrandom limiting distribution function $\bar{F}=\bar{F}_{\gamma,H}$ supported on $[0,\infty)$. 
	The corresponding Stieltjes transform $m_{\bar{F}}(z)$ for $z\in \mathbb{C}^+$ is the unique solution to 
	\begin{align}\label{stielt}
		m_{\bar{F}}(z)=-\bigg(z-\gamma\int_{0}^{\infty}\frac{t dH(t)}{1+t m_{\bar{F}}(z)}\bigg)^{-1}.
	\end{align}
\end{theorem}	
\citet{Pan2010} proves Theorem~\ref{Pan2010} in a slightly more general setting. \citet{Pan2010} assumes that $\underline{\hat{\Sigma}}_n=n^{-1}Z_n\Sigma_nZ_n^\top$, where the entries of $Z_n$ are assumed to be independent, have a common mean $\mu\in \mathbb{R}$, variance $\phi^2>0$ and satisfy 
\begin{align}\label{PanLind}
	\lim_{n\to\infty}\frac{1}{n^2\varepsilon_n^2}\sum_{i=1}^n\sum_{j=1}^p\mathbb{E}(Z_{ij}^2 \mathbf{1}\{|Z_{ij}|\geq\varepsilon_n\sqrt{n}\})=0,
\end{align}
for some positive sequence $\{\varepsilon_n\}_{n\geq 1}$ converging to zero. Thus, to prove Theorem~\ref{Pan2010}, it remains to verify that
Assumption~\ref{A3} entails the condition stated in~\eqref{PanLind}.

\begin{proposition}\label{prop4m}
Assume that Assumption~\ref{A1} holds. Then, under Assumption~\ref{A3}, the condition
in \eqref{PanLind} is satisfied.
\end{proposition}
The proof of this statement is provided in Appendix~\ref{App:B}.
We next state a theorem that might be of independent interest. 
\begin{theorem}\label{LPTh1}
Consider Assumptions~\ref{A1}--\ref{A4}, and let $g : [0,\infty) \to \mathbb{R}$ be a function with finitely many points of discontinuity. Then, for each $z \in \mathbb{C} \setminus \mathbb{R}_{\ge 0}$, 
\begin{align*}
	\frac{1}{p} \tr\bigl(g(\Sigma_n) (\hat{\Sigma}_n - z I_p)^{-1}\bigr) 
	\;\overset{\text{a.s.}}{\longrightarrow}\; 
	\int_0^\infty \frac{g(t)}{-z v(z) t - z} \, dH(t) \quad \text{as } n \to \infty,
\end{align*}
	where $v(z)$ is the Stieltjes transform of $\bar{F}$ defined for $z \in \mathbb{C} \setminus \mathbb{R}_{\ge 0}$.

\end{theorem}
Theorem~\ref{LPTh1} is a version of \citet[Theorem~2]{LedPeche2011}, and the proof is provided in Appendix~\ref{App:B}. 
In \citet{LedPeche2011}, the result is established for a function $g : [h_1, h_2] \to \mathbb{R}$, with $0 < h_1 \le h_2 < \infty$, where the interval $[h_1, h_2]$ contains the support of $H$.
Additionally, they assume that $Z_n$ denotes the $n \times p$ upper-left block of a doubly infinite i.i.d.\ array with mean zero, variance one, and finite $12$th moments, and that $\Sigma_n$ is positive definite for all $n$, without requiring the largest eigenvalue of $\Sigma_n$ to be uniformly bounded from above.

Another result of independent interest is the following lemma, which builds on Theorem~\ref{LPTh1}.
\begin{lemma}\label{LPmod}
	Consider Assumptions \ref{A1}--\ref{A4}. Then, almost surely, for all $\lambda >0$,
	\begin{align*}
		\frac{1}{p}\tr\big(\Sigma_n(\hat{\Sigma}_n+\lambda I_p)^{-1}\big)&{\longrightarrow}\frac{1}{\gamma}\frac{1-\lambda v(-\lambda)}{\lambda v(-\lambda)},\\
		\frac{1}{p}\tr\big(\Sigma_n(\hat{\Sigma}_n+\lambda I_p)^{-2}\big)&{\longrightarrow}\,\frac{1}{\gamma}\frac{v(-\lambda)+\lambda \tfrac{d}{d \lambda} v(-\lambda)}{(\lambda v(-\lambda))^2},
	\end{align*}
	as $n \to \infty$, where $\tfrac{d}{d \lambda} v(-\lambda)$ denotes the derivative of the function $\lambda \mapsto v(-\lambda)$ with respect to $\lambda>0$.
\end{lemma}
The first statement of Lemma~\ref{LPmod} is a version of \citet[Lemma 2] {LedPeche2011} with the difference that we focus on $\lambda\in \mathbb{R}_{>0}$ instead of $z \in \mathbb{C}^+$. The second statement is a version of \citet[Lemma 2.2]{DobWag2018}, which relies on \citet[Lemma 2]{LedPeche2011} and a \textit{derivative trick} similar to results of \citet{RubMesPal2011} and \citet{Zhang2013}. Our proof technique in Theorem~\ref{LPTh1} and Lemma~\ref{LPmod}, builds on the idea of deterministic equivalents (cf. \citet{Hachem2007}), resulting in weaker distributional assumptions on $Z$. Furthermore, the almost sure convergence in \citet[Lemma 2]{LedPeche2011} and \citet[Lemma 2.2]{DobWag2018} both hold for a null-set that may depend on $\lambda$. In contrast, Lemma~\ref{LPmod} states almost sure convergence for all $\lambda>0$, that is, the null-set is independent of the choice of $\lambda$. These results play a crucial role in the next theorem. 

Before stating the plug-in result for ridge regression, we introduce an estimator for $\lambda_n^*$. Under the current assumptions, we set
\begin{equation}\label{defestimatorlambda}
	\hat{\lambda}_n \coloneqq \biggl(\frac{\hat{\sigma}_n^2}{\hat{\tau}_n^2}\gamma_n \;\lor\; \lambda_1\biggr) 
	\;\land\; \lambda_2,
\end{equation}
where $0 < \lambda_1 \le \lambda_2 < \infty$, and such that
\[
\frac{\sigma^2}{\tau^2}\gamma_n \longrightarrow \frac{\sigma^2}{\tau^2}\gamma \;\eqqcolon\; \lambda^* \in (\lambda_1, \lambda_2).
\]
The truncation is imposed for technical convenience. While
\[
\frac{\hat{\sigma}_n^2}{\hat{\tau}_n^2}\gamma_n \overset{\mathbb{P}}{\longrightarrow} \lambda^*,
\]
the ratio need not be uniformly integrable, which complicates expectation-level arguments. The truncation ensures that $\hat{\lambda}_n$ remains bounded and does not affect the asymptotic limit.

\begin{theorem}\label{Ridgetuned}
	Consider Assumptions \ref{A1}--\ref{A5}. Then, almost surely, for all $\lambda >0$,
	\begin{equation*}
		\mathrm{R}_{\Sigma_n}(\hat{\beta}_R(\lambda))\longrightarrow\mathrm{R}(\lambda)\quad \text{as} \,\,n\to \infty, 
	\end{equation*}
	where
	\begin{align*}
		\mathrm{R}(\lambda)\coloneqq\lambda(\lambda \tau^2-  \sigma^2\gamma)\bigg(\frac{1}{\gamma}\frac{v(-\lambda)+\lambda \tfrac{d}{d \lambda} v(-\lambda)}{(\lambda v(-\lambda))^2}\bigg)+ \sigma^2\gamma \bigg(\frac{1}{\gamma}\frac{1-\lambda v(-\lambda)}{\lambda v(-\lambda)}\bigg).
	\end{align*}
	Furthermore,
	\begin{equation*}
		\mathrm{R}_{\Sigma_n}(\hat{\beta}_R(\hat{\lambda}_n))\overset{\mathbb{P}}{\longrightarrow}\mathrm{R}(\lambda^*)= \min_{\lambda >0}\mathrm{R}(\lambda)\quad \text{as}\,\,n\to\infty,
	\end{equation*}
	 where $\lambda^*= \frac{\sigma^2}{\tau^2}\gamma\in(\lambda_1,\lambda_2)$.
\end{theorem}
Consider the decomposition in Lemma~\ref{LemforTh1}\ref{dec2} and note that for $\lambda>0$, $\tr(\Sigma_nF)$ can be equivalently written as 
\begin{equation}\label{altridge}
	\tr(\Sigma_n F)=(\lambda \tau^2- \sigma^2\gamma_n )\frac{\lambda}{p}\tr(\Sigma_n(\hat{\Sigma}_n+\lambda I_p)^{-2})+ \frac{\sigma^2\gamma_n}{p}\tr(\Sigma_n(\hat{\Sigma}_n+\lambda I_p)^{-1}).
\end{equation}
The first statement of Theorem~\ref{Ridgetuned} is an immediate consequence of \eqref{altridge} and Lemma~\ref{LPmod}. Moreover, we note that the first statement of Theorem~\ref{Ridgetuned} is a generalization of \citet[Theorem 2.1]{DobWag2018}, who establish the corresponding claim under the more restrictive assumption that the entries of $Z_n$ are i.i.d. with finite 12-th moments. The second statement shows that tuning the ridge estimator with $\hat{\lambda}_n$ is asymptotically optimal, in the sense that it achieves the oracle ridge generalization error in the limit. As noted in the introduction of Section~\ref{sec:tuning}, \citet[Theorem~7]{Hastie2022} provide a similar result for a CV-tuned ridge penalty. Hence, Theorem~\ref{Ridgetuned} demonstrates that asymptotic statistical optimality can be achieved in ridge regression by a computationally more efficient alternative than cross-validation (see the discussion at the beginning of Section~\ref{sec:tuning}). 

In this regard, we also want to mention the result of \citet[Theorem 4.1]{patil21}, who establish consistency of a CV-tuned ridge estimator in a setting that does not rely on the random-effects assumption. As we do not consider cross-validation, but instead focus on a computationally more efficient procedure, we do not pursue any extensions in this direction. Similarly to Theorem~\ref{Ridgetuned}, we also consider a fully data-driven plug-in result for RGD. 
\begin{theorem}\label{GDtuned}
	Consider Assumptions \ref{A1}--\ref{A5}. Suppose RGD is initialized at $\hat{\beta}_0(\lambda,\eta)=\theta \in \mathbb{R}^p$, where $\hat{\eta}_n(\lambda):=(\hat{s}_1+\lambda)^{-1}$ and $\|\theta\|_2$ is uniformly bounded in $n$. Then, for every $t\in \mathbb{N}$, 
	\begin{equation*}\label{GDtuned1}
	\big|\mathrm{R}_{\Sigma_n}\big(\hat{\beta}_{t}(\hat{\lambda}_n, \hat{\eta}_n(\hat{\lambda}_n))\big)
	- \mathrm{R}_{\Sigma_n}\big(\hat{\beta}_{t}(\lambda_n^*, \hat{\eta}_n(\lambda_n^*))\big)\big| 
	\overset{\mathbb{P}}{\longrightarrow} 0 \quad \text{as}\,\,n\to\infty,
	\end{equation*}
	where $\hat{s}_1= \hat{s}_1(X)\geq 0$ is any measurable function of $X$. \footnote{For example, $\hat{s}_1$ could be an approximation of the largest eigenvalue of $\hat{\Sigma}_n$.} 
	
	Moreover, if $ \mathbb{P}(s_{max}(\hat{\Sigma}_n)<\hat{s}_1<K)\to 1$ as $n\to\infty$, for some constant $K>0$, the convergence is even uniform in $t$, that is,
	\begin{equation*}\label{GDtuned2}
	\sup_{t \in \mathbb{N}} 
	\big|\mathrm{R}_{\Sigma_n}\big(\hat{\beta}_{t}(\hat{\lambda}_n, \hat{\eta}_n(\hat{\lambda}_n))\big)
	- \mathrm{R}_{\Sigma_n}\big(\hat{\beta}_{t}(\lambda_n^*, \hat{\eta}_n(\lambda_n^*))\big)\big| 
	\overset{\mathbb{P}}{\longrightarrow} 0\quad \text{as}\,\,n\to\infty.
	\end{equation*}
\end{theorem}
Analogously to Theorem~\ref{Ridgetuned}, the key implication of Theorem~\ref{GDtuned} is that RGD tuned with $\hat{\lambda}_n$ has the same asymptotic performance as RGD tuned with the statistically optimal tuning parameter $\lambda_n^*$. We also emphasize that Theorem~\ref{GDtuned} allows for a data-dependent step-size that depends on $\hat{\lambda}_n$ and an approximation of the largest eigenvalue of $\hat{\Sigma}_n$. For instance, one may obtain such an approximation using power iterations on the matrix $\hat{\Sigma}_n$. Although $\hat{\eta}_n(\hat{\lambda}_n)=1/(\hat{s}_1+\hat{\lambda}_n)$ does not necessarily coincide with the optimal step-size $\eta_{\mathrm{opt}}(\lambda_n^*)=2/(2\lambda_n^*+s_1+s_p)$ (cf. Remark~\ref{rem:GD}),  $\hat{\eta}_n(\hat{\lambda}_n)$ approximates the \textit{optimal step-size} on the interval $(0, 1/(s_1+\lambda_n^*)]$, the interval where Theorem~\ref{Th1}\ref{Th1.i} applies (cf. Remark~\ref{rem:stepGD}). 

By the second statement of Theorem~\ref{GDtuned}, the generalization error of RGD with the data-driven tuning parameter $\hat{\lambda}_n$ can be made arbitrarily close, uniformly over $t$, to that of RGD with the oracle parameter $\lambda_n^*$ for all sufficiently large $n$. Recall that, by Theorem~\ref{Th1}\ref{Th1.i} and Theorem~\ref{Th1}\ref{Th1.ii}, the generalization error of RGD with $\lambda_n^*$ decreases monotonically in $t$ and converges, as $t \to \infty$, to the benchmark ridge generalization error $\mathrm{R}_{\Sigma_n}(\hat{\beta}_R(\lambda_n^*))$. Altogether, these results establish that, for all sufficiently large $n$, the generalization error of the data-driven procedure along the iterates
$\{\mathrm{R}_{\Sigma_n}(\hat{\beta}_{t}(\hat{\lambda}_n, \hat{\eta}_n(\hat{\lambda}_n))) : t \in \mathbb{N}\}$
closely follows that of the oracle iterates
$\{\mathrm{R}_{\Sigma_n}(\hat{\beta}_{t}(\lambda_n^*, \hat{\eta}_n(\lambda_n^*))) : t \in \mathbb{N}\}$.
In Figure~\ref{fig:main}, the green curve shows the oracle generalization error for a selected subset of iterates, with the dots marking these points. 
For large $n$, these oracle iterates can be replaced by their data-driven counterparts while achieving essentially the same predictive accuracy.

\subsection{Consistency without random effects}
\label{sec:NoRand}
In this section, we replace Assumption~\ref{A5}, which treats \(\beta\) as random, 
with Assumptions~\ref{A6} and~\ref{A7} in the consistency proof of 
Theorem~\ref{est1}. In particular, under Assumption~\ref{A6}, \(\tau_n^2\) is 
no longer random. Furthermore, we establish consistency of $\hat{\sigma}_n^2$ and $\hat{\tau}^2_n$ in a setting where $\gamma_n$ is not restricted to converge to a finite constant $\gamma \in (0,\infty)$ as $n\to \infty$. In particular, we also allow the boundary cases $\gamma=0$ and $\gamma=\infty$. The only restriction on the relative growth of $p$ and $n$ is that $p=o(n^{5/4})$. Before we state the main theorem of this section, we present a lemma that might be of independent interest. 

\begin{lemma}\label{momqf2}
Under Assumptions \ref{A2} and \ref{A3}, and for every sequence $\tilde{\beta}=\tilde{\beta}_n$ with $\|\tilde{\beta}\|_2 = 1$, we have
\begin{align*}
	\big|\tilde{\beta}^\top \hat{\Sigma}_n \tilde{\beta} - \tilde{\beta}^\top \Sigma_n \tilde{\beta}\big| 
	&= O_{\mathbb{P}}\left(\frac{1}{n^{1/2}}\right),\\[2mm]
	\Bigg|\tilde{\beta}^\top \hat{\Sigma}_n^2 \tilde{\beta} - \Big(\tilde{\beta}^\top \Sigma_n^2 \tilde{\beta} + \frac{1}{n} \tr(\Sigma_n) \tilde{\beta}^\top \Sigma_n \tilde{\beta} \Big) \Bigg| 
	&= O_{\mathbb{P}}\left(\frac{1}{n^{1/2}} \lor \frac{p}{ n^{3/2}} \right).
\end{align*}
\end{lemma}	
Lemma~\ref{momqf2} is the analogous result for quadratic forms to Lemma~\ref{momqf1}. As pointed out in Subsection~\ref{sec:RandEff}, Assumption~\ref{A5} is mainly a technical tool. For example, it simplifies the analysis of quadratic forms of the type $\beta^\top \hat{\Sigma}_n^k \beta$, $k \in \{1,2\}$, and avoids dealing with problematic alignments of $\beta$ with the eigenvectors corresponding to the extreme eigenvalues of $\hat{\Sigma}_n$. More specifically, under Assumption~\ref{A5}, results of the form
\[
\frac{1}{\tau^{2}} \beta^\top \hat{\Sigma}_n^k \beta - \frac{1}{p} \operatorname{tr}(\hat{\Sigma}_n^k) = o_{\mathbb{P}}(1), \quad k \in \{1,2\},
\]
can be obtained by using concentration inequalities to control deviations of $\tau^{-2} \beta^\top \hat{\Sigma}_n^k \beta$ from its conditional expectation 
$\mathbb{E}\bigl(\tau^{-2} \beta^\top \hat{\Sigma}_n^k \beta \,\big|\, X\bigr) = p^{-1} \operatorname{tr}(\hat{\Sigma}_n^k)$, 
in the regime where $\gamma_n \to \gamma \in (0,\infty)$ (see the proof of Theorem~\ref{est1} in Appendix~\ref{App:B}).

In contrast, in a setting without random effects, similar convergence results can still be achieved. In the case where $\Sigma_n = I_p$ for all $n$ and for the sequence of $\beta$ vectors arising from the model in \eqref{linmod}, Lemma~\ref{momqf1} and Lemma~\ref{momqf2} imply that
\begin{equation}\label{quadfromtotr}
	\frac{\beta^\top \hat{\Sigma}_n^k \beta}{\|\beta\|_2^2} - \frac{1}{p} \operatorname{tr}(\hat{\Sigma}_n^k) = o_{\mathbb{P}}(1), \quad k \in \{1,2\},
\end{equation}
provided that $\gamma_n = o(n^{1/2})$. For general $\Sigma_n$, the same convergence results as in \eqref{quadfromtotr} can be obtained using Assumption~\ref{A6}, given that $\gamma_n = o(n^{1/2})$. The relevant arguments for this statement are deferred to Appendix~\ref{App:C}.

\begin{theorem}\label{consestnonran}
	Under Assumptions \ref{A2}, \ref{A3}, \ref{A4}, \ref{A6}, and \ref{A7}, and the additional assumption that $n\to \infty$ with $p=o(n^{5/4})$, we have
	\begin{align*}
		\hat{\sigma}_n^2 - \sigma_n^2 \;\overset{\mathbb{P}}{\longrightarrow}\; 0 
		\quad \text{and} \quad 
		\hat{\tau}_n^2 - \tau_n^2 \;\overset{\mathbb{P}}{\longrightarrow}\; 0 
		\quad \text{as } n \to \infty.
	\end{align*}
\end{theorem}
The proof of Theorem~\ref{consestnonran} is much more involved than the proof of 
Theorem~\ref{est1}, due to the nonrandomness of \(\beta\) and the fact that we 
work in a setting where \(\gamma_n\) may even tend to infinity. We note that 
\citet{Dicker2014} defines the signal strength as \(\beta^\top \Sigma_n \beta\), 
which results in a slightly different estimator of \(\hat{\tau}_n^2\). In 
particular, the estimator proposed in \citet{Dicker2014} differs from that in 
\eqref{est:def} in that it is additionally multiplied by \(\hat{m}_1\). The 
estimator for \(\sigma_n^2\) is unchanged. 
\citet[Proposition~2(i)]{Dicker2014} proves Theorem~\ref{consestnonran} with this 
adapted estimator in a setting where \(p = o(n^2)\), the entries of \(Z_n\), as 
well as \(u\), are Gaussian for each $n$, and Assumption~\ref{A6} holds. In contrast, the 
distributional assumptions in Theorem~\ref{consestnonran} are considerably 
weaker. Using Assumption~\ref{A6}, equations~\eqref{convtrgSig} and \eqref{convhatm1}, and 
Theorem~\ref{consestnonran}, we obtain
\begin{align}\label{Dickergen}
	\begin{split}
	\hat{m}_1 \hat{\tau}_n^2 - \beta^\top \Sigma_n \beta
	&= \hat{m}_1 \hat{\tau}_n^2 - \tau_n^2 \frac{1}{p} \tr(\Sigma_n) + o(1) \\
	&= \hat{m}_1\bigl(\hat{\tau}_n^2 - \tau_n^2\bigr)
	+ \tau_n^2\left(\hat{m}_1 - \frac{1}{p} \tr(\Sigma_n)\right)
	+ o(1) \\
	&= o_{\mathbb{P}}(1).
	\end{split}
\end{align}
Hence, Theorem~\ref{consestnonran} together with \eqref{Dickergen} generalizes 
\citet[Proposition~2(i)]{Dicker2014} in the case where \(p = o(n^{5/4})\).

\begin{acks}[Acknowledgments]

This research was supported by the Austrian Science Fund (FWF): I~5484-N, as part of the Research Unit 5381 of the German Research Foundation and by the Aarhus University Research Foundation (AUFF, 47221 and 47388). 
\end{acks}

\bibliographystyle{imsart-nameyear}
\bibliography{bibliography}       

\newpage



\appendix

\section{Proofs of Section~\ref{sec:FiniteSample}}\label{App:A}

\begin{proof}[Proof of Proposition~\ref{prop1}]
Recall that $A = I_p - \eta(\hat{\Sigma}_n+\lambda I_p)$.  
For notational convenience, we write $\hat{\beta}_t$ in place of $\hat{\beta}_t(\lambda,\eta)$ when the dependence on $(\lambda,\eta)$ is clear from context.  
For $t=1$, $\eta>0$ and $\lambda>0$, we have 
\begin{align*}
	\hat{\beta}_1 
	&= \hat{\beta}_0 - \eta \, \nabla L(\hat{\beta}_0) \\
	&= \theta - \frac{\eta}{n} \bigl(-X^\top y + X^\top X \theta + \lambda n \theta\bigr) = \frac{\eta}{n} X^\top y + A \theta \\
	&= (I_p - A) \hat{\beta}_R(\lambda) + A \theta.
\end{align*}
Thus, the first equality holds for $t=1$.  We now prove that it holds for all 
$t \in \mathbb{N}$ by induction. Suppose that the claim holds for some $t\in \mathbb{N}$. 
Then, using the gradient descent recursion, we obtain for $t+1$:
\begin{align*}
	\hat{\beta}_{t+1} 
	&= \hat{\beta}_{t} - \eta \nabla L(\hat{\beta}_{t}) \\
	&= \hat{\beta}_{t}- \frac{\eta}{n} \bigl(-X^\top y + X^\top X \hat{\beta}_{t} + \lambda n \hat{\beta}_{t}\bigr)\\
	&=\frac{\eta}{n} X^\top y +A\hat{\beta}_{t}.
\end{align*}
Using $\frac{\eta}{n}X^\top y=(I_p-A)\hat{\beta}_R(\lambda)$ and the induction hypothesis in the previous display, we obtain 
\begin{align*}
	\hat{\beta}_{t+1}&=(I_p-A)\hat{\beta}_R(\lambda)+ A(I_p-A^t)\hat{\beta}_R(\lambda)+A^{t+1}\theta
	\\&=(I_p-A^{t+1})\hat{\beta}_R(\lambda)+A^{t+1}\theta
\end{align*}
which completes the induction step. Finally, since $\lim_{\lambda \downarrow 0}\hat{\beta}_R(\lambda)=\hat{\beta}_R(0)$ the result extends to the case $\lambda=0$.
\end{proof}	

    \begin{proof}[Proof of Lemma~\ref{LemforTh1}]
Since $\theta = 0$, the representation in \eqref{repGD} of $\hat{\beta}_t(\lambda,\eta)$ yields $\hat{\beta}_t(\lambda,\eta) = \tilde{\beta}_t(\lambda,\eta)=(I_p-A^t)\hat{\beta}_R(\lambda)$. Under the data model \eqref{set-1}, for $t \in \mathbb{N}$, $\lambda \ge 0$, and $\eta > 0$, we obtain
\begin{align*}
	\mathbb{E}\!\left(\hat{\beta}_t(\lambda,\eta) \mid X,\beta\right)
	&= \mathbb{E}\!\left((I_p - A^t) \hat{\beta}_R(\lambda) \mid X,\beta\right) \\
	&= (I_p - A^t)(X^\top X + n \lambda I_p)^\dagger X^\top X \beta \\
	&= (\hat{\Sigma}_n + \lambda I_p)^\dagger (I_p - A^t) \hat{\Sigma}_n \beta,
\end{align*}
where in the last equality we use that all matrices involved are simultaneously 
diagonalizable and therefore commute.
We define 
\begin{equation*}
C_t\coloneqq (\hat{\Sigma}_n + \lambda I_p)^\dagger (I_p - A^t)\hat{\Sigma}_n.
\end{equation*} 
Using $\beta-\mathbb{E}\!\left(\tilde{\beta}_t(\lambda,\eta)\mid X,\beta\right)=(I_p-C_t)\beta$, 
the fact that $(I_p-C_t)$ is symmetric, and assumption~\eqref{set-2}, we obtain
\begin{align}\label{B_t}
	\begin{split}
		B_\Sigma(t,\lambda,\eta)
		&= \mathbb{E} \Bigl[
		\bigl\|
		\mathbb{E} \bigl[\tilde{\beta}_t(\lambda,\eta) \mid X,\beta\bigr] - \beta
		\bigr\|_{\!\Sigma}^{2}
		\,\Bigm|\, X
		\Bigr] \\
		&= \mathbb{E} \Bigl[
		\beta^\top (I_p - C_t) \Sigma (I_p - C_t) \beta
		\,\Bigm|\, X
		\Bigr] \\
		&= \frac{\tau^2}{p} \tr\Bigl(\Sigma (I_p - C_t)^2 \Bigr).
	\end{split}
\end{align}
Similarly, we define
\begin{equation*}
D_t\coloneqq \frac{1}{n}(\hat{\Sigma}_n + \lambda I_p)^\dagger (I_p - A^t)X^\top.
\end{equation*}
Noting that $\tilde{\beta}_t(\lambda,\eta)-\mathbb{E}\!\left(\tilde{\beta}_t(\lambda,\eta)\mid X,\beta\right)=D_t u$, we obtain under the data model \eqref{set-1} that
\begin{align}\label{V_t}
	\begin{split}
		V_\Sigma(t,\lambda,\eta)
		&= \operatorname{tr}\Bigl\{
		\Sigma\,
		\mathbb{E} \Bigl[
		\operatorname{Cov}\!\left(\tilde{\beta}_t(\lambda,\eta) \mid X,\beta\right)
		\,\Bigm|\, X
		\Bigr]
		\Bigr\} \\
		&= \tr\bigl(\Sigma \, \mathbb{E}(D_t u u^\top D_t^\top \mid X) \bigr) \\
		&= \sigma^2 \tr(\Sigma D_t D_t^\top).
	\end{split}
\end{align}
We write $P=P(\lambda)=I_p-(\hat{\Sigma}_n+\lambda I_p)^\dagger (\hat{\Sigma}_n+\lambda I_p)$. Note that in the case $\lambda = 0$, $P=I_p-\hat{\Sigma}_n^\dagger\hat{\Sigma}_n=I_p-X^\dagger X$ is the orthogonal projector onto $\mathcal{N}(X)$; for $\lambda>0$, $P$ is the zero matrix. Therefore,
\begin{align}\label{I-C_t}
	\begin{split}
		I_p - C_t
		&= I_p - (\hat{\Sigma}_n + \lambda I_p)^\dagger (I_p - A^t)\hat{\Sigma}_n \\
		&= I_p - (\hat{\Sigma}_n + \lambda I_p)^\dagger 
		(\hat{\Sigma}_n + \lambda I_p - \lambda I_p - A^t \hat{\Sigma}_n) \\
		&= P + (\hat{\Sigma}_n + \lambda I_p)^\dagger (\lambda I_p + A^t \hat{\Sigma}_n).
	\end{split}
\end{align}
Next, we consider
\begin{align}\label{D_t^2}
	\begin{split}
		D_t D_t^\top
		&= \frac{1}{n^2} (\hat{\Sigma}_n + \lambda I_p)^\dagger (I_p - A^t) X^\top X (I_p - A^t) (\hat{\Sigma}_n + \lambda I_p)^\dagger \\
		&= \frac{1}{n} \bigl((\hat{\Sigma}_n + \lambda I_p)^\dagger\bigr)^2 (I_p - A^t)^2 \hat{\Sigma}_n,
	\end{split}
\end{align}
where in the second equality we use that all involved matrices are simultaneously diagonalizable. Combining the decomposition in \eqref{decA1} with \eqref{B_t}--\eqref{D_t^2}, we obtain
\begin{align*}
		\mathrm{R}_\Sigma(\hat{\beta}_t(\lambda,\eta))&= B_\Sigma(t,\lambda,\eta) + V_\Sigma(t,\lambda,\eta) \\
		&= \tr\Biggl(
		\Sigma \Bigl(
		\frac{\tau^2}{p} \bigl(P + (\hat{\Sigma}_n + \lambda I_p)^\dagger (\lambda I_p + A^t \hat{\Sigma}_n)^2\bigr)
		+ \frac{\sigma^2}{n} \bigl((\hat{\Sigma}_n + \lambda I_p)^\dagger\bigr)^2 (I_p - A^t)^2 \hat{\Sigma}_n
		\Bigr)
		\Biggr)
		\\&=\tr(\Sigma E_t),
\end{align*}
where
\begin{equation}\label{E_t}
E_t = 
\frac{\tau^2}{p} \Bigl( P + (\hat{\Sigma}_n + \lambda I_p)^\dagger (\lambda I_p + A^t \hat{\Sigma}_n)^2 \Bigr)
+ \frac{\sigma^2}{n} \Bigl( (\hat{\Sigma}_n + \lambda I_p)^\dagger \Bigr)^2 (I_p - A^t)^2 \hat{\Sigma}_n.
\end{equation}
We denote by $\{e_i\}_{i=1}^p$ the eigenvalues of $E_t$.  
Note that all matrices involved in \eqref{E_t} are simultaneously diagonalizable.  
Therefore, in the basis of common eigenvectors $E_t$ is diagonal and its eigenvalues, in the case $\lambda > 0$, can be written as
\begin{align*}
	e_i=e_i(t,\lambda,\eta) 
	= 	\begin{cases}\frac{\tau^2}{p} \left( \frac{\lambda + a_i^t s_i}{s_i + \lambda} \right)^2
	+ \frac{\sigma^2 }{n} \left( \frac{(1 - a_i^t)^2 s_i}{(s_i + \lambda)^2} \right),  & s_i > 0,\\[1.5mm]
	\frac{\tau^2}{p}, & s_i = 0,
	\end{cases}
\end{align*}
where $a_i = a_i(\lambda,\eta) = 1 - \eta (s_i + \lambda)$ are the eigenvalues of $A=I_p-\eta(\hat{\Sigma}_n+\lambda I_p)$ and $s_i$ are the eigenvalues of $\hat{\Sigma}_n$.  

In the case $\lambda = 0$, we have 
$(\hat{\Sigma}_n + \lambda I_p)^\dagger = \hat{\Sigma}_n^\dagger$, 
where the eigenvalues of $\hat{\Sigma}_n^\dagger$ are $1/s_i$ for $s_i > 0$ and $0$ for $s_i = 0$. Since in this case $P$ is the orthogonal projector onto $\mathcal{N}(X)$, its 
eigenvalues are $0$ for $s_i > 0$ and $1$ for $s_i = 0$. The eigenvalues of $E_t$ simplify in this case to
\begin{align*}
e_i(t,0,\eta) =
\begin{cases}
	\frac{\tau^2}{p}\, a_i^{2t}
	\;+\;
	\frac{\sigma^2 }{n}\, \frac{(1 - a_i^t)^2}{s_i},
	& s_i > 0,\\[2mm]
	\frac{\tau^2}{p},
	& s_i = 0,
\end{cases}
\end{align*}
where $a_i = a_i(0,\eta) = 1 - \eta s_i$.
 Therefore,
for all $\lambda \geq0$, we have 
\begin{align}\label{repe_i}
	\begin{split}
		e_i=e_i(t,\lambda,\eta) 
	= 	\begin{cases}\frac{\tau^2}{p} \left( \frac{\lambda + a_i^t s_i}{s_i + \lambda} \right)^2
		+ \frac{\sigma^2 }{n} \left( \frac{(1 - a_i^t)^2 s_i}{(s_i + \lambda)^2} \right),  & s_i > 0,\\[2mm]
		\frac{\tau^2}{p}, & s_i = 0.
	\end{cases}
\end{split}
\end{align}
It remains to show that, for $s_i>0$, 
\begin{equation*}
	e_i=\frac{1}{p}\frac{\sigma^2\gamma_n}{s_i+\lambda^*} +\frac{1}{p}\frac{s_i}{(s_i+\lambda)^2}\biggl(\frac{(\frac{\lambda}{\lambda^*}-1) \sigma^2\gamma_n}{\sqrt{\tau^2(s_i+\lambda^*)}}+\sqrt{\tau^2(s_i+\lambda^*)}a_i^t\biggr)^2,
\end{equation*}
where $\lambda^*=\frac{\sigma^2}{\tau^2}\gamma_n$. 
Using the representation in \eqref{repe_i}, we can decompose $e_i$ into three terms:
\begin{align*}
	e_i 
	&= \frac{\tau^2}{p} \left( \frac{\lambda + a_i^t s_i}{s_i + \lambda} \right)^2
	+ \frac{\sigma^2 \gamma_n}{p} \left( \frac{(1 - a_i^t)^2 s_i}{(s_i + \lambda)^2} \right) \\[1mm]
	&= \frac{\tau^2}{p} \left( 
	\frac{\lambda^2}{(s_i+\lambda)^2}
	+ \frac{2 a_i^t s_i \lambda}{(s_i+\lambda)^2}
	+ \frac{s_i^2 a_i^{2t}}{(s_i+\lambda)^2}
	\right)
	+ \frac{\sigma^2 \gamma_n}{p} \left(
	\frac{s_i}{(s_i+\lambda)^2}
	- \frac{2 a_i^t s_i}{(s_i+\lambda)^2}
	+ \frac{s_i a_i^{2t}}{(s_i+\lambda)^2}
	\right) \\[1mm]
	&= 
	\frac{\tau^2}{p} \frac{\lambda^2}{(s_i+\lambda)^2}
	+ \frac{\sigma^2 \gamma_n}{p} \frac{s_i}{(s_i+\lambda)^2} 
	\hspace{1em} (*) \\[0.5mm]
	&\quad + \frac{2\tau^2}{p} \frac{a_i^t s_i \lambda}{(s_i+\lambda)^2}
	- \frac{2\sigma^2 \gamma_n}{p} \frac{a_i^t s_i}{(s_i+\lambda)^2} 
	\hspace{1em} (**) \\[0.5mm]
	&\quad + \frac{\tau^2}{p} \frac{s_i^2 a_i^{2t}}{(s_i+\lambda)^2}
	+ \frac{\sigma^2 \gamma_n}{p} \frac{s_i a_i^{2t}}{(s_i+\lambda)^2} 
	\hspace{1em} (***).
\end{align*}
To simplify the first term in the decomposition, we use $\tau^2=\frac{\sigma^2\gamma_n}{\lambda^*}$ and write
\begin{align}
	(*) &= \frac{\tau^2}{p} \frac{\lambda^2}{(s_i+\lambda)^2} + \frac{\sigma^2\gamma_n}{p} \frac{s_i}{(s_i+\lambda)^2} \notag \\
	&= \frac{\frac{\sigma^2\gamma_n}{p} \left( \frac{\lambda^2}{\lambda^*} + s_i \right) (s_i+\lambda^*)}{(s_i+\lambda)^2 (s_i+\lambda^*)} \notag \\
	&= \frac{\frac{\sigma^2\gamma_n}{p} \left( \lambda^2 + \frac{\lambda^2}{\lambda^*} s_i + s_i \lambda^* + s_i^2 + 2 s_i \lambda - 2 s_i \lambda \right)}{(s_i+\lambda)^2 (s_i+\lambda^*)} \notag \\
	&= \frac{1}{p} \left( \frac{\sigma^2\gamma_n}{s_i+\lambda^*} + \frac{\sigma^2\gamma_n \left( \frac{\lambda^2}{\lambda^*} s_i + s_i \lambda^* - 2 s_i \lambda \right)}{(s_i+\lambda)^2 (s_i+\lambda^*)} \right) \notag \\
	&= \frac{1}{p} \left( \frac{\sigma^2\gamma_n}{s_i+\lambda^*} + \frac{\sigma^2\gamma_n s_i \lambda^*}{(s_i+\lambda)^2 (s_i+\lambda^*)} \left( \frac{\lambda}{\lambda^*} - 1 \right)^2 \right) \notag \\
	&= \frac{1}{p} \left( \frac{\sigma^2\gamma_n}{s_i+\lambda^*} + \frac{(\sigma^2\gamma_n)^2 s_i}{\tau^2 (s_i+\lambda)^2 (s_i+\lambda^*)} \left( \frac{\lambda}{\lambda^*} - 1 \right)^2 \right). \label{eq:star}
\end{align}
For the second term in the decomposition of $e_i$, we again use 
$\tau^2 = \frac{\sigma^2 \gamma_n}{\lambda^*}$ and obtain
\begin{align}
	(**) 
	&= \left(
	\frac{2\tau^2}{p} \frac{a_i^t s_i \lambda}{(s_i+\lambda)^2}
	- \frac{2\sigma^2\gamma_n}{p} \frac{a_i^t s_i}{(s_i+\lambda)^2}
	\right) \notag\\
	&= \frac{2}{p} \frac{\sigma^2\gamma_n\, s_i}{(s_i+\lambda)^2}
	\biggl( \frac{\lambda}{\lambda^*} - 1 \biggr)a_i^t .
	\label{eq:star2}
\end{align}
For the third term, we use $\tau^2 \lambda^* = \sigma^2 \gamma_n$ and compute
\begin{align}
	(***) 
	&= \frac{\tau^2}{p} \frac{s_i^2 a_i^{2t}}{(s_i+\lambda)^2}
	+ \frac{\sigma^2\gamma_n}{p} \frac{s_i a_i^{2t}}{(s_i+\lambda)^2} \notag\\
	&= \frac{1}{p} \frac{s_i}{(s_i+\lambda)^2}
	\,\tau^2 (s_i + \lambda^*)\, a_i^{2t}.\label{eq:star3}
\end{align}
Hence, using \eqref{eq:star}--\eqref{eq:star3}, we obtain the following expression for $e_i$:
\begin{align}
	e_i 
	&= \frac{1}{p}\biggl(
	\frac{\sigma^2\gamma_n}{s_i+\lambda^*}
	+ \frac{(\sigma^2\gamma_n)^2 s_i}{\tau^2 (s_i+\lambda)^2 (s_i+\lambda^*)}
	\biggl( \frac{\lambda}{\lambda^*} - 1 \biggr)^{\!2}
	\biggr) \notag
	\\ &\quad
	+ \frac{2}{p}\frac{\sigma^2\gamma_n s_i}{(s_i+\lambda)^2}
	\biggl( \frac{\lambda}{\lambda^*} - 1 \biggr) a_i^t
	+ \frac{1}{p}\frac{s_i}{(s_i+\lambda)^2}
	\tau^2 (s_i+\lambda^*) a_i^{2t} \notag
	\\
	&= \frac{1}{p}\frac{\sigma^2\gamma_n}{s_i+\lambda^*}
	\;+\;
	\frac{1}{p} \frac{s_i}{(s_i+\lambda)^2}
	\biggl(
	\frac{\bigl(\frac{\lambda}{\lambda^*}-1\bigr)\sigma^2\gamma_n}
	{\sqrt{\tau^2 (s_i+\lambda^*)}}
	\;+\;
	\sqrt{\tau^2 (s_i+\lambda^*)}\, a_i^t
	\biggr)^{\!2}.
\end{align}
Therfore, for all $\lambda\geq 0$, the eigenvalues $\{e_i\}_{i=1}^p$ of $E_t$ can be written as
\begin{align*}
	\begin{split}
		e_i=e_i(t,\lambda,\eta) 
		= 	\begin{cases}\frac{1}{p}\frac{\sigma^2\gamma_n}{s_i+\lambda^*}
			\;+\;
			\frac{1}{p} \frac{s_i}{(s_i+\lambda)^2}
			\biggl(
			\frac{\bigl(\frac{\lambda}{\lambda^*}-1\bigr)\sigma^2\gamma_n}
			{\sqrt{\tau^2 (s_i+\lambda^*)}}
			\;+\;
			\sqrt{\tau^2 (s_i+\lambda^*)}\, a_i^t
			\biggr)^{\!2},  & s_i > 0,\\[2 mm]
			 \frac{\tau^2}{p}, & s_i = 0.
		\end{cases}
	\end{split}
\end{align*}
This completes the first statement of the lemma.

For the second statement of the lemma, consider for $\lambda \ge 0$ the decomposition
\begin{align*}
	\beta-\hat{\beta}_R(\lambda) 
	&= \bigl(I_p - (X^\top X + n\lambda I_p)^\dagger X^\top X \bigr)\beta
	\;-\; (X^\top X + n\lambda I_p)^\dagger X^\top u \\[1mm]
	&= \bigl(I_p - (\hat{\Sigma}_n + \lambda I_p)^\dagger \hat{\Sigma}_n \bigr)\beta
	\;-\; \frac{1}{n}(\hat{\Sigma}_n + \lambda I_p)^\dagger X^\top u \\[1mm]
	&= \bigl(P + \lambda (\hat{\Sigma}_n + \lambda I_p)^\dagger\bigr)\beta
	\;-\; \frac{1}{n}(\hat{\Sigma}_n + \lambda I_p)^\dagger X^\top u.
\end{align*}
Note that $P + \lambda (\hat{\Sigma}_n + \lambda I_p)^\dagger$ is a symmetric matrix. Using the assumption on the data model in \eqref{set-1} and \eqref{set-2}, the previous display implies that the generalization error of 
$\hat{\beta}_R(\lambda)$ for $\lambda \ge 0$ is given by
\begin{align}\label{Fdec}
	\begin{split}
	\mathrm{R}_\Sigma(\hat{\beta}_R(\lambda)) 
	&= \mathbb{E} \bigl[ \|\hat{\beta}_R(\lambda) - \beta\|_\Sigma^2 \,\big|\, X \bigr] \\[1mm]
	&= \mathbb{E} \Bigl[ 
	\beta^\top \bigl( P + \lambda (\hat{\Sigma}_n + \lambda I_p)^\dagger \bigr) 
	\Sigma 
	\bigl( P + \lambda (\hat{\Sigma}_n + \lambda I_p)^\dagger \bigr) 
	\beta \,\Big|\, X \Bigr] \\[1mm]
	&\quad + \frac{1}{n^2} \mathbb{E} \Bigl[ 
	u^\top X (\hat{\Sigma}_n + \lambda I_p)^\dagger 
	\Sigma (\hat{\Sigma}_n + \lambda I_p)^\dagger X^\top u 
	\,\Big|\, X \Bigr] \\[1mm]
	&= \frac{\tau^2}{p} \, \tr \Bigl( \Sigma \bigl[ P + \lambda (\hat{\Sigma}_n + \lambda I_p)^\dagger \bigr]^2 \Bigr) 
	+ \frac{\sigma^2}{n} \, \tr \Bigl( \Sigma \bigl[ (\hat{\Sigma}_n + \lambda I_p)^\dagger \bigr]^2 \hat{\Sigma}_n \Bigr).
	\end{split}
\end{align}
Using \eqref{Fdec}, we obtain
\begin{align*}
	\mathrm{R}_\Sigma(\hat{\beta}_R(\lambda)) 
	&= \tr \left( \Sigma \Bigl\{\frac{\tau^2}{p} \bigl[ P + \lambda (\hat{\Sigma}_n + \lambda I_p)^\dagger \bigr]^2 
	+ \frac{\sigma^2}{n} \bigl[ (\hat{\Sigma}_n + \lambda I_p)^\dagger \bigr]^2 \hat{\Sigma}_n \Bigr\} \right) \notag \\
	&= \tr(\Sigma F),
\end{align*}
where 
\begin{equation}\label{Fmat}
	F = \frac{\tau^2}{p} \bigl[ P + \lambda (\hat{\Sigma}_n + \lambda I_p)^\dagger \bigr]^2 
	+ \frac{\sigma^2}{n} \bigl[ (\hat{\Sigma}_n + \lambda I_p)^\dagger \bigr]^2 \hat{\Sigma}_n.
\end{equation}
We denote by $\{f_i\}_{i=1}^p$ the eigenvalues of $F$.  
Note that all matrices involved in \eqref{Fmat} are simultaneously diagonalizable.  
Hence, in the basis of common eigenvectors, $F$ is diagonal and its eigenvalues can be expressed as
\begin{align*}
	f_i = f_i(\lambda) =
	\begin{cases}
		 \frac{\tau^2}{p} \frac{\lambda^2 + s_i}{(s_i + \lambda)^2}, & s_i > 0,\\[2 mm]
		\frac{\tau^2}{p}, & s_i = 0,
	\end{cases}
\end{align*}
for $\lambda \ge 0$. Using $\frac{\sigma^2}{n} = \frac{\sigma^2 \gamma_n}{p}$ and the derivation in \eqref{eq:star}, we obtain for $s_i > 0$,
\begin{equation*}
	f_i = (*) = \frac{1}{p} \frac{\sigma^2 \gamma_n}{s_i + \lambda^*} 
	+ \frac{1}{p} \frac{s_i}{(s_i + \lambda)^2} \frac{\bigl(\frac{\lambda}{\lambda^*} - 1\bigr)^2 (\sigma^2 \gamma_n)^2}{\tau^2 (s_i + \lambda^*)}.
\end{equation*}
Therefore, we conclude that for $\lambda \geq 0$ the eigenvalues $\{f_i\}_{i=1}^p$ of $F$ are
\begin{align*}
	f_i = f_i(\lambda) =
	\begin{cases}
		 \frac{1}{p} \frac{\sigma^2 \gamma_n}{s_i + \lambda^*} 
		+ \frac{1}{p} \frac{s_i}{(s_i + \lambda)^2} \frac{\bigl(\frac{\lambda}{\lambda^*} - 1\bigr)^2 (\sigma^2 \gamma_n)^2}{\tau^2 (s_i + \lambda^*)}, & s_i > 0,\\[2 mm]
		 \frac{\tau^2}{p}, & s_i = 0.
	\end{cases}
\end{align*}

\end{proof}

\begin{proof}[Proof of Proposition~\ref{corTh1}]
First, note that by Lemma~\ref{LemforTh1}\ref{dec2} we have 
$\mathrm{R}_{\Sigma}(\hat{\beta}_R(\lambda^*)) = \tr(\Sigma F(\lambda^*))$, 
and the eigenvalues of $F$ are given by 
$\frac{1}{p}\frac{\sigma^2 \gamma_n}{s_i + \lambda^*}$ for 
$i \in \{1,\dots,p\}$. 
Using Lemma~\ref{LemforTh1}\ref{dec1} and the decomposition in \eqref{decA1} yields
\begin{align*}
	\mathrm{R}_\Sigma(\hat{\beta}_t(\lambda,\eta))
	&= \mathrm{R}_\Sigma(\tilde{\beta}_t(\lambda,\eta))	+ \mathbb{E}\!\left(\theta^\top A^t \Sigma A^t \theta \,\middle|\, X\right) \\
	&= \tr(\Sigma E_t)
	+ \mathbb{E}\!\left(\theta^\top A^t \Sigma A^t \theta \,\middle|\, X\right).
\end{align*}
Recall that for all $t \in \mathbb{N}$, $\lambda \ge 0$ and $\eta > 0$ the eigenvalues of $E_t$ are given by
\begin{equation*}
	e_i(t,\lambda,\eta)
	= \begin{cases}
		\frac{1}{p}\frac{\sigma^2\gamma_n}{s_i + \lambda^*}
		+
		\frac{1}{p} \frac{s_i}{(s_i + \lambda)^2} 
		\biggl(
		\frac{
			\bigl(\frac{\lambda}{\lambda^*}-1\bigr)\sigma^2 \gamma_n
		}{
			\sqrt{\tau^2 (s_i + \lambda^*)}
		}
		+
		\sqrt{\tau^2 (s_i + \lambda^*)}\, a_i^t
		\biggr)^{\!2},
		& s_i > 0,
		\\[2 mm]
		 \frac{\tau^2}{p},
		& s_i = 0.
	\end{cases}
\end{equation*}
	In the case $s_i > 0$, the first term of $e_i(t,\lambda,\eta)$ is also given by $\frac{1}{p}\frac{\sigma^2\gamma_n}{s_i+\lambda^*}$ and the second term is clearly non-negative, because of the square. Hence, for all  $t \in \mathbb{N}$, $\lambda \ge 0$ and $\eta > 0$, we have 
	\begin{align}\label{e_igeqf_i}
		e_i(\lambda,t,\eta)\geq f_i(\lambda^*)\quad i\in\{1,\dots,p\}.
	\end{align} 
In the case $s_i = 0$, the inequality in \eqref{e_igeqf_i} is trivially satisfied.
 Furthermore, note that for all  $t \in \mathbb{N}$, $\lambda \ge 0$ and $\eta > 0$, we have
	$\theta^\top A^t \Sigma A^t \theta \;\ge\; 0$,
	since $\Sigma$ is positive semidefinite.
	 Since both matrices $E_t(\lambda,\eta)$ and $F(\lambda^*)$ depend only on $I_p$ and $\hat{\Sigma}_n$, we may write $E_t(\lambda,\eta)=U\Lambda_{t,\lambda,\eta} U^\top$ and $F=U\Lambda^* U^\top$, where $U$ is the orthogonal matrix whose columns are the normalized eigenvectors of $\hat{\Sigma}_n$ and $\Lambda_{t,\lambda,\eta}$ and $\Lambda^*$ are diagonal matrices containing the corresponding eigenvalues $\{e_i(t,\lambda,\eta)\}_{i=1}^p$ and $\{f_i(\lambda^*)\}_{i=1}^p$. Therefore, using \eqref{e_igeqf_i} and the fact that $U^\top \Sigma U$ is positive semidefinite, we obtain, for all  $t \in \mathbb{N}$, $\lambda \ge 0$ and $\eta > 0$,
	 \begin{align}\label{diffGDoptR}
	 	\begin{split}
	 		\mathrm{R}_\Sigma(\hat{\beta}_t(\lambda,\eta))
	 		- \mathrm{R}_\Sigma(\hat{\beta}_R(\lambda^*))
	 		&= \tr\bigl(\Sigma E_t(\lambda,\eta)\bigr) - \tr\bigl(\Sigma F(\lambda^*)\bigr) \\
	 		&= \tr\bigl[U^\top \Sigma U \, (\Lambda_{t,\lambda,\eta} - \Lambda^*)\bigr]
	 		\;\ge\; 0.
	 	\end{split}
	 \end{align}
	This concludes the first statement.
	
For the second statement, it suffices to show that, for all $\eta > 0$ and $t \in \mathbb{N}$,
\begin{align}
	\mathrm{R}_\Sigma(\hat{\beta}_t(0,\eta)) - \mathrm{R}_\Sigma(\hat{\beta}_R(\lambda^*)) 
	&= \operatorname{tr}\bigl(\Sigma E_t\bigr) - \operatorname{tr}\bigl(\Sigma F(\lambda^*)\bigr) \notag \\
	&= \operatorname{tr} \bigl[ U^\top \Sigma U \, (\Lambda_{t,0,\eta} - \Lambda^*) \bigr] \notag \\
	&= \sum_{l=1}^p (U^\top \Sigma U)_{ll} \, \bigl[ e_l(t,0, \eta) - f_l(\lambda^*) \bigr] \;>\; 0,
\end{align}
under the assumption that $\hat{\Sigma}_n$ has at least two positive and distinct eigenvalues, $s_j, s_k > 0$ with $s_j \neq s_k$, and that the corresponding diagonal entries $(U^\top \Sigma U)_{jj}$ and $(U^\top \Sigma U)_{kk}$ are strictly positive.
We prove this by contradiction. Suppose that
\begin{equation}\label{contra}
	\mathrm{R}_\Sigma(\hat{\beta}_t(0,\eta)) - \mathrm{R}_\Sigma(\hat{\beta}_R(\lambda^*)) 
	= \sum_{l=1}^p (U^\top \Sigma U)_{ll} \, \bigl[ e_l(t,0, \eta) - f_l(\lambda^*) \bigr] = 0.
\end{equation}
Note that $(U^\top \Sigma U)_{ll} \ge 0$ for all $l \in \{1,\dots,p\}$, and that $(U^\top \Sigma U)_{jj}$ and $(U^\top \Sigma U)_{kk}$ are strictly positive by assumption. Recalling \eqref{e_igeqf_i}, in order for \eqref{contra} to hold, it is necessary that
\[
e_j(t,0,\eta) - f_j(\lambda^*) = 0 
\quad \text{and} \quad 
e_k(t,0,\eta) - f_k(\lambda^*) = 0.
\]
For $s_j > 0$, we have
\begin{align*}
	e_j(t,0,\eta) - f_j(\lambda^*) 
	&= \frac{1}{p} \frac{1}{s_j} \biggl(
	- \frac{\sigma^2 \gamma_n}{\sqrt{\tau^2 (s_j + \lambda^*)}}
	+ \sqrt{\tau^2 (s_j + \lambda^*)}\, a_j^t
	\biggr)^2,
\end{align*}
where $a_j = 1 - \eta s_j$. The term inside the square is zero if and only if
\begin{align*}
	\sqrt{\tau^2 (s_j + \lambda^*)} \, a_j^t = \frac{\sigma^2 \gamma_n}{\sqrt{\tau^2 (s_j + \lambda^*)}}
	\quad \Longleftrightarrow \quad
	a_j^t = \frac{\lambda^*}{s_j + \lambda^*} 
	\quad \Longleftrightarrow \quad
	\eta = \frac{1}{s_j} \left( 1 - \biggl( \frac{\lambda^*}{s_j + \lambda^*} \biggr)^{1/t} \right).
\end{align*}
Similarly, for $e_k(t,0,\eta) - f_k(\lambda^*) = 0$, we would require
\[
\eta = \frac{1}{s_k} \left( 1 - \biggl( \frac{\lambda^*}{s_k + \lambda^*} \biggr)^{1/t} \right).
\]
Since $s_j \neq s_k$, the two conditions on $\eta$ cannot hold simultaneously, leading to a contradiction and thereby establishing the claim.

\end{proof}

 \begin{proof}[Proof of Theorem~\ref{Th1}]
We note that under the assumption on $\theta$, the decomposition of RGD given in \eqref{decA1} still remains valid, that is,
\begin{equation*}
	\mathrm{R}_{\Sigma}(\hat{\beta}_t(\lambda,\eta))
	= \mathrm{R}_\Sigma(\tilde{\beta}_t(\lambda,\eta)) + \mathbb{E}(\theta^\top A^t \Sigma A^t \theta).
\end{equation*}  
Note that, $\mathrm{R}_{\Sigma}(\hat{\beta}_t(\lambda,\eta))$ is monotonically decreasing in $t$ for all $\lambda \ge \lambda^* = \frac{\sigma^2}{\tau^2} \gamma_n$ and corresponding step-sizes $\eta \in (0, 1/(s_1+\lambda)]$, provided that both $\mathrm{R}(\tilde{\beta}_t(\lambda,\eta))$ and $\mathbb{E}(\theta^\top A^t \Sigma A^t \theta)$ are monotonically decreasing in $t$.

For $\mathbb{E}(\theta^\top A^t \Sigma A^t \theta)$, we prove the stronger statement that, under the assumption on $\theta$ stated in the theorem,  $\mathbb{E}(\theta^\top A^t \Sigma A^t \theta)$ is monotonically decreasing in $t$ for all $\lambda \ge 0$ and corresponding step sizes $\eta \in (0, 2/(s_1+\lambda))$, where $s_1$ denotes the largest eigenvalue of $\hat{\Sigma}_n$. We have,
	\begin{align*}
		\mathbb{E}\!\left[
		\theta^\top\!\left(
		A^{t}\Sigma A^{t} - A^{t+1}\Sigma A^{t+1}
		\right)\theta
		\,\middle|\, X
		\right]
		&= \rho\, \mathrm{tr}\!\left(
		A^{t}\Sigma A^{t} - A^{t+1}\Sigma A^{t+1}
		\right) \\
		&= \rho\bigl(
		\mathrm{tr}(\Sigma A^{2t})
		- \mathrm{tr}(\Sigma A^{2(t+1)})
		\bigr).
	\end{align*}
Recall that $A=I_p-\eta(\hat{\Sigma}_n+\lambda I_p)$ is symmetric and the matrices involved in its definition are simultaneously diagonalizable. Therefore, we can write $A = U \Lambda_1 U^\top$,
where $U$ is the orthogonal matrix whose columns are the normalized eigenvectors of $\hat{\Sigma}_n$ and $\Lambda_1$ is a diagonal matrix containing the corresponding eigenvalues
$a_j = a_j(\lambda,\eta) = 1 - \eta (s_j + \lambda)$ for $j \in \{ 1,\dots,p\}$.
From Remark~\ref{rem:GD}, we have $|a_j| < 1$ whenever $\eta \in (0, 2/(s_1 + \lambda))$. Using these considerations together with the previous display, we obtain
\begin{align*}
	\mathbb{E}\!\left[
	\theta^\top \!\left( A^t \Sigma A^t - A^{t+1} \Sigma A^{t+1} \right) \theta
	\,\middle|\, X
	\right]
	&= \rho \bigl[ \mathrm{tr}\bigl\{\Sigma\, U(\Lambda_1^{2t} -  \Lambda^{2(t+1)})\, U^\top\bigr\} \bigr] \\
	&= \rho \, \mathrm{tr} \Big[ U^\top\,\Sigma \, U \big( \Lambda_1^{2t} - \Lambda_1^{2(t+1)} \big)  \Big] \ge 0,
\end{align*}
where the inequality follows because each $(a_j)^{2t} - (a_j)^{2(t+1)} \ge 0$ for $j\in \{1,\dots,p\}$ and $U^\top \Sigma U$ is positive semidefinite, which ensures that its diagonal entries are nonnegative.

It remains to show that $\mathrm{R}(\tilde{\beta}_t(\lambda,\eta))$ is monotonically decreasing in $t$ for all $\lambda\geq 0$ and corresponding step-sizes $\eta\in (0,2/(s_1+\lambda))$. Using Lemma~\ref{LemforTh1}\ref{dec1}, we have for all $t\in \mathbb{N}$, $\lambda\geq 0$ and $\eta>0$ that $\mathrm{R}(\tilde{\beta}_t(\lambda,\eta))=\tr(\Sigma E_t)$, where
\begin{equation*}
	E_t=E_t(\lambda,\eta)= \frac{\tau^2}{p} \Bigl[ P + (\hat{\Sigma}_n + \lambda I_p)^\dagger (\lambda I_p + A^t \hat{\Sigma}_n)^2 \Bigr]
	+ \frac{\sigma^2}{n} \Bigl[ (\hat{\Sigma}_n + \lambda I_p)^\dagger \Bigr]^2 (I_p - A^t)^2 \hat{\Sigma}_n
\end{equation*}
and $P=P(\lambda)=I_p-(\hat{\Sigma}_n+\lambda I_p)^{\dagger}(\hat{\Sigma}_n+\lambda I_p)$. Since, for any fixed $t \in \mathbb{N}$, the matrices $E_t$ and $E_{t+1}$ depend only on $I_p$ and $\hat{\Sigma}_n$, we may write
$E_t=U\Lambda_tU^\top$ and $E_{t+1}=U\Lambda_{t+1}U^\top$, where $\Lambda_t$ and $\Lambda_{t+1}$ are diagonal matrices containing the corresponding eigenvalues $\{e_i(t,\lambda,\eta)\}_{i=1}^p$ and  $\{e_i(t+1,\lambda,\eta)\}_{i=1}^p$, respectively. We now show that each $e_i(t,\lambda,\eta)$, for $i\in\{ 1, \dots, p\}$, is monotonically decreasing in $t$ for all $\lambda \ge \lambda^* = \frac{\sigma^2}{\tau^2} \gamma_n$ and $\eta \in (0, 1/(s_1 + \lambda)]$. First, note that $(\lambda / \lambda^* - 1) \ge 0$ for all $\lambda \ge \lambda^*$. Second, since $a_i=a_i(\lambda,\eta) \in (0,1]$ for all $\lambda \ge 0$ and corresponding $\eta \in (0, 1/(s_1 + \lambda)]$, it follows that $a_i^t$ is monotonically decreasing in $t$. We recall that, for $i\in \{1,\dots,p\}$, the eigenvalues of $E_t$ are of the form 
\begin{equation*}
	e_i(t,\lambda,\eta) =
	\begin{cases}
		 \frac{1}{p} \frac{\sigma^2 \gamma_n}{s_i + \lambda^*} 
		+ \frac{1}{p} \frac{s_i}{(s_i + \lambda)^2} 
		\Biggl(
		\frac{\bigl(\frac{\lambda}{\lambda^*} - 1\bigr) \sigma^2 \gamma_n}{\sqrt{\tau^2 (s_i + \lambda^*)}}
	\;+\; \sqrt{\tau^2 (s_i + \lambda^*)}\, a_i^t
	\Biggr)^2, & s_i > 0, \\[2 mm]
	 \frac{\tau^2}{p}, & s_i = 0,
\end{cases}
\end{equation*}
where, in the case $s_i > 0$, the expression inside the square consists of two nonnegative terms whenever $\lambda \ge \lambda^*$ and $\eta \in (0, 1/(s_1+\lambda)]$. Moreover, one of these terms is monotonically decreasing in $t$, while the other does not depend on $t$, implying that the entire squared expression is also monotonically decreasing in $t$.

Consequently, we conclude that 
\begin{equation*}
e_i(t,\lambda,\eta) \ge e_i(t+1,\lambda,\eta),
\end{equation*}
for $\lambda \ge \lambda^*$ and $\eta \in (0, 1/(s_1 + \lambda)]$.
Since $U^\top \Sigma U$ is positive semidefinite, it follows that \[
\mathrm{R}_\Sigma(\tilde{\beta}_t(\lambda,\eta))-\mathrm{R}_\Sigma(\tilde{\beta}_{t+1}(\lambda,\eta)) = \mathrm{tr}(\Sigma E_t)-\mathrm{tr}(\Sigma E_{t+1})\geq 0,\] whenever $\lambda \ge \lambda^*$ and $\eta \in (0, 1/(s_1 + \lambda)]$. This completes the first statement.

Comparing the decompositions of the eigenvalues of $E_t$ and $F$ in
Lemma~\ref{LemforTh1}\ref{dec1} and Lemma~\ref{LemforTh1}\ref{dec2}, we observe that, for each $i\in \{1,\dots,p\}$,
$\lim_{t\to\infty} e_i(t,\lambda,\eta) = f_i(\lambda)$ for all $\lambda \ge 0$,
provided that $\eta \in (0, 2/(s_1+\lambda))$.  
Moreover,
\begin{align*}
	\lim_{t\to\infty} \mathbb{E}\!\left(\theta^\top A^{t}\Sigma A^{t}\theta \,\middle|\, X\right)
	= \rho \lim_{t\to\infty} \tr(\Sigma A^{2t})
	= 0,
\end{align*}
	since $\lim_{t\to\infty}A^t$ is equal to the zero matrix whenever $\eta \in (0,2/(s_{1}+\lambda))$. 
	Combining the last two arguments yields the second statement.
\end{proof}

\section{Proofs of Section~\ref{sec:tuning}}\label{App:B}

\begin{proof}[Proof of Lemma~\ref{denomnonzero}]
Denote by $s_1\geq s_2,\geq\dots\geq s_p\geq 0$ the oredered eigenvalues of $\hat{\Sigma}_n$ and recall that $n,p\geq 2$. For $n \ge p$, we have
\begin{equation*}
\gamma_n \hat m_1^{\,2}
= \gamma_n \left( \frac{1}{p} \sum_{i=1}^{p} s_i  \right)^{2}
\le \frac{1}{p} \sum_{i=1}^{p} s_i^2
= \hat m_2,
\end{equation*}
where the inequality follows from Jensen’s inequality and the fact that $\gamma_n \le 1$.
Equality can only hold when all eigenvalues of $\hat\Sigma_n$ coincide. As we show next, this event has probability zero.
Indeed, \citet[Theorem~1]{Okamoto1973} states that under assumption \ref{A2} and \ref{A3} with probability one, the non-zero eigenvalues of $\underline{\hat{\Sigma}}_n= n^{-1} Z\Sigma Z^\top=n^{-1}X X^\top$ are all distinct and the rank of $\underline{\hat{\Sigma}}_n$ is given by $n\wedge r\geq 1$, where $r$ is the rank of $\Sigma_n$. Because $\hat{\Sigma}_n$ and $\underline{\hat{\Sigma}}_n$ possess the same nonzero eigenvalues, the result carries over directly to $\hat{\Sigma}_n$. Hence, $\hat{\Sigma}_n$ has at least one nonzero eigenvalue and therefore the inequality in the preceding display is strict.

In the case $p > n$, we have
\begin{align*}
	\hat{m}_1^2  
	&= \left( \frac{1}{p} \sum_{i=1}^{p} s_i\right)^2 = \left( \frac{1}{\gamma_n} \frac{1}{n} \sum_{i=1}^{n} s_i \right)^2<  \left( \frac{1}{\gamma_n} \right)^2 \left( \frac{1}{n} \sum_{i=1}^{n} s_i^2 \right)= \frac{1}{\gamma_n} \left( \frac{1}{p} \sum_{i=1}^{p} s_i^2 \right)
	= \frac{1}{\gamma_n} \hat{m}_2,
\end{align*}
almost surely, where we have used the fact that $\hat{\Sigma}_n$ and $\underline{\hat{\Sigma}}_n$ share the same nonzero eigenvalues. The strict inequality follows from Jensen's inequality together with \citet[Theorem~1]{Okamoto1973}.

\end{proof}

\begin{proof}[Proof of Lemma~\ref{momqf1} and Lemma~\ref{momqf2}]
Recall that
\begin{equation*}
	\hat{\Sigma}_n = \frac{1}{n} \sum_{i=1}^{n} x_i x_i^\top
	= \frac{1}{n} \sum_{i=1}^{n} \Sigma_n^{1/2} z_i z_i^\top \Sigma_n^{1/2},
\end{equation*}
where $\{x_i\}_{i=1}^n$ are the rows of $X$ and $\{z_i\}_{i=1}^n$ are the rows of $Z$. 
For the first statements in Lemma~\ref{momqf1} and Lemma~\ref{momqf2}, we write
\begin{align}	\label{1stmomdectr}
\frac{1}{p^{1/2}} \tr(\hat{\Sigma}_n) - \frac{1}{p^{1/2}} \tr(\Sigma_n)
	&= \frac{1}{n} \sum_{i=1}^{n} \frac{1}{p^{1/2}} \Big( 
	\tr(\Sigma_n^{1/2} z_i z_i^\top \Sigma_n^{1/2}) - \tr(\Sigma_n) 
	\Big)\notag \\
	&= \frac{1}{n} \sum_{i=1}^{n} \frac{1}{p^{1/2}} \big( z_i^\top \Sigma_n z_i - \tr(\Sigma_n) \big),
\end{align}
and
\begin{align}\label{1stmomdecquad}
	\tilde{\beta}^\top \hat{\Sigma}_n \tilde{\beta} - \tilde{\beta}^\top \Sigma_n \tilde{\beta}\notag
	&= \frac{1}{n} \sum_{i=1}^{n} \Big( 
	\tilde{\beta}^\top \Sigma_n^{1/2} z_i z_i^\top \Sigma_n^{1/2} \tilde{\beta} 
	- \tilde{\beta}^\top \Sigma_n \tilde{\beta} 
	\Big)\notag \\
	&= \frac{1}{n} \sum_{i=1}^{n} \Big( 
	z_i^\top \Sigma_n^{1/2} \tilde{\beta} \tilde{\beta}^\top \Sigma_n^{1/2} z_i
	- \tr(\Sigma_n^{1/2} \tilde{\beta} \tilde{\beta}^\top \Sigma_n^{1/2}) 
	\Big).
\end{align}
Comparing the expressions in \eqref{1stmomdectr} and \eqref{1stmomdecquad}, we see that they differ only in the matrices $p^{-1/2}\Sigma_n$ and $\Sigma_n^{1/2} \tilde{\beta}\tilde{\beta}^\top \Sigma_n^{1/2}$. To handle both cases simultaneously, we introduce a symmetric positive semidefinite $p\times p$ matrix $A_n$ satisfying $\mathrm{tr}(A_n^2) \le C^2$. Observe that the collection of random variables
\[
\bigl\{ z_i^\top A_n z_i - \mathrm{tr}(A_n) \bigr\}_{i=1}^n
\]
is independent and has mean zero.

For any $M > 0$, the Markov inequality together with the preceding considerations gives
\begin{align*}
	\mathbb{P} \Bigg( \Bigl| \frac{1}{n} \sum_{i=1}^n \big( z_i^\top A_n z_i - \tr(A_n) \big) \Bigr| > M \Bigg)
	&\le \frac{1}{(n\, M)^2} \sum_{i=1}^n \mathbb{E} \Big[ \big( z_i^\top A_n z_i - \tr(A_n) \big)^2 \Big].
\end{align*}
Applying Lemma~\ref{quadform} to the expectation in the preceding display yields
\begin{align*}
	\mathbb{P} \Bigg( \Bigl| \frac{1}{n} \sum_{i=1}^n (z_i^\top A_n z_i - \tr(A_n)) \Bigr| > M \Bigg)
	&\le \frac{1}{n M^2} \, \nu_4 C_2 \tr(A_n^2) 
	\le \frac{1}{n M^2} \, \nu_4 C_2 C^2,
\end{align*}
and therefore
\begin{equation*}
\frac{1}{n} \sum_{i=1}^n (z_i^\top A_n z_i - \tr(A_n)) = O_{\mathbb{P}}\left(\frac{1}{n^{1/2}}\right).
\end{equation*}
Revisiting \eqref{1stmomdectr} and \eqref{1stmomdecquad}, this completes the first statements of Lemma~\ref{momqf1} and Lemma~\ref{momqf2}.

For the second statements of Lemma~\ref{momqf1} and Lemma~\ref{momqf2}, consider the decompositions
\begin{align}\label{sectrdec}
	\frac{1}{p} \mathrm{tr}(\hat{\Sigma}_n^2) 
	- \Biggl(\frac{1}{p}\mathrm{tr}(\Sigma_n^2) + \gamma_n \Bigl(\frac{1}{p}\mathrm{tr}(\Sigma_n)\Bigr)^2 \Biggr)
	&= \frac{1}{pn^2} \mathrm{tr} \Biggl( \Bigl( \sum_{i=1}^n \Sigma_n^{1/2} z_i z_i^\top \Sigma_n^{1/2} \Bigr)^2 \Biggr) 
	- \Biggl(\frac{1}{p}\mathrm{tr}(\Sigma_n^2) + \gamma_n \Bigl(\frac{1}{p}\mathrm{tr}(\Sigma_n)\Bigr)^2 \Biggr) \notag \\
	&= \frac{1}{n^2} \sum_{i=1}^n \sum_{i_1=1}^n \frac{1}{p} (z_i^\top \Sigma_n z_{i_1})^2 
	- \Biggl(\frac{1}{p}\mathrm{tr}(\Sigma_n^2) + \gamma_n \Bigl(\frac{1}{p}\mathrm{tr}(\Sigma_n)\Bigr)^2 \Biggr),
\end{align}
and
\begin{align}
	\tilde{\beta}^\top \hat{\Sigma}_n^2 \tilde{\beta} 
	- \Bigl(\tilde{\beta}^\top \Sigma_n^2 \tilde{\beta} + \frac{1}{n} \mathrm{tr}(\Sigma_n) \, \tilde{\beta}^\top \Sigma_n \tilde{\beta} \Bigr)
	&= \tilde{\beta}^\top \Biggl( \frac{1}{n} \sum_{i=1}^n \Sigma_n^{1/2} z_i z_i^\top \Sigma_n^{1/2} \Biggr)^2 \tilde{\beta} 
	- \Bigl(\tilde{\beta}^\top \Sigma_n^2 \tilde{\beta} + \frac{1}{n} \mathrm{tr}(\Sigma_n) \, \tilde{\beta}^\top \Sigma_n \tilde{\beta} \Bigr) \notag \\
	&= \frac{1}{n^2} \sum_{i=1}^n \sum_{i_1=1}^n 
	z_{i_1}^\top (\Sigma_n^{1/2} \tilde{\beta} \tilde{\beta}^\top \Sigma_n^{1/2}) z_i \; z_i^\top \Sigma_n z_{i_1} \label{secquaddec1}\\
	&\quad
	- \Bigl(\tr(\Sigma_n^{1/2}\tilde{\beta}\tilde{\beta}^\top\Sigma_n^{1/2}\Sigma_n) + \frac{1}{n} \mathrm{tr}(\Sigma_n) \, \mathrm{tr}(\Sigma_n^{1/2}\tilde{\beta}\tilde{\beta}^\top \Sigma_n^{1/2}  \Bigr).\label{secquaddec2}
\end{align}
Comparing the expressions in \eqref{sectrdec} and \eqref{secquaddec1}-\eqref{secquaddec2}, we observe that they differ only in the matrices $\Sigma_n^{1/2} \tilde{\beta} \tilde{\beta}^\top \Sigma_n^{1/2}$ and $p^{-1} \Sigma_n$. To handle both cases simultaneously, we introduce a symmetric positive semidefinite $p \times p$ matrix $B_n$ satisfying 
\(\mathrm{tr}(B_n) \le C\) and \(\mathrm{tr}(B_n^2) \le C^2\).
Then,
\begin{align*}
	\frac{1}{n^2} \sum_{i=1}^n \sum_{i_1=1}^n z_{i_1}^\top B_n z_i \, z_i^\top \Sigma_n z_{i_1}
	&- \Big( \tr(B_n \Sigma_n) + \frac{1}{n} \tr(B_n) \tr(\Sigma_n) \Big)\\
	&= \frac{1}{n} \sum_{i=1}^n \frac{1}{n} \Big( z_i^\top B_n z_i \, z_i^\top \Sigma_n z_i - \tr(B_n) \tr(\Sigma_n) \Big) \; \mathbf{(I)} \notag \\
	&\quad + \frac{1}{n} \sum_{i=1}^n \sum_{\substack{i_1=1 \\ i_1 \neq i}}^n \frac{1}{n} 
	\Big( z_{i_1}^\top B_n z_i \, z_i^\top \Sigma_n z_{i_1} - \tr(B_n \Sigma_n) \Big) \; \mathbf{(II)} \\
	&\quad - \frac{1}{n} \tr(B_n \Sigma_n) \; \mathbf{(III)}
\end{align*}
Since $|\mathbf{(III)}| = n^{-1} \tr(B_n \Sigma_n) = n^{-1} C \, \tr(B_n) \le n^{-1} C^2$ by Assumption~\ref{A2}, it remains to control the expected values of $(\mathbf{I})$ and $(\mathbf{II})$ in order to derive the second statements of the lemmas.

Consider the following decomposition of $\mathbf{(I)}$:
\begin{align}\label{(I)}
	\begin{split}
		\mathbf{(I)} 
		&= \frac{1}{n} \sum_{i=1}^n \frac{1}{n} \Big( z_i^\top B_n z_i \, z_i^\top \Sigma_n z_i - \tr(B_n) \tr(\Sigma_n) \Big) \\
		&= \frac{1}{n^2} \sum_{i=1}^n z_i^\top B_n z_i \, \Big( z_i^\top \Sigma_n z_i - \tr(\Sigma_n) \Big) + \frac{1}{n^2} \sum_{i=1}^n \tr(\Sigma_n) \, \Big( z_i^\top B_n z_i - \tr(B_n) \Big).
	\end{split}
\end{align}
To bound the expectation of the first term in the second line of~\eqref{(I)}, we invoke the triangle inequality and the Cauchy--Schwarz inequality, which gives
\begin{align}\label{I_i}
	\mathbb{E} \Biggl( \Bigl| \frac{1}{n^2} \sum_{i=1}^n z_i^\top B_n z_i \, \Big( z_i^\top \Sigma_n z_i - \tr(\Sigma_n) \Big) \Bigr|\Biggr)
	\le \frac{1}{n^2} \sum_{i=1}^n \sqrt{ \mathbb{E} \big[ (z_i^\top B_n z_i)^2 \big] \, 
		\mathbb{E} \big[ (z_i^\top \Sigma_n z_i - \tr(\Sigma_n))^2 \big] }.
\end{align}
Using the inequality $(a-b)^2 \le 2(a^2 + b^2)$ for $a,b \in \mathbb{R}$ and Lemma~\ref{quadform}, we obtain
\begin{align}\label{quad:exp}
	\begin{split}
	\mathbb{E}\!\left[(z_i^\top B_n z_i)^2\right]
	&= \mathbb{E}\!\left[(z_i^\top B_n z_i - \tr(B_n) + \tr(B_n))^2 \right] \\
	&\le 2\,\mathbb{E}\!\left[(z_i^\top B_n z_i - \tr(B_n))^2\right] + 2\,\tr(B_n)^2 \\
	&\le 4 C_2 \nu_4\, \tr(B_n^2) + 2\,\tr(B_n)^2 \\
	&\le 4 C_2 \nu_4 C^2 + 2 C^2 
	\end{split}
\end{align}
and
\begin{align*}
	\mathbb{E}\!\left[(z_i^\top \Sigma_n z_i - \tr(\Sigma_n))^2\right]
	\le 2 C_2 \nu_4\, \tr(\Sigma_n^2)
	\le p \,(2 C_2 \nu_4 C^2).
\end{align*}
Therefore, the bound in \eqref{I_i} reduces to
\begin{align}\label{I_i_2}
	\begin{split}
		\mathbb{E}\Biggl(\Bigl|
		\frac{1}{n^2}\sum_{i=1}^n 
		z_i^\top B_n z_i \, \big(z_i^\top \Sigma_n z_i - \tr(\Sigma_n)\big)
		\Bigr|\Biggr)
		&\le \frac{1}{n^2}\sum_{i=1}^n 
		\sqrt{\left(4 C_2\nu_4 C^2 + 2C^2\right)\big(p\,(2 C_2\nu_4 C^2)\big)} \\
		&\le K_1 \frac{\sqrt{p}}{n},
	\end{split}
\end{align}
for some constant $K_1>0$.

Similarly, the expectation of the second term in the second line of~\eqref{(I)} can be bounded using the Jensen inequality, recalling that 
$\{z_i^\top B_n z_i - \tr(B_n)\}_{i=1}^n$ are independent, mean-zero random variables and invoking Lemma~\ref{quadform}. We then obtain
\begin{align}\label{I_ii}
	\begin{split}
		\frac{\tr(\Sigma_{n})}{n}
		\mathbb{E}\Biggl(\Bigl|\frac{1}{n}
		\sum_{i=1}^{n}
		\bigl( z_{i}^{\top} B_{n} z_{i} - \tr(B_{n}) \bigr)
		\Bigr|\Biggr)
		&\le 
		\frac{\tr(\Sigma_{n})}{n}
		\Bigl[ \mathbb{E}\Bigl(\bigl|\frac{1}{n}\sum_{i=1}^{n}
		z_{i}^{\top} B_{n} z_{i} - \tr(B_{n}) \bigr|^{2}\Bigr) \Bigr]^{1/2} \\
		&\le 
		\frac{\tr(\Sigma_{n})}{n}
		\Bigl[ \frac{1}{n^{2}}
		\sum_{i=1}^{n}
		\mathbb{E}\Bigl(\bigl| z_{i}^{\top} B_{n} z_{i} - \tr(B_{n}) \bigr|^{2}\Bigr)
		\Bigr]^{1/2} \\
		&\le 
		\frac{p}{n}\, C
		\Bigg( \frac{2 C_{2}\nu_{4}\, \tr(B_{n}^{2})}{n} \Bigg)^{1/2} \\
		&\le 
		K_{2}\,\frac{p}{n^{3/2}},
	\end{split}
\end{align}
for some constant $K_{2}>0$. Using the decomposition of $\mathbf{(I)}$ in~\eqref{(I)} together with the bounds established in~\eqref{I_i_2} and~\eqref{I_ii}, it follows that
\begin{equation}\label{O_I}
	\mathbf{(I)}
	= 
	\frac{1}{n}\sum_{i=1}^{n}\frac{1}{n}
	\Big( z_{i}^{\top} B_{n} z_{i}\, z_{i}^{\top} \Sigma_{n} z_{i}
	- \tr(B_{n})\, \tr(\Sigma_{n}) \Big)
	= O_{\mathbb{P}}\!\left( \frac{\sqrt{p}}{n} \,\vee\, \frac{p}{n^{3/2}} \right).
\end{equation}

We are now going to show that
\begin{align}
		\mathbf{(II)}
		&=
		\frac{1}{n} \sum_{i=1}^{n}
		\sum_{\substack{i_{1}=1 \\ i_{1}\neq i}}^{n}
		\frac{1}{n}
		\Bigl(
		z_{i_{1}}^{\top} B_{n} z_{i}\,
		z_{i}^{\top} \Sigma_{n} z_{i_{1}}
		- \tr(B_{n}\Sigma_{n})
		\Bigr)=O_{\mathbb{P}}\left(\frac{1}{n^{1/2}}\lor \frac{\sqrt{p}}{n}\right).
\end{align}
We define
\begin{equation*}
E_{i,i_1}\coloneqq z_{i_1}^{\top} B_{n} z_{i}\; z_{i}^{\top}\Sigma_{n} z_{i_1}
- \tr\bigl(B_{n} \Sigma_{n}\bigr),\quad i\neq i_1.
\end{equation*}
Note that, for each fixed \(i\), the collection \(\{E_{i,i_1}\}_{i_1=1,i_1\neq i}^{n}\)
is conditionally independent given \(z_i\) and has mean zero. 
By the Jensen inequality, we obtain 
\begin{align}\label{qfmom2_1} \mathbb{E}\bigg[\bigg|\frac{1}{n}\sum_{i=1}^n \frac{1}{n}\sum_{\substack{i_1=1\\i_1\neq i}}^n E_{i,i_1}\bigg|\bigg] &\le \mathbb{E}\bigg[\Big(\frac{1}{n}\sum_{i=1}^n \frac{1}{n}\sum_{\substack{i_1=1\\i_1\neq i}}^n E_{i,i_1}\Big)^2\bigg]^{1/2} = \biggl[\frac{1}{n^{4}} \sum_{\substack{i,j=1\\}}^n \sum_{\substack{i_1\neq i\\ j_1\neq j}}^n \mathbb{E}\bigl(E_{i,i_1}E_{j,j_1}\bigr)\biggr]^{1/2}. 
\end{align}
Next, observe that $\mathbb{E}(E_{i,i_1} E_{j,j_1}) = 0$ whenever the index pairs $\{i,i_1\}$ and $\{j,j_1\}$ are disjoint. 
Consequently, only pairs with a nonempty intersection contribute to the quadruple sum in \eqref{qfmom2_1}.
 We distinguish the following cases:
\begin{align}\label{cases1}
\mathbb{E}[E_{i,i_1} E_{j,j_1}] =
\begin{cases}
	0, & \text{if } \{i,i_1\} \cap \{j,j_1\} = \emptyset,\\[1mm]
	\mathbb{E}[E_{i,i_1}^2], & \text{if } \{i,i_1\}=\{j,j_1\}\\[1mm]
	\mathbb{E}[(z_i^\top B_n\Sigma_n z_i-\tr(B_n\Sigma_n))^2], & \text{if}  \{i,i_1\}\cap\{j,j_1\}=\{i\},\\[1mm]
	\mathbb{E}[(z_{i_1}^\top B_n\Sigma_n z_{i_1}-\tr(B_n\Sigma_n))^2], & \text{if} \{i,i_1\}\cap\{j,j_1\}=\{i_1\}, 
\end{cases}
\end{align}
where in the case $i=j_1$ and $i_1=j$, we use that $\mathbb{E}(E_{i,i_1}E_{i_1,i})=\mathbb{E}(E_{i,i_1}E_{i,i_1})=\mathbb{E}(E_{i,i_1}^2)$.
By the tower property, Lemma~\ref{quadform} for the conditional expectation, 
and the Cauchy--Schwarz inequality, we obtain
\begin{align}\label{E_iequal-aos}
	\begin{split}
		\mathbb{E}[E_{i,i_1}^2]
		&= \mathbb{E}\bigl[ \mathbb{E}(E_{i,i_1}^2 \mid z_i) \bigr] \\[1mm]
		&\le \mathbb{E}\Bigl( 2 C_2 \nu_4 \, \tr( B_n z_i z_i^{\top} \Sigma_n^2 z_i z_i^{\top} B_n ) \Bigr) \\[1mm]
		&= 2C_2 \nu_4 \, \mathbb{E}\bigl[ (z_i^{\top} \Sigma_n^2 z_i) (z_i^{\top} B_n^2 z_i) \bigr] \\[1mm]
		&\le 2C_2 \nu_4 \,
		\Bigl[ \mathbb{E} \bigl(| z_i^{\top} \Sigma_n^2 z_i |^2\bigr) \Bigr]^{1/2} \;
		\Bigl[ \mathbb{E} \bigl(| z_i^{\top} B_n^2 z_i |^2\bigr) \Bigr]^{1/2},
	\end{split}
\end{align}
for $i_1 \in \{1,\dots,n\}$ with $i \neq i_1$.
Using the same arguments as in \eqref{quad:exp}, we get
\begin{align}\label{quad:exp2}
	\begin{split}
		\mathbb{E}\!\left[(z_i^\top \Sigma_n z_i)^2\right]
		&= \mathbb{E}\!\left[(z_i^\top \Sigma_n z_i - \tr(\Sigma_n) + \tr(\Sigma_n))^2 \right] \\
		&\le 2\,\mathbb{E}\!\left[(z_i^\top \Sigma_n z_i - \tr(\Sigma_n))^2\right] + 2\,\tr(\Sigma_n)^2 \\
		&\le 4  C_2 \nu_4\, \tr(\Sigma_n^2) + 2\,\tr(\Sigma_n)^2 \\
		&\le 4 p C_2 \nu_4 C^2 + 2 p^2 C^2.
	\end{split}
\end{align}
Combining \eqref{quad:exp}, \eqref{E_iequal-aos} and \eqref{quad:exp2}, and noting that 
$\Sigma_n$ and $B_n$ are symmetric positive semidefinite matrices satisfying
$\|\Sigma_n\|_2 \le C$ and $\|B_n\|_2 \le C$, it follows that
\begin{align}\label{K3}
	\begin{split}
		\mathbb{E}[E_{i,i_1}^2]
		&\le C_2 \nu_4 \,
		\bigl[ \mathbb{E}\bigl(|z_i^{\top} \Sigma_n^2 z_i|^2\bigr) \bigr]^{1/2} \,
		\bigl[ \mathbb{E}\bigl(|z_i^{\top} B_n^2 z_i|^2\bigr) \bigr]^{1/2} \\[1mm]
		&\le  C^2 C_2 \nu_4 \,
		\bigl[ \mathbb{E}\bigl(|z_i^{\top} \Sigma_n z_i|^2\bigr) \bigr]^{1/2} \,
		\bigl[ \mathbb{E}\bigl(|z_i^{\top} B_n z_i|^2\bigr) \bigr]^{1/2} \\[1mm]
		&\le C^2 C_2 \nu_4 \,
		\bigl( 4 p C_2 \nu_4 C^2 + 2 p^2 C^2 \bigr)^{1/2} 
		\bigl( 4 C_2 \nu_4 C^2 + 2 C^2 \bigr)^{1/2} \\[1mm]
		&\le p\, K_3,
	\end{split}
\end{align}
for some constant $K_3>0$.
For the cases $\{i,i_1\}\cap\{j,j_1\}=\{i_1\}$ and $\{i,i_1\}\cap\{j,j_1\}=\{i\}$, observe that for all $i\in \{1,\dots,n\}$, we have  by Lemma~\ref{quadform} 
\begin{align}\label{cases:45}
	\mathbb{E}[(z_{i}^\top B_n\Sigma_n z_{i}-\tr(B_n\Sigma_n))^2]\leq 2 C_2 \nu_4 \tr(B_n^2\Sigma_n^2)\leq 2 C_2 \nu_4 C^4\eqqcolon K_4.
\end{align}	
With these bounds in hand we return to \eqref{qfmom2_1}.
Combining \eqref{qfmom2_1} and \eqref{cases1}, together with the upper bounds in
\eqref{K3} and \eqref{cases:45}, we obtain
\begin{align}\label{K4}
	\begin{split}
	\mathbb{E}\bigg(\bigg|\frac{1}{n}\sum_{i=1}^n \frac{1}{n}\sum_{\substack{i_1=1\\i_1\neq i}}^n E_{i,i_1}\bigg|\bigg)&\leq \biggl[\frac{1}{n^{4}}
	\sum_{\substack{i,j=1\\}}
	\sum_{\substack{i_1\neq i\\ j_1\neq j}}
	\mathbb{E}\bigl(E_{i,i_1}E_{j,j_1}\bigr)\biggr]^{1/2}\\
	&\leq \biggl[2 K_3 \frac{p}{n^2}+ 4 K_4\frac{1}{n}\biggr]^{1/2}.
	\end{split}
\end{align} 
Therefore, we conclude that
\begin{align}\label{O_II-aos}
	\mathbf{(II)}
	=
	\frac{1}{n}\sum_{i=1}^{n}
	\sum_{\substack{i_{1}=1\\ i_{1}\neq i}}^{n}
	\frac{1}{n}
	\bigl(
	z_{i_{1}}^{\top}B_{n}z_{i}\,
	z_{i}^{\top}\Sigma_{n}z_{i_{1}}
	- \tr(B_{n}\Sigma_{n})
	\bigr)
	=
	O_{\mathbb{P}}\!\left(\frac{\sqrt{p}}{n}\,\vee\, \frac{1}{n^{1/2}}\right).
\end{align}
Combining \eqref{O_I} with \eqref{O_II-aos}, we find
\begin{align*}\label{O_sum-aos}
	\mathbf{(I)}+\mathbf{(II)}
	=
	O_{\mathbb{P}}\!\left(
	\frac{\sqrt{p}}{n}
	\;\vee\;
	\frac{1}{n^{1/2}}
	\;\vee\;
	\frac{p}{n^{3/2}}
	\right)
	=
	O_{\mathbb{P}}\!\left(
	\frac{1}{n^{1/2}}
	\;\vee\;
	\frac{p}{n^{3/2}}
	\right),
\end{align*}
where the final reduction follows from the facts that
\[
p\le n \implies \frac{\sqrt{p}}{n}\le \frac{1}{\sqrt{n}},
\qquad
p>n \implies \frac{\sqrt{p}}{n} < \frac{p}{n^{3/2}}.
\]
This completes the proof.
\end{proof}

   \begin{proof}[Proof of Theorem~\ref{est1}]
Using the definition of $\hat{\sigma}_n^2$ in \eqref{est:def}, we obtain
\begin{align}\label{decsigmadifI}
	|\hat{\sigma}_n^2 - \sigma^2|&=\left|\frac{1}{\tilde{m}_2} 
	\Bigl(\hat{m}_2 \frac{\|y\|_2^2}{n} - \frac{\hat{m}_1 \|X^\top y\|_2^2}{n^2} 
	\Bigr)-\sigma^2\right|\notag \\
	&=
	\left|\frac{1}{\tilde{m}_2} 
	\Bigl(\hat{m}_2 \frac{\|y\|_2^2}{n} - \frac{\hat{m}_1 \|X^\top y\|_2^2}{n^2} 
	- \tau^2 \hat{m}_2 \hat{m}_1 + \tau^2  \hat{m}_2 \hat{m}_1 \Bigr) - \sigma^2
	\right|,
\end{align}
almost surely.
Consider the following decompositions:
\begin{align}
	\frac{\|y\|_2^2}{n} 
	&= \beta^\top \hat{\Sigma}_n \beta 
	+ \frac{2}{n}\, u^\top X \beta 
	+ \frac{\|u\|_2^2}{n}, \label{decomp1} \\[1mm]
	\frac{\|X^\top y\|_2^2}{n^2} 
	&= \beta^\top \hat{\Sigma}_n^2 \beta 
	+ \frac{2}{n^2}\, u^\top X X^\top X \beta 
	+ \frac{u^\top \hat{\underline{\Sigma}}_n u}{n}, \label{decomp2} \\[1mm]
	\sigma^2 
	&= \frac{1}{\tilde{m}_2} \Bigl( \hat{m}_2 - \gamma_n \hat{m}_1^2 \Bigr) \sigma^2, \quad \text{almost surely}, \label{decomp3} \\[1mm]
	\gamma_n \hat{m}_1^2 
	&=\frac{\hat{m}_1}{n} \mathrm{tr}(\hat{\underline{\Sigma}}_n). \label{decomp4}
\end{align}
Using \eqref{decomp1}--\eqref{decomp4} and applying the triangle inequality 
in \eqref{decsigmadifI}, we obtain
\begin{align}\label{decsigmadifII-aos}
	\begin{split}
		|\hat{\sigma}_n^2 - \sigma^2| 
		&\le 
		\frac{\hat{m}_2}{\tilde{m}_2} 
		\Bigl|
		\beta^\top \hat{\Sigma}_n \beta - \tau^2 \hat{m}_1 
		+ \frac{2}{n} u^\top X \beta 
		+ \frac{\|u\|_2^2}{n} - \sigma^2
		\Bigr| \\[1mm]
		&\quad + 
		\frac{\hat{m}_1}{\tilde{m}_2} 
		\Bigl|
		\tau^2 \hat{m}_2 - \beta^\top \hat{\Sigma}_n^2 \beta 
		- \frac{2}{n^2} u^\top X X^\top X \beta 
		+ \sigma^2 \gamma_n \hat{m}_1 
		- \frac{1}{n} u^\top \hat{\underline{\Sigma}}_n u
		\Bigr|,
	\end{split}
\end{align}
almost surely.
Note that Lemma~\ref{momqf1}, together with the subsequent discussion, implies that under the assumptions of the theorem
\begin{equation}\label{momforest1}
	\hat{m}_1 \;\overset{\mathbb{P}}{\longrightarrow}\; \int_0^\infty t \, dH(t) > 0, \quad 
	\tilde{m}_2=\hat{m}_2 - \gamma_n \hat{m}_1^2 \;\overset{\mathbb{P}}{\longrightarrow}\; \int_0^\infty t^2 \, dH(t) > 0.
\end{equation}
Using the tower property and the conditional Markov inequality, for any $\varepsilon>0$, we obtain
\begin{align}\label{uXbeta}
	\begin{split}
		\mathbb{P}\!\left(\left|\frac{u^\top X\beta}{n}\right| > \varepsilon \right)
		= \mathbb{E}\!\left[\, \mathbb{P}\!\left(\left|\frac{u^\top X\beta}{n}\right| > \varepsilon \,\middle|\, X \right) \right] \le 
		\mathbb{E}\!\left[\, 1 \wedge 
		\frac{\mathbb{E}\!\left((u^\top X\beta)^2   \,\middle|\, X \right)}{(n\,\varepsilon)^{2}} \right].
	\end{split}
\end{align}
Using the independence of $u$, $X$, and $\beta$, and \eqref{momforest1}, we further obtain
\begin{align*}
	\mathbb{E}\bigg(\frac{(u^\top X\beta)^2}{n^2} \,\bigg|\, X\bigg) 
	&= \mathbb{E}\bigg(\frac{u^\top X \beta \beta^\top X^\top u}{n^2} \,\bigg|\, X\bigg) = \frac{\sigma^2}{n} \, \mathbb{E}\bigg(\frac{\tr(X \beta \beta^\top X^\top)}{n} \,\bigg|\, X\bigg) \\
	&= \frac{\tau^2 \sigma^2}{n} \hat{m}_1
	= o_{\mathbb{P}}(1), \quad \text{as } n \to \infty.
\end{align*}
By an application of the dominated convergence theorem to the upper bound in~\eqref{uXbeta}, it follows that 
$n^{-1} u^\top X\beta = o_{\mathbb{P}}(1)$. Similarly,
\begin{align}\label{uXXtopXbeta}
	\mathbb{P}\!\left(\left|\frac{u^\top XX^\top X\beta}{n^{2}}\right|>\varepsilon\right)
	=
	\mathbb{E}\!\left[
	\mathbb{P}\!\left(
	\left|\frac{u^\top XX^\top X\beta}{n^{2}}\right|>\varepsilon
	\,\bigg|\, X
	\right)
	\right] \le 
	\mathbb{E}\!\left(
	1 \,\wedge\,
	\frac{
		\mathbb{E}\!\left( (u^\top XX^\top X\beta)^{2}\,\big|\,X \right)
	}{n^4\,\varepsilon^{2}}
	\right).
\end{align}
By Assumption~\ref{A2}, we have
$s_{\max}(\hat{\Sigma}_n)
\le s_{\max}(\Sigma_n)\, s_{\max}(Z^\top Z/n)
\le C\, s_{\max}(Z^\top Z/n)$.
Moreover, using the independence of $u$, $X$, and $\beta$, and applying Lemma~\ref{lebip}, we obtain
\begin{align*}
	\mathbb{E}\!\left(
	\frac{(u^\top XX^\top X\beta)^{2}}{n^{4}}
	\,\Big|\, X
	\right)&=
	\frac{\tau^{2}\sigma^{2}}{pn}\,\tr(\hat{\Sigma}_n^{3}) \le
	\frac{\tau^{2}\sigma^{2}}{n}\, s_{\max}^{3}(\hat{\Sigma}_n) \\
	&\le
	\frac{\tau^{2}\sigma^{2}}{n}\, C^{3} s_{\max}^{3}(Z^\top Z/n)
	= o_{\mathbb{P}}(1).
\end{align*}
By an application of the dominated convergence theorem to the upper bound in~\eqref{uXXtopXbeta}, it follows that 
$n^{-2}u^\top XX^\top X\beta = o_{\mathbb{P}}(1)$.

So far, we have treated the bilinear forms that appear in the upper bound of~\eqref{decsigmadifII-aos}. 
We now turn to the difference between the quadratic forms and their corresponding conditional expectations in the same upper bound.
By the law of large numbers,
\[
\frac{1}{n}\|u\|_2^2 - \sigma^2 \;\overset{\mathbb{P}}{\longrightarrow}\; 0.
\]
Using the conditional Markov inequality, Lemma~\ref{quadform}, the identity $\gamma_n \hat{m}_1 = n^{-1}\tr(\underline{\hat{\Sigma}}_n)$, \eqref{momforest1}, and the independence of $u$ and $X$, we obtain, for any $\varepsilon>0$,
\begin{align}\label{quadu1}
	\begin{split}
	\mathbb{P}\!\left(
	\left| \frac{u^\top \underline{\hat{\Sigma}}_n u}{n} - \sigma^2 \gamma_n \hat{m}_1 \right| > \varepsilon \,\Big|\, X
	\right)
	&\le 
	\mathbb{P}\!\left(
	\frac{\sigma^2}{n} \left| \frac{1}{\sigma^2} u^\top \underline{\hat{\Sigma}}_n u - \tr(\underline{\hat{\Sigma}}_n) \right| > \varepsilon \,\Big|\, X
	\right) \\
	&\le 
	1 \wedge 
	\frac{1}{\varepsilon^2} \, 
	\mathbb{E}\!\left(
	\frac{\sigma^4}{n^2} 
	\left| \frac{1}{\sigma^2} u^\top \underline{\hat{\Sigma}}_n u - \tr(\underline{\hat{\Sigma}}_n) \right|^2 
	\,\Big|\, X
	\right) \\
	&\le 
	1 \wedge 
	\frac{1}{(n\varepsilon)^2} \, 2\, C_2 \nu_{4,u} \, \tr(\underline{\hat{\Sigma}}_n^2) \\
	&= 
	1 \wedge 
	\frac{\gamma_n}{n} \frac{2\, C_2 \nu_{4,u} }{\varepsilon^2} \, \hat{m}_2
	\;\overset{\mathbb{P}}{\longrightarrow}\; 0,\quad\text{as}\,\,n\to\infty.
	\end{split}
\end{align}
Similarly, for arbitrary $\varepsilon>0$, an application of the conditional Markov inequality, \eqref{momforest1}, the independence of $\beta$ and $X$, and Lemma~\ref{quadform} yields
\begin{align}\label{quadbeta1}
	\begin{split}
	\mathbb{P}\Big(\big|\beta^\top \hat{\Sigma}_n \beta - \tau^2 \hat{m}_1\big| > \varepsilon \,\Big|\, X \Big)
	&\le 
	1 \wedge \frac{1}{\varepsilon^2} \, 
	\mathbb{E}\Bigg(
	\frac{\tau^4}{p^2} \Big|\frac{p}{\tau^2} \beta^\top \hat{\Sigma}_n \beta - \tr(\hat{\Sigma}_n) \Big|^2 \,\Big|\, X
	\Bigg) \\
	&\le 
	1 \wedge \frac{2\, C_2 \, \nu_{4,\beta}}{p \, \varepsilon^2} \, \hat{m}_2
	\;\overset{\mathbb{P}}{\longrightarrow}\; 0,\quad\text{as}\,\,n\to\infty.
	\end{split}
\end{align}
Analogously, for the quadratic form involving $\hat{\Sigma}_n^2$, we have
\begin{align}\label{quadbeta2}
	\begin{split}
	\mathbb{P}\Big(\big|\beta^\top \hat{\Sigma}_n^2 \beta - \tau^2 \hat{m}_2\big| > \varepsilon \,\Big|\, X \Big)
	&\le 
	1 \wedge \frac{1}{\varepsilon^2} \, 
	\mathbb{E}\Bigg(
	\frac{\tau^4}{p^2} \Big|\frac{p}{\tau^2} \beta^\top \hat{\Sigma}_n^2 \beta - \tr(\hat{\Sigma}_n^2) \Big|^2 \,\Big|\, X
	\Bigg) \\
	&\le 
	1 \wedge \frac{2\,C_2 \, \nu_{4,\beta}}{(p \, \varepsilon)^2} \, \tr(\hat{\Sigma}_n^4) \\
	&\le 
	1 \wedge \frac{2\,C_2 \, \nu_{4,\beta}}{p \, \varepsilon^2} \, s_{\max}^4(\hat{\Sigma}_n) 
	\;\overset{\mathbb{P}}{\longrightarrow}\; 0,\quad \text{as}\,\,n\to\infty,
	\end{split}
\end{align}
where the last step uses Lemma~\ref{lebip}.

Once again, by the tower property, the dominated convergence theorem, and equations~\eqref{quadu1}, \eqref{quadbeta1}, and \eqref{quadbeta2}, it follows that
\[
\frac{1}{n} u^\top \hat{\underline{\Sigma}}_n u - \sigma^2 \gamma_n \hat{m}_1 = o_{\mathbb{P}}(1), \qquad
\beta^\top \hat{\Sigma}_n \beta - \tau^2 \hat{m}_1 = o_{\mathbb{P}}(1), \qquad
\beta^\top \hat{\Sigma}_n^2 \beta - \tau^2 \hat{m}_2 = o_{\mathbb{P}}(1).
\]
Therefore, we conclude that $|\hat{\sigma}_n^2 - \sigma^2|=o_{\mathbb{P}}(1)$.

\medskip

Building on the results for $\hat{\sigma}_n^2$, it is straightforward to derive  $|\hat{\tau}_n^2 - \tau^2| =o_{\mathbb{P}}(1)$.
Using the definition of $\hat{\tau}_n^2$ in \eqref{est:def}, we have
\begin{align}\label{dectaudifI}
	\begin{split}
		|\hat{\tau}_n^2 - \tau^2| 
		&= \frac{1}{\tilde{m}_2} \Biggl| 
		\frac{\|X^\top y\|_2^2}{n^2} - \gamma_n \hat{m}_1 \frac{\|y\|_2^2}{n} - \tau^2 (\hat{m}_2 - \gamma_n \hat{m}_1^2) 
		\Biggr| \\
		&\le \frac{1}{\tilde{m}_2} \Biggl| 
		\beta^\top \hat{\Sigma}_n^2 \beta - \tau^2 \hat{m}_2 + \frac{2}{n} u^\top X X^\top X \beta + \frac{1}{n} u^\top \hat{\underline{\Sigma}}_n u - \sigma^2 \gamma_n \hat{m}_1
		\Biggr| \\
		&\quad + \frac{\gamma_n \hat{m}_1}{\tilde{m}_2} \Biggl|
		\beta^\top \hat{\Sigma}_n \beta - \tau^2 \hat{m}_1 + \frac{2}{n} u^\top X \beta + \frac{\|u\|_2^2}{n} - \sigma^2
		\Biggr|,
	\end{split}
\end{align}
almost surely. In the upper bound of \eqref{dectaudifI}, we have the same quadratic forms with their corresponding conditional means, as well as bilinear forms, as in the upper bound of \eqref{decsigmadifII-aos}. For the factors $\tilde{m}_2^{-1}$ and $\gamma_n \hat{m}_1 \tilde{m}_2^{-1}$, we recall the convergence results in \eqref{momforest1}. Using the same arguments as for $\hat{\sigma}_n^2$, we therefore conclude that $
|\hat{\tau}_n^2 - \tau^2| =o_{\mathbb{P}}(1)$.
\end{proof}

\begin{proof}[Proof of Proposition~\ref{prop4m}]
	For some $\delta>0$ and $\varepsilon>0$, we may write
	\[
	\mathbb{E}\!\left[ z_{ij}^{2}\, \mathbf{1}\{|z_{ij}|\ge \varepsilon\sqrt{n}\}\right]
	= \mathbb{E}\!\left[ \frac{|z_{ij}|^{2+\delta}}{|z_{ij}|^{\delta}}\, \mathbf{1}\{|z_{ij}|\ge \varepsilon\sqrt{n}\}\right],
	\]
	since $|z_{ij}|\ge \varepsilon\sqrt{n}>0$ on the event under consideration.
	By Assumption~\ref{A3}, the $(4+\delta)$th moments of the $Z_{ij}$ are uniformly 
	bounded, and therefore their $(2+\delta)$th moments are uniformly bounded as well. 
	Consequently,
	\[
	\mathbb{E}\!\left[ z_{ij}^{2}\,\mathbf{1}\{|z_{ij}|\ge \varepsilon\sqrt{n}\}\right]
	\le \frac{\nu_{4+\delta}}{\varepsilon^{\delta} n^{\delta/2}}
	\;\longrightarrow\; 0, \qquad \text{as}\,\,n\to\infty .
	\]
	Therefore,
	\begin{align}\label{lindbergcondalpha}
		\begin{split}
			\frac{1}{\varepsilon^2 n^2}
			\sum_{i=1}^n \sum_{j=1}^p
			\mathbb{E}\!\left(z_{ij}^2 \mathbf{1}\{|z_{ij}|\ge \varepsilon\sqrt{n}\}\right)
			&= 
			\frac{p}{n\varepsilon^2}
			\frac{1}{n}\sum_{i=1}^n \frac{1}{p}\sum_{j=1}^p
			\mathbb{E}\!\left(z_{ij}^2 \mathbf{1}\{|z_{ij}|\ge \varepsilon\sqrt{n}\}\right) \\
			&\le 
			\gamma_n \,
			\frac{1}{n}\sum_{i=1}^n \frac{1}{p}\sum_{j=1}^p
			\frac{\nu_{4+\delta}}{\varepsilon^{\delta+2} n^{\delta/2}} \\
			&= 
			\gamma_n \,
			\frac{\nu_{4+\delta}}{\varepsilon^{\delta+2} n^{\delta/2}}
			\longrightarrow 0,
		\end{split}
	\end{align}
	as $n\to\infty$, where we use that $\gamma_n \to \gamma \in (0,\infty)$ by Assumption~\ref{A1}.
	
	Now replace $\varepsilon$ by
	\[
	\varepsilon_n = \bigl(n^{-\alpha}\bigr)^{1/(\delta+2)}, \qquad \alpha\in(0,\delta/2).
	\]
	Since $\varepsilon_n^{\delta+2}=n^{-\alpha}$ the expression appearing in the last line of \eqref{lindbergcondalpha} becomes
	\[
	\gamma_n \frac{\nu_{4+\delta}}{\varepsilon_n^{\delta+2} n^{\delta/2}}
	= \gamma_n\,\nu_{4+\delta}\,n^{\alpha-\delta/2}.
	\]
	Using Assumption~\ref{A1} and since $\alpha < \delta/2$, it follows that
	\[
	\gamma_n\,\nu_{4+\delta}\,n^{\alpha-\delta/2} \;\longrightarrow\; 0,
	\qquad \text{as}\,\,n\to\infty.
	\]
	This completes the proof.
\end{proof}

\begin{remark}\label{rm:remark}
	
	For the upcoming proofs, we recall some facts from random matrix theory. In Subsection~\ref{sec:TunedRGD}, we introduced the notion of a Stieltjes transform for any cumulative distribution function $G$ supported on $[0,\infty)$. For $z \in \mathbb{C} \setminus \mathbb{R}_{\geq 0}$, the Stieltjes transform of the empirical spectral distribution function $F_{\hat{\Sigma}_n}$ can be written as
	\begin{align*}
		m_n(z) \coloneqq m_{F_{\hat{\Sigma}_n}}(z)=\int_0^\infty\frac{1}{s-z}dF_{\hat{\Sigma}_n}(s) 
		= \frac{1}{p} \tr \bigl( (\hat{\Sigma}_n - z I_p)^{-1} \bigr) 
		= \frac{1}{p} \sum_{j=1}^{p} \frac{1}{s_j - z}.
	\end{align*}
	
Similarly, in Subsection~\ref{sec:TunedRGD} we introduced the empirical spectral distribution function of 
\(
\underline{\hat{\Sigma}}_{n}=n^{-1}XX^{\top}
\),
denoted by \(F_{\underline{\hat{\Sigma}}_{n}}\), and its associated Stieltjes transform \(m_{F_{\underline{\hat{\Sigma}}_n}}(z)=v_n(z)\), for $z\in \mathbb{C}\setminus\mathbb{R}_{\geq 0}$.  
The empirical spectral distribution functions  \(F_{\underline{\hat{\Sigma}}_{n}}\) and \(F_{\hat{\Sigma}_{n}}\) are related through  
\begin{equation}\label{disrel}
	F_{\hat{\Sigma}_{n}}(x)
	=\Bigl(1-\frac{1}{\gamma_{n}}\Bigr)\, \mathbf{1}_{[0,\infty)}(x)
	+\frac{1}{\gamma_{n}}\, F_{\underline{\hat{\Sigma}}_{n}}(x),\qquad \text{for all }x\in\mathbb{R},
\end{equation}
because the matrices \(\hat{\Sigma}_{n}\) and \(\underline{\hat{\Sigma}}_{n}\) differ only by 
\(|p-n|\) zero eigenvalues.

Combining Theorem~\ref{Pan2010} with the relation in \eqref{disrel}, one can deduce that, almost surely,
\[
F(x)\coloneqq\lim_{n\to \infty} F_{\hat{\Sigma}_n}(x)
=  \Bigl(1-\frac{1}{\gamma}\Bigr)\mathbf{1}_{[0,\infty)}(x)
\;+\; \frac{1}{\gamma}\,\bar F(x),\qquad \text{for all continuity points of $\bar{F}$}.
\]
Since the continuity points of $\bar{F}$ form a dense subset of $\mathbb{R}$, for any $x\in \mathbb{R}$ we can find a sequence of continuity points $\{x^c_k\}_{k\ge 1}$ satisfying $x^c_{k} > x^c_{k+1}$ for all $k\ge 1$ and $x^c_k \to x$ as $k \to \infty$. Using the right-continuity of $\mathbf{1}_{[0,\infty)}$ and $\bar{F}$, the limiting relation can then be extended to
\[
F(x)=  \Bigl(1-\frac{1}{\gamma}\Bigr)\mathbf{1}_{[0,\infty)}(x)
\;+\; \frac{1}{\gamma}\,\bar F(x),\qquad \text{for all  }x\in \mathbb{R}.
\]

For the corresponding Stieltjes transforms \(m_{F_{\hat{\Sigma}_{n}}}\) and
\(m_{F_{\underline{\hat{\Sigma}}_{n}}}\), the relation in \eqref{disrel} implies that, for each 
\(z \in \mathbb{C}\setminus \mathbb{R}_{\ge 0}\),
\begin{align}\label{Stielcon}
	\begin{split}
		m_{n}(z)
		= -\Bigl(1-\frac{1}{\gamma_{n}}\Bigr)\frac{1}{z}
		+ \frac{1}{\gamma_{n}}\, v_{n}(z).
	\end{split}
\end{align}
For $z \in \mathbb{C} \setminus \mathbb{R}_{\ge 0}$, with $z = u + i v$, we decompose
\begin{align}\label{samesign2}
	\begin{split}
		v_n(z) = \int_0^\infty \frac{1}{x - z} \, dF_{\underline{\hat{\Sigma}}_n}(x)
		&= \int_0^\infty \frac{x - u}{(x - u)^2 + v^2} \, dF_{\underline{\hat{\Sigma}}_n}(x)\\
		&\quad+ i \int_0^\infty \frac{v}{(x - u)^2 + v^2} \, dF_{\underline{\hat{\Sigma}}_n}(x).
	\end{split}
\end{align}
Note that \eqref{samesign2} implies that $\Im(v_n(z))$ has the same sign as $\Im(z)$. An analogous property holds for the limiting Stieltjes transform $m_{\bar{F}}(z)=v(z)$, that is, $\Im(v(z))$ also shares the sign of $\Im(z)$.
Using \eqref{samesign2}, it is straightforward to verify that for $z \in \mathbb{C} \setminus \mathbb{R}$,
\begin{align}\label{bound:realandim}
|\Im v_n(z)| \le \frac{1}{|\Im(z)|} \quad \text{and} \quad
|\Re v_n(z)| \le \frac{1}{2\,|\Im(z)|},
\end{align}
where the key step in establishing the bound for the real part is the observation that the function
\[
g(a) = \frac{a}{a^2 + b^2}, \quad a \ge 0, \, b \in \mathbb{R} \setminus \{0\},
\]
attains its maximum at $a = |b|$.

In the case where $z\in \mathbb{R}_{<0}$, we trivially obtain that $v_n(z)\leq |z|^{-1}$. The same bounds apply to $v(z)$ in the respective cases $z\in \mathbb{C}\setminus \mathbb{R}$ and $z\in \mathbb{R}_{<0}$.
Moreover, for every \(z \in \mathbb{C}\setminus \mathbb{R}_{\ge 0}\), the integrands corresponding to the real and imaginary parts of \(v_n(z)\), viewed as functions of \(x\), are continuous and bounded. 
Thus, applying Theorem~\ref{Pan2010} together with \eqref{Stielcon}, and using the Portmanteau theorem for the real and imaginary parts appearing in \eqref{samesign2}, we obtain, for each \(z\in \mathbb{C}\setminus \mathbb{R}_{\geq 0}\),
\begin{align}\label{limitStil}
	m(z) \;\coloneqq\; m_{F}(z)
	\;=\; -\Bigl(1 - \frac{1}{\gamma}\Bigr)\frac{1}{z}
	\;+\; \frac{1}{\gamma}\, v(z),
\end{align}
almost surely.

\end{remark}

\begin{proof}[Proof of Theorem~\ref{LPTh1}] 
Let \(\{\Theta_n\}_{n \ge 1}\), with \(\Theta_n \in \mathbb{R}^{p \times p}\), 
be symmetric positive semidefinite matrices satisfying \(\|\Theta_n\|_F = O(p^{-1/2})\) for all \(n\).
First, we establish that, for each $z \in \mathbb{C} \setminus \mathbb{R}_{\ge 0}$,
\begin{equation*}
	\tr\bigl(\Theta (\hat{\Sigma}_n -z  I_p)^{-1}\bigr) - 
	\tr\bigl(\Theta(-z v(z) \Sigma_n - z I_p)^{-1}\bigr)
	\;\overset{\text{a.s.}}{\longrightarrow}\; 0.
\end{equation*}
In the second step, we set $\Theta_n = p^{-1} g(\Sigma_n)$ for a continuous function $g : [0,\infty) \to \mathbb{R}$ and show that, for each $z \in \mathbb{C} \setminus \mathbb{R}_{\ge 0}$,
\begin{equation*}
	\frac{1}{p} \tr\bigl(g(\Sigma_n)(-z v(z) \Sigma_n - z I_p)^{-1}\bigr)
	= \int_{0}^\infty \frac{g(t)}{-z v(z) t - z} \, dF_{\Sigma_n}(t)
	\;\longrightarrow\; \int_{0}^\infty \frac{g(t)}{-z v(z) t - z} \, dH(t).
\end{equation*}
The extension to functions $g : [0,\infty) \to \mathbb{R}$ with finitely many discontinuities follows by the same arguments as in \citet{LedPeche2011}.

We begin with some notation and elementary algebra. Let
\[
\hat{\Sigma}_n = \frac{1}{n}\sum_{j=1}^{n} x_j x_j^{\top},
\]
where $x_1,\dots,x_n$ denote the rows of $X$. For each $j$, define the leave-one-out matrix
\[
\hat{\Sigma}_{n,-j} \coloneqq \frac{1}{n}\sum_{k=1,k\neq j}^{n} x_k x_k^{\top}.
\]
For any $z \in \mathbb{C}\setminus \mathbb{R}_{\ge 0}$, set
\[
\hat{R}_n(z) \coloneqq (\hat{\Sigma}_n - z I_p)^{-1},
\qquad 
\hat{R}_{n,-j}(z) \coloneqq (\hat{\Sigma}_{n,-j} - z I_p)^{-1}.
\]
By the Sherman--Morrison--Woodbury formula, we have, for $z \in \mathbb{C} \setminus \mathbb{R}_{\ge 0}$,
\begin{equation}\label{She-Woo}
	\hat{R}_n(z) 
	= \Bigl(\hat{\Sigma}_{n,-j} - z I_p + \frac{x_j x_j^\top}{n}\Bigr)^{-1} 
	= \hat{R}_{n,-j}(z) 
	- \frac{\hat{R}_{n,-j}(z)\, \bigl(\frac{x_j x_j^\top}{n}\bigr)\, \hat{R}_{n,-j}(z)}
	{1 + \frac{1}{n}\, x_j^\top \hat{R}_{n,-j}(z)\, x_j}.
\end{equation}
Using \eqref{Stielcon}, we may write, for $z \in \mathbb{C} \setminus \mathbb{R}_{\ge 0}$,
\begin{align}\label{PrepStilt}
	-\,z\,v_n(z) 
	\;=\; 1 - \gamma_n \;-\; \frac{z}{n}\,\tr\bigl(\hat{R}_n(z)\bigr).
\end{align}
Using \eqref{She-Woo}, we obtain
\begin{align*}
	-\frac{z}{n}\tr\big(\hat{R}_n(z)\big)
	&= \frac{1}{n} \sum_{j=1}^p \frac{-z}{s_i - z} = \gamma_n - \frac{1}{n} \tr\big(\hat{\Sigma}_n \hat{R}_n(z)\big) = \gamma_n - \frac{1}{n^2} \sum_{j=1}^n \tr\big(x_j x_j^\top \hat{R}_n(z)\big) \\
	&= \gamma_n - \frac{1}{n} \sum_{j=1}^n \frac{\frac{1}{n}x_j^\top \hat{R}_{n,-j}(z) x_j}{1 + \frac{1}{n}x_j^\top \hat{R}_{n,-j}(z) x_j}.
\end{align*}
By \eqref{PrepStilt} and the preceding display,
\begin{align}\label{lambdavn}
	\begin{split}
		-z\, v_n(z)
		&= 1 - \gamma_n - \frac{z}{n}\,\tr\!\big(\hat{R}_n(z)\big) = 1 - \frac{1}{n}\sum_{j=1}^{n}
		\frac{\frac{1}{n}\, x_j^{\top}\hat{R}_{n,-j}(z)\,x_j}
		{1 + \frac{1}{n}\, x_j^{\top}\hat{R}_{n,-j}(z)\,x_j} \\
		&= \frac{1}{n}\sum_{j=1}^{n}
		\frac{1}{1 + \frac{1}{n}\, x_j^{\top}\hat{R}_{n,-j}(z)\,x_j}.
	\end{split}
\end{align}
For $z \in \mathbb{C}\setminus\mathbb{R}_{\ge 0}$, define
\[
R_n(z) \coloneqq \big(-z\, v(z)\,\Sigma_n - z I_p\big)^{-1}.
\]
Note that both $\hat{R}_n(z)$ and $R_n(z)$ are invertible for all such $z$.
Applying the resolvent identity yields
\[
\hat{R}_n(z) - R_n(z)
= \hat{R}_n(z)\big(-z\,v(z)\,\Sigma_n - \hat{\Sigma}_n\big) R_n(z).
\]
Thus, for any $z \in \mathbb{C}\setminus\mathbb{R}_{\ge 0}$,
\begin{align}\label{stiltdec}
	\begin{split}
		\hat{R}_n(z) - R_n(z)
		&= \hat{R}_n(z)\big(-z\,v(z)\,\Sigma_n - \hat{\Sigma}_n\big) R_n(z) \\
		&= -z\,v(z)\,\hat{R}_n(z)\Sigma_n R_n(z)
		- \frac{1}{n}\sum_{j=1}^n \hat{R}_n(z)\,x_j x_j^{\top} R_n(z) \\
		&= \bigl(-z\,v(z) + z\,v_n(z)\bigr)\hat{R}_n(z)\Sigma_n R_n(z) \\
		&\quad - z\,v_n(z)\,\hat{R}_n(z)\Sigma_n R_n(z)
		- \frac{1}{n}\sum_{j=1}^n \hat{R}_n(z)\,x_j x_j^{\top} R_n(z).
	\end{split}
\end{align}
Next, multiplying \eqref{stiltdec} on the left by \(\Theta_n\), and using \eqref{She-Woo} and \eqref{lambdavn}, we obtain
\begin{align}\label{decStiC}
	\Theta_n (\hat{R}_n(z) - R_n(z))
	= (-z v(z) + z v_n(z))\, \Theta_n \hat{R}_n(z) \Sigma_n R_n(z) + \Theta_n A_n R_n(z),
\end{align}
where
\begin{align}
		A_n 
		&\coloneqq - z\, v_n(z)\, \hat{R}_n(z) \Sigma_n - \frac{1}{n} \sum_{j=1}^n \hat{R}_n(z)\, x_j x_j^\top \notag\\
		&= \frac{1}{n} \sum_{j=1}^n \Biggl( \frac{1}{1 + \frac{1}{n}\, x_j^\top \hat{R}_{n,-j}(z) x_j}\, \hat{R}_n(z) \Sigma_n - \hat{R}_n(z)\, x_j x_j^\top \Biggr) \notag\\
		&= \frac{1}{n} \sum_{j=1}^n \frac{1}{1 + \frac{1}{n}\, x_j^\top \hat{R}_{n,-j}(z) x_j} \Bigl( \hat{R}_n(z)\Sigma_n - \hat{R}_{n,-j}(z)x_j x_j^\top \Bigr)\notag\\
		&=\frac{1}{n} \sum_{j=1}^n \frac{1}{1 + \frac{1}{n}\, x_j^\top \hat{R}_{n,-j}(z) x_j}  \Bigl( \hat{R}_{n,-j}(z)\Sigma_n - \hat{R}_{n,-j}(z)x_j x_j^\top\label{AdecC1}\\
		&\quad- \frac{1}{1+\frac{1}{n}x_j^\top\hat{R}_{n,-j}(z)x_j}\hat{R}_{n,-j}(z)\bigl(\frac{x_jx_j^\top}{n}\bigr)\hat{R}_{n,-j}(z)\Sigma_n \Bigr) \label{AdecC2}.
\end{align}
Before proceeding with the proof of the theorem, we state some elementary facts that will be used repeatedly.

\medskip

First, for each \(z \in \mathbb{R}_{<0}\), it is immediate that
\begin{align}\label{Resolb:real}
\|R_n(z)\|_2 \le \frac{1}{|z|}.
\end{align}
Second, for \(z=u+iv\) with \(v\neq 0\), we compute
\begin{align}\label{samesign1}
	\begin{split}
		zv(z)
		&= \int_{0}^{\infty} \frac{z}{\,s-z\,}\, d\bar F(s)
		= \int_{0}^{\infty} \frac{(u+iv)(s-u+iv)}{(s-u)^2+v^2}\, d\bar F(s) \\
		&= \int_{0}^{\infty} \frac{u(s-u)-v^2}{(s-u)^2+v^2}\, d\bar F(s)
		\;+\; i \int_{0}^{\infty} \frac{sv}{(s-u)^2+v^2}\, d\bar F(s).
	\end{split}
\end{align}
Thus, from \eqref{samesign1}, we conclude that 
$\Im(z v(z))$ and $\Im(z)$ share the same sign.
Third, for $z \in \mathbb{C}\setminus\mathbb{R}$ and $t \ge 0$, 
\begin{align}\label{R_nupper}
	\begin{split}
		\biggl|\frac{1}{-z v(z) t - z}\biggr|^2 
		&= \frac{1}{|z v(z) t + z|^2} = \frac{1}{\bigl(\Re(z v(z) t + z)\bigr)^2 + \bigl(\Im(z v(z) t + z)\bigr)^2} \\[1mm]
		&\le \frac{1}{\bigl(\Im(z v(z))\, t + \Im(z)\bigr)^2} 
		\le \frac{1}{(\Im z)^2},
	\end{split}
\end{align}
where the last inequality follows from the fact that $\Im(z v(z))$ has the same sign as $\Im(z)$. Since the eigenvalues of $R_n(z)$ are 
$\{(-z v(z) t_j - z)^{-1}\}_{j=1}^{p}$, where $\{t_j\}_{j=1}^{p}$ are the 
eigenvalues of $\Sigma_n$, we obtain, for each $z \in \mathbb{C}\setminus\mathbb{R}$,
\begin{align}\label{Resolb:complex}
	\|R_n(z)\|_2
	= \max_{1\le j \le p} \bigl|(-z v(z) t_j - z)^{-1}\bigr|
	\le \frac{1}{|\Im(z)|}.
\end{align}
Since the same computations can be done for $v_n(z)$ instead of $v(z)$, we conclude that, for each $z \in \mathbb{C}\setminus\mathbb{R}$,
\begin{align*}
\|(-z v_n(z) \Sigma_n - z I_p)^{-1}\|_2\leq  \frac{1}{|\Im(z)|}.
\end{align*}
It is straightforward to verify that, the same bounds as in \eqref{Resolb:real} and \eqref{Resolb:complex} 
apply to
\(\|\hat{R}_n(z)\|_2\), and 
\(\|\hat{R}_{n,-j}(z)\|_2\), 
with $|\Im(z)|^{-1}$ for \(z \in \mathbb{C}\setminus \mathbb{R}\) and 
$|z|^{-1}$ for \(z \in \mathbb{R}_{< 0}\).
\medskip

Recall \eqref{decStiC}. In order to complete the first step of the proof, it 
remains to verify that, for each $z \in \mathbb{C}\setminus \mathbb{R}_{\ge 0}$,
\begin{equation}\label{conv1}
	\operatorname{tr}(\Theta_n A_n R_n(z)) \;\overset{\text{a.s.}}{\longrightarrow}\; 0
\end{equation}
and
\begin{equation}\label{conv2}
	(-z v(z) + z v_n(z))\, \operatorname{tr}(\Theta_n \hat{R}_n(z) \Sigma_n R_n(z)) 
	\;\overset{\text{a.s.}}{\longrightarrow}\; 0.
\end{equation}
We begin with the claim in \eqref{conv2}.  
By the Portmanteau theorem, applied to the real and imaginary parts of \(v_n(z)\) (see \eqref{samesign2}), it follows that for each \(z \in \mathbb{C}\setminus \mathbb{R}_{\ge 0}\),
\[
v_n(z)\;\xrightarrow{\text{a.s.}}\; v(z).
\]
Moreover, since for each $z \in \mathbb{C} \setminus \mathbb{R}_{\ge 0}$ the 
spectral norms of $\hat{R}_n(z)$, $R_n(z)$, and $\Sigma_n$ are uniformly 
bounded in $n$, and $\|\Theta_n\|_{F} = O(p^{-1/2})$ and $\Theta_n$ is a symmetric positive semidefinite matrix for every $n$, it follows that
\begin{align*}
\bigl|\operatorname{tr}(\Theta_n \hat{R}_n(z) \Sigma_n R_n(z))\bigr| &\le \|\hat{R}_n(z) \Sigma_n R_n(z)\|_2 \operatorname{tr}(\Theta_n )\\ 
&\leq  K_1(z)\, \operatorname{tr}(\Theta_n)
\le K_1(z)\, \|\Theta_n\|_F \|I_p\|_F\\ 
&= \sqrt{p}\, K_1(z)\, \|\Theta_n\|_F\;=O(1),
\end{align*}
for some constants $K_1(z)>0$ depending only on $z$. Therefore, we conclude that, for each $z \in \mathbb{C} \setminus \mathbb{R}_{\ge 0}$,
\[
\bigl(- z v(z) + z v_n(z)\bigr)\, 
\operatorname{tr}(\Theta_n \hat{R}_n(z) \Sigma_n R_n(z)) 
\;\;\overset{\text{a.s.}}{\longrightarrow}\;\; 0.
\]

For \eqref{conv1}, we use the representation of $A_n$ from \eqref{AdecC1}--\eqref{AdecC2} together with $x_j = \Sigma_n^{1/2} z_j$ to obtain
\begin{align}\label{Rest:LPmod}
	\begin{split}
		\bigl| \tr(\Theta_n A_n R_n(z)) \bigr|
		&\le
		\left|
		\frac{1}{n} \sum_{j=1}^n
		\frac{
			\tr\!\bigl(\Sigma_n^{1/2} R_n(z) \Theta_n \hat R_{n,-j}(z) \Sigma_n^{1/2}\bigr)
			-
			z_j^\top \Sigma_n^{1/2} R_n(z) \Theta_n \hat R_{n,-j}(z) \Sigma_n^{1/2} z_j
		}{
			1 + n^{-1} x_j^\top \hat R_{n,-j}(z) x_j
		}
		\right| \\[0.4em]
		&\quad+
		\left|
		\frac{1}{n^2} \sum_{j=1}^n
		\frac{
			z_j^\top \Sigma_n^{1/2} \hat R_{n,-j}(z) \Sigma_n R_n(z) \Theta_n \hat R_{n,-j}(z) \Sigma_n^{1/2} z_j
		}{
			\bigl(1 + n^{-1} x_j^\top \hat R_{n,-j}(z) x_j\bigr)^2
		}
		\right|.
	\end{split}
\end{align}
For convenience, we define
\begin{align*}
B_{-j}&=B_{-j}^{(n)}(z) \coloneqq \Sigma_n^{1/2} R_n(z)\, \Theta_n \, \hat R_{n,-j}(z) \, \Sigma_n^{1/2}\\
D_{-j}&=D_{-j}^{(n)}(z)\coloneqq \Sigma_n^{1/2}\hat{R}_{n,-j}(z)\Sigma_nR_n(z)\Theta_n\hat{R}_{n,-j}(z)\Sigma_n^{1/2}.
\end{align*}
When $z \in \mathbb{R}_{<0}$, $\hat R_{n,-j}(z)$ is positive definite and hence 
$x^\top \hat R_{n,-j}(z) x \ge 0$ for all $x \in \mathbb{R}^p$ and $j\in \{1,\dots,n\}$. Using this bound together with \eqref{Rest:LPmod}, we deduce that for each $z \in \mathbb{R}_{<0}$,
\begin{align}\label{Rest:LPmod:real}
	\begin{split}
		 \bigl|\tr(\Theta_n A_n R_n(z)) \bigr|
		&\le
		\frac{1}{n} \sum_{j=1}^n
		\bigl| z_j^\top B_{-j} z_j-\tr\!\bigl(B_{-j}\bigr) 
		\bigr|
		+
		\frac{1}{n^2} \sum_{j=1}^n
		\bigl|z_j^\top D_{-j} z_j
		\bigr|.
	\end{split}
\end{align}
In the case $z = u + i v$, $v \neq 0$ and for an arbitrary $j\in \{1,\dots,n\}$, consider the spectral decomposition 
$\hat{\Sigma}_{n,-j} = \sum_{k=1}^p \lambda_k u_k u_k^\top$. 
For any nonzero vector $x \in \mathbb{R}^p$ and $y_k = u_k^\top x$, $k \in \{1, \dots, p\}$, we have
\begin{align}\label{decstilt:quad}
	\begin{split}
		\frac{z}{n} x^\top \hat{R}_{n,-j}(z) x 
		&= \frac{1}{n} \sum_{k=1}^p y_k^2 \frac{z}{\lambda_k - z} 
		= \frac{1}{n} \sum_{k=1}^p y_k^2 \frac{(u + i v)(\lambda_k - u + i v)}{(\lambda_k - u)^2 + v^2} \\
		&= \frac{1}{n} \sum_{k=1}^p y_k^2 \frac{u (\lambda_k - u) - v^2}{(\lambda_k - u)^2 + v^2} 
		+ \frac{i}{n} \sum_{k=1}^p y_k^2 \frac{v \lambda_k}{(\lambda_k - u)^2 + v^2}.
	\end{split}
\end{align}
From \eqref{decstilt:quad}, we see that 
$\Im\bigl(n^{-1} z\, x^\top \hat R_{n,-j}(z) x\bigr)$ has the same sign as $\Im(z)$ and therefore
\begin{align}\label{upperb:LPmod2}
	\begin{split}
		|z| \, \Bigl| 1 + \frac{1}{n} x^\top \hat R_{n,-j}(z) x \Bigr|
		&= \Bigl| z + \frac{z}{n} x^\top \hat R_{n,-j}(z) x \Bigr|
		\ge \Bigl| \Im \Bigl( z + \frac{z}{n} x^\top \hat R_{n,-j}(z) x \Bigr) \Bigr| \\
		&= \Bigl| \Im(z) + \Im \Bigl( \frac{z}{n} x^\top \hat R_{n,-j}(z) x \Bigr) \Bigr|
		\ge |\Im(z)|.
	\end{split}
\end{align}
Hence, for all $j\in \{1,\dots,n\}$ and $z\in \mathbb{C}\setminus\mathbb{R}$, we have 
\begin{align}\label{upperb:LPmod3}
	\bigg|\frac{1}{1+\frac{1}{n}x_j^\top \hat{R}_{n,-j}(z)x_j}\bigg|\leq \frac{|z|}{|\Im(z)|}.
\end{align}
Therefore, in the case $z\in \mathbb{C}\setminus\mathbb{R}$, we use \eqref{Rest:LPmod} and \eqref{upperb:LPmod3} to obtain 
\begin{align}\label{Rest:LPmod:complex}
	\begin{split}
		\bigl| \tr(\Theta_n A_n R_n(z)) \bigr|
		&\le\frac{|z|}{|\Im(z)|}
		\frac{1}{n} \sum_{j=1}^n \left|z_j^\top B_{-j} z_j-
		\tr\!\bigl(B_{-j}\bigr) 
		\right|
		+
		\frac{|z|^2}{\Im(z)^2}
		\frac{1}{n^2} \sum_{j=1}^n
		\Bigl|z_j^\top D_{-j} z_j
		\Bigr|.
	\end{split}
\end{align}

To complete the argument that, for each $z \in \mathbb{C} \setminus \mathbb{R}_{\ge 0}$,
$\tr(\Theta_n A_n R_n(z)) \;\overset{\text{a.s.}}{\longrightarrow}\; 0,$
it remains to show that
\[
\frac{1}{n} \sum_{j=1}^n
\bigl|z_j^\top B_{-j} z_j - \tr(B_{-j})\bigr| 
\;\overset{\text{a.s.}}{\longrightarrow}\; 0,
\quad \text{and} \quad
\frac{1}{n^2} \sum_{j=1}^n
\bigl|z_j^\top D_{-j} z_j\bigr|
\;\overset{\text{a.s.}}{\longrightarrow}\; 0.
\]
Recall that the matrices involved in $D_{-j}$ and $B_{-j}$ are 
$R_n(z)$, $\hat{R}_n(z)$, $\hat{R}_{n,-j}(z)$, $\Sigma_n$, and $\Theta_n$ for each $j\in \{1,\dots,p\}$.  
Moreover, we have established that, for each $z \in \mathbb{C} \setminus \mathbb{R}_{\ge 0}$, 
the matrices $R_n(z)$, $\hat{R}_n(z)$, and $\hat{R}_{n,-j}(z)$ have uniformly bounded spectral norms in $n$.
Likewise, $\Sigma_n$ has uniformly bounded spectral norm in $n$ by Assumption~\ref{A2}, 
and $\|\Theta_n\|_{F} = O(p^{-1/2})$. We write $X'_n\coloneqq\Sigma_n^{1/2}R_n(z)$ and $Y'_n\coloneqq\hat{R}_{n,j}(z)\Sigma_n^{1/2}$ for $z\in \mathbb{C}\setminus\mathbb{R}_{\geq 0}$. Then, for each $z \in \mathbb{C} \setminus \mathbb{R}_{\ge 0}$ and 
$j \in \{1,\dots,p\}$, we have
\begin{align}\label{spectralb:B1}
	\begin{split}
		\operatorname{tr}(B_{-j} B_{-j}^\ast) &= \|X'_n\Theta_n Y'_n\|_F^2\leq \|X'_n\|_2^2\,\|Y'_n\|_2^2\, \|\Theta_n\|_F^2
		\\
		&\le K_2(z)\, \|\Theta_n\|_F^2 
		\le \frac{K_3(z)}{p},
	\end{split}
\end{align}
for some constants $K_2(z), K_3(z) > 0$. 
Consequently, for some $\delta>0$, we obtain
\begin{align}\label{spectralb:B2}
	\operatorname{tr}\bigl((B_{-j} B_{-j}^\ast)^{1+\delta}\bigr) 
	\le \operatorname{tr}(B_{-j} B_{-j}^\ast)^{1+\delta} 
	\le \biggl(\frac{K_3(z)}{p}\biggr)^{1+\delta}.
\end{align}
Using the same arguments with
\(X_n'' \coloneqq \Sigma_n^{1/2}\hat{R}_{n,-j}(z)\Sigma_n R_n(z)\)
and
\(Y_n'' \coloneqq \hat{R}_{n,-j}(z)\Sigma_n^{1/2}\)
in place of \(X_n\) and \(Y_n\), it follows that for each
\(z \in \mathbb{C} \setminus \mathbb{R}_{\ge 0}\) and any \(\delta>0\),
\begin{align}\label{spectralb:D}
	\operatorname{tr}(D_{-j} D_{-j}^\ast) 
	\le \frac{K_4(z)}{p}, \qquad
	\operatorname{tr}\bigl((D_{-j} D_{-j}^\ast)^{1+\delta}\bigr) 
	\le \biggl(\frac{K_4(z)}{p}\biggr)^{1+\delta},
\end{align}
for some constant $K_4(z) > 0$. By the independence of $z_j$ and $B_{-j}$, Lemma~\ref{quadform} with 
$q = 2+\varepsilon/2, \varepsilon>0$ and \eqref{spectralb:B1}--\eqref{spectralb:B2}, we obtain for each $z \in \mathbb{C} \setminus \mathbb{R}_{\ge 0}$ and 
$j \in \{1,\dots,p\}$,
\begin{align}\label{Bbound}
	\begin{split}
		\mathbb{E}\!\left(|z_j^\top B_{-j} z_j - \tr(B_{-j})|^{q}\mid B_{-j}\right)
		&\le 
		C_q\!\left[\bigl(\nu_4\,\tr(B_{-j}B_{-j}^*)\bigr)^{q/2}
		+ \nu_{2q}\,\tr\!\bigl((B_{-j}B_{-j}^*)^{q/2}\bigr)\right] \\[0.4em]
		&\le C_q \!\left[(\nu_4^{q/2}+\nu_{2q})\biggl(\frac{K_3(z)}{p}\biggr)^{q/2}\right]=
		\frac{K_5(z)}{p^{\,1+\varepsilon/4}},
	\end{split}
\end{align}
for some constant $K_5(z)>0$. Using Markov's inequality and Jensen's inequality, together with the upper bound in \eqref{Bbound}, we obtain for each $z \in \mathbb{C} \setminus \mathbb{R}_{\ge 0}$ and arbitrary $\delta>0$,
\begin{align*}
	\mathbb{P}\Bigg(
	\frac{1}{n} \sum_{j=1}^n \bigl| z_j^\top B_{-j} z_j - \operatorname{tr}(B_{-j}) \bigr| > \delta
	\Bigg)
	&\le  
	\frac{1}{\delta^q} \, \mathbb{E} \Bigg(
	\Big(\frac{1}{n} \sum_{j=1}^n \bigl| z_j^\top B_{-j} z_j - \operatorname{tr}(B_{-j}) \bigr| \Big)^q
	\Bigg) \\[1mm]
	&\le \frac{1}{\delta^q} \, \mathbb{E} \Bigg(
	\frac{1}{n} \sum_{j=1}^n \bigl| z_j^\top B_{-j} z_j - \operatorname{tr}(B_{-j}) \bigr|^q
	\Bigg) \le \frac{K_6(z)}{p^{\,1+\varepsilon/4}}
	= \frac{K_6(z)}{(\gamma_n n)^{\,1+\varepsilon/4}},
\end{align*}
for some constant $K_6(z) > 0$. Assumption~\ref{A1} implies that the sequence $\{\gamma_n\}_{n \ge 1}$ is bounded. 
Hence,
\[
\sum_{n=1}^\infty \frac{K_6(z)}{(\gamma_n n)^{\,1+\varepsilon/4}} < \infty,
\]
and it follows from the Borel-Cantelli lemma that for each $z\in \mathbb{C}\setminus \mathbb{R}_{\geq 0}$,
\[
\frac{1}{n} \sum_{j=1}^n |z_j^\top B_{-j} z_j - \tr(B_{-j})|
\;\overset{\text{a.s.}}{\longrightarrow}\; 0,
\quad \text{as } n \to \infty.
\]
For any fixed $j \in \{1,\ldots,n\}$, applying the inequality 
$(a+b)^q \le 2^{\,q-1}(a^q + b^q)$ for $a,b \ge 0$ with $q = 2 + \varepsilon/2$ and $\varepsilon>0$, 
and using Lemma~\ref{quadform} together with the bounds in~\eqref{spectralb:D}, 
we obtain, for each $z \in \mathbb{C} \setminus \mathbb{R}_{\ge 0}$, that
\begin{align}\label{D:bound1}
	\begin{split}
	\mathbb{E}(|z_j^\top D_{-j} z_j|^q)&\leq	2^{q-1}\mathbb{E}(|z_j^\top D_{-j} z_j-\tr(D_{-j})|^q) +2^{q-1}\mathbb{E}(|\tr(D_{-j})|^q)\\
	&\leq \frac{K_7(z)}{p^{q/2}} +2^{q-1}\mathbb{E}(|\tr(D_{-j})|^q)
	\end{split}
\end{align}
for some constant $K_7(z)>0$.
Using the uniform spectral bounds on 
$R_n(z)$, $\hat{R}_n(z)$, $\hat{R}_{n,-j}(z)$, and $\Sigma_n$, 
together with the assumption $\|\Theta_n\|_{F} = O(p^{-1/2})$, 
it follows that, for each $z \in \mathbb{C} \setminus \mathbb{R}_{\ge 0}$ 
and each $j \in \{1,\dots,p\}$,
\begin{align}\label{D:bound2}
	2^{q-1} \, \mathbb{E}\bigl(|\operatorname{tr}(D_{-j})|^q\bigr)
	\leq K_8(z)\, \operatorname{tr}(\Theta_n)^q
	\leq p^{q/2} K_8(z) \, \|\Theta_n\|_{F}^q
	\le K_9(z),
\end{align}
for some constants $K_8(z), K_9(z) > 0$.
Combining the bounds in \eqref{D:bound1}--\eqref{D:bound2}, we deduce that, for each $z\in \mathbb{C}\setminus \mathbb{R}_{\geq 0}$ and $j\in \{1,\dots,p\}$,
\[\mathbb{E}(|z_j^\top D_{-j} z_j|^q)\leq \frac{K_7(z)}{p^{q/2}}+K_9(z).
\]
Hence, using Markov's inequality, the Jensen inequality and the preceeding display, we obtain for arbitrary $z\in \mathbb{C}\setminus \mathbb{R}_{\geq 0}$ and $\delta > 0$,
\begin{align*}
	\mathbb{P}\Bigg(
	\Big|\frac{1}{n^2} \sum_{j=1}^n z_j^\top D_{-j} z_j \Big| > \delta
	\Bigg)
	\le \frac{1}{(n\delta)^q}\mathbb{E}\Bigg(
	\Big|\frac{1}{n} \sum_{j=1}^n z_j^\top D_{-j} z_j \Big|^q 
	\Bigg)\leq \frac{1}{(n\delta)^q}
	\frac{1}{n} \sum_{j=1}^n \mathbb{E}\bigl( |z_j^\top D_{-j} z_j|^q 
	\bigr) \leq \frac{K_{10}(z)}{n^q},
\end{align*}
for some constant $K_{10}(z)>0$.  
Since $q>1$ yields $K_{10}(z) \sum_{n=1}^\infty n^{-q} < \infty$,
we conclude by the Borel-Cantelli lemma that for each $z\in \mathbb{C}\setminus \mathbb{R}_{\geq 0}$, \[
\frac{1}{n^2} \sum_{i=1}^n | z_i^\top D_{-i} z_i | \;\overset{\text{a.s.}}{\longrightarrow}\; 0.
\]
Putting all arguments together, we conclude that, for each $z \in \mathbb{C} \setminus \mathbb{R}_{\ge 0}$,
\begin{align}\label{fstepLPTh1}
\tr(\Theta_n \hat{R}_n(z)) - \tr(\Theta_n R_n(z))
\;\overset{\text{a.s.}}{\longrightarrow}\; 0.
\end{align}
This completes the first step of the proof.
 
The second step is considerably simpler. To this end, let $g:[0,\infty)\to\mathbb{R}$ be a continuous function. 
Note that, for all $n$, the matrix $g(\Sigma_n)$ has uniformly bounded spectral norm.
Moreover, by Assumption~\ref{A2}, we have
\[
\|p^{-1} g(\Sigma_n)\|_F \le p^{-1/2}\, \|g(\Sigma_n)\|_2 \le p^{-1/2}\, C',
\]
for some constant $C' > 0$. Hence, we may apply \eqref{fstepLPTh1} with $\Theta_n = p^{-1} g(\Sigma_n)$.
 
Recall that we have established that, for all $t \ge 0$ and $z \in \mathbb{C} \setminus \mathbb{R}$,
$|\Im(-z v(z) t - z)|^{-1} \le |\Im(z)|^{-1}$ (cf.~\eqref{R_nupper}), and therefore,
\[
\bigl|\Im\bigl((-z v(z) t - z)^{-1}\bigr)\bigr|
= \frac{|\Im(-z v(z) t - z)|}{\Re(-z v(z) t - z)^2 + \Im(-z v(z) t - z)^2}
\le \frac{1}{|\Im(-z v(z) t - z)|} \le \frac{1}{|\Im(z)|}.
\]
Moreover, applying the same argument used to obtain the bound on the real part in \eqref{bound:realandim}, we have, for all $t \ge 0$ and $z \in \mathbb{C} \setminus \mathbb{R}$,
\[
\bigl|\Re\bigl((-z v(z) t - z)^{-1}\bigr)\bigr|
= \frac{|\Re(-z v(z) t - z)|}{\Re(-z v(z) t - z)^2 + \Im(-z v(z) t - z)^2}
\le \frac{1}{2|\Im(-z v(z) t - z)|} \le \frac{1}{2|\Im(z)|}.
\]
In the case where $z \in \mathbb{R}_{<0}$, we obtain
\[
|- z v(z) t - z|^{-1} \le |z|^{-1}.
\]
Using Assumption~\ref{A2}, the continuity of $g$ and applying the Portmanteau theorem separately to the real and imaginary parts of the function
$t \;\mapsto\; g(t)\,(-z\, v(z)\, t - z)^{-1}$,
we obtain that, for each $z \in \mathbb{C} \setminus \mathbb{R}_{\ge 0}$,

\[
\frac{1}{p} \, \tr \bigl( g(\Sigma_n) (-z v(z) \Sigma_n - z I_p)^{-1} \bigr)
= \int_0^\infty \frac{g(t)}{-z v(z)\, t - z} \, dF_{\Sigma_n}(t)
\;\longrightarrow\;
\int_0^\infty \frac{g(t)}{-z v(z)\, t - z} \, dH(t),
\]
as \(n \to \infty\).
This completes the second step of the proof.

To extend the result to a function $g:[0,\infty)\to\mathbb{R}$ with finitely many discontinuities, we can use the same arguments as \citet[Theorem 2]{LedPeche2011}.


\end{proof}	

\begin{remark}\label{LpTh1:remark}
Recall from Theorem~\ref{Pan2010} that, for \(z \in \mathbb{C}^+\),
$v(z) = m_{\bar{F}}(z)$
is implicitly given by
\[
v(z) = -\Bigl(z - \gamma \int_0^\infty \frac{t }{1 + t v(z)}\, dH(t)\Bigr)^{-1}.
\]
Using the implicit representation of \(v(z)\) for \(z \in \mathbb{C}^+\), and after some straightforward algebra, one obtains
\[
v(z) = -(1 - \gamma)\frac{1}{z} + \gamma \int_0^\infty \frac{1}{- z v(z) t - z} \, dH(t).
\]
Combining the previous display with \eqref{limitStil}, we then conclude that, for \(z \in \mathbb{C}^+\),
\[
m(z) = -\Bigl(1 - \frac{1}{\gamma}\Bigr)\frac{1}{z} + \frac{1}{\gamma} v(z)
= \int_0^\infty \frac{1}{- z v(z) t - z} \, dH(t).
\]
Note that this implicit representation of \(m(z)\) can be extended to all \(z \in \mathbb{C} \setminus \mathbb{R}_{\ge 0}\) using Theorem~\ref{LPTh1}. 
Recall that, for each \(z \in \mathbb{C} \setminus \mathbb{R}_{\ge 0}\),
$
m_n(z) \;\overset{\text{a.s.}}{\longrightarrow}\; m(z),
$
(see Remark~\ref{rm:remark}). Combining this almost sure convergence with Theorem~\ref{LPTh1} applied to \(g(\Sigma_n) = I_p\), we obtain, for each \(z\in \mathbb{C}\setminus\mathbb{R}_{\geq 0}\),
\begin{equation}\label{imprep:m}
m_n(z)
= \frac{1}{p}\,\tr\!\bigl( (\hat{\Sigma}_n -z I_p)^{-1} \bigr)
\;\overset{\text{a.s.}}{\longrightarrow}\;
\int_0^\infty \frac{1}{- z v(z) t -z} \, dH(t) = m(z).
\end{equation}

\end{remark}

\medskip

\begin{proof}[Proof of Lemma~\ref{LPmod}]
For the first assertion of the lemma, consider \(g(\Sigma_n) = \Sigma_n\).
By Theorem~\ref{LPTh1}, for each $\lambda>0$ we have
\begin{align}\label{lemma:3.8:p1}
	\frac{1}{p}\,\tr\!\bigl(\Sigma_n (\hat{\Sigma}_n + \lambda I_p)^{-1}\bigr)
	\;\overset{\text{a.s.}}{\longrightarrow}\;
	\int_0^\infty \frac{t}{\lambda v(-\lambda) t + \lambda}\, dH(t)
	\eqqcolon h(\lambda),
\end{align}
where the null set associated with the almost sure convergence depends on the choice of $\lambda$. We first show that
\begin{align}\label{Lem3.7I}
	h(\lambda)
	= \frac{1}{\gamma}\,
	\frac{1 - \lambda v(-\lambda)}{1 - \gamma\bigl(1 - \lambda v(-\lambda)\bigr)},
\end{align}
for each $\lambda>0$.
In the second step, we verify that the convergence in \eqref{lemma:3.8:p1} holds for all \(\lambda>0\) on a single null set that does not depend on \(\lambda\). We rely on the following facts.  

First, using \eqref{imprep:m} for \(z = -\lambda, \lambda>0\), we obtain
\[
m_n(-\lambda)
= \frac{1}{p}\,\tr\!\bigl( (\hat{\Sigma}_n + \lambda I_p)^{-1} \bigr)
\;\overset{\text{a.s.}}{\longrightarrow}\;
\int_0^\infty\frac{1}{t\lambda v(-\lambda)+\lambda} = m(-\lambda).
\] Second, since \(v(-\lambda) > 0\), it follows that \(\lambda\, v(-\lambda) > 0\). Therefore, we obtain
\begin{align*}
	h(\lambda)=\int_0^\infty \frac{t}{\lambda v(-\lambda)\, t + \lambda} \, dH(t)
	&= \frac{1}{\lambda v(-\lambda)} \int_0^\infty \frac{t}{t + \frac{\lambda}{\lambda v(-\lambda)}} \, dH(t)= \frac{1}{\lambda v(-\lambda)} \int_0^\infty \frac{t + \frac{\lambda}{\lambda v(-\lambda)} - \frac{\lambda}{\lambda v(-\lambda)}}{t + \frac{\lambda}{\lambda v(-\lambda)}} \, dH(t)\\
	&= \frac{ 1 - \lambda h(\lambda)}{\lambda v(-\lambda)} = \frac{1 - \lambda m(-\lambda)}{\lambda v(-\lambda)}.
\end{align*}
Using \eqref{limitStil}, we deduce that
\[
1 - \lambda m(-\lambda) 
= \biggl( 1 - \biggl(1 - \frac{1}{\gamma}\biggr) - \frac{\lambda}{\gamma} \, v(-\lambda) \biggr) 
= \frac{1}{\gamma} \, \bigl(1 - \lambda v(-\lambda)\bigr),
\]
and hence,
\begin{align*}
h(\lambda)=\frac{1 - \lambda m(-\lambda)}{\lambda v(-\lambda)}
	= \frac{1}{\gamma} \frac{1 - \lambda v(-\lambda)}{\lambda v(-\lambda)}.
\end{align*}
This establishes the equality in \eqref{Lem3.7I}.

For convenience, for $n\in \mathbb{N}$, define the function $h_n:(0,\infty)\times \Omega\to \mathbb{R}$ by 
\[
h_n(\lambda,\omega) \coloneqq \frac{1}{p}\,\mathrm{tr}\!\Bigl(\Sigma_n (\hat{\Sigma}_n + \lambda I_p)^{-1}\Bigr).
\]
So far, we have established that for each \(\lambda > 0\) there exists a null set \(N_\lambda\) such that
\[
h_n(\lambda, \omega) \;\longrightarrow\; h(\lambda), 
\quad \text{as } n \to \infty, \text{ for all } \omega \in N_\lambda^{c}.
\]
We now show that the convergence above holds for all \(\lambda>0\) on a single null set that does not depend on \(\lambda\).

To this end, consider an arbitrary $m\in\mathbb{N}$, the dense subset $D_m\coloneqq[1/m,m]\cap \mathbb{Q}$ in $[1/m,m]$ and $N=\cup_{\lambda \in \mathbb{Q}\cap \mathbb{R}_{>0}} N_\lambda$. Since \(N\) is a countable union of sets of measure zero, it also has measure zero. We thus conclude that, for each \(\lambda \in D_m\),
\begin{align}\label{countcon1}
	h_n(\lambda, \omega) \;\longrightarrow\; h(\lambda), 
	\quad \text{as } n \to \infty, \text{ for all } \omega \in N^c.
\end{align}
For the remaining part of the proof fix an arbitrary $\omega\in N^c$ and we are going to suppress the dependence on $\omega$ for notational simplicity. Next, we collect several facts that will be useful in the remainder of the proof.

First, by \citet[Theorem~6.28]{Klenke}, we have
\[
\frac{d}{d\lambda} v(-\lambda)
= \frac{d}{d\lambda}\int_0^\infty \frac{1}{s+\lambda}\, d\bar{F}(s)
= - \int_0^\infty \frac{1}{(s+\lambda)^{2}}\, d\bar{F}(s).
\]
Hence, \(\tfrac{d}{d\lambda} v(-\lambda) < 0\) for all $\lambda>0$, which implies that \(\lambda \mapsto v(-\lambda)\) is strictly decreasing for $\lambda>0$.
Second, since $\lambda \mapsto v(-\lambda)$ is differentiable for $\lambda>0$, it is also continuous on $(0,\infty)$. 
Third, by the continuity of $\lambda \mapsto v(-\lambda)$ on $(0,\infty)$ and the representation of $h$ in \eqref{Lem3.7I}, it follows that $h$ is continuous on $(0,\infty)$ and uniformly continuous on $[1/m, m]$. 
Fourth, using the representation in \eqref{Lem3.7I} of $h$, we observe that the numerator is monotonically decreasing in $\lambda$, while the denominator is monotonically increasing in $\lambda$. 
It follows that \(h\) is monotonically decreasing in \(\lambda\). Furthermore, for each $n \in \mathbb{N}$, the function $h_n$ is monotonically decreasing in $\lambda$.
This follows directly from the spectral decomposition \(\hat{\Sigma}_n = \sum_{i=1}^p s_i u_i u_i^\top\).  
For \(0 < \lambda_1 \leq \lambda_2<\infty\) and all \(n \in \mathbb{N}\), we have
\[
h_n(\lambda_1) - h_n(\lambda_2)
= \frac{1}{p} \sum_{i=1}^p \left( \frac{u_i^\top \Sigma_n u_i}{s_i + \lambda_1} - \frac{u_i^\top \Sigma_n u_i}{s_i + \lambda_2} \right) \geq 0.
\]

We now continue with the proof. Since \(h\) is uniformly continuous on \([1/m, m]\) and \(D_m\) is dense in \([1/m, m]\), we may, for any \(\varepsilon>0\), find points
$
\frac{1}{m} = \lambda_0 < \lambda_1 < \cdots < \lambda_M = m,
$
such that
$
|h(\lambda_i) - h(\lambda_{i+1})| < \varepsilon/2$, for all $i \in \{0,\dots, M-1\}.
$
Now, let $\lambda \in [1/m, m]$ and $\varepsilon>0$ be arbitrary. 
Then there exists an index $i \in \{0,\dots, M-1\}$ such that
$\lambda \in [\lambda_i, \lambda_{i+1}]$.
Furthermore, by the monotonicity of the functions $\{h_n\}_{n\ge 1}$, we have
\begin{align}\label{monohlam}
	h_n(\lambda_i) \ge h_n(\lambda) \ge h_n(\lambda_{i+1}),
\end{align}
for all $n \in \mathbb{N}$.
Using \eqref{countcon1} we can choose an $n_0\in \mathbb{N}$, such that $|h_n(\lambda_i)-h(\lambda_i)|<\varepsilon/2$ and $|h_n(\lambda_{i+1})-h(\lambda_{i+1})|<\varepsilon/2$ for all $n\geq n_0$. Therefore, 
\begin{align}\label{hlamboundI}
	h_n(\lambda_i)&= h(\lambda_i)+ h_n(\lambda_i)-h(\lambda_i)< h(\lambda_i) +\frac{\varepsilon}{2}
\end{align}	
and
\begin{align}\label{hlamboundII}
	h_n(\lambda_{i+1})&=  h(\lambda_{i+1})+ h_n(\lambda_{i+1})- h(\lambda_{i+1}) >  h(\lambda_{i+1})-\frac{\varepsilon}{2},
\end{align}
for all $n\geq n_0$.
Putting together \eqref{monohlam}, \eqref{hlamboundI} and \eqref{hlamboundII}, we get
\begin{align*}
	h(\lambda_i)+\frac{\varepsilon}{2}> h_n(\lambda_i)\ge h_n(\lambda)\ge h_n(\lambda_{i+1})>h(\lambda_{i+1})-\frac{\varepsilon}{2},
\end{align*}
for all $n\geq n_0$.
Since $\lambda \in[\lambda_i,\lambda_{i+1}]$ and by the monotonicity of $h$ we deduce that $|h(\lambda_i)-h(\lambda)|<\varepsilon/2$ and $|h(\lambda_{i+1})-h(\lambda)|<\varepsilon/2$. Together with the previous display, we obtain
\[
h(\lambda) + \varepsilon \ge h_n(\lambda) \ge h(\lambda) - \varepsilon
\]
for \(n\geq n_0\) and therefore, for each \(\lambda \in [1/m, m]\),
\begin{align}\label{countcon2}
	h_n(\lambda, \omega) \;\longrightarrow\; h(\lambda), 
	\quad \text{as } n \to \infty, \text{ for all } \omega \in N^c.
\end{align}
Since the convergence in \eqref{countcon2} holds for every \(m \in \mathbb{N}\), we conclude that, for each \(\lambda > 0\),
\begin{align}\label{countcon}
	h_n(\lambda, \omega) \;\longrightarrow\; h(\lambda),
	\quad \text{as } n \to \infty, \text{ for all } \omega \in N^c.
\end{align}

For the second statement of the lemma, note that
\[
h_n'(\lambda) \coloneqq \frac{d}{d\lambda} h_n(\lambda)
= - \frac{1}{p}\,\mathrm{tr}\!\Bigl(\Sigma_n (\hat{\Sigma}_n + \lambda I_p)^{-2} \Bigr),
\]
for all $\lambda>0$.
Recall that
$\frac{d}{d\lambda} v(-\lambda)
= - \int_0^\infty \frac{1}{(s+\lambda)^2}\, d\bar{F}(s),$
and together with the representation of $h$ in \eqref{Lem3.7I}, we obtain that the derivative $\frac{d}{d\lambda}h(\lambda)$ exists for all $\lambda>0$ and satisfies
\begin{align*}
	h'(\lambda)
	\coloneqq \frac{d}{d\lambda} 
	\bigg(
	\frac{1}{\gamma}\,
	\frac{1-\lambda v(-\lambda)}{\lambda v(-\lambda)}
	\bigg)= -\,\frac{1}{\gamma}\,
	\frac{v(-\lambda)+\lambda\,\tfrac{d}{d\lambda} v(-\lambda)}{(\lambda v(-\lambda))^2}.
\end{align*}
Throughout the remainder of the proof, let $0 < \lambda_1 < \lambda_2 < \infty$, and recall that we consider a fixed $\omega \in N^c$.  
From \eqref{countcon}, it follows that, for each $\lambda \in [\lambda_1, \lambda_2]$, 
\[
h_n(\lambda) \longrightarrow h(\lambda) \quad \text{as } n \to \infty.
\]
We shall prove that $h_n'$ converges uniformly to $h'$ on $[\lambda_1, \lambda_2]$, which is stronger than the statement given in the theorem.

To this end, define
$M_n \coloneqq \sup_{x\in[\lambda_1,\lambda_2]}
\bigl| h_n'(x) - h'(x) \bigr|.$
Suppose, for the sake of contradiction, that
\(\limsup_{n\to\infty} M_n > 0\).
Then there exists a subsequence \(\{n_k\}_{k\geq 1}\) such that
\(M_{n_k} \to c > 0\) as $k\to \infty$.
We next collect several properties of the family of functions
\(\{h_{n_k}'\}_{k\ge 1}\)
that will be used to derive the desired contradiction.

First, observe that the functions \(\{h_{n_k}'\}_{k \ge 1}\) are continuous in \(\lambda\) and satisfy
\[
|h_{n_k}'(\lambda)| \le \frac{C}{\lambda_1^2}, \quad \text{for all } \lambda \in [\lambda_1, \lambda_2] \text{ and all } k \ge 1,
\]
by Assumption~\ref{A2}. Second, for each \(k \ge 1\), the functions \(h_{n_k}'\) are \(C' \lambda_1^{-3}\)-Lipschitz on \([\lambda_1, \lambda_2]\), for some constant \(C' > 0\). It follows that the family of functions \(\{h_{n_k}'\}_{k\ge 1}\) is equicontinuous on \([\lambda_1,\lambda_2]\).
Third, by the previous two points and \citet[Theorems 7.12 and 7.25]{Rudin1976}, there exists a further subsequence \(\{n_{k_l}\}_{l \ge 1}\) such that
\[
h_{n_{k_l}}' \;\longrightarrow\; g' \quad \text{uniformly on } [\lambda_1, \lambda_2]
\]
as \(l \to \infty\), for some continuous function \(g'\).

We conclude, by the fundamental theorem of calculus and the dominated convergence theorem, that for all \(t \in [\lambda_1,\lambda_2]\),
\begin{align*}
	h(t) - h(\lambda_1) 
	&= \lim_{l\to \infty} \bigl(h_{n_{k_l}}(t) - h_{n_{k_l}}(\lambda_1)\bigr) = \lim_{l\to \infty} \int_{\lambda_1}^t h_{n_{k_l}}'(x)\, dx \\
	&= \int_{\lambda_1}^t \lim_{l\to \infty} h_{n_{k_l}}'(x)\, dx = \int_{\lambda_1}^t g'(x)\, dx.
\end{align*}

It follows that \(g' = h'\) almost everywhere on \([\lambda_1,\lambda_2]\), and by continuity of \(g'\) and \(h'\), also for all \(\lambda \in [\lambda_1,\lambda_2]\).  
In other words, \(M_{n_{k_l}} \to 0\) as $l\to \infty$, which contradicts the assumption that \(\limsup_{n\to\infty} M_n > 0\).

\end{proof}

\begin{proof}[Proof of Theorem~\ref{Ridgetuned}]
The first statement of Theorem~\ref{Ridgetuned} follows almost immediately from Lemma~\ref{LPmod} and the decomposition of \(\mathrm{R}_{\Sigma_n}(\hat{\beta}_R(\lambda))\) in Lemma~\ref{LemforTh1}\ref{dec2}.  
For \(\lambda > 0\), we have by Lemma~\ref{LemforTh1}\ref{dec2},
\begin{align}\label{Rdec}
	\begin{split}
		\mathrm{R}_{\Sigma_n}(\hat{\beta}_R(\lambda)) 
		&= \tr(\Sigma_n F)= \frac{\tau^2 \lambda^2}{p}\, \tr\Bigl(\Sigma_n (\hat{\Sigma}_n + \lambda I_p)^{-2}\Bigr)
		+ \frac{\sigma^2}{n}\, \tr\Bigl(\Sigma_n (\hat{\Sigma}_n + \lambda I_p)^{-2} \hat{\Sigma}_n \Bigr) \\
		&= \Bigl(\frac{\lambda^2 \tau^2}{p} - \frac{\lambda \sigma^2}{n}\Bigr)\, \tr\Bigl(\Sigma_n (\hat{\Sigma}_n + \lambda I_p)^{-2}\Bigr)
		+ \frac{\sigma^2}{n}\, \tr\Bigl(\Sigma_n (\hat{\Sigma}_n + \lambda I_p)^{-1}\Bigr) \\
		&=  (\lambda \tau^2 - \sigma^2 \gamma_n)\, \frac{\lambda}{p} \tr\Bigl(\Sigma_n (\hat{\Sigma}_n + \lambda I_p)^{-2}\Bigr)
		+ \frac{\sigma^2 \gamma_n}{p} \tr\Bigl(\Sigma_n (\hat{\Sigma}_n + \lambda I_p)^{-1}\Bigr).
	\end{split}
\end{align}
Using Lemma~\ref{LPmod} in the last line of \eqref{Rdec} and the convergence of $\gamma_n\to \gamma\in(0,\infty)$, we obtain almost surely that, for all \(\lambda > 0\),
\begin{align}\label{Ridgelimit}
	\mathrm{R}_{\Sigma_n}(\hat{\beta}_R(\lambda)) 
	\;\longrightarrow\;\mathrm{R}(\lambda), \quad \text{as } n\to\infty,
\end{align}
where 
\begin{align*}
	\mathrm{R}(\lambda)= 
	\lambda(\lambda \tau^2 - \sigma^2 \gamma) 
	\Biggl( \frac{1}{\gamma} \frac{v(-\lambda) + \lambda \frac{d}{d\lambda} v(-\lambda)}{(\lambda v(-\lambda))^2} \Biggr)
	+ \sigma^2  \Biggl( \frac{1 - \lambda v(-\lambda)}{\lambda v(-\lambda)} \Biggr).
\end{align*}
 This completes the first statement.

For the second statement, recall that 
\(\lambda_n^* = \frac{\sigma^2}{\tau^2}\,\gamma_n\).  
Substituting \(\lambda = \lambda_n^*\) into the last line of \eqref{Rdec} yields
\[
\mathrm{R}_{\Sigma_n}\bigl(\hat{\beta}_R(\lambda_n^*)\bigr)
= 
\frac{\sigma^2 \gamma_n}{p}\,
\mathrm{tr}\!\bigl(\Sigma_n (\hat{\Sigma}_n + \lambda_n^* I_p)^{-1}\bigr).
\]
Likewise, substituting  
\(\lambda^* = \frac{\sigma^2}{\tau^2}\,\gamma\) into the limit expression \(\mathrm{R}(\lambda)\) gives
\[
\mathrm{R}(\lambda^*)
=
\sigma^2  
\Biggl(
\frac{1 - \lambda^* v(-\lambda^*)}{\lambda^* v(-\lambda^*)}
\Biggr).
\]
The strategy is to decompose 
\[
\mathrm{R}_{\Sigma_n}\bigl(\hat{\beta}_R(\hat{\lambda}_n)\bigr)
=\Bigl[\mathrm{R}_{\Sigma_n}\bigl(\hat{\beta}_R(\hat{\lambda}_n)\bigr)
-\mathrm{R}_{\Sigma_n}\bigl(\hat{\beta}_R(\lambda_n^*)\bigr)\Bigr]
+\mathrm{R}_{\Sigma_n}\bigl(\hat{\beta}_R(\lambda_n^*)\bigr),
\]
and to show, in a first step, that
\[
\mathrm{R}_{\Sigma_n}\bigl(\hat{\beta}_R(\lambda_n^*)\bigr)
\;\overset{a.s.}{\longrightarrow}\;
\mathrm{R}(\lambda^*),
\]
and, in a second step, that
\[
\mathrm{R}_{\Sigma_n}\bigl(\hat{\beta}_R(\hat{\lambda}_n)\bigr)
-\mathrm{R}_{\Sigma_n}\bigl(\hat{\beta}_R(\lambda_n^*)\bigr)
\;\overset{\mathbb{P}}{\longrightarrow}\;0.
\]
Finally, we verify that $\mathrm{R}(\lambda^*)=\min_{\lambda>0} \mathrm{R}(\lambda)$.

To this end, we first establish some straightforward facts. Recall from \eqref{defestimatorlambda} that \(\hat{\lambda}_n \in [\lambda_1,\lambda_2]\) for all \(n\), and that \(\lambda_n^* \in [\lambda_1,\lambda_2]\) for all sufficiently large \(n\), where \(0<\lambda_1 \leq \lambda_2 < \infty\).
We write $h_n(\lambda)\coloneqq p^{-1} \tr\big(\Sigma_n(\hat{\Sigma}_n+ \lambda I_p\big)^{-1})$ for $\lambda>0$ and note that the derivative of $h_n(\lambda)$ with respect to $\lambda$ is given by $h_n^\prime(\lambda)\coloneqq-p^{-1} \tr\big(\Sigma_n(\hat{\Sigma}_n+ \lambda I_p)^{-2}\big)$. By Assumption~\ref{A2}, we additionally have
$|h_n'(\lambda)| \le C\lambda_1^{-2}$, for all $ \lambda \in [\lambda_1, \lambda_2]$.
Hence, using the mean value theorem and the fact that \(\lambda_n^* \in [\lambda_1, \lambda_2]\) for all sufficiently large \(n\), we obtain
\begin{align*}
	\bigl|\mathrm{R}_{\Sigma_n}(\hat{\beta}_R(\lambda_n^*)) - \mathrm{R}_{\Sigma_n}(\hat{\beta}_R(\lambda^*)) \bigr|
	&=
	\Biggl|
	\frac{\sigma^2 \gamma_n}{p}\,\mathrm{tr}\!\bigl(\Sigma_n (\hat{\Sigma}_n + \lambda_n^* I_p)^{-1}\bigr)
	-
	\frac{\sigma^2 \gamma_n}{p}\,\mathrm{tr}\!\bigl(\Sigma_n (\hat{\Sigma}_n + \lambda^* I_p)^{-1}\bigr)
	\Biggr|\\
	&\le \frac{C'}{\lambda_1^2} \, |\lambda_n^* - \lambda^*|,
\end{align*}
for all sufficiently large $n$, where $C'>0$ is some constant.
Using \(|\lambda_n^* - \lambda^*| \to 0\) as \(n \to \infty\), the previous display, and \eqref{Ridgelimit} evaluated at \(\lambda = \lambda^*\), we conclude that
\begin{align}\label{IIlimopt}
	\begin{split}
		\mathrm{R}_{\Sigma_n}\bigl(\hat{\beta}_R(\lambda_n^*)\bigr)
		&=
		\frac{\sigma^2 \gamma_n}{p} \,
		\mathrm{tr}\!\bigl(\Sigma_n (\hat{\Sigma}_n + \lambda_n^* I_p)^{-1}\bigr)
		-
		\frac{\sigma^2 \gamma_n}{p} \,
		\mathrm{tr}\!\bigl(\Sigma_n (\hat{\Sigma}_n + \lambda^* I_p)^{-1}\bigr)\\
		&\quad+
		\frac{\sigma^2 \gamma_n}{p} \,
		\mathrm{tr}\!\bigl(\Sigma_n (\hat{\Sigma}_n + \lambda^* I_p)^{-1}\bigr)
		\;\overset{\text{a.s.}}{\longrightarrow}\; 
		R(\lambda^*).
	\end{split}
\end{align}
This completes the first step of the second statement.

We proceed with the second step. By the Jensen inequality, the Cauchy--Schwarz inequality, Assumption~\ref{A2}, and the triangle inequality, we obtain
\begin{align}
	\bigl|\mathrm{R}_{\Sigma_n}(\hat{\beta}_R(\hat{\lambda}_n)) - \mathrm{R}_{\Sigma_n}(\hat{\beta}_R(\lambda_n^*))\bigr|
	&= \Bigl|\mathbb{E}\Bigl[
	(\hat{\beta}_R(\hat{\lambda}_n)-\beta)^\top \Sigma_n (\hat{\beta}_R(\hat{\lambda}_n)-\beta) 
	-
	(\hat{\beta}_R(\lambda_n^*)-\beta)^\top \Sigma_n (\hat{\beta}_R(\lambda_n^*)-\beta)
	\;\Big|\; X\Bigr]\Bigr| \notag \\
	&\leq \mathbb{E}\Bigl[\bigl|
	(\hat{\beta}_R(\hat{\lambda}_n) + \hat{\beta}_R(\lambda_n^*) - 2\beta)^\top \Sigma_n (\hat{\beta}_R(\hat{\lambda}_n) - \hat{\beta}_R(\lambda_n^*))
	\bigr|\;\Big|\; X\Bigr] \notag \\
	&\le \mathbb{E}\Bigl[
	\bigl\| \Sigma_n (\hat{\beta}_R(\hat{\lambda}_n) + \hat{\beta}_R(\lambda_n^*) - 2\beta) \bigr\|_2
	\,
	\bigl\| \hat{\beta}_R(\hat{\lambda}_n) - \hat{\beta}_R(\lambda_n^*) \bigr\|_2
	\;\Big|\; X\Bigr] \notag \\
	&\le C \, \mathbb{E}\Bigl[
	\bigl\| \hat{\beta}_R(\hat{\lambda}_n) + \hat{\beta}_R(\lambda_n^*) - 2\beta \bigr\|_2
	\,
	\bigl\| \hat{\beta}_R(\hat{\lambda}_n) - \hat{\beta}_R(\lambda_n^*) \bigr\|_2
	\;\Big|\; X\Bigr] \notag \\
	&\le C \, \mathbb{E}\Bigl[
	\bigl\| \hat{\beta}_R(\hat{\lambda}_n) + \hat{\beta}_R(\lambda_n^*) \bigr\|_2
	\,
	\bigl\| \hat{\beta}_R(\hat{\lambda}_n) - \hat{\beta}_R(\lambda_n^*) \bigr\|_2
	\;\Big|\; X\Bigr] \label{RidgeriskupperIi} \\
	&\quad + 2 C \, \mathbb{E}\Bigl[
	\|\beta\|_2
	\,
	\bigl\| \hat{\beta}_R(\hat{\lambda}_n) - \hat{\beta}_R(\lambda_n^*) \bigr\|_2
	\;\Big|\; X\Bigr]. \label{RidgeriskupperIii}
\end{align}
Note that 
	\begin{align*}
		\hat{\beta}_R(\hat{\lambda}_n)+\hat{\beta}_R(\lambda^*_n))&=((X^\top X+n\hat{\lambda}_n I_p)^{-1} +(X^\top X+n\lambda_n^* I_p)^{-1})X^\top y
		\\&=((\hat{\Sigma}_n+\hat{\lambda}_n I_p)^{-1} +(\hat{\Sigma}_n+\lambda_n^* I_p)^{-1})\frac{X^\top y}{n}.
	\end{align*}
We define  
\begin{align*}
	A\coloneqq (\hat{\Sigma}_n+\hat{\lambda}_n I_p)^{-1} +(\hat{\Sigma}_n+\lambda_n^* I_p)^{-1}\quad\text{and}\quad B\coloneqq (\hat{\Sigma}_n+\hat{\lambda}_n I_p)^{-1} -(\hat{\Sigma}_n+\lambda_n^* I_p)^{-1}.
\end{align*}
Let $\{s_i\}_{i=1}^p$ denote the eigenvalues of $\hat{\Sigma}_n$.  
By the definition of $\hat{\lambda}_n$, we have $\hat{\lambda}_n \in [\lambda_1, \lambda_2]$. Moreover, $\lambda_n^* \in [\lambda_1, \lambda_2]$ for all sufficiently large $n$.
Using these facts, the triangle inequality yields
\begin{align}\label{BoundA}
	\begin{split}
	\|A\|_2
	&= \max_{1\leq i\leq p} \biggl| \frac{1}{s_i + \hat{\lambda}_n} + \frac{1}{s_i + \lambda_n^*} \biggr| \\
	&\le \max_{1\leq i\leq p} \biggl( \biggl| \frac{1}{s_i + \hat{\lambda}_n} \biggr| + \biggl| \frac{1}{s_i + \lambda_n^*} \biggr| \biggr) \le \frac{2}{\lambda_1},
	\end{split}
\end{align}
for all sufficiently large $n$. Similarly, invoking a Lipschitz–continuity argument and writing 
$g_i(\lambda) \coloneqq (s_i+\lambda)^{-1}$, $i\in\{1,\ldots,p\}$ and $\lambda>0$, 
for the eigenvalues of $(\hat{\Sigma}_n+\lambda I_p)^{-1}$, with 
$g_i'(\lambda) \coloneqq \frac{d}{d\lambda} g_i(\lambda) = -(s_i+\lambda)^{-2}$,
we obtain, for all sufficiently large $n$, that
	\begin{align}\label{LipschitzB}
		\begin{split}
		\|B\|_2
		&= \max_{1\leq i\leq p}
		\left\{
		\left|
		\frac{1}{s_i+\hat{\lambda}_n}
		- 
		\frac{1}{s_i+\lambda_n^*}
		\right|
		\right\}\\
		&\le 
		\max_{1\leq i\leq p}
		\left\{
		\sup_{\lambda\in[\lambda_1,\lambda_2]}
		|g_i'(\lambda)|\,\big|\hat{\lambda}_n-\lambda_n^*\big|
		\right\}
		\le 
		\frac{1}{\lambda_1^{2}}\,\big|\hat{\lambda}_n-\lambda_n^*\big|.
		\end{split}
	\end{align}
Using the bounds in \eqref{BoundA} and \eqref{LipschitzB} for the quantity in
\eqref{RidgeriskupperIi}, and noting that
\[
s_{\max}(\hat{\Sigma}_n)
\le 
s_{\max}(\Sigma_n)\, s_{\max}(n^{-1}Z^\top Z)
\le 
C\, s_{\max}(n^{-1}Z^\top Z),
\]
we obtain
\begin{align}\label{RidgeriskupperII}
		C\,\mathbb{E}\!\left(
		\|\hat{\beta}_R(\hat{\lambda}_n)+\hat{\beta}_R(\lambda_n^*)\|_2\,
		\|\hat{\beta}_R(\hat{\lambda}_n)-\hat{\beta}_R(\lambda_n^*)\|_2
		\,\big|\, X
		\right)
		&\le 
		C\,\frac{2}{\lambda_1^3}\,
		\mathbb{E}\!\left(
		\frac{\|X^\top y\|_2^2}{n^2}\,
		|\hat{\lambda}_n-\lambda_n^*|
		\,\big|\, X
		\right)\notag
		\\[0.4em]
		&\le 
		C^2\,\frac{2\,s_{\max}(n^{-1}Z^\top Z)}{\lambda_1^3}\,
		\mathbb{E}\!\left(
		\frac{\|y\|_2^2}{n}\,
		|\hat{\lambda}_n-\lambda_n^*|
		\,\big|\, X
		\right).
\end{align}
Using the decomposition
\[
\frac{\|y\|_2^2}{n}
= \frac{\|X\beta + u\|_2^2}{n}
\le \frac{(\|X\beta\|_2 + \|u\|_2)^2}{n}
\le 2\,\beta^\top \hat{\Sigma}_n \beta
+ 2\,\frac{\|u\|_2^2}{n},
\]
and substituting this bound into the conditional expectation in
\eqref{RidgeriskupperII}, we obtain
\begin{align}\label{RidgeriskupperIII}
	\mathbb{E}\!\left(
	\frac{\|y\|_2^2}{n}\,
	|\hat{\lambda}_n - \lambda_n^*|
	\,\big|\, X
	\right)
	&\le
	2\,\mathbb{E}\!\left(
	\beta^\top \hat{\Sigma}_n \beta\,
	|\hat{\lambda}_n - \lambda_n^*|
	\,\big|\, X
	\right)
	+
	2\,\mathbb{E}\!\left(
	\frac{\|u\|_2^2}{n}\,
	|\hat{\lambda}_n - \lambda_n^*|
	\,\big|\, X
	\right).
\end{align}
Next, we show that
\[
\mathbb{E}\Bigl(
\beta^\top \hat{\Sigma}_n \beta \, |\hat{\lambda}_n - \lambda_n^*|
\,\big|\, X
\Bigr)
\;\overset{\mathbb{P}}{\longrightarrow}\; 0
\quad\text{and}\quad
\mathbb{E}\left(
\frac{\|u\|_2^2}{n} \, |\hat{\lambda}_n - \lambda_n^*|
\,\big|\, X
\right)
\;\overset{\mathbb{P}}{\longrightarrow}\; 0.
\]
By Assumption~\ref{A5} and the Cauchy--Schwarz inequality, we have
\[
\mathbb{E}\bigl(\|\beta\|_2^4\bigr)
\leq \sum_{i=1}^p \mathbb{E}(\beta_i^4)
+ \sum_{\substack{i,j=1 \\ i\neq j}}^p \mathbb{E}(\beta_i^4)^{1/2} \mathbb{E}(\beta_j^4)^{1/2}
\le \nu_{4,\beta}.
\]
Together with the conditional Cauchy--Schwarz inequality and the independence of $\beta$ and $X$,
this implies that
\begin{align}\label{IIIupperbound}
	\begin{split}
		\mathbb{E}\Bigl(
		\beta^\top \hat{\Sigma}_n \beta \, |\hat{\lambda}_n - \lambda_n^*|
		\,\big|\, X
		\Bigr)
		&\le
		s_{\max}(\hat{\Sigma}_n)\,
		\mathbb{E}\Bigl(
		\|\beta\|_2^2 \, |\hat{\lambda}_n - \lambda_n^*|
		\,\big|\, X
		\Bigr)
		\\[0.3em]
		&\le
		s_{\max}(\hat{\Sigma}_n)\,
		\mathbb{E}\bigl(\|\beta\|_2^4\bigr)^{1/2}
		\,
		\mathbb{E}\Bigl(
		|\hat{\lambda}_n - \lambda_n^*|^2
		\,\big|\, X
		\Bigr)^{1/2}
		\\[0.3em]
		&\leq
		C \, s_{\max}(n^{-1} Z^\top Z)\,\,
	\nu_{4,\beta}^{1/2}\,
		\,
		\mathbb{E}\Bigl(
		|\hat{\lambda}_n - \lambda_n^*|^2
		\,\big|\, X
		\Bigr)^{1/2}.
	\end{split}
\end{align}
By Theorem~\ref{est1}, the continuous mapping theorem, and Assumption~\ref{A1}, we have
\[
|\hat{\lambda}_n - \lambda_n^*| \le |\hat{\lambda}_n - \lambda^*| + |\lambda_n^* - \lambda^*| \;\overset{\mathbb{P}}{\longrightarrow}\; 0,
\qquad
|\hat{\lambda}_n - \lambda_n^*|^k \;\overset{\mathbb{P}}{\longrightarrow}\; 0,\,k\in \mathbb{N}.
\]
Moreover, there exists a constant \(K<\infty\) such that \(|\hat{\lambda}_n - \lambda_n^*| \le K\) for all \(n \in \mathbb{N}\), which follows from the boundedness of the sequence \(\{\lambda_n^*\}_{n\ge 1}\) and the truncation of the estimator \(\hat{\lambda}_n\).
Hence, by the Markov inequality, the tower property, and the dominated convergence theorem, 
we obtain for any $\varepsilon>0$ and $k\in \mathbb{N}$,
\begin{align}\label{exp:lambdaconv}
	\mathbb{P}\Bigl(\, \bigl| \mathbb{E}\bigl[|\hat{\lambda}_n - \lambda_n^*|^k \mid X\bigr] \bigr| > \varepsilon \,\Bigr)
	&\le 
	\frac{\mathbb{E}\bigl(\mathbb{E}[|\hat{\lambda}_n - \lambda_n^*|^k \mid X]\bigr)}{\varepsilon}= \frac{\mathbb{E}(|\hat{\lambda}_n - \lambda_n^*|^k)}{\varepsilon} 
	\;\longrightarrow\; 0,
\end{align}
as $n \to \infty$.
Revisiting \eqref{IIIupperbound}, we conclude that, by Lemma~\ref{lebip} and the previous considerations,
\begin{align}\label{RidgeriskupperIV}
	\mathbb{E}\Bigl(
	\beta^\top \hat{\Sigma}_n \beta \, |\hat{\lambda}_n - \lambda_n^*|
	\,\big|\, X
	\Bigr)
	\;\overset{\mathbb{P}}{\longrightarrow}\; 0.
\end{align}
Similarly, by Assumption~\ref{A5} and the Cauchy--Schwarz inequality, we have
\[
\mathbb{E}\!\left[\frac{\|u\|_2^4}{n^2}\right]
= \frac{1}{n^2} \sum_{i=1}^n \mathbb{E}(u_i^{4})
+ \frac{1}{n^2} \sum_{\substack{i,j=1\\ i\neq j}}^{n} \mathbb{E}(u_i^{2} u_j^{2})
\le \nu_{4,u}.
\]
Together with the conditional Cauchy--Schwarz inequality and the independence of $u$ and $X$, this implies
\begin{align*}
	\mathbb{E}\left(\frac{\|u\|_2^2}{n} \, |\hat{\lambda}_n - \lambda_n^*| \,\big|\, X \right)
	&\le 
	\mathbb{E}\left[\frac{\|u\|_2^4}{n^2}\right]^{1/2} \,
	\mathbb{E}\Bigl(|\hat{\lambda}_n - \lambda_n^*|^2 \,\big|\, X \Bigr)^{1/2} \\
	&\le 
	\nu_{4,u}^{1/2} \,\,
	\mathbb{E}\Bigl(|\hat{\lambda}_n - \lambda_n^*|^2 \,\big|\, X \Bigr)^{1/2}.
\end{align*}
Using \eqref{exp:lambdaconv},
it follows that
\begin{equation}\label{RidgeriskupperV}
	\mathbb{E}\left(\frac{\|u\|_2^2}{n} \, |\hat{\lambda}_n - \lambda_n^*| \,\big|\, X \right) \;\overset{\mathbb{P}}{\longrightarrow}\; 0,
\end{equation}
as \(n \to \infty\).
Putting together the arguments above, we conclude that the quantity in \eqref{RidgeriskupperIi} converges in probability to zero.

Next, we show that the quantity in \eqref{RidgeriskupperIii} also converges in probability to zero using similar arguments as above.  
Observe that, 
\begin{align*}
	\|\hat{\beta}_R(\hat{\lambda}_n) - \hat{\beta}_R(\lambda_n^*)\|_2^2
	&\le \|B\|_2^2 \, \frac{\|X^\top y\|_2^2}{n^2} 
	\le C \, s_{\max}(n^{-1} Z^\top Z) \|B\|_2^2 \, \frac{\|y\|_2^2}{n} \\
	&\le C \, s_{\max}(n^{-1} Z^\top Z) \, \frac{1}{\lambda_1^4} \, |\hat{\lambda}_n - \lambda_n^*|^2 \, \frac{\|y\|_2^2}{n},
\end{align*}
for all sufficiently large \(n\), and
\[
\left[C \, s_{\max}(n^{-1} Z^\top Z) \frac{1}{\lambda_1^4} 
\mathbb{E}\left(\frac{\|y\|_2^2}{n} \, |\hat{\lambda}_n - \lambda_n^*|^2 \,\bigg|\, X \right) \right]^{1/2} \;\overset{\mathbb{P}}{\longrightarrow}\; 0, \quad \text{as } n \to \infty.
\]
Therefore, for the expression in \eqref{RidgeriskupperIii}, it follows from the conditional Cauchy--Schwarz inequality, the equality
\(\mathbb{E}(\|\beta\|_2^2 \mid X) = \mathbb{E}(\|\beta\|_2^2) = \tau^2\), 
and the previous display that
\begin{align*}
		\mathbb{E}\Bigl(\|\beta\|_2 \, \|\hat{\beta}_R(\hat{\lambda}_n) - \hat{\beta}_R(\lambda_n^*)\|_2 \,\big|\, X \Bigr)
		&\le \mathbb{E}(\|\beta\|_2^2 \mid X)^{1/2} \,
		\mathbb{E}\bigl[\|\hat{\beta}_R(\hat{\lambda}_n) - \hat{\beta}_R(\lambda_n^*)\|_2^2 \mid X\bigr]^{1/2} \\
		&\le \tau \, \left[C \, s_{\max}(n^{-1} Z^\top Z) \frac{1}{\lambda_1^4} 
		\mathbb{E}\left(\frac{\|y\|_2^2}{n} \, |\hat{\lambda}_n - \lambda_n^*|^2 \,\bigg|\, X \right) \right]^{1/2} 
		\;\overset{\mathbb{P}}{\longrightarrow}\; 0.
\end{align*}
Putting everything together, we have shown that the quantities in \eqref{RidgeriskupperIi} and \eqref{RidgeriskupperIii} both converge to zero in probability. Hence,
\begin{equation*}
\big|\mathrm{R}_{\Sigma_n}(\hat{\beta}_R(\hat{\lambda}_n)) - \mathrm{R}_{\Sigma_n}(\hat{\beta}_R(\lambda_n^*))\big| \;\overset{\mathbb{P}}{\longrightarrow}\; 0,\quad \text{as }n\to\infty.
\end{equation*}

It remains to verify that \(\lambda^*\) satisfies
$R(\lambda^*) = \min_{\lambda > 0} R(\lambda)$.  
Consider the spectral decomposition \(\hat{\Sigma}_n = \sum_{i=1}^{p} s_i v_i v_i^\top\) and the decomposition in Lemma~\ref{LemforTh1}\ref{dec2}. Then, for all \(\lambda > 0\) and $n\in \mathbb{N}$, we have
\begin{align}\label{optlam*}
	\begin{split}
		\text{R}_{\Sigma_n}(\hat{\beta}_R(\lambda_n^*)) 
		=
		 \sum_{i=1}^{p} v_i^\top \Sigma_n v_i \, f_i(\lambda^*) \le \sum_{i=1}^{p} v_i^\top \Sigma_n v_i \, f_i(\lambda)  
		= \text{R}_{\Sigma_n}(\hat{\beta}_R(\lambda)).
	\end{split}
\end{align}
Using \eqref{Ridgelimit}, \eqref{IIlimopt} and \eqref{optlam*}, we conclude that
\begin{align*}
		R(\lambda^*)\leq R(\lambda),
\end{align*}
for all $\lambda>0$ and therefore proving the optimality of $\lambda^*$.
	
\end{proof}

\begin{proof}[Proof of Theorem~\ref{GDtuned}]
In the proof of the theorem, we will frequently simplify notation by writing 
\(\hat{\beta}_{t}(\hat{\lambda}_n,\hat{\eta}_n)\) in place of 
\(\hat{\beta}_{t}(\hat{\lambda}_n,\hat{\eta}_n(\hat{\lambda}_n))\).  
This is justified because the step size \(\hat{\eta}_n(\hat{\lambda}_n)\) is tuned using the same
regularization parameter that appears in the first argument of the gradient-descent estimator.  

For the first statement of the theorem, we use the same bounds as in the proof of Theorem~\ref{Ridgetuned} (cf.\ \eqref{RidgeriskupperIi} and \eqref{RidgeriskupperIii}) to obtain
\begin{align}
	\bigl|
	\mathrm{R}_{\Sigma_n}\!\bigl(\hat{\beta}_{t}(\hat{\lambda}_n,\hat{\eta}_n)\bigr)
	&-
	\mathrm{R}_{\Sigma_n}\!\bigl(\hat{\beta}_{t}(\lambda_n^*,\hat{\eta}_n)\bigr)
	\bigr|\notag\\
	&\le
	C \Biggl[
	\mathbb{E}\!\Bigl(
	\|\hat{\beta}_{t}(\hat{\lambda}_n,\hat{\eta}_n)
	+\hat{\beta}_{t}(\lambda_n^*,\hat{\eta}_n)\|_2\,
	\|\hat{\beta}_{t}(\hat{\lambda}_n,\hat{\eta}_n)
	-\hat{\beta}_{t}(\lambda_n^*,\hat{\eta}_n)\|_2
	\,\big|\, X \Bigr)  \label{GDupperI}
	\\ &\qquad\quad
	+\,2\,\mathbb{E}\!\Bigl(
	\|\beta\|_2\,
	\|\hat{\beta}_{t}(\hat{\lambda}_n,\hat{\eta}_n)
	-\hat{\beta}_{t}(\lambda_n^*,\hat{\eta}_n)\|_2
	\,\Big|\, X \Bigr)
	\Biggr]. \label{GDupperII}
\end{align}
We now develop all the upper bounds required to show that the quantities in \eqref{GDupperI} and \eqref{GDupperII} converge to zero in probability.

By Proposition~\ref{prop1}, the RGD estimator $\hat{\beta}_{t}(\lambda,\hat{\eta}_n)$ admits the representation
\[
\hat{\beta}_{t}(\lambda,\hat{\eta}_n)
=
\hat{\beta}_{R}(\lambda)
- A_\lambda^{\,t}\, \hat{\beta}_{R}(\lambda)
+ A_\lambda^{\,t}\,\theta,
\qquad 
\lambda > 0,\ t\in\mathbb{N},
\]
where $A_\lambda \coloneqq I_p - \hat{\eta}_n(\lambda)(\hat{\Sigma}_n + \lambda I_p)$.
Introduce
\[
B_\lambda \coloneqq (\hat{\Sigma}_n + \lambda I_p)^{-1}\bigl(I_p - A_\lambda^t\bigr),
\qquad 
\lambda > 0,\ t\in \mathbb{N},
\]
so that, by the triangle inequality,
\begin{align}\label{GDriskupperII}
	\begin{split}
	\bigl\|
	\hat{\beta}_{t}(\hat{\lambda}_n,\hat{\eta}_n)
	+
	\hat{\beta}_{t}(\lambda_n^*,\hat{\eta}_n)
	\bigr\|_2
	&=
	\bigl\|
	(B_{\hat{\lambda}_n} + B_{\lambda_n^*})\tfrac{X^\top y}{n}
	+
	\bigl(A_{\hat{\lambda}_n}^{\,t}
	+
	A_{\lambda_n^*}^{\,t}\bigr)\theta
	\bigr\|_2 \\[0.4em]
	&\le
	\bigl\|(B_{\hat{\lambda}_n} + B_{\lambda_n^*})\tfrac{X^\top y}{n}\bigr\|_2
	+
	\bigl\|\bigl(A_{\hat{\lambda}_n}^{\,t}
	+
	A_{\lambda_n^*}^{\,t}\bigr)\theta\bigr\|_2.
	\end{split}
\end{align}
Denote by $s_1\geq s_2\geq\dots\geq s_p$ the ordered eigenvalues of $\hat{\Sigma}_n$.  
It is straightforward to verify that $B_\lambda$ is simultaneously diagonalizable for all $\lambda>0$ and $t\in \mathbb{N}$. We now bound the spectral norm of $B_{\hat{\lambda}_n} + B_{\lambda_n^*}$ using the triangle inequality and Lemma~\ref{ext} for 
$x = \hat{\eta}_n(\hat{\lambda}_n)(s_i+\hat{\lambda}_n)$ and 
$x = \hat{\eta}_n(\lambda_n^*)(s_i+\lambda_n^*)$. We obtain
\begin{align}\label{GDriskupperIII}
	\begin{split}
		\bigl\|B_{\hat{\lambda}_n} + B_{\lambda_n^*}\bigr\|_2
		&= \max_{1\leq i\leq p} \Biggl\{\Biggl| 
		\frac{1-(1-\hat{\eta}_n(\hat{\lambda}_n)(s_i+\hat{\lambda}_n))^t}{s_i+\hat{\lambda}_n} 
		+ \frac{1-(1-\hat{\eta}_n(\lambda_n^*)(s_i+\lambda_n^*))^t}{s_i+\lambda_n^*} 
		\Biggr|\Biggr\} \\
		&\le \max_{1\leq i\leq p} \Biggl\{
		\Biggl|\frac{1-(1-\hat{\eta}_n(\hat{\lambda}_n)(s_i+\hat{\lambda}_n))^t}{s_i+\hat{\lambda}_n}\Biggr|
		+ \Biggl|\frac{1-(1-\hat{\eta}_n(\lambda_n^*)(s_i+\lambda_n^*))^t}{s_i+\lambda_n^*}\Biggr|
		\Biggr\} \\
		&\le \max_{1\leq i\leq p} \Biggl\{
		t \, \max\{1, |1-\hat{\eta}_n(\hat{\lambda}_n)(s_i+\hat{\lambda}_n)|^{t-1}\} \, \hat{\eta}_n(\hat{\lambda}_n) \\
		&\quad\quad + t \, \max\{1, |1-\hat{\eta}_n(\lambda_n^*)(s_i+\lambda_n^*)|^{t-1}\} \, \hat{\eta}_n(\lambda_n^*) 
		\Biggr\}.
	\end{split}
\end{align}
Recall that $\hat{\eta}_n(\lambda) = (\hat{s}_1 + \lambda)^{-1}$ for $\lambda>0$ and that for all $\lambda \in [\lambda_1, \lambda_2]$, we have $\hat{\eta}_n(\lambda) \le \lambda_1^{-1}$.  
By the definition of the estimator $\hat{\lambda}_n$, we have $\hat{\lambda}_n \in [\lambda_1, \lambda_2]$. Moreover, $\lambda_n^* \in [\lambda_1, \lambda_2]$ for all sufficiently large $n$ due to the convergence $\lambda_n^* \to \lambda^* \in (\lambda_1, \lambda_2)$.
Hence, the upper bound in \eqref{GDriskupperIII} reduces to
\begin{align}\label{GDriskupperIV}
	\bigl\|B_{\hat{\lambda}_n} + B_{\lambda_n^*}\bigr\|_2 \le \frac{2t}{\lambda_1} \, \max\Biggl\{ 1, \Bigl| 1 - \frac{s_1 + \lambda_2}{\lambda_1} \Bigr|^{t-1} \Biggr\} \eqqcolon K_1(s_1),
\end{align}
for all sufficiently large $n$.
For the second quantity in \eqref{GDriskupperII}, we use the uniform boundedness of $\|\theta_n\|_2$ in $n$, the triangle inequality, the bound $\hat{\eta}_n(\lambda)\le \lambda_1^{-1}$ for all $\lambda \in [\lambda_1,\lambda_2]$, and the upper bounds on the estimators $\hat{\lambda}_n$ and $\lambda_n^*$ for all sufficiently large $n$. Then, for all sufficiently large $n$, we have
\begin{align}\label{GDriskupperIV2}
	\begin{split}
	\bigl\|(A^t_{\hat{\lambda}} + A^t_{\lambda_n^*}) \theta\bigr\|_2
	&\le \bigl\|(A^t_{\hat{\lambda}} + A^t_{\lambda_n^*})\bigr\|_2 \, \bigl\|\theta\bigr\|_2 \\
	&\le C_\theta \, \max_{1\leq i\leq p} \Bigl| (1 - \hat{\eta}_n(\hat{\lambda}_n)(s_i+\hat{\lambda}_n))^t + (1 - \hat{\eta}_n(\lambda_n^*)(s_i+\lambda_n^*))^t \Bigr| \\
	&\le 2 C_\theta \, \max \Biggl\{ 1, \Bigl| 1 - \frac{s_1 + \lambda_2}{\lambda_1} \Bigr|^t \Biggr\} \eqqcolon K_2(s_1).
	\end{split}
\end{align}
Observe that
\begin{align}\label{GDriskupperV}
	\begin{split}
		\bigl\|\hat{\beta}_{t}(\hat{\lambda}_n,\hat{\eta}_n)-\hat{\beta}_{t}(\lambda_n^*,\hat{\eta}_n)\bigr\|_2
		&= \bigl\|(B_{\hat{\lambda}_n} - B_{\lambda_n^*})\tfrac{X^\top y}{n} + (A^t_{\hat{\lambda}} - A^t_{\lambda_n^*})\theta\bigr\|_2 \\
		&\le \bigl\|(B_{\hat{\lambda}_n} - B_{\lambda_n^*})\tfrac{X^\top y}{n}\bigr\|_2 + \bigl\|(A^t_{\hat{\lambda}} - A^t_{\lambda_n^*})\theta\bigr\|_2.
	\end{split}
\end{align}
We now use a Lipschitz argument to bound $\|B_{\hat{\lambda}_n} - B_{\lambda_n^*}\|_2$ and $\|A^t_{\hat{\lambda}_n}-A^t_{\lambda_n^*}\|_2$. 
Note that $A^t_{\hat{\lambda}}$ and $A^t_{\lambda_n^*}$ are simultaneously diagonalizable. 
For each $i \in \{1,\dots,p\}$, define
\[
g_i(\lambda) = \bigl(1 - \hat{\eta}_n(\lambda)(s_i + \lambda)\bigr)^t, \quad \lambda >0,
\]
and note that the derivative with respect to $\lambda$ is
\begin{align*}
	g_i'(\lambda) 
	\coloneqq \frac{d}{d\lambda} g_i(\lambda)
	= t \, \bigg(1 - \hat{\eta}_n(\lambda)(s_i + \lambda)\bigg)^{t-1} \bigg(\hat{\eta}_n(\lambda)^2 (s_i + \lambda) - \hat{\eta}_n(\lambda)\bigg).
\end{align*}
Furthermore, for all $i \in \{1,\dots,p\}$,
\begin{align*}
	\sup_{\lambda\in [\lambda_1,\lambda_2]}|g_i'(\lambda)|
	\le 
	t \, \max \Bigg\{ 1, \bigg| 1 - \frac{s_1 + \lambda_2}{\lambda_1} \bigg|^{t-1} \Bigg\}
	\Bigg| \frac{s_1 + \lambda_2}{\lambda_1^2} + \frac{1}{\lambda_1} \Bigg| \eqqcolon K_3(s_1).
\end{align*}
Therefore, we conclude that, for all sufficiently large $n$,
\begin{align}\label{GDriskupperVI}
	\begin{split}
	\|(A^t_{\hat{\lambda}} - A^t_{\lambda_n^*}) \theta\|_2
	&\le C_\theta \, \max_{i \in \{1,\dots,p\}} \Bigg| \bigg(1 - \hat{\eta}_n(\hat{\lambda}_n)(s_i + \hat{\lambda}_n)\bigg)^t - \bigg(1 - \hat{\eta}_n(\lambda_n^*)(s_i + \lambda_n^*)\bigg)^t \Bigg| \\
	&\le C_\theta \, K_3(s_1) \, \bigl|\hat{\lambda}_n - \lambda_n^*\bigr|.
	\end{split}
\end{align}
Next, we bound the spectral norm of $B_{\hat{\lambda}_n} - B_{\lambda_n^*}$. To this end, for $i \in \{1,\dots,p\}$, define
\begin{equation}\label{def:h}
h_i(\lambda) = \frac{1 - \bigl(1 - \hat{\eta}_n(\lambda)(s_i + \lambda)\bigr)^t}{s_i + \lambda}, \quad \lambda > 0.
\end{equation}
The derivative of $h_i$ with respect to $\lambda$ is
\begin{equation}\label{der:h}
h_i'(\lambda)
\coloneqq \frac{d}{d\lambda} h_i(\lambda)
= \frac{-g_i'(\lambda)}{s_i+\lambda} - \frac{h_i(\lambda)}{s_i+\lambda}.
\end{equation}
By Lemma~\ref{ext} and the bound for $\sup_{\lambda\in [\lambda_1,\lambda_2]}|g_i'(\lambda)|$, we obtain for each
$i \in \{1,\dots,p\}$,
	\begin{align*}
		\sup_{\lambda \in[\lambda_1,\lambda_2]}|h^\prime_i(\lambda)|&\leq \frac{K_3(s_1)}{\lambda_1}+\sup_{\lambda \in[\lambda_1,\lambda_2]} \frac{\hat{\eta}_n(\lambda)}{\lambda_1}\max\bigg\{1, \bigl|1-\hat{\eta}(\lambda)(s_i+\lambda)\bigr|^{t-1}\bigg\} \,t 
		\\&\leq \frac{K_3(s_1)}{\lambda_1}+ \frac{1}{\lambda_1^2}\max\bigg\{1, \Bigl|1-\frac{s_1+\lambda_2}{\lambda_1}\Bigr|^{t-1}\bigg\} \, t \eqqcolon K_4(s_1).
	\end{align*}
Therefore, for all sufficiently large $n$,
	\begin{align}\label{GDriskupperVII}
		\begin{split}
		\|B_{\hat{\lambda}_n} - B_{\lambda_n^*}\|_2
		&= \max_{1\leq i\leq p} \bigl| h_i(\hat{\lambda}_n) - h_i(\lambda_n^*) \bigr| \le \max_{1\leq i\leq p} 
		\bigg\{ \sup_{\lambda \in [\lambda_1,\lambda_2]} |h_i'(\lambda)| \, \bigl| \hat{\lambda}_n - \lambda_n^* \bigr| \bigg\} \\
		&\leq K_4(s_1) \bigl| \hat{\lambda}_n - \lambda_n^* \bigr|.
		\end{split}
	\end{align}
Next, we show that the expression in \eqref{GDupperI} converges in probability to zero.  
Using the Cauchy--Schwarz inequality, we have 
\begin{align}\label{GDriskupperVIII}
	\begin{split}
		\mathbb{E}\bigg(
		\bigl\|&\hat{\beta}_{t}(\hat{\lambda}_n,\hat{\eta}_n)
		+\hat{\beta}_{t}(\lambda_n^*,\hat{\eta}_n)\bigr\|_2
		\,
		\bigl\|\hat{\beta}_{t}(\hat{\lambda}_n,\hat{\eta}_n)
		-\hat{\beta}_{t}(\lambda_n^*,\hat{\eta}_n)\bigr\|_2
		\;\Big|\; X
		\bigg) \\
		&\leq 	\mathbb{E}\bigg(\bigl\|\hat{\beta}_{t}(\hat{\lambda}_n,\hat{\eta}_n)
		+\hat{\beta}_{t}(\lambda_n^*,\hat{\eta}_n)\bigr\|_2^2\Big|X\bigg)^{1/2}\mathbb{E}\bigg(\bigl\|\hat{\beta}_{t}(\hat{\lambda}_n,\hat{\eta}_n)
		-\hat{\beta}_{t}(\lambda_n^*,\hat{\eta}_n)\bigr\|_2^2\Big|X\bigg)^{1/2}.
	\end{split}
\end{align}
By the bounds in \eqref{GDriskupperII}--\eqref{GDriskupperIV2}, using the inequality
$(a+b)^2 \le 2a^2 + 2b^2$ for $a,b \in \mathbb{R}$, together with the uniform boundedness
of $\|\theta_n\|_2$ in $n$, we obtain
\begin{align}\label{B:add}
	\begin{split}
		\mathbb{E}\Bigl(\bigl\|
		\hat{\beta}_{t}(\hat{\lambda}_n,\hat{\eta}_n)
		+
		\hat{\beta}_{t}(\lambda_n^*,\hat{\eta}_n)
		\bigr\|_2^2 \,\big|\, X \Bigr)
		&\le
		\mathbb{E}\Bigg[
		\Bigl(
		\bigl\|
		(B_{\hat{\lambda}_n} + B_{\lambda_n^*})\tfrac{X^\top y}{n}
		\bigr\|_2
		+
		\bigl\|
		(A_{\hat{\lambda}_n}^{\,t}
		+
		A_{\lambda_n^*}^{\,t})\theta
		\bigr\|_2
		\Bigr)^2
		\,\bigg|\, X
		\Bigg] \\
		&\le
		\mathbb{E}\Bigg[
		\Bigl(
		K_1(s_1)\bigl\|\tfrac{X^\top y}{n}\bigr\|_2
		+
		K_2(s_1)\|\theta\|_2
		\Bigr)^2
		\,\bigg|\, X
		\Bigg] \\
		&\le
		2K_1^2(s_1)\,
		\mathbb{E}\bigg(
		\tfrac{\|X^\top y\|_2^2}{n^2}
		\,\Big|\, X
		\bigg)
		+
		2K_2^2(s_1)\,C_\theta^2 .
	\end{split}
\end{align}
Using $n^{-2}\|X^\top y\|_2^{2} \le s_{1}\, n^{-1}\|y\|_2^{2}$ and Lemma~\ref{lebip}, we have
\begin{equation}\label{exp:Op1}
\mathbb{E}\left(\frac{\|X^\top y\|_2^2}{n^2}\bigg|\,X \right)\leq 2 s_1\biggl(\, \frac{\tau^2}{p}\tr(\hat{\Sigma}_n)+\sigma^2\biggr)\leq 2s_1(\tau^2\,s_1+\sigma^2)=O_{\mathbb{P}}(1).
\end{equation}
Similarly, using \eqref{GDriskupperV}--\eqref{GDriskupperVII} and $(a+b)^2\leq 2a^2+2b^2$, for $a,b\in \mathbb{R}$, we have
\begin{align}\label{B:sub}
	\begin{split}
		\mathbb{E}\bigg(\bigl\|\hat{\beta}_{t}(\hat{\lambda}_n,\hat{\eta}_n)
		-\hat{\beta}_{t}(\lambda_n^*,\hat{\eta}_n)\bigr\|_2^2\Big|X\bigg)&\leq 2\,K_4(s_1)^2\,s_1\,\mathbb{E}\left(\bigl|\hat{\lambda}_n-\lambda_n^*\bigr|^2\,\frac{\|y\|_2^2}{n}\;\Big|X\right)\\
		&\quad+ 2\,C_\theta^2\,K_3(s_1)^2\,\mathbb{E}\bigg(\bigl|\hat{\lambda}_n-\lambda_n^*\bigr|^2\;\Big|X\bigg).
\end{split}
\end{align}	
Applying the same arguments as in \eqref{RidgeriskupperIII} and onward, now with $|\hat{\lambda}_{n}-\lambda_{n}^{*}|^2$ in place of $|\hat{\lambda}_{n}-\lambda_{n}^{*}|$, we obtain
\[
\mathbb{E}\!\left(
\bigl| \hat{\lambda}_{n}-\lambda_{n}^{*}\bigr|^2\,\frac{\|y\|_{2}^2}{n} \,\Big|\, X\right)
\overset{\mathbb{P}}{\longrightarrow}0,
\]
as $n\to\infty$. Moreover,
\[
\mathbb{E}\!\left(
\lvert \hat{\lambda}_n-\lambda_n^* \rvert^2
\,\Big|\, X
\right)
\overset{\mathbb{P}}{\longrightarrow} 0,
\]
as $n\to \infty$, by \eqref{exp:lambdaconv}.
Putting the arguments together from \eqref{GDriskupperVIII}--\eqref{B:sub}, we conclude that the expression in \eqref{GDupperI} converges in probability to zero. It remains to show that the quantity in \eqref{GDupperII} also converges in probability to zero. 
By the Cauchy--Schwarz inequality for the conditional expectation, Assumption~\ref{A5}, and \eqref{B:sub}, we have
\begin{align*}
	\mathbb{E}\Bigl[\, 
	\|\beta\|_2 \, 
	\|\hat{\beta}_{t}(\hat{\lambda}_n,\hat{\eta}_n)-\hat{\beta}_{t}(\lambda_n^*,\hat{\eta}_n)\|_2 
	\,\Big|\, X \,\Bigr]
	&\le 
	\Bigl\{ 
	\mathbb{E}\bigl(\|\beta\|_2^2 \,\big|\, X\bigr) 
	\Bigr\}^{1/2} 
	\Bigl\{ 
	\mathbb{E}\bigl(\|\hat{\beta}_{t}(\hat{\lambda}_n,\hat{\eta}_n)-\hat{\beta}_{t}(\lambda_n^*,\hat{\eta}_n)\|_2^2 \,\big|\, X\bigr) 
	\Bigr\}^{1/2} \\
	&\le 
	\tau \Biggl\{
	2 K_4(s_1)^2 s_1 \, 
	\mathbb{E}\left(\bigl|\hat{\lambda}_n-\lambda_n^*\bigr|^2 \,\frac{\|y\|_2^2}{n} \,\Big|\, X\right) \\
	&\quad + 2 C_\theta^2 K_3(s_1)^2 \, 
	\mathbb{E}\Bigl(\bigl|\hat{\lambda}_n-\lambda_n^*\bigr|^2 \,\big|\, X\Bigr)
	\Biggr\}^{1/2}.
\end{align*}
The upper bound converges in probability to zero, as argued previously. 
Since the expressions in \eqref{GDupperI} and \eqref{GDupperII} both converge in probability to zero, we conclude that
\begin{align*}
	\big|\mathrm{R}_{\Sigma_n}\big(\hat{\beta}_{t}(\hat{\lambda}_n,\hat{\eta}_n(\hat{\lambda}_n))\big)
	- \mathrm{R}_{\Sigma_n}\big(\hat{\beta}_{t}(\lambda_n^*,\hat{\eta}_n(\lambda_n^*))\big)\big|
	\;\overset{\mathbb{P}}{\longrightarrow}\; 0,
\end{align*}
as $n\to \infty$.
This completes the first statement.
	
For the second statement, we use the same proof idea as before, but bounding the derivatives and other expressions is now considerably simpler. We begin with a simple consideration. Under the assumptions of the theorem, 
$s_1 = s_{\max}(\hat{\Sigma}_n) > 0$ almost surely for all $n \in \mathbb{N}$.
Using the inequality
\[
\mathbb{P}(0 < s_1 < \hat{s}_1 < K) 
\ge \mathbb{P}(s_1 < \hat{s}_1 < K) + \mathbb{P}(s_1 > 0) - 1
= \mathbb{P}(s_1 < \hat{s}_1 < K),
\]
which holds for each $n \in \mathbb{N}$, we conclude that
\[
\mathbb{P}(0 < s_1 < \hat{s}_1 < K) \to 1 \quad \text{as } n \to \infty,
\]
since $\mathbb{P}(s_1 < \hat{s}_1 < K) \to 1$ as $n \to \infty$ by assumption. By the same arguments as in Remark~\ref{rem:GD}, we obtain, for $\lambda>0$,
\[
\|A_\lambda\|_2 
= \bigl\| I_p - \hat{\eta}_n(\lambda)(\hat{\Sigma}_n + \lambda I_p) \bigr\|_2
= \max\Bigl\{ \bigl| 1 - \hat{\eta}_n(\lambda)(s_1 + \lambda) \bigr|, \;\;
\bigl| 1 - \hat{\eta}_n(\lambda)(s_p + \lambda) \bigr| \Bigr\}.
\]
Furthermore, on the event 
$E_n \coloneqq \{ 0 < s_1 < \hat{s}_1 < K \}$ 
it follows that, for all $\lambda > 0$,
\[
\frac{1}{K + \lambda} < \frac{1}{\hat{s}_1 + \lambda} < \frac{1}{s_1 + \lambda}<\frac{1}{\lambda}.
\]
Next we show that on the event $E_n$ it cannot hold that
\[
\bigl| 1 - \hat{\eta}_n(\lambda)(s_1 + \lambda) \bigr| 
\ge \bigl| 1 - \hat{\eta}_n(\lambda)(s_p + \lambda) \bigr|.
\]
We show this by contradiction. Assume for the sake of argument that the statement holds true. Then,
\begin{align*}
	\bigg| 1 - \frac{1}{\hat{s}_1+\lambda}(s_1+\lambda) \bigg| 
	&\ge \bigg| 1 - \frac{1}{\hat{s}_1+\lambda}(s_p+\lambda) \bigg|\\ 
	&\Longleftrightarrow \bigl|\hat{s}_1-s_1\bigr|\geq \bigl|\hat{s}_1-s_p\bigr| \\
	&\Longleftrightarrow s_p\geq s_1,
\end{align*}
where the last inequality cannot hold true since $s_1 > s_p$, almost surely. 
Combining the previous arguments, we obtain that on the event $E_n$,
\begin{align}\label{A:upper}
	\|A_\lambda\|_2 
	= \Bigl| 1 - \hat{\eta}_n(\lambda)(s_p + \lambda) \Bigr| 
	\le 1 - \frac{\lambda}{K + \lambda} \in (0,1).
\end{align}
Define $F(\lambda)\coloneqq 1 - \frac{\lambda}{K+\lambda}$ for $\lambda\in [\lambda_1,\lambda_2]$ and note that for each $\lambda \in [\lambda_1, \lambda_2]$,
\[
\frac{(t+1) F(\lambda)^t}{t F(\lambda)^{t-1}}
= \frac{(t+1) F(\lambda)}{t}
= F(\lambda) \Bigl(1 + \frac{1}{t}\Bigr)
\longrightarrow F(\lambda) \in (0,1),
\qquad \text{as } t \to \infty.
\]
It then follows from the ratio test that
\[
\lim_{t \to \infty} t F(\lambda)^{t-1} = 0,
\]
and consequently,
\begin{equation}\label{F:boundint}
	\sup_{t \ge 1} t F(\lambda)^{t-1} < \infty.
\end{equation}
It immediately follows from the triangle inequality that
\begin{align}\label{upperA+}
	\begin{split}
		\bigl\|A^t_{\hat{\lambda}_n} + A^t_{\lambda_n^*}\bigr\|_2 
		&= \max_{1 \le i \le p} 
		\Bigl| \bigl(1 - \hat{\eta}_n(\hat{\lambda}_n)(s_i + \hat{\lambda}_n)\bigr)^t 
		+ \bigl(1 - \hat{\eta}_n(\lambda_n^*)(s_i + \lambda_n^*)\bigr)^t \Bigr| \le 2.
	\end{split}
\end{align}
Similarly, by the triangle inequality, and recalling that 
$\hat{\eta}_n(\hat{\lambda}_n) \le \lambda_1^{-1}$ and, for all sufficiently large $n$, 
$\hat{\eta}_n(\lambda_n^*) \le \lambda_1^{-1}$, 
we have, for all sufficiently large $n$,
\begin{align}\label{upperA+B}
	\begin{split}
	\bigl\|B_{\hat{\lambda}_n}+B_{\lambda_n^*}\bigr\|_2 
	&\le \max_{1\leq i\leq p} 
	\Bigg\{ 
	\Bigg| \frac{1 - (1 - \hat{\eta}_n(\hat{\lambda}_n)(s_i+\hat{\lambda}_n))^t}{s_i+\hat{\lambda}_n} 
	+ \frac{1 - (1 - \hat{\eta}_n(\lambda_n^*)(s_i+\lambda^*_n))^t}{s_i+\lambda^*_n} \Bigg| 
	\Bigg\} \\
	&\le \max_{1\leq i\leq p} 
	\Bigg\{ \Bigg| \frac{1 - (1 - \hat{\eta}_n(\hat{\lambda}_n)(s_i+\hat{\lambda}_n))^t}{s_i+\hat{\lambda}_n} \Bigg| \Bigg\} 
	+ \max_{1\leq i\leq p} 
	\Bigg\{ \Bigg| \frac{1 - (1 - \hat{\eta}_n(\lambda_n^*)(s_i+\lambda^*_n))^t}{s_i+\lambda^*_n} \Bigg| \Bigg\} \\
	&\le \frac{4}{\lambda_1}.
	\end{split}
\end{align}
For $\|A^t_{\hat{\lambda}_n} - A^t_{\lambda_n^*}\|_2$ and $\|B_{\hat{\lambda}_n} - B_{\lambda_n^*}\|_2$, we again apply a Lipschitz argument.
By the triangle inequality and the considerations above, for all 
$\lambda \in [\lambda_1, \lambda_2]$ and all $i \in \{1, \dots, p\}$, we have
\begin{align*}
	|g_i'(\lambda)| 
	&= \Bigl| t (1 - \hat{\eta}_n(\lambda)(s_i+\lambda))^{t-1} \bigl( \hat{\eta}_n(\lambda)^2 (s_i+\lambda) - \hat{\eta}_n(\lambda) \bigr) \Bigr| \\
	&\le \Bigl| t (1 - \hat{\eta}_n(\lambda)(s_i+\lambda))^{t-1} \Bigr|\, 
	\Bigl( \bigl|\hat{\eta}_n(\lambda)^2 (s_i+\lambda)\bigr| + \bigl|\hat{\eta}_n(\lambda)\bigr| \Bigr) \\
	&\le t F(\lambda_1)^{t-1} \bigg( \frac{K + \lambda_2}{\lambda_1^2} + \frac{1}{\lambda_1} \bigg).
\end{align*}
Using this bound, we obtain for all sufficiently large $n$,
	\begin{align}\label{upperA-}
		\begin{split}
		\bigl\|A^t_{\hat{\lambda}_n}-A^t_{\lambda_n^*}\bigr\|_2&= \max_{1\leq i\leq p}\{|(1-\hat{\eta}_n(\hat{\lambda}_n)(s_i+\hat{\lambda}_n))^t-(1-\hat{\eta}_n(\lambda_n^*)(s_i+\lambda^*_n))^t|\}
		\\&\leq \max_{1\leq i\leq p}\bigg\{\sup_{\lambda\in [\lambda_1,\lambda_2]}|g^\prime_i(\lambda)|\, \bigl|\hat{\lambda}_n-\lambda_n^*\bigr|\bigg\}
		\\&\leq tF(\lambda_1)^{t-1}\, \bigg(\frac{K+\lambda_2}{\lambda_1^2}+\frac{1}{\lambda_1}\bigg)\, \bigl|\hat{\lambda}_n-\lambda_n^*\bigr|.
		\end{split}
	\end{align}
Considering \eqref{def:h}, \eqref{der:h}, and the arguments above, we obtain, for all $\lambda \in [\lambda_1, \lambda_2]$ and all $i \in \{1, \dots, p\}$,
\begin{align*}
	\sup_{\lambda\in [\lambda_1,\lambda_2]}|h_i'(\lambda)| 
	&\le \sup_{\lambda\in [\lambda_1,\lambda_2]}\biggl| \frac{g_i'(\lambda)}{s_i + \lambda} \biggr| + \sup_{\lambda\in [\lambda_1,\lambda_2]}\biggl| \frac{h_i(\lambda)}{s_i + \lambda} \biggr|\\
	&\le \frac{1}{\lambda_1} \, t F(\lambda_1)^{t-1} \Biggl( \frac{K + \lambda_2}{\lambda_1^2} + \frac{1}{\lambda_1} \Biggr) + \frac{2}{\lambda_1^2}.
\end{align*}
Using this bound, we obtain, for all sufficiently large $n$,
\begin{align}\label{upper:B-D}
	\begin{split}
	\bigl\|B_{\hat{\lambda}_n}-B_{\lambda_n^*}\bigr\|_2 
	&= \max_{1\leq i\leq p} \bigl|g_i(\hat{\lambda}_n) - g_i(\lambda_n^*)\bigr| 
	= \max_{1\leq i\leq p} \Bigl\{ \sup_{\lambda \in [\lambda_1, \lambda_2]} |g_i'(\lambda)| \, \bigl|\hat{\lambda}_n - \lambda_n^*\bigr| \Bigr\} \\
	&\le \Biggl( \frac{1}{\lambda_1} \, t F(\lambda_1)^{t-1} \bigg( \frac{K + \lambda_2}{\lambda_1^2} + \frac{1}{\lambda_1} \bigg) + \frac{2}{\lambda_1^2} \Biggr)\, \bigl|\hat{\lambda}_n - \lambda_n^*\bigr|.
	\end{split}
\end{align}
Revisiting the arguments from the first part of the proof, and using the upper bounds
in \eqref{upperA+}--\eqref{upper:B-D} together with \eqref{F:boundint}, we conclude that
\[
\sup_{t \in \mathbb{N}}\,
\Bigl|
\mathrm{R}_{\Sigma_n}\!\bigl(\hat{\beta}_{t}(\hat{\lambda}_n,\hat{\eta}_n(\hat{\lambda}_n))\bigr)
-
\mathrm{R}_{\Sigma_n}\!\bigl(\hat{\beta}_{t}(\lambda_n^*,\hat{\eta}_n(\lambda_n^*))\bigr)
\Bigr|
\overset{\mathbb{P}}{\longrightarrow} 0,
\qquad \text{as } n \to \infty.
\]
\end{proof}

\begin{proof}[Proof of Theorem~\ref{consestnonran}]
	For convenience, define 
	\[m_1\coloneqq \frac{1}{p}\tr(\Sigma_n),\qquad m_2\coloneqq \frac{1}{p}\tr(\Sigma_n^2)
	\]
	For $|\hat{\sigma}_n^2 - \sigma_n^2|$, we obtain the following decomposition using the triangle inequality:
	\begin{align}\label{var:dec}
		\begin{split}
		|\hat{\sigma}_n^2-\sigma_n^2|&=\biggl|\hat{\sigma}_n^2-\frac{1}{\tilde{m}_2}\bigl(\hat{m}_2\beta^\top \hat{\Sigma}_n\beta -\hat{m}_1\beta^\top \hat{\Sigma}_n^2\beta\bigr)+\frac{1}{\tilde{m}_2}\bigl(\hat{m}_2\beta^\top \hat{\Sigma}_n\beta -\hat{m}_1\beta^\top \hat{\Sigma}_n^2\beta\bigr)-\sigma_n^2\biggr|\\
		&\leq\biggl|\hat{\sigma}_n^2-\frac{1}{\tilde{m}_2}\bigl(\hat{m}_2\beta^\top \hat{\Sigma}_n\beta -\hat{m}_1\beta^\top \hat{\Sigma}_n^2\beta\bigr)-\sigma_n^2\biggr|\\&\quad+\biggl|\frac{1}{\tilde{m}_2}\bigl(\hat{m}_2\beta^\top \hat{\Sigma}_n\beta -\hat{m}_1\beta^\top \hat{\Sigma}_n^2\beta\bigr)\biggr|,
		\end{split}
	\end{align}
	almost surely. 
We begin by showing that
\begin{equation}\label{sig:nonran:upper}
	\biggl|\frac{1}{\tilde{m}_2}\Bigl(\hat{m}_2 \, \beta^\top \hat{\Sigma}_n \beta - \hat{m}_1 \, \beta^\top \hat{\Sigma}_n^2 \beta\Bigr)\biggr| = o_{\mathbb{P}}(1).
\end{equation}
Recall that $\tau_n^2 = \|\beta\|_2^2$ and write $\tilde{\beta} = \beta / \tau_n$. Then,
\begin{align*}
	\hat{m}_1\, \beta^\top \hat{\Sigma}_n^{2}\beta
	= \tau_n^{2}
	\Big\{\hat{m}_1-m_1+m_1\Big\}
	\Big\{
	\tilde{\beta}^\top \hat{\Sigma}_n^{2}\tilde{\beta}
	- \tilde{\beta}^\top \Sigma_n^{2}\tilde{\beta}
	- \frac{1}{n}\tr(\Sigma_n)\,\tilde{\beta}^\top \Sigma_n \tilde{\beta}
	+ \tilde{\beta}^\top \Sigma_n^{2}\tilde{\beta}
	+ \frac{1}{n}\tr(\Sigma_n)\,\tilde{\beta}^\top \Sigma_n \tilde{\beta}
	\Big\}.
\end{align*}
For notational convenience, define
\begin{align*}
	A &\coloneqq \hat{m}_1-m_1, 
	\qquad B \coloneqq m_1, \qquad
	E \coloneqq \tau_n^{2}\!\left(
	\tilde{\beta}^\top \hat{\Sigma}_n^{2}\tilde{\beta}
	- \tilde{\beta}^\top \Sigma_n^{2}\tilde{\beta}
	- \frac{1}{n}\tr(\Sigma_n)\,\tilde{\beta}^\top \Sigma_n\tilde{\beta}
	\right),\\[0.3em]
	F &\coloneqq \tau_n^{2}\!\left(
	\tilde{\beta}^\top \Sigma_n^{2}\tilde{\beta}
	+ \frac{1}{n}\tr(\Sigma_n)\,\tilde{\beta}^\top \Sigma_n\tilde{\beta}
	\right).
\end{align*}
Under the assumptions of the theorem and Lemmas~\ref{momqf1} and~\ref{momqf2}, the above quantities satisfy
\[
A = O_{\mathbb{P}}\!\left( \frac{1}{(pn)^{1/2}} \right),\quad
B = O(1),\quad
E = O_{\mathbb{P}}\!\left( \frac{1}{n^{1/2}} \vee \frac{p}{n^{3/2}} \right),\quad
F = O\left( 1 + \gamma_n \right).
\]
With this notation we obtain the decomposition
\[
\hat{m}_1\, \beta^\top \hat{\Sigma}_n^{2}\beta
= (A + B)(E + F).
\]
It is straightforward to verify that
\[
AE = O_{\mathbb{P}}\biggl(\frac{1}{np^{1/2}} \vee \frac{\sqrt{p}}{n^{2}}\biggr), 
\qquad
BE = O_{\mathbb{P}}\biggl(\frac{1}{n^{1/2}} \vee \frac{p}{n^{3/2}}\biggr),
\]
and
\[
AF
= O_{\mathbb{P}}\Big((pn)^{-1/2}\Big)\, O_{\mathbb{P}}\big(1+\gamma_n\big)
= O_{\mathbb{P}}\biggl(\frac{1}{(pn)^{1/2}} \vee \frac{p^{1/2}}{n^{3/2}}\biggr).
\]
Furthermore, for \(p \ge n\), the following hierarchy of terms holds:
\begin{equation}\label{np:relation}
	\frac{\sqrt{p}}{n} \le \frac{p}{n^{3/2}} \le \frac{p^{3/2}}{n^2} \le \frac{p^2}{n^{5/2}}.
\end{equation}
In the case \(p \le n\), we have 
\begin{equation}\label{np:relation2}
	\frac{\sqrt{p}}{n^2}\leq \frac{1}{np^{1/2}},\quad\frac{p}{n^{3/2}}\leq \frac{1}{n^{1/2}}\quad\text{and}\quad \frac{p^{1/2}}{n^{3/2}}\leq \frac{1}{(pn)^{1/2}}.
\end{equation} 
Therefore, under the assumption \(p=o(n^{5/4})\), we conclude that
\begin{align}\label{hm1quad2}
	\begin{split}
		\hat m_1\, \beta^\top \hat{\Sigma}_n^{2}\beta
		&= BF + o_{\mathbb{P}}(1) \\
		&= \tau_n^2\, m_1
		\Biggl( \tilde{\beta}^\top \Sigma_n^{2} \tilde{\beta}
		+ \frac{1}{n} \tr(\Sigma_n)\, \tilde{\beta}^\top \Sigma_n \tilde{\beta} \Biggr)
		+ o_{\mathbb{P}}(1).
	\end{split}
\end{align}
Similarly, we define 
\begin{align*}
	A' &\coloneqq \tau_n^2 \!\left( \tilde\beta^\top \hat\Sigma_n \tilde\beta
	- \tilde\beta^\top \Sigma_n \tilde\beta \right), &
	B' &\coloneqq \tau_n^2 \, \tilde\beta^\top \Sigma_n \tilde\beta,\\[0.3em]
	E' &\coloneqq \hat{m}_2 - m_2 - \gamma_n m_1^{\,2}, &
	F' &\coloneqq m_2 + \gamma_n m_1^{\,2}.
\end{align*}
Under the assumption of the theorem and using Lemma~\ref{momqf1} and~\ref{momqf2}, the above quantities satisfy
\[
A' = O_{\mathbb{P}}\!\left( \frac{1}{n^{1/2}} \right),\quad
B' = O(1),\quad
E' = O_{\mathbb{P}}\!\left( \frac{1}{n^{1/2}} \vee \frac{p}{n^{3/2}} \right),\quad
F' = O\left( 1 + \gamma_n \right).
\]
By the assumptions of the theorem, using \eqref{np:relation} and \eqref{np:relation2}, we conclude that
\[
A' E' = o_{\mathbb{P}}(1),\quad B' E' = o_{\mathbb{P}}(1)
\qquad\text{and}\qquad A'F'=o_{\mathbb{P}}(1).
\]
Therefore, it follows that
\begin{align}\label{h2quad1}
	\begin{split}
	\hat m_2\,\beta^\top \hat\Sigma_n \beta
	&=\tau_n^2\,\tilde\beta^\top \hat\Sigma_n \tilde\beta \,\frac{1}{p}\tr(\hat\Sigma_n^2) =(A'+B')(E'+F')=B'F' + o_{\mathbb{P}}(1) \\
	&=\tau_n^2\,\tilde\beta^\top \Sigma_n \tilde\beta\,(m_2+\gamma_n m_1^2)
	+o_{\mathbb{P}}(1).
	\end{split}
\end{align}
Putting together the expansions from \eqref{hm1quad2} and \eqref{h2quad1}, we obtain
\begin{align*}
	\hat m_2\,\beta^\top \hat\Sigma_n\beta
	-\hat m_1\,\beta^\top \hat\Sigma_n^2\beta
	&= \tau_n^2\,\tilde\beta^\top \Sigma_n \tilde\beta\, (m_2+\gamma_n m_1^2) - \tau_n^2\,m_1\Bigl(\tilde\beta^\top \Sigma_n^2\tilde\beta
	+ \frac{1}{n}\tr(\Sigma_n)\,\tilde\beta^\top \Sigma_n\tilde\beta\Bigr)
	+ o_{\mathbb{P}}(1).
\end{align*}
As in Assumption~\ref{A6}, we write
$\Delta_1 = \tilde\beta^\top\Sigma_n\tilde\beta - m_1$ and  
$\Delta_2 = \tilde\beta^\top\Sigma_n^2\tilde\beta - m_2$. Hence, we can rewrite the previous display as
\begin{align*}
	\hat m_2\,\beta^\top \hat\Sigma_n\beta
	-\hat m_1\,\beta^\top \hat\Sigma_n\beta&= \tau_n^2 \Delta_1(m_2+\gamma_n m_1^2)+\tau_n^2m_1(m_2+\gamma_n m_1^2)\\
	&\qquad-\tau_n^2 m_1(\Delta_2+m_2 + \gamma_n m_1 \Delta_1+\gamma_nm_1^2)
	+ o_{\mathbb{P}}(1) \\
	&= \tau_n^2(\Delta_1 m_2 - m_1\Delta_2) + o_{\mathbb{P}}(1).
\end{align*}
Therefore,
\[
\hat{m}_{2}\,\beta^{\top}\hat{\Sigma}_{n}\beta
-
\hat{m}_{1}\,\beta^{\top}\hat{\Sigma}_{n}\beta
=  o_{\mathbb{P}}(1),
\]
whenever 
\(\tau_{n}^{2}\bigl(\Delta_{1} m_{2} - m_{1}\Delta_{2}\bigr)=o(1)\),
a condition that is satisfied under 
Assumption~\ref{A2} together with Assumption~\ref{A6}. For $\tilde m_2$, recall from Lemma~\ref{denomnonzero} that 
$\tilde m_2>0$ with probability one for all $n,p\geq 2$. 
Moreover, Lemma~\ref{Bodver} implies that $\tilde m_2 = m_2 +  o_{\mathbb{P}}(1)$. 
By \eqref{convtrgSig} and since $H(0)\neq 1$ by Assumption~\ref{A4}, we have
\begin{align}\label{mom2:nonran}
m_2 
= \frac{1}{p}\tr(\Sigma_n^2)
= \int x^2\, dF_{\Sigma_n}(x)
\;\longrightarrow\;
\int x^2\, dH(x)
\neq 0.
\end{align}
This completes the proof of the assertion in~\eqref{sig:nonran:upper}.

Although the following arguments are similar to those used in the proof of Theorem~\ref{est1}, they cannot be omitted, as the setting considered here is substantially different. In particular, the non-randomness of $\beta$ and the fact that we allow $p/n \to \infty$ require separate and careful treatment.
We now analyze the first term in the upper bound of~\eqref{var:dec} and establish that
\begin{equation} \label{Idec}
	I \coloneqq \biggl|
	\hat{\sigma}_n^{2}
	- \frac{1}{\tilde{m}_2} \Bigl( \hat{m}_2\, \beta^{\top}\hat{\Sigma}_n \beta 
	- \hat{m}_1\, \beta^{\top}\hat{\Sigma}_n^{2} \beta \Bigr)
	- \sigma_n^{2}
	\biggr|
	= o_{\mathbb{P}}(1).
\end{equation}
Note that
\begin{align}\label{decom:xy:y}
\frac{\|y\|_2^{2}}{n}
= \beta^{\top} \hat{\Sigma}_n \beta
+ \frac{2}{n}\, u^{\top} X\beta
+ \frac{1}{n}\, u^{\top} u,
\quad
\frac{\|X^{\top} y\|_2^{2}}{n^{2}}
= \beta^{\top} \hat{\Sigma}_n^{2} \beta
+ \frac{2}{n^{2}}\, u^{\top} X X^{\top} X \beta
+ \frac{1}{n}\, u^{\top} \underline{\hat{\Sigma}}_n u.
\end{align}
Moreover,
\begin{equation*}
	\sigma_n^{2}
	= \frac{1}{\tilde{m}_2}\Bigl( \hat{m}_2 - \gamma_n \hat{m}_1^{2} \Bigr)\sigma_n^{2}
	= \frac{1}{\tilde{m}_2}\Bigl( \hat{m}_2
	- \frac{1}{n}\,\hat{m}_1\, \mathrm{tr}(\underline{\hat{\Sigma}}_n) \Bigr)\sigma_n^{2},
	\qquad \text{almost surely.}
\end{equation*}
Recalling the definition of $\hat{\sigma}_n^2$ in \eqref{est:def}, applying the triangle inequality together with the above decompositions of
$n^{-1}\|y\|_{2}^{2}$ and $n^{-2}\|X^{\top} y\|_{2}^{2}$, the expression in~\eqref{Idec}
is bounded by
\begin{align}\label{sig:upper:I}
	\begin{split}
	I
	&\le 
	\frac{\hat{m}_2}{\tilde{m}_2}
	\Biggl\{
	\left|\frac{2}{n}\, u^{\top} X\beta\right|
	+ \left|\frac{u^{\top}u}{n} - \sigma_n^{2}\right|
	\Biggr\}\\
	&\quad+ \frac{\hat{m}_1}{\tilde{m}_2}
	\Biggl\{
	\left|\frac{2}{n^{2}}\, u^{\top} X X^{\top} X \beta\right|
	+ \left|
	\frac{1}{n}\, u^{\top} \underline{\hat{\Sigma}}_n u
	- \frac{\sigma_n^{2}}{n}\, \mathrm{tr}\!\left(\underline{\hat{\Sigma}}_n\right)
	\right|
	\Biggr\}.
	\end{split}
\end{align}
We begin by analyzing the first term in the upper bound of \eqref{sig:upper:I}. Note that by Lemma~\ref{momqf1} and Assumption~\ref{A2},
\begin{align*}
	\hat{m}_2
	= \hat{m}_2 - m_2 - \gamma_n m_1^2 + m_2 + \gamma_n m_1^2
	= O_{\mathbb{P}}(1+\gamma_n).
\end{align*}
In order to establish that 
\begin{equation}\label{bilin:nonran}
	\frac{\gamma_n}{n} u^\top X \beta = o_{\mathbb{P}}(1),
\end{equation}
it suffices to show that
\[
\mathbb{E}\!\left[
\frac{(\gamma_n u^{\top} X\beta)^{2}}{n^{2}}
\;\Big|\; X
\right] = o_{\mathbb{P}}(1).
\]
Observe that, by Assumption~\ref{A2}, Lemma~\ref{momqf2} and $p^2n^{-3}\leq p^{5/2}n^{-3}\to 0$ as $n,p\to\infty$, we have
\begin{align*}
	\mathbb{E}\!\left[
	\frac{(\gamma_n u^{\top} X\beta)^{2}}{n^{2}}
	\,\Big|\, X
	\right]
	&=
	\frac{p^{2}}{n^{4}}
	\,
	\mathbb{E}\!\left(
	\beta^{\top} X^{\top} uu^{\top} X\beta\, 
	\,\Big|\, X
	\right)=
	\frac{p^{2}\sigma_n^{2}}{n^{4}}
	\, \beta^{\top}X^{\top} X\beta \\
	&=
	\frac{p^{2}\sigma_n^{2}\tau_n^{2}}{n^{3}}
	\,\tilde{\beta}^{\top} \hat{\Sigma}_n \tilde{\beta} =
	\frac{p^{2}\sigma_n^{2}\tau_n^{2}}{n^{3}}
	\,\tilde{\beta}^{\top} \Sigma_n \tilde{\beta}
	+ o_{\mathbb{P}}(1)
	= o_{\mathbb{P}}(1).
\end{align*}
Next, since the entries of $u$ are independent with uniformly bounded fourth moments in $n$ and 
$p^{2}n^{-3}\to 0$, we conclude using the Markov inequality that
\begin{align}\label{u:LLN}
\gamma_{n}\,\Bigl|\frac{u^{\top}u}{n}-\sigma_{n}^{2}\Bigr|
= o_{\mathbb{P}}(1)
\end{align}
and that 
\[
	\frac{\hat{m}_2}{\tilde{m}_2}
\Biggl\{
\left|\frac{2}{n}\, u^{\top} X\beta\right|
+ \left|\frac{u^{\top}u}{n} - \sigma_n^{2}\right|
\Biggr\}=o_{\mathbb{P}}(1).
\]
Similarly, for the second term in the upper bound of \eqref{sig:upper:I}, we have, using Assumption~\ref{A2} and Lemma~\ref{momqf1},
\begin{align}\label{mom1:nonran}
	\hat{m}_1
	=\hat{m}_1-m_1+m_1
	= O_{\mathbb{P}}(1).
\end{align}
In order to establish that $n^{-2} u^{\top} X X^{\top} X\beta= o_{\mathbb{P}}(1)$, we show that 
\[	\mathbb{E}\!\left[
\frac{(u^{\top} X X^{\top} X\beta)^{2}}{n^{4}}
\,\Big|\, X
\right]=o_{\mathbb{P}}(1).
\]
Note that,
\begin{align}\label{bilinform3upperb}
	\begin{split}
	\mathbb{E}\!\left[
	\frac{(u^{\top} X X^{\top} X\beta)^{2}}{n^{4}}
	\,\Big|\, X
	\right]
	&=
	\frac{1}{n^{4}}
	\,\mathbb{E}\!\biggl[
	\beta^{\top} X^{\top} X X^{\top} uu^{\top} X X^{\top} X\beta\,
	\,\Big|\, X
	\biggr]  \\
	&\le
	\frac{\sigma_n^{2}}{n^{4}}
	\,\beta^{\top} (X^{\top} X)^{3}\beta
	=
	\frac{\sigma_n^{2}}{n}
	\,\beta^{\top} \hat{\Sigma}_n^{3}\beta.
	\end{split}
\end{align}
We write the entries of $\underline{\hat{\Sigma}}_{n}=n^{-1}Z\Sigma_n Z^\top$ as 
$a_{jk} = n^{-1} e_{j}^{\top}\underline{\hat{\Sigma}}_{n} e_{k}$ for 
$j,k \in \{1,\ldots,n\}$, where $\{e_{1},\ldots,e_{n}\}$ denotes the standard 
basis of $\mathbb{R}^{n}$.
Then,
\begin{align*}
	\beta^{\top} \hat{\Sigma}_n^{3} \beta
	= \tau_n^{2}\, \tilde{\beta}^{\top} \hat{\Sigma}_n^{3} \tilde{\beta} \le
	\|\hat{\Sigma}_n\|_{2}\,
	\tau_n^{2}\, \tilde{\beta}^{\top} \hat{\Sigma}_n^{2} \tilde{\beta}
	= 
	\|\underline{\hat{\Sigma}}_n\|_{2}\,
	\tau_n^{2}\, \tilde{\beta}^{\top} \hat{\Sigma}_n^{2} \tilde{\beta}.
\end{align*}
Taking the expectation of $\|\underline{\hat{\Sigma}}_n\|_{2}$ and using
$\|\underline{\hat{\Sigma}}_n\|_{2} \le \|\underline{\hat{\Sigma}}_n\|_{F}$ together with the Jensen
inequality, we obtain
\begin{align}\label{Frobinq}
	\mathbb{E}\!\left(\|\underline{\hat{\Sigma}}_n\|_{2}\right)
	\le 
	\mathbb{E}\!\left[
	\Bigl(\sum_{j=1}^{n}\sum_{k=1}^{n} a_{jk}^{2}\Bigr)^{1/2}
	\right]
	\le
	\left[
	\mathbb{E}\Bigl(\sum_{j=1}^{n}\sum_{k=1}^{n} a_{jk}^{2}\Bigr)
	\right]^{1/2}.
\end{align}
Recall that $z_{1},\ldots,z_{n}$ denote the row vectors of $Z$.
For $j\neq k$, we have
\begin{align*}
	\mathbb{E}(a_{jk}^{2})
	&= 
	\mathbb{E}\!\left[(e_j^{\top} \underline{\hat{\Sigma}}_n e_k)^{2}\right]
	= \frac{1}{n^{2}}\,
	\mathbb{E}\!\left[(z_j^{\top}\Sigma_n z_k)^{2}\right] \\
	&=
	\frac{1}{n^{2}}\,
	\mathbb{E}\!\left[
	\mathbb{E}\!\left(z_j^{\top}\Sigma_n z_k\, z_k^{\top}\Sigma_n z_j \,\Big|\, z_j\right)
	\right]
	=
	\frac{1}{n^{2}}\,\mathbb{E}\!\left(z_j^{\top}\Sigma_n^{2} z_j\right)
	= 
	\frac{1}{n^{2}}\,\mathrm{tr}(\Sigma_n^{2}).
\end{align*}
For $j=k$, Lemma~\ref{quadform} yields
\begin{align*}
	\mathbb{E}(a_{jj}^{2})
	=
	\frac{1}{n^{2}}\,\mathbb{E}\!\left[(z_j^{\top}\Sigma_n z_j)^{2}\right]
	\le
	\frac{C'}{n^{2}}\,\mathrm{tr}(\Sigma_n)^{2},
\end{align*}
for some constant $C'>0$.
Therefore, the upper bound in \eqref{Frobinq} simplifies to
\begin{align*}
	\mathbb{E}\!\left(\bigl\|\underline{\hat{\Sigma}}_n\bigr\|_{2}\right)
	&\le
	\Biggl[
	\sum_{j=1}^{n} \frac{C'}{n^{2}}\tr(\Sigma_n)^{2}
	+ \sum_{j=1}^{n}\sum_{\substack{k=1 \\ k\neq j}}^{n}
	\frac{1}{n^{2}}\tr(\Sigma_n^{2})
	\Biggr]^{1/2} \\
	&\le
	\biggl[
	\frac{p^{2}}{n} C'' 
	+ p C^{2}
	\biggr]^{1/2},
\end{align*}
for some constant $C''>0$.
Hence,
\[\bigl
\|\hat{\Sigma}_n\bigr\|_{2}
= \bigl\|\underline{\hat{\Sigma}}_n\bigr\|_{2}
= O_{\mathbb{P}}\!\left(\frac{p}{\sqrt{n}}+\sqrt{p}\right).
\]
Furthermore, by Assumption~\ref{A2} 
\[
\frac{1}{n}\tr(\Sigma_n)\,\tilde{\beta}^{\top}\Sigma_n\tilde{\beta}
= \gamma_n m_1\,\tilde{\beta}^{\top}\Sigma_n\tilde{\beta}
\le \gamma_n C^{2},
\]
and using Lemma~\ref{momqf2} together with the assumptions of the theorem, we have 
\begin{align*}
	\tilde{\beta}^{\top}\hat{\Sigma}_n^{2}\tilde{\beta} = O_{\mathbb{P}}\left(\frac{1}{\sqrt{n}}\vee \frac{p}{n^2}\right) +O_{\mathbb{P}}\bigl(1+\gamma_n\bigr)=O_{\mathbb{P}}\bigl(1+\gamma_n\bigr).
\end{align*}
Returning to \eqref{bilinform3upperb}, the bounds obtained above imply that
\begin{align*}
	\mathbb{E}\!\left(
	\frac{(u^{\top} X X^{\top} X\beta)^{2}}{n^{4}}
	\,\Big|\, X
	\right)
	&\le
	\frac{1}{n}\,
	\sigma_n^{2}\tau_n^{2}\,
	\tilde{\beta}^{\top}\hat{\Sigma}_n^{3}\tilde{\beta} \le
	\frac{1}{n}\,
	\sigma_n^{2}\tau_n^{2}\,
	\tilde{\beta}^{\top}\hat{\Sigma}_n^{2}\tilde{\beta}
	\,\bigl\|\hat{\Sigma}_n\bigr\|_{2} =O_{\mathbb{P}}\!\left(\frac{p}{n^{3/2}}+ \frac{\sqrt{p}}{n}\right)O_{\mathbb{P}}\!\left(1+\gamma_n\right)\\
	&=O_{\mathbb{P}}\!\left(
	\frac{\sqrt{p}}{n}
	+\frac{p}{n^{3/2}}
	+\frac{p^{3/2}}{n^{2}}
	+\frac{p^{2}}{n^{5/2}}
	\right)=O_{\mathbb{P}}\!\left(
	\frac{\sqrt{p}}{n}\vee
	\frac{p^{2}}{n^{5/2}}
	\right).
\end{align*}
Since $p=o(n^{5/4})$ and 
\[\frac{\sqrt{p}}{n}=\biggl(\frac{p}{n^{5/4}}\biggr)^{1/2}\frac{1}{n^{3/8}}\to0,
\] the bound in
\eqref{bilinform3upperb} implies
\[
n^{-4}\,\mathbb{E}\!\left[(u^{\top}XX^{\top}X\beta)^{2}\mid X\right]
= o_{\mathbb{P}}(1).
\]
It follows that, 
\begin{equation}\label{bilin:nonran2}
n^{-2}\,u^{\top}XX^{\top}X\beta=o_{\mathbb{P}}(1).
\end{equation}
It remains to show that
\[
\frac{1}{n}u^{\top}\underline{\hat{\Sigma}}_{n}u
-
\frac{\sigma_{n}^{2}}{n}\tr(\underline{\hat{\Sigma}}_{n})
=o_{\mathbb{P}}(1).
\]
By the independence of $u$ and $X$, we may apply Lemma~\ref{quadform} with 
$n^{-1}\underline{\hat{\Sigma}}_{n}$ and $\tilde{u}=\sigma_{n}^{-1}u$:
\begin{align*}
	\mathbb{E}\Biggl\{
	\Bigl|
	\frac{1}{n}u^{\top}\underline{\hat{\Sigma}}_{n}u
	-\frac{\sigma_{n}^{2}}{n}\tr\bigl(\underline{\hat{\Sigma}}_{n}\bigr)
	\Bigr|^{2}\,
	\bigg|X
	\Biggr\}
	&=
	\sigma_{n}^{4}\,
	\mathbb{E}\Biggl\{
	\Bigl|
	\frac{1}{n}\tilde{u}^{\top}\underline{\hat{\Sigma}}_{n}\tilde{u}
	-\frac{1}{n}\tr\bigl(\underline{\hat{\Sigma}}_{n}\bigr)
	\Bigr|^{2}
	\bigg|X
	\Biggl\}                                       \\
	&\le
	\nu_{4}^{2}\,\frac{C'\nu_{u,4}}{n^{2}}\,
	\tr\bigl(\underline{\hat{\Sigma}}_{n}^{2}\bigr).
\end{align*}
Hence, for any $\varepsilon>0$,
\begin{align*}
	\mathbb{P}\Biggl\{
	\biggl|
	\frac{1}{n}u^{\top}\underline{\hat{\Sigma}}_{n}u
	-\frac{\sigma_{n}^{2}}{n}\tr\bigl(\underline{\hat{\Sigma}}_{n}\bigr)
	\biggr|>\varepsilon
	\Biggr\}
	&=
	\mathbb{E}\Biggl[
	\mathbb{P}\Biggl\{
	\Bigl|
	\frac{1}{n}u^{\top}\underline{\hat{\Sigma}}_{n}u
	-\frac{\sigma_{n}^{2}}{n}\tr\bigl(\underline{\hat{\Sigma}}_{n}\bigr)
	\Bigr|>\varepsilon\,
	\bigg|X
	\Biggr\}
	\Biggr]                                     \\
	&\le
	\frac{1}{\varepsilon^{2}}
	\mathbb{E}\Biggl[
	1\wedge
	\mathbb{E}\Biggl\{
	\Bigl|
	\frac{1}{n}u^{\top}\underline{\hat{\Sigma}}_{n}u
	-\frac{\sigma_{n}^{2}}{n}\tr\bigl(\underline{\hat{\Sigma}}_{n}\bigr)
	\Bigr|^{2}\,
	\bigg|X
	\Biggr\}
	\Biggr]                                     \\
	&\le
	\frac{1}{\varepsilon^{2}}
	\wedge
	\left(
	\frac{\nu_{4}^{2}C_{2}\nu_{u,4}}{\varepsilon^{2}n^{2}}\,
	\tr\bigl(\underline{\hat{\Sigma}}_{n}^{2}\bigr)
	\right)                                    =
	\frac{1}{\varepsilon^{2}}
	\wedge
	\left(
	\frac{p}{n^{2}}\,C''\,
	\frac{1}{p}\tr\bigl(\hat{\Sigma}_{n}^{2}\bigr)
	\right),
\end{align*}
where we use $\tr(\underline{\hat{\Sigma}}_{n}^{2})
=\tr(\hat{\Sigma}_{n}^{2})$.
Since $\hat{m}_2 = O_{\mathbb{P}}(1+\gamma_n)$, and under the assumptions of the theorem
\[
\frac{p}{n^{2}} = \left(\frac{\sqrt{p}}{n}\right)^2 \to 0
\quad\text{and}\quad
\frac{p^2}{n^3} = \left(\frac{p}{n^{5/4}}\right)^2 \frac{1}{\sqrt{n}} \to 0,
\]
we conclude that
\begin{align}\label{u:quadform}
\frac{1}{n} \, u^{\top} \underline{\hat{\Sigma}}_n \, u
- \frac{\sigma_n^2}{n} \, \mathrm{tr}\bigl(\underline{\hat{\Sigma}}_n\bigr)
\;\overset{\mathbb{P}}{\longrightarrow}\; 0.
\end{align}

The second part is straightforward, since we can utilize the facts
established in the first part of the proof. We consider the decomposition
	\begin{align}\label{sig:dec}
	\begin{split}
		|\hat{\tau}_n^2-\tau_n^2|&=\biggl|\hat{\tau}_n^2-\frac{1}{\tilde{m}_2}\bigl(\beta^\top \hat{\Sigma}_n^2\beta -\gamma_n\hat{m}_1\beta^\top \hat{\Sigma}_n\beta\bigr)+\frac{1}{\tilde{m}_2}\bigl(\beta^\top \hat{\Sigma}_n^2\beta -\gamma_n\hat{m}_1\beta^\top \hat{\Sigma}_n\beta\bigr)-\tau_n^2\biggr|\\
		&\leq\biggl|\hat{\tau}_n^2-\frac{1}{\tilde{m}_2}\bigl(\beta^\top \hat{\Sigma}_n^2\beta -\gamma_n\hat{m}_1\beta^\top \hat{\Sigma}_n\beta\bigr)\biggr|\\
		&\quad+\biggl|\frac{1}{\tilde{m}_2}\bigl(\beta^\top \hat{\Sigma}_n^2\beta -\gamma_n\hat{m}_1\beta^\top \hat{\Sigma}_n\beta\bigr)-\tau_n^2\biggr|,
	\end{split}
\end{align}
almost surely.
First, we show that the second qunatity in the upper bound of \eqref{sig:dec} converges in probability to zero.
Recall that
\[
A = \hat{m}_1-m_1=O_{\mathbb{P}}\bigl((np)^{-1/2}\bigr), 
\qquad   
B = m_1=O(1),
\]
\[
A' = \tau_n^{2}\!\left(\tilde{\beta}^\top \hat{\Sigma}_n\tilde{\beta}
-\tilde{\beta}^\top \Sigma_n\tilde{\beta}\right)=O_{\mathbb{P}}\bigl(n^{-1/2}\bigr),
\qquad
B' = \tau_n^{2}\tilde{\beta}^\top \Sigma_n \tilde{\beta}=O(1).
\]
Therefore,
\begin{align*}
	\tau_n^2\gamma_n\hat m_1\,\tilde{\beta}^\top \hat{\Sigma}_n \tilde{\beta}
	&= \tau_n^2\gamma_n 
	\Big( \hat{m}_1-m_1+m_1\Big) 
	\Big(\tilde{\beta}^\top\hat{\Sigma}_n\tilde{\beta}
	- \tilde{\beta}^\top\Sigma_n\tilde{\beta}
	+ \tilde{\beta}^\top\Sigma_n\tilde{\beta}\Big) \\
	&= \gamma_n (A+B)(A' + B').
\end{align*}
It is straightforward to verify that
\[
\gamma_n A A' = O_{\mathbb{P}}\Biggl(\frac{\sqrt{p}}{n^2}\Biggr), \quad
\gamma_n A B' = O_{\mathbb{P}}\Biggl(\frac{\sqrt{p}}{n^{3/2}}\Biggr), \quad
\gamma_n B A' = O_{\mathbb{P}}\Biggl(\frac{p}{n^{3/2}}\Biggr).
\]
Since $p = o(n^{5/4})$, we conclude that
\[
\tau_n^2 \, \gamma_n \, \hat{m}_1 \, \tilde{\beta}^\top \hat{\Sigma}_n \tilde{\beta}
= \tau_n^2 \, \gamma_n \, m_1 \, \tilde{\beta}^\top \Sigma_n \tilde{\beta} + o_{\mathbb{P}}(1).
\]
For $\beta^\top \hat{\Sigma}_n^2\beta$, we obtain by the uniform boundedness of $\tau_n$ and Lemma~\ref{momqf2} that
	\begin{align*}
	\beta^\top \hat{\Sigma}_n^2\beta=\tau_n^2\tilde{\beta}^\top \hat{\Sigma}_n^2\tilde{\beta}
		= \tau_n^2\bigg(\tilde{\beta}^\top \Sigma_n^2\tilde{\beta} +\gamma_n \frac{1}{p}\tr(\Sigma_n)\tilde{\beta}^\top \Sigma_n\tilde{\beta}\bigg) +o_{\mathbb{P}}(1).
	\end{align*}
Combining the preceding two displays, we obtain
\begin{align*}
	\beta^\top \hat{\Sigma}_n^{2}\beta
	- \gamma_n \hat m_1\, \beta^\top \hat{\Sigma}_n \beta
	&=
	\tau_n^{2}\!\left(
	\tilde{\beta}^\top \Sigma_n^{2}\tilde{\beta}
	+ \gamma_n\,m_1\,
	\tilde{\beta}^\top \Sigma_n \tilde{\beta}
	\right)
	- \tau_n^{2}\gamma_n\,m_1\,
	\tilde{\beta}^\top \Sigma_n \tilde{\beta}
	+o_{\mathbb{P}}(1) \\
	&= \tau_n^{2}\tilde{\beta}^\top \Sigma_n^{2}\tilde{\beta}
	+ o_{\mathbb{P}}(1) \\
	&= \tau_n^{2}\Delta_2
	+ \tau_n^{2}m_2
	+ o_{\mathbb{P}}(1).
\end{align*}
We already established that $\tilde{m}_2 = m_2 + o_{\mathbb{P}}(1)$ and that
$m_2 \to \int_0^\infty x^2\, dH(x) \neq 0$. Since $\tau_n^2\Delta_2 = o(1)$ by
Assumption~\ref{A6}, we have
\begin{align*}
	\frac{1}{	\tilde{m}_2}
		\beta^\top \hat{\Sigma}_n^{2}\beta
		- \gamma_n \hat m_1 \beta^\top \hat{\Sigma}_n \beta
	- \tau_n^2
=o_{\mathbb{P}}(1).
\end{align*}

For the first quantity in the upper bound of \eqref{sig:dec}, we
decompose $n^{-1}\|y\|_2^2$ and $n^{-2}\|X^\top y\|_2^2$ as in \eqref{decom:xy:y}, and use the identity
\[
\sigma_n^2
= \frac{1}{\tilde{m}_2}\bigl( \hat{m}_2
- \gamma_n \hat{m}_1^2 \bigr)\sigma_n^2,
\qquad \text{almost surely.}
\] 
Recalling the definition of $\hat{\tau}_n^2$ in \eqref{est:def}, we obtain
	\begin{align*}
	\biggl|\hat{\tau}_n^2-\frac{1}{\tilde{m}_2}\bigl(\beta^\top \hat{\Sigma}_n^2\beta -\gamma_n\hat{m}_1\beta^\top \hat{\Sigma}_n\beta\bigr)\biggr|&=\Biggl|\bigg(\frac{1}{\tilde{m}_2} \frac{\|X^\top y\|_2^2}{n^2} - \frac{\gamma_n \hat{m}_1}{\tilde{m}_2} \frac{\|y\|_2^2}{n}\bigg)-\frac{1}{\tilde{m}_2} \bigg(\beta^\top \hat{\Sigma}_n^2\beta - \gamma_n \hat{m}_1 \beta^\top\hat{\Sigma}_n\beta \bigg)\Biggr|
		\\&\leq \frac{1}{\tilde{m}_2}\bigg|\frac{2}{n^2}u^\top X X^\top X \beta\bigg|+\frac{1}{\tilde{m}_2}\bigg|\frac{1}{n}u^\top \underline{\hat{\Sigma}}_nu-\gamma_n\hat{m}_1 \frac{1}{n}u^\top u\bigg|+\frac{2\gamma_n\hat{m}_1}{\tilde{m}_2}\bigg|\frac{1}{n}u^\top X\beta\bigg|
	\end{align*}
By \eqref{mom2:nonran}, \eqref{bilin:nonran}, \eqref{mom1:nonran}, \eqref{bilin:nonran2} and Lemma~\ref{Bodver}, it remains to verify that
\[
\bigg|\frac{1}{n}u^\top \underline{\hat{\Sigma}}_n u
- \gamma_n \hat{m}_1 \frac{1}{n}u^\top u
\bigg|
= o_{\mathbb{P}}(1).
\]
Using $\gamma_n \hat{m}_1
= n^{-1}\tr\bigl(\underline{\hat{\Sigma}}_n\bigr)$, we obtain
\begin{align*}
	\bigg|\frac{1}{n}u^\top \underline{\hat{\Sigma}}_n u
	- \gamma_n \hat{m}_1\frac{1}{n}u^\top u
	\bigg|
	&=
	\Biggl|
	\frac{1}{n}u^\top \underline{\hat{\Sigma}}_n u
	- \frac{\sigma_n^2}{n}\tr(\underline{\hat{\Sigma}}_n)
	- \gamma_n\hat{m}_1
	\left(\frac{1}{n}u^\top u - \sigma_n^2\right)
	\Biggr| \\
	&\le
	\Biggl|\frac{1}{n}u^\top \underline{\hat{\Sigma}}_n u
	- \frac{\sigma_n^2}{n}\tr\bigl(\underline{\hat{\Sigma}}_n\bigr)
	\Biggr|
	+
	\Biggl|\gamma_n\hat{m}_1
	\left(\frac{1}{n}u^\top u - \sigma_n^2\right)\Biggr|.
\end{align*}
Using \eqref{u:LLN}, \eqref{mom1:nonran} and \eqref{u:quadform}, we conclude that
\[
\Biggl|\frac{1}{n}u^\top \underline{\hat{\Sigma}}_n u
- \frac{\sigma_n^2}{n}\tr\bigl(\underline{\hat{\Sigma}}_n\bigr)
\Biggr| = o_{\mathbb{P}}(1),
\qquad
\Biggl|\gamma_n\hat{m}_1
\left(\frac{1}{n}u^\top u - \sigma_n^2\right)\Biggr|
= o_{\mathbb{P}}(1). \qedhere
\]

\end{proof}

\section{Further technical results}\label{App:C}

In this section, we present several auxiliary results that will be used in the proofs of Appendix~\ref{App:B}.

\begin{lemma}\label{lebip}
Under Assumptions~\ref{A1} and \ref{A3}, we have
\begin{align}\label{largeeig:p}
	s_{\max}\bigl(n^{-1} Z^\top Z\bigr) \;\overset{\mathbb{P}}{\longrightarrow}\; (1+\sqrt{\gamma})^2 
	\qquad \text{as }n\to\infty.
\end{align}

\end{lemma}

\begin{remark}	
A substantial literature establishes necessary and sufficient conditions for the almost sure convergence of the largest eigenvalue of the sample covariance matrix $n^{-1}Z^\top Z$, primarily under i.i.d.\ assumptions. In particular, \citet{BaiYin1988,BaiYin1993} show that if $\gamma_n = p/n \to \gamma \in (0,\infty)$, then the largest eigenvalue satisfies
\begin{align}\label{largeeig:as}
	s_{\max}(n^{-1}Z^\top Z) \xrightarrow{\text{a.s.}} (1+\sqrt{\gamma})^{2} 
	\qquad \text{as }n\to\infty.
\end{align}
Here, $Z = Z_n$ denotes the $n \times p$ upper-left block of a doubly infinite i.i.d.\ array with mean zero, variance one, and finite fourth moment. Moreover, \citet{notelargeeig} show that finiteness of the fourth moment is also necessary for the almost sure convergence in~\eqref{largeeig:as}.

\citet{BaiYin1993} further showed that the convergence in probability in~\eqref{largeeig:p} holds under the same framework, except that the finite fourth-moment assumption can be replaced by the condition
\begin{align}\label{BaiYincond}
	n^{2}\,\mathbb{P}\bigl(|z_{11}| \ge \sqrt{n}\bigr) = o(1),
\end{align}
which is readily seen to be weaker than the assumption of a bounded fourth moment.
Condition~\eqref{BaiYincond} also implies that $\mathbb{E}(|z_{11}|^{4-\varepsilon}) < \infty$ for each $\varepsilon>0$; see \citet[Remark~3]{BaiYin1993}. In fact, one can even show that \eqref{BaiYincond} lies strictly between the assumption of a bounded fourth moment and the requirement that all $(4-\varepsilon)$-th moments are finite, and thus constitutes only a minor relaxation in terms of a bounded moment condition.
Related discussions can be found in \citet{Silverstein1989}.

More recently, \citet[Theorem~1.8]{ChafTik} show that convergence in~\eqref{largeeig:p} holds for $\gamma_n \to \gamma \in (0,\infty)$ under a different collection of distributional assumptions.
Specifically, for each $n\in \mathbb{N}$, the rows of $Z$ are i.i.d.\ copies of an isotropic random vector $\xi^{(n)}$ (possibly varying with $n$) in $\mathbb{R}^p$, together with conditions controlling the tails of norms of projections of the rows. In particular, when $\xi$ does not vary with $n$ and its components are i.i.d.\ with bounded fourth moments, \citet[Corollary~1.9]{ChafTik} show that~\eqref{largeeig:p} holds.

In contrast, our setting assumes only that the entries of $Z=Z_n$ are independent for each $n\in \mathbb{N}$ and allows their distributions to vary with $i$, $j$, and $n$, but requires uniformly bounded $(4+\delta)$-th moments for some $\delta>0$.

\end{remark}

\begin{proposition}\label{prop:lebip}
Consider Assumptions~\ref{A1} and \ref{A3}, and let $\{\eta_n\}_{n \ge 1}$ be a sequence of positive numbers satisfying $\eta_n \log (n) \to \infty$. Then
\[
\sum_{i=1}^n \sum_{j=1}^{p} 
\mathbb{P}\bigl(|z_{ij}| \ge \eta_n \sqrt{n}\bigr) = o(1) \quad \text{as } n \to \infty.
\]
\end{proposition}
\begin{proof}
	By Markov's inequality, we have
	\begin{align*}
		\sum_{i=1}^n \sum_{j=1}^p 
		\mathbb{P}\bigl(|z_{ij}| \ge \eta_n \sqrt{n}\bigr)
		&\le \frac{1}{(\eta_n n^{1/2})^{4+\delta}}
		\sum_{i=1}^n \sum_{j=1}^p 
		\mathbb{E}\bigl(|z_{ij}|^{4+\delta}\bigr)\leq \gamma_n\frac{\nu_{4+\delta}}{\eta_n^{4+\delta} n^{\delta/2}} 
	\end{align*}
Since $\eta_n \log n \to \infty$, there exists an $n_0 \in \mathbb{N}$ such that
\[
\eta_n > \frac{1}{\log(n)}, \qquad \text{for all } n \ge n_0.
\]
Because $\eta_n > 0$ for all $n$, raising both sides to the power $4+\delta$ yields
\[
\eta_n^{4+\delta} > \frac{1}{\log(n)^{4+\delta}}, \qquad \text{for all } n \ge n_0.
\]
Multiplying by $n^{\delta/2}$ yields
	\[
	\eta_n^{4+\delta} n^{\delta/2} > \frac{n^{\delta/2}}{\log (n)^{4+\delta}} \quad \text{for all }n\geq n_0.
	\]
	Since, for all $\delta>0$, $n^{\delta/2}\log (n)^{-(4+\delta)} \to \infty$ as $n \to \infty$, 
	it follows that 
	\[
	\eta_n^{4+\delta} n^{\delta/2} \to \infty\qquad \text{as }n\to\infty,
	\]
	which completes the proof.
\end{proof}

\begin{proof}[Proof of Lemma~\ref{lebip}]
Let $\{\eta_n\}_{n\ge 1}$ be a deterministic sequence such that $\eta_n>0$ for all $n\in\mathbb{N}$, 
$\eta_n \to 0$, and 
$\eta_n \log (n) \to \infty$ as $n \to \infty$. 
Let $\{k_n\}_{n\ge 1}$ be a sequence of nonnegative integers satisfying 
\begin{align*}
\frac{k_n}{\log (n)} \to \infty,
\qquad
\eta_n^{1/6}\frac{k_n}{\log (n)} \to 0
\qquad\text{and}\qquad\frac{\eta_n\sqrt{n}}{k_n^3}\to\infty\qquad \text{as } n \to \infty.
\end{align*}
In the next step, we truncate and center the entries of \(Z = Z_n\) for each
\(n \in \mathbb{N}\).
For \(i = 1, \ldots, n\) and \(j = 1, \ldots, p\), define the truncated variables
\[
\hat z_{ij} = z_{ij}\,\mathbf{1}\bigl\{|z_{ij}| < \eta_n \sqrt{n}\bigr\},
\]
and their centered versions
\[
\tilde z_{ij} = \hat z_{ij} - \mathbb{E}[\hat z_{ij}].
\]
Let
\[
\hat Z = (\hat z_{ij})_{1 \le i \le n,\; 1 \le j \le p}
\quad \text{and} \quad
\tilde Z = (\tilde z_{ij})_{1 \le i \le n,\; 1 \le j \le p}
\]
denote the corresponding \(n \times p\) matrices.
Moreover, define
\[
 s^{(n)}_{\max}= s_{\max}\!\left(n^{-1} Z^\top Z\right), \qquad
\hat s^{(n)}_{\max} = s_{\max}\!\left(n^{-1} \hat Z^\top \hat Z\right), \qquad
\tilde s^{(n)}_{\max} = s_{\max}\!\left(n^{-1} \tilde Z^\top \tilde Z\right).
\]
We begin by showing that  
\[s^{(n)}_{max}-\tilde{s}^{(n)}_{max}\overset{\mathbb{P}}{\longrightarrow}0.
\]
By the triangle inequality for the spectral norm,
we obtain
\begin{align}\label{largeigbound}
	\begin{split}
		\Bigl|\bigl(\hat s^{(n)}_{\max}\bigr)^{1/2}-\bigl(\tilde s^{(n)}_{\max}\bigr)^{1/2}\Bigr|
		= \frac{1}{\sqrt n}\bigl|\|\hat Z\|_2-\|\tilde Z\|_2\bigr|
		\le \frac{1}{\sqrt n}\|\hat Z-\tilde Z\|_2 \le \frac{1}{\sqrt n}\|\hat Z-\tilde Z\|_F.
	\end{split}
\end{align}
Let \(E=\hat Z-\tilde Z\). Then \(E\) is an \(n\times p\) matrix with entries
\[E_{ij}=\mathbb E(\hat z_{ij})
=\mathbb E\!\left[z_{ij}\mathbf 1\bigl\{|z_{ij}|<\eta_n\sqrt n\bigr\}\right],
\qquad 1\le i\le n,\;1\le j\le p.
\]
Since $\mathbb{E}[z_{ij}] = 0$, we have
\[
\mathbb{E}\bigl[z_{ij} \mathbf{1}\{|z_{ij}| < \eta_n \sqrt{n}\}\bigr] 
= - \mathbb{E}\bigl[z_{ij} \mathbf{1}\{|z_{ij}| \ge \eta_n \sqrt{n}\}\bigr].
\]
Applying the Cauchy--Schwarz inequality and using $\mathbb{E}(z_{ij}^2)=1$, it follows that
\begin{align*}
	\bigl| \mathbb{E}\bigl[z_{ij}\mathbf{1}\{|z_{ij}|<\eta_n\sqrt{n}\}\bigr] \bigr|
	&= \bigl| \mathbb{E}\bigl[z_{ij}\mathbf{1}\{|z_{ij}|\ge \eta_n\sqrt{n}\}\bigr] \bigr| \le \bigl\{\mathbb{E}(z_{ij}^2)\bigr\}^{1/2} \; \bigl\{\mathbb{P}\bigl(|z_{ij}|\ge \eta_n\sqrt{n}\bigr)\bigr\}^{1/2} \\
	&= \mathbb{P}\bigl(|z_{ij}|\ge \eta_n\sqrt{n}\bigr)^{1/2},
\end{align*}
for all $1\le i\le n$ and $1\le j\le p$.
Therefore, it follows from \eqref{largeigbound}, Proposition~\ref{prop:lebip}, and the previous display that
\begin{align}\label{half:conv}
	\begin{split}
		\Bigl|\bigl(\hat s^{(n)}_{\max}\bigr)^{1/2}-\bigl(\tilde s^{(n)}_{\max}\bigr)^{1/2}\Bigr|
		&\le \frac{1}{\sqrt{n}}\|\hat{Z}-\tilde{Z}\|_F=
		\Biggl(\frac{1}{n}\sum_{i=1}^n\sum_{j=1}^p
		\Bigl\{\mathbb E\bigl[z_{ij}\mathbf 1\{|z_{ij}|<\eta_n\sqrt n\}\bigr]\Bigr\}^2
		\Biggr)^{1/2} \\
		&\le
		\Biggl(\frac{1}{n}\sum_{i=1}^n\sum_{j=1}^p
		\mathbb P\bigl(|z_{ij}|\ge \eta_n\sqrt n\bigr)
		\Biggr)^{1/2}
		= o(n^{-1/2}).
	\end{split}
\end{align}
Moreover, by Markov’s inequality and Jensen’s inequality, for any \(M>0\),
\begin{align}\label{half:conv2}
	\begin{split}
		\mathbb P\Bigl(\bigl(\hat s^{(n)}_{\max}\bigr)^{1/2}>M\Bigr)
		&\le
		\mathbb P\biggl(
		\frac{1}{\sqrt{n}}\|\hat{Z}\|_F>M
		\biggr)
		\le
		\frac{1}{M\sqrt n}
		\mathbb E\Biggl[
		\Bigl(\sum_{i=1}^n\sum_{j=1}^p \hat z_{ij}^2\Bigr)^{1/2}
		\Biggr] \\
		&\le
		\frac{1}{M\sqrt n}
		\Biggl(
		\sum_{i=1}^n\sum_{j=1}^p
		\mathbb E\bigl[z_{ij}^2\mathbf 1\{|z_{ij}|<\eta_n\sqrt n\}\bigr]
		\Biggr)^{1/2}
		\le
		\frac{(\gamma_n n)^{1/2}}{M}.
	\end{split}
\end{align}
Similarly, for any \(M>0\), by Markov’s inequality, Jensen’s inequality, and the bound
\((a-b)^2 \le 2a^2 + 2b^2\) for \(a,b \in \mathbb{R}\), we obtain
\begin{align}\label{half:conv3}
	\begin{split}
		\mathbb{P}\Bigl(\bigl(\tilde s_{\max}^{(n)}\bigr)^{1/2} > M\Bigr)
		&\le
		\frac{1}{M\sqrt{n}}
		\Biggl[\sum_{i=1}^n \sum_{j=1}^p \mathbb{E}\Bigl\{\bigl[\hat z_{ij}- \mathbb{E}(\hat{z}_{ij})\bigr]^2\Bigr\}\Biggr]^{1/2} \\
		&\le
		\frac{2}{M\sqrt{n}}
		\Biggl(
		\sum_{i=1}^n \sum_{j=1}^p
		\mathbb{E}\bigl(\hat z_{ij}^2\bigr)
		\Biggr)^{1/2}
		\le
		\frac{2(\gamma_n n)^{1/2}}{M}.
	\end{split}
\end{align}
Combining \eqref{half:conv}–\eqref{half:conv3} with Assumption~\ref{A1}, we obtain
\begin{align}\label{hatsmintildmin}
	\begin{split}
	\hat s^{(n)}_{\max}-\tilde s^{(n)}_{\max}
	&=
	\Bigl(\bigl(\hat  s^{(n)}_{\max}\bigr)^{1/2}-\bigl(\tilde s^{(n)}_{\max}\bigr)^{1/2}\Bigr)
	\Bigl(\bigl(\hat s^{(n)}_{\max}\bigr)^{1/2}+\bigl(\tilde s^{(n)}_{\max}\bigr)^{1/2}\Bigr)\\
	&=
	o_{\mathbb{P}}\bigl(n^{-1/2}\bigr)\,O_{\mathbb P}\bigl(n^{1/2}\bigr)
	=
	o_{\mathbb P}(1).
	\end{split}
\end{align}
Next, by Proposition~\ref{prop:lebip}, we have that for any $\varepsilon>0$,
\begin{align}\label{sminhats}
	\begin{split}
		\mathbb{P}\Bigl(\bigl|s^{(n)}_{\max} - \hat s^{(n)}_{\max}\bigr| > \varepsilon\Bigr)
		&\le \mathbb{P}\Bigl(s^{(n)}_{\max} \neq \hat s^{(n)}_{\max}\Bigr) \le
		\mathbb{P}\Biggl(
		\bigcup_{1 \le i \le n,\; 1 \le j \le p}
		\bigl\{|z_{ij}| \ge \eta_n \sqrt{n}\bigr\}
		\Biggr) \\
		&\le
		\sum_{i=1}^n \sum_{j=1}^p
		\mathbb{P}\bigl(|z_{ij}| \ge \eta_n \sqrt{n}\bigr)
		= o(1).
	\end{split}
\end{align}
Using \eqref{hatsmintildmin} and \eqref{sminhats}, we conclude that
\begin{align}\label{lebip:step1}
s^{(n)}_{\max}-\tilde s^{(n)}_{\max}
=
\bigl(s^{(n)}_{\max}-\hat s^{(n)}_{\max}\bigr)
+
\bigl(\hat s^{(n)}_{\max}-\tilde s^{(n)}_{\max}\bigr)
\overset{\mathbb{P}}{\longrightarrow}0,
\qquad\text{as } n\to\infty.
\end{align}

In the next step of the proof we establish that
\begin{align}\label{liminf:smax}
	\liminf_{n \to \infty} s_{\max}^{(n)} \ge (1+\sqrt{\gamma})^2
	\quad \text{almost surely}.
\end{align}
We begin with the following observation. Let $\mu_n$ denote the measure corresponding to the empirical spectral distribution function $F_{n^{-1}Z^\top Z}$. By Theorem~\ref{Pan2010}, in the case where $\Sigma_n = I_p$ for all $n \in \mathbb{N}$, the measure $\mu_n$ converges weakly, almost surely, to the Marchenko--Pastur law $\mu_{\mathrm{MP}}$ with parameter $\gamma$, whose support is given by 
\[ \operatorname{supp}(\mu_{\mathrm{MP}}) = \begin{cases} \bigl[(1-\sqrt{\gamma})^2,\,(1+\sqrt{\gamma})^2\bigr], & \gamma \le 1,\\[1mm] \{0\} \cup \bigl[(1-\sqrt{\gamma})^2,\,(1+\sqrt{\gamma})^2\bigr], & \gamma > 1, \end{cases} \] (see, e.g., \citealp[Chapter~3]{Silversteinbook2010}). 
To establish \eqref{liminf:smax} we argue by contradiction. Fix an $\omega$ for which the conclusion of Theorem~\ref{Pan2010} holds with $\Sigma_n = I_p$ for all $n \in \mathbb{N}$, and suppose that \eqref{liminf:smax} is false. Then there exists an \(\varepsilon_0>0\) and a subsequence \(\{n_k\}_{k\ge 1}\) such that
\[
s_{\max}^{(n_k)} \le (1+\sqrt{\gamma})^2 - \varepsilon_0
\quad \text{for all } k.
\]
Therefore, for each $k$, all eigenvalues of the matrix \(n_k^{-1} Z^\top Z\) lie in \((-\infty, (1+\sqrt{\gamma})^2 - \varepsilon_0]\).  
Consequently, the corresponding empirical spectral measure satisfies
\[
\mu_{n_k}\Bigl( ((1+\sqrt{\gamma})^2 - \varepsilon_0, \infty) \Bigr) = 0
\quad \text{for all } k.
\]
Since \(((1+\sqrt{\gamma})^2 - \varepsilon_0, \infty)\) is an open set, the Portmanteau theorem implies that
\[
\liminf_{k\to\infty} \mu_{n_k}\Bigl( ((1+\sqrt{\gamma})^2 - \varepsilon_0, \infty) \Bigr)
\;\ge\; 
\mu_{\mathrm{MP}}\Bigl( ((1+\sqrt{\gamma})^2 - \varepsilon_0, \infty) \Bigr).
\]
The left-hand side is zero, while the right-hand side is strictly positive, because $(1+\sqrt{\gamma})^2$ is the right edge of $\operatorname{supp}(\mu_{\mathrm{MP}})$. This is a contradiction. Hence, we conclude that \eqref{liminf:smax} holds. 

Next, we show that, combining \eqref{lebip:step1} and \eqref{liminf:smax}, 
\[
s_{\max}^{(n)} \;\overset{\mathbb{P}}{\longrightarrow}\; (1+\sqrt{\gamma})^2,
\] 
provided that 
\begin{align}\label{lebip:step2}
\limsup_{n\to\infty} \tilde s_{\max}^{(n)} \;\le\; (1+\sqrt{\gamma})^2 \quad \text{almost surely}.
\end{align}
Fix $\epsilon>0$ and note that
\begin{align*}
	\mathbb{P}\Big(s_{\max}^{(n)} > (1+\sqrt{\gamma})^2 + \epsilon \Big) 
	&= \mathbb{P}\Big(\tilde s_{\max}^{(n)} + (s_{\max}^{(n)} - \tilde s_{\max}^{(n)}) > (1+\sqrt{\gamma})^2 + \epsilon \Big) \\
	&\le \mathbb{P}\Big(\tilde s_{\max}^{(n)} > (1+\sqrt{\gamma})^2 + \epsilon/2 \Big) 
	+ \mathbb{P}\Big(|s_{\max}^{(n)} - \tilde s_{\max}^{(n)}| > \epsilon/2\Big).
\end{align*}
where both terms on the right-hand side vanish as $n\to\infty$ by \eqref{lebip:step1} and \eqref{lebip:step2}. Hence,
\[
\lim_{n\to\infty} \mathbb{P}\Big(s_{\max}^{(n)} > (1+\sqrt{\gamma})^2 + \epsilon \Big) = 0.
\]
Similarly, \eqref{liminf:smax} implies that
\[
\lim_{n \to \infty}
\mathbb{P}\Bigl( s_{\max}^{(n)} < (1+\sqrt{\gamma})^2 - \varepsilon \Bigr) = 0.
\]
The result follows by combining the two previous displays.

Therefore, it remains to show that \eqref{lebip:step2} holds.
The crucial step is to establish that, for the sequence $\{k_n\}_{n\ge 1}$ defined at the beginning of the proof 
and for any $x > (1+\sqrt{\gamma})^2$,
\begin{equation}\label{BaiYinres}
	\sum_{n=1}^{\infty} 
	\mathbb{E}\Biggl[ 
	\Biggl( \frac{\tilde s^{(n)}_{\max}}{x} \Biggr)^{k_n} 
	\Biggr] < \infty.
\end{equation}
For an arbitrary $x > (1+\sqrt{\gamma})^2$, define
\[
A_n = \bigl\{\tilde s^{(n)}_{\max} > x \bigr\}, 
\qquad 
A = \limsup_{n \to \infty} A_n.
\]
Note that if \eqref{BaiYinres} holds, then by Markov's inequality,
\begin{align*}
	\sum_{n=1}^{\infty} \mathbb{P}(A_n)
 \le \sum_{n=1}^{\infty} \mathbb{E} \Biggl[ \Biggl( \frac{\tilde{s}^{(n)}_{\max}}{x} \Biggr)^{k_n} \Biggr] 
	< \infty.
\end{align*}
It then follows from the Borel--Cantelli lemma that
$\mathbb{P}(A) = 0$.
Hence, for every \(\omega \notin A\), there exists an \(n_0(\omega)\) such that for all
\(n \ge n_0(\omega)\), $\tilde s_{\max}^{(n)}(\omega) \le x.
$
Therefore,
\[
\limsup_{n \to \infty} \tilde s_{\max}^{(n)} \le x
\quad \text{almost surely}.
\]
Since \(x > (1+\sqrt{\gamma})^2\) is arbitrary, we conclude that
\[
\limsup_{n \to \infty} \tilde s_{\max}^{(n)} \le (1+\sqrt{\gamma})^2
\quad \text{almost surely}.
\]
It remains to verify \eqref{BaiYinres}. To this end, we follow the approach of 
\citet[Theorem~3.1]{BaiYin1988} and begin with a few preliminary observations.

Using matrix multiplication and a proof by induction, it can be shown that
\begin{align*}
	\tr\bigl((\tilde{Z}^\top \tilde{Z})^k\bigr) 
	= \sum_{j_1=1}^p (\tilde{Z}^\top \tilde{Z})^{(k)}_{j_1 j_1} 
	= \sum_{j_1, \dots, j_k = 1}^{p} 
	\tilde z_{\bullet j_1}^\top \tilde z_{\bullet j_2} \,
	\tilde z_{\bullet j_2}^\top \tilde z_{\bullet j_3} \,
	\cdots \,
	\tilde z_{\bullet j_{k-1}}^\top \tilde z_{\bullet j_k} \,
	\tilde z_{\bullet j_k}^\top \tilde z_{\bullet j_1}.
\end{align*}
Note that in the previous display, each summand contains $k$ inner products. 
For each inner product, we write
\[
\tilde z_{\bullet j_r}^\top \tilde z_{\bullet j_{r+1}} 
= \sum_{i_r=1}^n \tilde z_{i_r j_r} \tilde z_{i_r j_{r+1}}, 
\quad r = 1, \dots, k,
\]
with the convention that $j_{k+1} = j_1$. 
Hence, the trace can be expressed as
\[
\tr\bigl((\tilde{Z}^\top \tilde{Z})^k\bigr) 
= \sum_{j_1, \dots, j_k = 1 }^{p}\sum_{i_1, \dots, i_k = 1}^n 
\tilde z_{i_1 j_1} \tilde z_{i_1 j_2} \,
\tilde z_{i_2 j_2} \tilde z_{i_2 j_3} \,
\cdots \,
\tilde z_{i_k j_k} \tilde z_{i_k j_1}.
\]
Going back to verify \eqref{BaiYinres}, for arbitrary $k \ge 0$ we use the previous display to bound
\begin{align}\label{sumdec}
	\begin{split}
		\mathbb{E}\Bigl[\bigl(\tilde{s}^{(n)}_{\max}\bigr)^{k}\Bigr] 
		&\le \mathbb{E}\Bigl[\tr\bigl((n^{-1}\tilde{Z}^\top \tilde{Z})^k\bigr)\Bigr] \\
		&= n^{-k} \sum_{j_1, \dots, j_k = 1 }^{p}\sum_{i_1, \dots, i_k = 1}^n 
		\mathbb{E}\Bigl[
		\tilde z_{i_1 j_1} \tilde z_{i_1 j_2} \,
		\tilde z_{i_2 j_2} \tilde z_{i_2 j_3} \,
		\cdots \,
		\tilde z_{i_k j_k} \tilde z_{i_k j_1}
		\Bigr].
	\end{split}
\end{align}
For the remaining part of the proof, we introduce some notation from graph theory.  

Let $k \ge 1$ be arbitrary. Given two $k$-tuples of integers
\[
(i_1, \dots, i_k) \in \{1, \dots, n\}^k, \qquad
(j_1, \dots, j_k) \in \{1, \dots, p\}^k,
\]
we define a directed multigraph as follows. Place the vertices 
$\{i_1, \dots, i_k\}$ on a line called the $I$-line, 
and the vertices $\{j_1, \dots, j_k\}$ on a line called the $J$-line. 
The edge set is
\[
E \coloneqq \{e_1, \dots, e_{2k}\},
\]
where for each $a\in  \{1, \dots, k\}$,
\[
e_{2a-1} \coloneqq \overrightarrow{j_a i_a}, \qquad
e_{2a} \coloneqq \overrightarrow{i_a j_{a+1}},
\]
with the convention that $j_{k+1} = j_1$. Here, for each edge, the first index denotes the initial vertex and the second index denotes the terminal vertex.
Two vertices are considered equal if and only if they have the same integer value and lie on the same line. 
In particular, even if $i_l$ and $j_m$ take the same integer value, they are treated as distinct vertices since they lie on different lines.  
Thus, a vertex is determined not only by its numerical value but also by its membership in the $I$- or $J$-line.  
We say that two edges $e_i$ and $e_j$ are coincident if they connect the same sets of vertices; that is, the direction of the edge does not matter, only the pair of vertices it connects. The graph constructed above is called a $W$-graph if each edge coincides with at least one other edge, distinct from itself.

Consider the expectation on the right-hand side of \eqref{sumdec}.  
For each summand, we have a given set of $I$-indices $(i_1, \dots, i_k) \in \{1, \dots, n\}^k$ and $J$-indices $(j_1, \dots, j_k) \in \{1, \dots, p\}^k$, which allows us to construct a directed multigraph as described above.  
Note that each random variable appearing in the expectation corresponds to exactly one edge in the constructed graph.  
Moreover, the expectation is zero whenever the corresponding graph is not a $W$-graph.  
This conclusion follows from the definition of coincident edges, 
together with the independence of the $\tilde{z}_{i,j}$ and the fact that each $\tilde{z}_{i,j}$ has mean zero. 
Henceforth we restrict our attention to $W$-graphs.

An edge $e_{2a}$ is called a \emph{column innovation} if $j_{a+1} \notin \{j_1, \dots, j_a\}$, and an edge $e_{2a-1}$ is called a \emph{row innovation} if $i_a \notin \{i_1, \dots, i_{a-1}\}$, for $a \ge 1$. 
By convention, when $a=1$, the set $\{i_1, \dots, i_{a-1}\}$ is empty.  Hence, a row innovation introduces a new integer for the row index of $\tilde{Z}$, while a column innovation introduces a new integer for the column index of $\tilde{Z}$.  
In particular, by this definition, $e_1$ is always a row innovation. We denote the set of row and column innovations by $T_1$. We also note that in \citet{BaiYin1988}, the definitions of row and column innovations are reversed, because in their setup $Z$ is a $p \times n$ matrix and they consider $s_{\max} = s_{\max}(n^{-1} Z Z^\top)$.  
This difference is purely notational and does not affect the combinatorial arguments or the resulting bounds on the spectral norm, as we will see later.

An edge $e_i$ is said to be \emph{single up to $e_j$}, with $i \le j$, if it does not coincide with any other edge in the set $\{e_1,\dots,e_j\}$ except itself. Note that by definition every $T_1$-edge is single up to itself.

An edge $e_i$ is called a \emph{$T_3$-edge} if there exists an innovation $e_j$ with $j<i$ that is single up to $e_{i-1}$ and such that $e_i$ coincides with $e_j$.
Loosely speaking, a $T_3$-edge is the first repetition of a $T_1$-edge in terms of coincidence. The $T_3$-edges are further divided into regular and irregular $T_3$-edges.

Let $e_i$ be a $T_3$-edge. By definition, there exists at least one \(T_1\)-edge \(e_j\),
\(j<i\), which is single up to \(e_{i-1}\), and such that $e_j$ shares at least one vertex with the initial vertex of
\(e_i\). Up to step \(i-1\), only the initial vertex of \(e_i\) is specified, and there may
exist several \(T_1\)-edges sharing this vertex.
Once the terminal vertex of \(e_i\) is determined, that is, once the full edge
\(e_i\) is specified, this uniquely determines which of these candidate \(T_1\)-edges
coincides with \(e_i\).  
If there is exactly one such $T_1$-edge, then $e_i$ is called an \emph{irregular $T_3$-edge}.  
If there exist at least two such $T_1$-edges, then $e_i$ is called a \emph{regular $T_3$-edge}.  
In this way, the set of $T_3$-edges is partitioned into regular and irregular $T_3$-edges. The combinatorial importance of this definition is that regular $T_3$-edges allow multiple possible pairings with preceding $T_1$-edges, thereby increasing the number of admissible edge sequences in the graph. 
In contrast, irregular $T_3$-edges correspond to a unique coincidence, leaving no flexibility for alternative pairings, but more on this later.

An edge is called a $T_4$-edge if it is neither a $T_1$-edge nor a $T_3$-edge.  
A $T_4$-edge $e_i$ is said to be a $T_2$-edge if it does not coincide with any edge in the set $\{e_1, \dots, e_{i}\} \cap T_4$ except itself. In other words, a $T_2$-edge is single-up to itself, among the $T_4$-edges.  
By definition, we have $T_2 \subseteq T_4$.  

The $T_2$-edges can be further divided into two categories: 
\begin{enumerate}
	\item $T_{2,1}$-edges, which coincide with a $T_3$-edge, and
	\item $T_{2,2}$-edges, which do not coincide with any $T_3$-edge.
\end{enumerate}

Looseley speaking, $T_{2,1}$-edges are repetitions of a repetition in terms of coincidence. We refer to $T_{2,2}$-edges as \emph{noninnovations}, meaning that they are edges which are single up to themselves but are not $T_1$-edges.  
Intuitively, these edges do not introduce new vertices, but allow for additional ways to connect vertices that have already appeared.
Let $t = |T_2|$ denote the total number of $T_2$-edges, and let $\mu = |T_{2,1}|$. Then the number of $T_{2,2}$-edges is $t - \mu$. 

 By definition, each $T_2$-edge does not coincide with any other $T_2$-edge, besides itself. Furthermore, observe that the definition of coincidence is an equivalence relation on $E$. It follows, that the $T_2$-edges partition the $T_4$-edges into equivalence classes under coincidence.
We denote by $n_i$, $i \in \{1, \dots, \mu\}$, the number of $T_4$-edges that coincide with the $i$-th $T_{2,1}$-edge, and note that $n_i \ge 1$.  
Similarly, we denote by $m_j$, $j \in \{1, \dots, t-\mu\}$, the number of $T_4$-edges that coincide with the $j$-th $T_{2,2}$-edge and note that $m_j \ge 2$.  
Indeed, for any $T_{2,2}$-edge, we count at least the edge itself and, by the definition of a $W$-graph, the edge it coincides with.

Corresponding to the above definitions of edges, we record some useful facts:
\begin{enumerate}
	\item\label{fact1} Note that by definition of the sets $T_1$, $T_3$, and $T_4$, we have
	\[
	E=\{e_1, \dots, e_{2k}\} = T_1 \cup T_3 \cup T_4
	\]
	and $T_2\subseteq T_4$.
	\item\label{fact2} Let $l$ denote the total number of innovations, $r$ the number of row innovations, 
	and $c$ the number of column innovations. Then $l = r + c$. Moreover, $e_1$ is always a row innovation, and since in a $W$-graph each innovation eventually coincides with another edge, it follows that $1 \leq \ell \leq k$. By definition, there is a one-to-one correspondence between $T_1$-edges and $T_3$-edges, implying
	$|T_3| = l.$
 
	\item \label{fact3} Recall that the graph contains a total of $2k$ edges.  
	Since the number of innovations equals the number of $T_3$-edges, it follows that the number of $T_4$-edges is
	$|T_4| = 2k - 2l$.

	\item \label{fact5} Since $T_2 \subseteq T_4$, we have
	\[
	t = |T_2| \le |T_4| = 2k - 2l,
	\]
	and since $T_{2,1} \subseteq T_2$, it follows that $\mu \le t$. Furthermore, by the definition of $T_2$-edges $t=0$ implies that $|T_4|=0$.
	
	\item \label{fact6} Next we show that $l \ge \mu$. By definition, each $T_{2,1}$-edge coincides with exactly one $T_3$-edge. Indeed, let $e_i$ be a $T_{2,1}$-edge. By the definition of $T_2$-edges, $e_i$ does not coincide with any other $T_2$-edge, besides itself, and hence it coincides with exactly one $T_3$-edge. By Fact~\ref{fact2}, each $T_3$-edge corresponds to exactly one $T_1$-edge. Therefore, each $T_{2,1}$-edge corresponds to exactly one $T_1$-edge, which immediately implies $l \ge \mu$. As a consequence, there are $l - \mu$ $T_1$-edges that do not coincide with any $T_{2,1}$-edge.  
	By definition, each of these $l-\mu$ $T_1$-edges also does not coincide with any $T_{2,2}$-edge, and hence cannot coincide with any edge in $T_4 \setminus T_2$.  
	It follows that each of these $l-\mu$ $T_1$-edges coincides with exactly one $T_3$-edge.
	\item \label{fact4}  
Using the definitions of $T_1$, $T_2$, $T_3$, and $T_4$-edges, along with the facts above, we can decompose the $2k$ edges of $E$ into equivalence classes under coincidence. By Fact~\ref{fact6}, there are $l-\mu$ $T_1$-edges that are coincident only with themselves and exactly one $T_3$-edge, and no other edges in $E$. Hence, each of these $l-\mu$ $T_1$-edges forms an equivalence class containing exactly $2$ elements. 
Similarly, for a $T_{2,1}$-edge $e_i$, there exists exactly one coincident $T_3$-edge and exactly one coincident $T_1$-edge. By the definition of $n_i$, the equivalence class of $e_i$ therefore contains $n_i + 2$ elements.  
Analogously, $m_j$ denotes the number of elements in the equivalence class of the $j$-th $T_{2,2}$-edge under coincidence.  

Consequently, we obtain
\begin{align}\label{graphdec}
	\begin{split}
		2k &= |T_1| + |T_3| + |T_4|= 2l + |T_4|= 2l + \sum_{i=1}^\mu n_i + \sum_{j=1}^{t-\mu} m_j \\
		&= 2(l-\mu) + \sum_{i=1}^\mu (n_i + 2) + \sum_{j=1}^{t-\mu} m_j.
	\end{split}
\end{align}

	\end{enumerate}

Two $W$-graphs are said to be \emph{isomorphic} if one can be obtained from the other by a permutation of the $I$-vertices in $\{1, \dots, n\}$ and a permutation of the $J$-vertices in $\{1, \dots, p\}$.  	Under this definition, the collection of $W$-graphs is partitioned into isomorphism classes.
	
A $W$-graph is called \emph{canonical} if $i_1 = j_1 = 1$, and for each $a \ge 2$,
\begin{align}\label{canonicalrule}
i_a \le \max\{i_1, \dots, i_{a-1}\} + 1, \qquad
j_a \le \max\{j_1, \dots, j_{a-1}\} + 1.
\end{align}
By construction, each isomorphism class contains exactly one canonical $W$-graph.  
Indeed, existence follows by relabeling the $I$- and $J$-vertices so that the first
vertex appearing in each class is labeled $1$, the next distinct vertex is labeled
$2$, and so on, separately for the $I$- and $J$-vertices.
To prove uniqueness, suppose there exist two canonical $W$-graphs in the same isomorphism class. We show that they must coincide.
Consider an edge $e_i$, $i>1$. The initial vertex of $e_i$ is already determined
by the preceding edges $\{e_1, \dots, e_{i-1}\}$. If $e_i$ is an innovation edge, then by \eqref{canonicalrule} its terminal vertex is also uniquely determined. Otherwise, $e_i$ only connects to vertices already introduced by $\{e_1, \dots, e_{i-1}\}$. Since vertex labels are determined by their order of first appearance, it follows that the choice of the terminal vertex of $e_i$ is uniquely determined.
It follows 
that the two graphs coincide edge by edge, and hence each isomorphism class 
contains exactly one canonical $W$-graph.

	\begin{figure}
		\centering
		\begin{tikzpicture}[
			x=5cm,
			y=3cm,->,>=stealth', shorten >=4pt, auto,node distance=3cm,
			thick,main node/.style={circle,draw,font=\sffamily\Large\bfseries}
			]
			\node at (2.25,0) {\(I\)};
			\node at (2.25,2) {\(J\)};
			\draw (0,0) node[draw, circle, label={below:\(i_1=i_4=1\)}] {} 
			-- (1,0) node[draw, circle, label={below:\(i_2=i_5=i_7=i_8=2\)}] {}
			-- (2,0) node[draw, circle, label={below:\(i_3=i_6=3\)}] {}
			-- (2.2,0) ;
			\draw (0,2) node[draw, circle, label={above:\(j_1=j_5=1\)}] {}
			-- (1,2) node[draw, circle, label={above:\(j_2=j_4=j_6=2\)}] {}
			--(2,2) node[draw, circle, label={above:\(j_3=j_7=j_8=3\)}] {} 
			-- (2.2,2) ;
			\path
			(0,2) edge[red] node [left] {e1}  (0,0)
			(0,0) edge[bend left, purple] node [left] {e2}  (1,2)
			(1,2) edge[blue] node [left] {e3}  (1,0)
			(1,0) edge[bend right, magenta] node [left] {e4} (2,2)
			(2,2) edge[cyan] node [right] {e5} (2,0)
			(2,0) edge[bend left=35, olive] node [left] {e6} (1,2)
			(1,2) edge[bend left, purple] node [right] {e7} (0,0)
			(0,0) edge[bend left, red] node [left] {e8} (0,2)
			(0,2) edge[bend right] node [left] {e9} (1,0)
			(1,0) edge[bend right,blue] node [left] {e10} (1,2)
			(1,2) edge[bend left=35, olive] node [right] {e11} (2,0)
			(2,0) edge[bend right, cyan] node [right] {e12} (2,2)
			(2,2) edge[bend right, magenta] node [right] {e13} (1,0)
			(1,0) edge[bend right=50, magenta] node [right] {e14} (2,2)
			(2,2) edge[magenta] node [right] {e15} (1,0)
			(1,0) edge[bend right] node [right] {e16} (0,2); 
		\end{tikzpicture}
		\caption{The figure illustrates an example of a canonical $W$-graph that highlights the definitions introduced above. 
			Coincident edges are indicated by the same color. 
			In this example, $k = 8$, 
			$T_1 = \{e_1, e_2, e_3, e_4, e_5\}$,
			$T_3 = \{e_7, e_8, e_{10}, e_{12}, e_{13}\}$,
			and $T_4 = \{e_6, e_9, e_{11}, e_{14}, e_{15}, e_{16}\}$,
			where
			$T_2 = \{e_6, e_9, e_{14}\}$.
			Here, $e_{14}$ is a $T_{2,1}$-edge, while $e_6$ and $e_9$ are $T_{2,2}$-edges. 
			Moreover, $e_8$ is an example of an irregular $T_3$-edge, whereas $e_7$ is a regular $T_3$-edge.
		}
		\label{Figure2}
	\end{figure} 
\medskip 
With these facts and definitions in hand, we decompose the sum on the right-hand side of \eqref{sumdec} according to the number of innovations. 
Recall that $l$ denotes the number of $T_1$-edges in a $W$-graph. 
For each fixed $l$, we sum over all index sequences $(i_1,\dots,i_k)$ and $(j_1,\dots,j_k)$ that are compatible with a $W$-graph having exactly $l$ innovations. 
Within each such configuration, we further sum over all possible arrangements of $T_1$-, $T_3$-, and $T_4$-edges, all canonical $W$-graphs corresponding to that arrangement, and all $W$-graphs isomorphic to a given canonical graph. 
Formally, we write
\[
n^{-k} \sum_{l=1}^{k} \sum\nolimits^\prime \sum\nolimits^{\prime\prime} \sum\nolimits^{\prime\prime\prime} 
\mathbb{E} \Bigl( \tilde z_{i_1 j_1} \tilde z_{i_1 j_2} \, \tilde z_{i_2 j_2} \tilde z_{i_2 j_3} \cdots \tilde z_{i_k j_k} \tilde z_{i_k j_1} \Bigr),
\]
where:
\begin{itemize}
	\item $\sum\nolimits^\prime$ sums over all arrangements of $T_1$-, $T_3$-, and $T_4$-edges;
	\item $\sum\nolimits^{\prime\prime}$ sums over all canonical $W$-graphs for a given arrangement; 
	\item $\sum\nolimits^{\prime\prime\prime}$ sums over all $W$-graphs isomorphic to a given canonical graph.
\end{itemize}

We now bound the number of terms arising in each of the above sums, starting with 
$\sum_{l=1}^k \sum\nolimits^\prime$.
Recall that the first edge in a $W$-graph is always a row innovation and hence belongs to $T_1$.
Therefore, for a fixed $l \in \{1,\dots,k\}$, there are
\[
\binom{2k-1}{\,l-1\,}\leq \binom{2k}{\,l\,} 
\]
possible choices for the locations of the remaining $l-1$ $T_1$-edges among the edges
$\{e_2,\dots,e_{2k}\}$. Once the positions of the $T_1$-edges are fixed, there remain $2k-l$ edges.
Since $|T_3|=l$, there are
\[
\binom{2k-l}{\,l\,}
\]
ways to choose the positions of the $T_3$-edges among the remaining edges, and the remaining
$2k-2l$ positions are occupied by $T_4$-edges.
By the Vandermonde identity,
\[
\binom{2k}{l}
=
\sum_{c=0}^l \binom{k}{c}\binom{k}{l-c}.
\]
Therefore,
\[
\Bigl|\sum_{l=1}^k \sum\nolimits^\prime\Bigr|
\le
\sum_{l=1}^k \binom{2k}{l}\binom{2k-l}{l}
=
\sum_{l=1}^k \sum_{c=0}^l
\binom{k}{c}\binom{k}{l-c}\binom{2k-l}{l}.
\]
Bounding the number of elements of $\sum\nolimits^{\prime\prime\prime}$ is straightforward.
Observe that every column innovation introduces a new $J$-index, while every row innovation
introduces a new $I$-index. Since the first edge is always a row innovation, a $W$-graph with
$c$ column innovations and $r$ row innovations contains exactly $c+1$ distinct $J$-indices
and $r$ distinct $I$-indices.
As the $I$-indices take values in $\{1,\dots,n\}$ and the $J$-indices take values in
$\{1,\dots,p\}$, each isomorphism class contains
\[
p(p-1)\cdots(p-c)\; n(n-1)\cdots(n-r+1)
\]
distinct graphs. Using the bounds
$p(p-1)\cdots(p-c)\le p^{c+1}$, 
$n(n-1)\cdots(n-r+1)\le n^{r}$, and recalling that $l=c+r$,
we obtain
\[
\Bigl|\sum\nolimits^{\prime\prime\prime}\Bigr| \le p^{c+1} n^{r}
= p^{c+1} n^{\,l-c}.
\]

Next, we bound the number of canonical graphs corresponding to a fixed arrangement of $T_1$, $T_3$, and $T_4$ edges. 
Equivalently, we ask: for a given edge-type arrangement, what is the maximal number of distinct canonical graphs that can be formed?
Assume for the moment that the number of $T_2$-edges $t$ satisfies $t \ge 1$.    
Recall that in a canonical $W$-graph, the vertices of each $T_1$-edge are uniquely determined by the preceding edges.  
Specifically, the initial vertex of a $T_1$-edge is either an existing vertex determined by previous edges or the first vertex in the graph, and the terminal vertex is the new integer assigned according to the innovation rule. Similarly, if $e_i$ with $i \ge 2$ is an irregular $T_3$-edge, its vertices are uniquely determined by the single $T_1$-edge that is single up to $e_{i-1}$. Hence, once the arrangement of the different types of edges is fixed in a canonical $W$-graph, the vertices of $T_1$-edges and irregular $T_3$-edges are uniquely determined, and therefore these edges do not add any further combinatorial complexity.

The situation is different for regular $T_3$-edges. For example, in
Figure~\ref{Figure2}, the edge $e_7$ is a regular $T_3$-edge. Given the arrangement
of edge types up to $e_6$, there are two admissible choices for completing
$e_7$. Indeed, the edges $e_2$ and $e_3$ are $T_1$-edges that are single up to $e_6$ and
each shares one vertex with the initial vertex of $e_7$. Consequently, when
constructing $e_7$ as a regular $T_3$-edge, its terminal vertex must be identified
with the corresponding vertex of one of these candidate $T_1$-edges. In this
example, one may therefore choose to connect $j_4 = 2$ either with $i_4 = 1$ or
with $i_4 = 2$.

\citet[Lemma~5.5]{Silversteinbook2010} show that, for a regular $T_3$-edge $e_i$ in a canonical $W$-graph, the number of $T_1$-edges that are single up to $e_{i-1}$ and share at least one vertex with the initial vertex of $e_i$ is bounded by $t+1$.  
Consequently, there are at most $t+1$ possible ways to draw a single regular $T_3$-edge.
Moreover, \citet[Lemma 5.6]{Silversteinbook2010} show that the total number of regular $T_3$-edges is at most twice the number of $T_2$-edges.
Since $t=|T_2|\leq |T_4|=2k-2l$, the total number of ways to assign vertices to all regular $T_3$-edges is bounded by
\begin{align}\label{T4bound1}
(t+1)^{2t} \le (t+1)^{2(2k-2l)}.
\end{align}
We now consider the $T_4$-edges. Recall that we assume for the moment that there are $t \ge 1$ $T_2$-edges in the graph. As established the $T_2$-edges partition the $T_4$-edges in equivalence classes under coincidence. 
Hence, for a given set of $t$ $T_2$-edges, there are at most $t^{2k-2l}$ possible assignments of the vertices of these $T_2$-edges to the remaining $2k-2l$ $T_4$-edges.

Next, we estimate the number of ways to choose the $t$ $T_2$-edges given the indices
$\{i_1,\dots,i_k\}$ and $\{j_1,\dots,j_k\}$ using a crude bound.
Note that in a canonical $W$-graph each $I$-vertex of an edge takes values in
$\{1,\dots,r\}$, while each $J$-vertex takes values in $\{1,\dots,c+1\}$.
Hence, there are at most $r(c+1)$ possible choices for a single edge.
It follows that the number of ways to choose $t$ distinct $T_2$-edges is bounded by
\[
\binom{r(c+1)}{t}
\;\le\;
\binom{k^2}{t}
\;\le\;
k^{2t}.
\]
Combining these bounds, the total number of ways to draw the $2k-2l$ $T_4$-edges is at most
\begin{align}\label{T4bound2}
k^{2t} \, t^{2k-2l} \le k^{2t} (t+1)^{2k-2l}.
\end{align}
It remains to consider the case $t=0$. By \citet[Lemma 5.6]{Silversteinbook2010}, there are no regular $T_3$-edges in this case. Moreover, by Fact~\ref{fact5}, there are no $T_4$-edges when $t=0$. Hence, when $t=0$, the graph contains only $T_1$-edges and irregular $T_3$-edges and $l=k$. By the previous discussion, there is a unique way to assign vertices to both $T_1$-edges and irregular $T_3$-edges, so that $\sum\nolimits^{\prime\prime}$ is only a single summand in this case.  
Therefore, combining \eqref{T4bound1} and \eqref{T4bound2}, together with the discussion of the case $t=0$, and recalling that $t \in \{0,\dots,2k-2l\}$, we conclude that the total number of summands in $\sum\nolimits^{\prime\prime}$ is bounded by
\[
\sum_{t=0}^{2k-2l} (t+1)^{4k-4l} k^{2t} (t+1)^{2k-2l}
=
\sum_{t=0}^{2k-2l} k^{2t} (t+1)^{6k-6l}.
\]
Using the bounds from above, the total number of summands of 
$\sum_{l=1}^k \sum^\prime \sum^{\prime\prime} \sum^{\prime\prime\prime}$ can be bounded by
\begin{equation}\label{fastfertig1}
\sum_{l=1}^k \sum_{c=0}^l 
\binom{k}{c} \binom{k}{l-c} \binom{2k-l}{l} 
\sum_{t=0}^{2k-2l} k^{2t} (t+1)^{6k-6l} p^{c+1} n^{\,l-c}.
\end{equation}

We now turn to bounding the expectation
\[
\mathbb{E}\bigl(\tilde z_{i_1 j_1} \tilde z_{i_1 j_2} \tilde z_{i_2 j_2} \tilde z_{i_2 j_3} \cdots \tilde z_{i_k j_k} \tilde z_{i_k j_1}\bigr).
\]
First, note that for all \(i \in \{1,\dots,n\}\) and \(j \in \{1,\dots,p\}\),
the random variables \(\tilde z_{ij}\) are independent and satisfy
\[
\mathbb{E}(\tilde z_{ij})
= \mathbb{E}(\hat z_{ij}) - \mathbb{E}(\hat z_{ij})
= 0,
\]
\[
|\tilde z_{ij}| \le 2\, \eta_n \sqrt{n},
\]
and
\[
\mathbb{E}(\tilde z_{ij}^2)
= \mathbb{E}\bigl[\bigl(\hat z_{ij}-\mathbb{E}(\hat z_{ij})\bigr)^2\bigr]
\le \mathbb{E}(\hat{z}_{ij}^2)
\le \mathbb{E}(z_{ij}^2)
= 1.
\]
Furthermore, since \(|\tilde z_{ij}| \le 2\,\eta_n\sqrt n\) almost surely and
\(\mathbb E(\tilde z_{ij}^2)\le 1\), we have for all \(l \ge 2\),
\[
\mathbb E\bigl(|\tilde z_{ij}|^l\bigr)
\le (2\,\eta_n\sqrt n)^{\,l-2}.
\]
For $l \ge 3$, using Hölder’s inequality and since Assumption~\ref{A3} implies uniformly bounded fourth moments,
\[
\mathbb{E}\bigl( |\tilde z_{ij}|^l \bigr)
\le \mathbb{E}\bigl( |\tilde z_{ij}|^3 \bigr)\,\bigl(2\eta_n \sqrt n\bigr)^{\,l-3}
\le \mathbb{E}\bigl( \tilde z_{ij}^4 \bigr)^{1/2}
\mathbb{E}\bigl( \tilde z_{ij}^2 \bigr)^{1/2}
\bigl(2\eta_n\sqrt n\bigr)^{\,l-3}
\le \bigl(2^4\nu_4\bigr)^{1/2}\bigl(2\eta_n\sqrt n\bigr)^{\,l-3}.
\]
We define
\[
C \coloneqq \bigl(2^4\nu_4\bigr)^{1/2}.
\]
As already established, in the case $t=0$ there are only $T_1$-edges and
irregular $T_3$-edges in the graph. Hence, each random variable appears
exactly twice, and by independence,
\[
\mathbb{E}\Bigl(
\tilde{z}_{i_1j_1}\tilde{z}_{i_1j_2}\tilde{z}_{i_2j_2}\tilde{z}_{i_2j_3}
\cdots
\tilde{z}_{i_kj_k}\tilde{z}_{i_kj_1}
\Bigr)
=
\prod_{a=1}^k
\mathbb{E}\bigl(\tilde{z}_{i_aj_a}^2\bigr)
\le 1.
\]
Now assume $t \ge 1$. There are $\mu$ $T_{2,1}$-edges and $t-\mu$
$T_{2,2}$-edges. By the decomposition \eqref{graphdec} into equivalence
classes under coincidence and the independence of the entries
$\tilde z_{ij}$, the expectation factorizes as
\begin{align}\label{expectationgraph}
	\mathbb{E}\Bigl(
	\tilde{z}_{i_1j_1}\tilde{z}_{i_1j_2}\cdots
	\tilde{z}_{i_kj_k}\tilde{z}_{i_kj_1}
	\Bigr)
	&=
	\prod_{v=1}^{l-\mu}
	\mathbb{E}\bigl(\tilde{z}_{i_{I_v}j_{I_v}}^{2}\bigr)
	\prod_{v=1}^{t-\mu}
	\mathbb{E}\bigl(\tilde{z}_{i_{N_v}j_{N_v}}^{m_v}\bigr)
	\prod_{v=1}^{\mu}
	\mathbb{E}\bigl(\tilde{z}_{i_{M_v}j_{M_v}}^{n_v+2}\bigr).
\end{align}
Here:
\begin{itemize}
	\item $(i_{I_v},j_{I_v})$, $v=1,\dots,l-\mu$, denote the vertices corresponding to
	$T_1$-edges that only coincide with
	exactly one irregular $T_3$-edge;
	\item $(i_{N_v},j_{N_v})$, $v=1,\dots,t-\mu$, denote the vertices corresponding to the
	equivalence classes under coincidence generated by $T_{2,2}$-edges, each class
	having size $m_v\ge2$;
	\item $(i_{M_v},j_{M_v})$, $v=1,\dots,\mu$, denote the vertices corresponding to the
	equivalence classes under coincidence generated by $T_{2,1}$-edges, each class
	having size $n_v+2\ge3$.
\end{itemize}
Note that
\begin{align*}
	\prod_{v=1}^{\mu}
	\mathbb{E}\bigl(\tilde{z}_{i_{M_v}j_{M_v}}^{\,n_v+2}\bigr)
	&\le
	\prod_{v=1}^{\mu}
	C(2\eta_n\sqrt{n})^{\,n_v-1}
	\le
	C^{\mu}(2\eta_n\sqrt{n})^{\sum_{v=1}^{\mu}(n_v-1)}.
\end{align*}
Similarly,
\begin{align*}
	\prod_{v=1}^{t-\mu}
	\mathbb{E}\bigl(\tilde{z}_{i_{N_v}j_{N_v}}^{\,m_v}\bigr)
	&\le
	\prod_{v=1}^{t-\mu}
	(2\eta_n\sqrt{n})^{\,m_v-2}
	=
	(2\eta_n\sqrt{n})^{\sum_{v=1}^{t-\mu}(m_v-2)}.
\end{align*}
Therefore, using \eqref{expectationgraph} and the bound
\(\mathbb{E}(\tilde z_{ij}^2)\le 1\), we obtain
\begin{align}\label{fastfertig2}
	\begin{split}
	\mathbb{E}\Bigl(
	\tilde{z}_{i_1j_1}\cdots \tilde{z}_{i_kj_1}
	\Bigr)
	&\le
	C^{\mu}
	(2\eta_n\sqrt{n})^{\sum_{v=1}^{t-\mu}(m_v-2)+\sum_{v=1}^{\mu}(n_v-1)} \\
	&=
	C^{\mu}(2\eta_n\sqrt{n})^{\,2k-2l-2(t-\mu)-\mu},
	\end{split}
\end{align}
where in the last line we used \[
2k-2l=\sum_{v=1}^{t-\mu}m_v+\sum_{v=1}^\mu n_v.
\]
Recall that $k = k_n$ and $\eta_n$ satisfy
\[
\frac{k_n}{\log (n)} \to \infty,
\qquad
\eta_n^{1/6}\frac{k_n}{\log (n)} \to 0
\qquad\text{and}\qquad\frac{\eta_n\sqrt{n}}{k_n^3}\to\infty\qquad \text{as } n \to \infty.
\]
Choose $N \in \mathbb{N}$ such that $k_n \ge C\geq 1$ for all $n \ge N$ and recall that $\mu\leq t$.
Then the upper bound in \eqref{fastfertig2}, for all $n \ge N$, simplifies to
\begin{equation}\label{fastfertig3}
	C^\mu (2\eta_n\sqrt{n})^{2k-2l-2(t-\mu)-\mu}
	\;\le\;
	k^t (2\eta_n\sqrt{n})^{2k-2l-t}.
\end{equation}
Combining \eqref{fastfertig1}, \eqref{fastfertig2} and \eqref{fastfertig3}, we obtain, for all $n\geq N$,
\begin{align}\label{momentupperboundtr}
	\begin{split}
		\mathbb{E}\Bigl(\bigl(\tilde{s}^{(n)}_{\max}\bigr)^k\Bigr)
		&\le n^{-k}
		\sum_{l=1}^k \sum_{c=0}^l
		\binom{k}{c}\binom{k}{l-c}\binom{2k-l}{l}
		\, p^{c+1} n^{\,l-c} \\
		&\quad\times\biggl[
		\sum_{t=0}^{2k-2l}
		k^{2t}(t+1)^{6(k-l)} k^t
		(2\eta_n\sqrt{n})^{2k-2l-t}\biggr].
	\end{split}
\end{align}
Rewriting the inner sum gives
\[
\sum_{t=0}^{2k-2l}
k^{2t}(t+1)^{6(k-l)} k^t
(2\eta_n\sqrt{n})^{2k-2l-t}
=
(2\eta_n\sqrt{n})^{2k-2l}
\sum_{t=0}^{2k-2l}
k^{3t}(t+1)^{6(k-l)}(2\eta_n\sqrt{n})^{-t}.
\]
Since
\[
n^{-k} (\sqrt{n})^{2k-2l} n^l = 1,
\]
the bound in \eqref{momentupperboundtr} simplifies to
\begin{align}\label{momentupperboundtr1}
	\begin{split}
	\mathbb{E}\Bigl(\bigl(\tilde{s}^{(n)}_{\max}\bigr)^k\Bigr)
	&\le
	p \sum_{l=1}^k \sum_{c=0}^l
	\binom{k}{c}\binom{k}{l-c}\binom{2k-l}{l}
	\Bigl(\frac{p}{n}\Bigr)^c \\
	&\quad\times\Biggl[
	\sum_{t=0}^{2k-2l}
	k^{3t}(t+1)^{6(k-l)}(2\eta_n\sqrt{n})^{-t}\Biggr]\,(2\eta_n)^{2k-2l}.
	\end{split}
\end{align}
Since we assume that $\eta_n \sqrt{n}/k_n^3 \to \infty$, it follows that
$2\eta_n \sqrt{n}/k_n^3 > 1$ for all sufficiently large $n$ and we
define $\tilde{\eta}_n \coloneqq 2\eta_n$. Using the inequality $a^{-t}(t+1)^b \le a (b/\log a)^b$, which holds
for $a > 1$, $b > 0$, and $t > 0$ (see \citet[p.~520]{BaiYin1988}), with
$a = \tilde{\eta}_n \sqrt{n}/k^3$ and $b = 6(k-l)$. Consequently, for all
$l \in \{1,\dots,k-1\}$ and all sufficiently large $n$, we obtain
	\begin{align}\label{fastfertig4}
		\begin{split}
		\sum_{t=1}^{2k-2l}k^{3t}(t+1)^{6(k-l)}(\tilde{\eta}_n\sqrt{n})^{-t}&= \sum_{t=1}^{2k-2l}\bigg(\frac{\tilde{\eta}_n\sqrt{n}}{k^3}\bigg)^{-t}(t+1)^{6(k-l)}\\
		&\leq 2k\, \biggl(\frac{\tilde{\eta}_n\sqrt{n}}{k^3}\biggr)\left(\frac{6(k-l)}{\log\bigl(\frac{\tilde{\eta}_n\sqrt{n}}{k^3}\bigr)}\right)^{6(k-l)}\\
		&\leq 2\,\bigl(\tilde{\eta}_n\sqrt{n}\bigr)\,\Biggl(\frac{6(k-l)}{\log(\tilde{\eta}_n)+\frac{1}{2}\log(n)-3\log(k)}\Biggr)^{6(k-l)}.
		\end{split}
	\end{align}
Now, observe that the assumption $\eta_n \log(n) \to \infty$ implies that
\[
\eta_n \ge \frac{1}{\log(n)}, \quad \text{for all sufficiently large } n.
\]
Taking logarithms and using that $\eta_n \to 0$, we obtain
\[
0 \ge \log(\eta_n) \ge - \log(\log(n)), \quad \text{for all sufficiently large } n.
\]
Since $\tilde{\eta}_n$ is merely a rescaled (by a constant) version of $\eta_n$, the same bound holds for $\tilde{\eta}_n$ for all sufficiently large $n$.

Similarly, the condition $\delta_n^{1/6} k_n / \log(n) \to 0$ implies that
\[
k_n \le \frac{\log(n)}{\eta_n^{1/6}}, \quad \text{for all sufficiently large } n.
\]
Taking logarithms and using the previous considerations, we obtain
\[
\log k_n \le \log \log(n) - \frac{1}{6} \log(\eta_n) \le \frac{7}{6} \log \log(n), \quad \text{for all sufficiently large } n.
\]
Hence, for all sufficiently large $n$,
\[
\log(\tilde{\eta}_n) + \frac{1}{2}\log n - 3 \log k_n 
\ge \frac{1}{2}\log (n) - \frac{27}{6} \log( \log (n)) \ge \frac{1}{3} \log (n).
\]
Since $\tilde{\eta}_n \sqrt{n} \le n$ for all sufficiently large $n$, and using \eqref{fastfertig4}, we obtain that, for all sufficiently large $n$ and all $l \in \{1, \dots, k-1\}$,
\begin{align*}
	\sum_{t=1}^{2k-2l} k^{3t} (t+1)^{6(k-l)} (\tilde{\eta}_n \sqrt{n})^{-t} 
	\le 2\, n \, \Biggl( \frac{18 k}{\log (n)} \Biggr)^{6(k-l)}.
\end{align*}
Furthermore, in the case $t = 0$ or $k = l$, we have
\[
k^{3t}(t+1)^{6(k-l)}(\tilde{\eta}_n \sqrt{n})^{-t} = 1 \le 2n,
\]
and therefore, for all $l \in \{1, \dots, k\}$, there exists an $N_1 \in \mathbb{N}$ such that for all $n \ge N_1$,
\begin{align}\label{fastfertig5}
	\sum_{t=0}^{2k-2l} k^{3t} (t+1)^{6(k-l)} (2 \eta_n \sqrt{n})^{-t} 
	\le 2\, n \, \Biggl( \frac{18 k}{\log (n)} \Biggr)^{6(k-l)}.
\end{align}
	
Using \citet[Lemma~2.1]{BaiYin1988} with $1 \le c \le l \le k$ and for any $\delta>0$, we have
\begin{equation}\label{BaiYinLem2.1}
	\binom{k}{c}\binom{k}{l-c}\binom{2k-l}{l}\delta^{k-l}
	\le \bigl(1+\sqrt{\delta}\bigr)^{2k}\binom{k}{l}\binom{2l}{2c}.
\end{equation}
We now show that \eqref{BaiYinLem2.1} remains valid for all $0 \le c \le l \le k$.
First consider the case $c=0<l\le k$. Then
\[
\binom{k}{c}\binom{k}{l-c}\binom{2k-l}{l}\delta^{k-l}
= \binom{k}{l}\binom{2k-l}{l}\delta^{k-l}.
\]
Since all terms are nonnegative, we may bound
\begin{align*}
	\binom{k}{l}\binom{2k-l}{l}\delta^{k-l}
	\le \binom{k}{l}\sum_{m=0}^k \binom{2k-m}{m}\delta^{k-m} = \binom{k}{l}\sum_{l^*=0}^k \binom{k+l^*}{2l^*}\delta^{l^*},
\end{align*}
where $l^*=k-m$. Observing that $\binom{k+l^*}{2l^*}\le \binom{2k}{l^*}$ for all $0\le l^*\le k$ and that
\[
\sum_{l^*=0}^k \binom{2k}{2l^*}\delta^{l^*}
\le \sum_{l^*=0}^{2k} \binom{2k}{l^*}\bigl(\sqrt{\delta}\bigr)^{\,l^*},
\]
we obtain, by the binomial theorem,
\[
\binom{k}{l}\sum_{l^*=0}^k \binom{k+l^*}{2l^*}\delta^{l^*}
\le \binom{k}{l}\sum_{l^*=0}^{2k} \binom{2k}{l^*}\bigl(\sqrt{\delta}\bigr)^{\,l^*}
= \bigl(1+\sqrt{\delta}\bigr)^{2k}\binom{k}{l}.
\]
Next, if $c=l<k$, inequality \eqref{BaiYinLem2.1} reduces to
$\delta^k \le (1+\delta)^{2k}$, which is immediate. Finally, when $c=l=k$, both sides of \eqref{BaiYinLem2.1} equal one. Therefore, inequality \eqref{BaiYinLem2.1} holds for all $0 \le c \le l \le k$.

Using \eqref{fastfertig5} and \eqref{BaiYinLem2.1} for $\delta=\tilde{\eta}_n$, the right hand side of \eqref{momentupperboundtr1} can be upper bounded, for all $n\geq N_1$, by
	\begin{align*}
		\mathbb{E}\Bigl(\bigl(s^{(n)}_{max}\bigr)^{k}\Bigr)&\leq 2np\, \sum_{l=1}^k\sum_{c=0}^l \binom{k}{l}\binom{2l}{2c}\gamma_n^c\bigl(1+\sqrt{\tilde{\eta}_n}\bigr)^{2k}\Biggl(\frac{18\,\tilde{\eta}_n^{1/6}k}{\log(n)}\Biggr)^{6(k-l)}.
		\end{align*}
The upper bound further simplifies to
\begin{align*}	
	\mathbb{E}\Bigl((s^{(n)}_{max})^{k}\Bigr)&\leq 2np\, \bigl(1+\tilde{\eta}_n\bigr)^{2k}\sum_{l=1}^k\binom{k}{l}\Biggl(\frac{18\,\tilde{\eta}_n^{1/6}k}{\log(n)}\Biggr)^{6(k-l)}\sum_{c=0}^{2l} \binom{2l}{c}\bigl(\sqrt{\gamma_n}\bigr)^c\\
	&\leq 2np\, \bigl(1+\tilde{\eta}_n\bigr)^{2k}\sum_{l=0}^k\binom{k}{l}\bigl\{(1+\sqrt{\gamma_n})^2\bigr\}^l\,\Biggl[\biggl(\frac{18\,\tilde{\eta}_n^{1/6}k}{\log(n)}\biggr)^{6}\Biggr]^{k-l}\\
	&=2np\, \bigl(1+\tilde{\eta}_n\bigr)^{2k}\left(\bigl(1+\sqrt{\gamma_n}\bigr)^{2}+ \biggl(\frac{18\,\tilde{\eta}_n^{1/6}k}{\log(n)}\biggr)^{6}\right)^k\\
	&=\left((2np)^{1/k}\bigl(1+\tilde{\eta}_n\bigr)^{2}\Biggl[\bigl(1+\sqrt{\gamma_n}\bigr)^{2}+ \biggl(\frac{18\,\tilde{\eta}_n^{1/6}k}{\log(n)}\biggr)^{6}\Biggr]\right)^k\eqqcolon D^k_n
	,
\end{align*}
for all $n\geq N_1$.
Observe that
	\begin{itemize}
		\item $(2np)^{1/k}\to 1,$ because $k_n/\log(n)\to \infty$ as $n\to \infty$,
		\item $(1+\sqrt{\tilde{\eta}_n})^2\to 1$, since $\eta_n\to 0$ and therefore, $\tilde{\eta}_n\to 0$.
		\item $(1+\sqrt{\gamma_n})^2\to (1+\sqrt{\gamma})^2$, because of Assumption~\ref{A1}, and 
		\item $\tilde{\eta}_n^{1/6}k_n/\log(n)\to0$, since we assume that  $\eta_n^{1/6}k_n/\log(n)\to0$.
	\end{itemize}
Hence, $D_n \to (1+\sqrt{\gamma})^2$. Therefore, there exists $\delta>0$ satisfying
\[
(1+\sqrt{\gamma})^2 < \delta < x < \infty
\]
and an $N_2 \in \mathbb{N}$ such that, for all $n \ge N_2$,
\[
D_n^k \le \delta^k.
\]
Consequently, for all $n \ge N_2$,
\[
\mathbb{E}\!\left[\frac{\bigl(\tilde{s}^{(n)}_{\max}\bigr)^{k_n}}{x^{k_n}}\right]
\le \left(\frac{\delta}{x}\right)^{k_n}.
\]
Since $k_n / \log (n) \to \infty$, there exists for any $\varepsilon>0$ an $n_0 \in \mathbb{N}$ such that
\[
k_n \ge \varepsilon \log (n), \quad \text{for all } n \ge n_0.
\]
To guarantee summability, we choose $\varepsilon$ sufficiently large so that
\[
K := -\varepsilon \log(\delta/x) > 1.
\]
This is possible because $\log(\delta/x)<0$.  
Hence, there exists an $N_3\in \mathbb{N}$ such that for all $n\geq N_3$,
\[
\left(\frac{\delta}{x}\right)^{k_n} 
\le \exp\,\bigl(k_n \log(\delta/x)\bigr)
\le \exp\,\bigl(\varepsilon \log (n) \cdot \log(\delta/x)\bigr)
= n^{-K}.
\]
Consider the decomposition
\[
\sum_{n=1}^\infty 
\mathbb{E}\!\left[\frac{(\tilde{s}^{(n)}_{\max})^{k_n}}{x^{k_n}}\right]
= \sum_{n=1}^{N_3-1} \mathbb{E}\!\left[\frac{(\tilde{s}^{(n)}_{\max})^{k_n}}{x^{k_n}}\right]
+ \sum_{n=N_3}^\infty \mathbb{E}\!\left[\frac{(\tilde{s}^{(n)}_{\max})^{k_n}}{x^{k_n}}\right].
\]
Since the first sum on the right-hand side of the preceding display is finite by the boundedness of the entries $\tilde{z}_{ij}$, and by our previous considerations
\(x^{-k_n}\mathbb{E}[(\tilde{s}^{(n)}_{\max})^{k_n}] \le n^{-K} \) with $K>1$ for all \(n \ge N_3\), the second sum is finite as well. Therefore,
\[
\sum_{n=1}^\infty 
\mathbb{E}\!\left[\frac{(\tilde{s}^{(n)}_{\max})^{k_n}}{x^{k_n}}\right] < \infty.
\]
This completes the proof.
\end{proof}

The following elementary lemma is used only in the proof of Theorem \ref{GDtuned} and allows us to abbreviate several arguments.
\begin{lemma}\label{ext}
	Let $m \in \mathbb{N}$ and $x\geq 0$. Then, $|1-(1-x)^m| \leq \max\{1,|(1-x)^{m-1}|\}|x|m$.	
\end{lemma}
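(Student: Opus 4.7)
The plan is to derive the inequality from the algebraic identity
\[
1-(1-x)^m \;=\; x\,\sum_{k=0}^{m-1}(1-x)^k,
\]
which is just the factorization $1-y^m=(1-y)\sum_{k=0}^{m-1}y^k$ applied with $y=1-x$. This immediately reduces the claim to bounding the geometric-type sum $\sum_{k=0}^{m-1}|1-x|^k$ by $m\max\{1,|1-x|^{m-1}\}$.

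From the identity and the triangle inequality I get
\[
\bigl|1-(1-x)^m\bigr| \;\le\; |x|\sum_{k=0}^{m-1}|1-x|^k.
\]
Now I would split into two cases according to the size of $|1-x|$. If $|1-x|\le 1$, then every term $|1-x|^k\le 1$ for $k=0,\dots,m-1$, so the sum is at most $m=m\cdot\max\{1,|1-x|^{m-1}\}$. If $|1-x|>1$, the map $k\mapsto |1-x|^k$ is increasing, so every term is bounded by $|1-x|^{m-1}$ and the sum is at most $m\,|1-x|^{m-1}=m\cdot\max\{1,|1-x|^{m-1}\}$. In either case we obtain
\[
\bigl|1-(1-x)^m\bigr| \;\le\; |x|\,m\,\max\bigl\{1,|1-x|^{m-1}\bigr\} \;=\; |x|\,m\,\max\bigl\{1,|(1-x)^{m-1}|\bigr\},
\]
which is exactly the desired inequality.

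There is no real obstacle here; the argument is a one-line factorization followed by a trivial case split on whether the base $|1-x|$ is contracting or expanding under further multiplication. The hypothesis $x\ge 0$ is not actually used in the estimate above (only $m\in\mathbb N$ matters), and the proof works uniformly for $1-x$ of either sign.
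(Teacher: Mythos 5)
Your proof is correct, and it takes a genuinely different route from the paper's. The paper proves the lemma by the mean value theorem: writing $f(t)=1-(1-t)^m$, one has $1-(1-x)^m=f(x)-f(0)=m(1-\zeta)^{m-1}x$ for some $\zeta$ strictly between $0$ and $x$, and then the factor $|1-\zeta|^{m-1}$ is bounded by $\max\{1,|1-x|^{m-1}\}$ since $1-\zeta$ lies between $1$ and $1-x$. You instead use the purely algebraic factorization $1-(1-x)^m=x\sum_{k=0}^{m-1}(1-x)^k$ and bound the $m$-term geometric sum termwise by $\max\{1,|1-x|^{m-1}\}$ after a case split on whether $|1-x|\le 1$ or $|1-x|>1$. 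The two arguments are of comparable length; yours avoids calculus entirely, makes the source of the factor $m$ transparent (the sum has exactly $m$ terms), and dispenses with the auxiliary intermediate point $\zeta$ that the MVT proof then has to estimate. Your observation that the hypothesis $x\ge 0$ is not actually needed is correct (the factorization identity and the termwise bound are valid for all real $x$), though the MVT proof also extends to $x<0$ without change, so this is a shared feature rather than a genuine advantage of your approach.
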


\begin{proof}[Proof of Lemma~\ref{ext}]
	\begin{equation*}
		\begin{gathered}
			|1-(1-x)^m| = |m(1-\zeta)^{m-1}x|\leq \max\{1,|(1-x)|^{m-1}\}\,|x|\,m.
		\end{gathered} 
	\end{equation*}
	The first equality follows from the mean value theorem for some $\zeta$ in the open interval between $0$ and $x$.
	For the inequality, note that $|1-\zeta|^{m-1}$ is bounded by 1, for $\zeta\leq 2$ and increases monotonically for $\zeta>2$. Since $\zeta<x$, we have $|1-\zeta|^{m-1}\leq \max\{1,|1-\zeta|^{m-1}\}\leq \max\{1,|1-x|^{m-1}\}$.
\end{proof}

The next lemma of this section is from the book of \citet{Silversteinbook2010} and is stated without a proof.
\begin{lemma}[\citet{Silversteinbook2010}, Lemma B.26]\label{quadform}
	Let $A$ be an $n \times n $ non-random matrix and $x=(x_1,...,x_n)^\top$ be a random vector of independent entries. Assume that $\mathbb{E}(x_i)=0$, $\mathbb{E}(x_i^2)=1$ and $\mathbb{E}(x_j^l)\leq \nu_l$ for $l\leq 2q$. Then for any $q\geq 1$,
	\begin{align*}
		\mathbb{E}(|x^\top A x-\tr(A)|^q)\leq C_q \bigg((\nu_4 \tr(AA^*))^{q/2}+ \nu_{2q} \tr((AA^*)^{q/2})\bigg),
	\end{align*}
	where $C_q$ is a constant depending on $q$ only.
\end{lemma}

The following lemma is an almost immediate consequence of Lemma~\ref{momqf1} and Lemma~\ref{momqf2}. Since 
$p^{-1}\tr(\Sigma_n^2)\to\int t^2\,dH(t)$ by Assumptions~\ref{A2} and \ref{A4}, the first assertion below, 
together with Assumption~\ref{A4}, extends \citet[Theorem 3.1]{BODNAR2014} to the setting where 
$\gamma_n=o(n^{1/2})$.

\begin{lemma}\label{Bodver}
	Under Assumptions~\ref{A2} and~\ref{A3}, and if $\gamma_n=o(n^{1/2})$, we have
	\begin{align*}
		\tilde{m}_2
		= \frac{1}{p}\tr(\hat{\Sigma}_n^2)
		- \gamma_n\Bigl(\frac{1}{p}\tr(\hat{\Sigma}_n)\Bigr)^2
		= \frac{1}{p}\tr(\Sigma_n^2) + o_{\mathbb{P}}(1),
	\end{align*}
	and for every sequence $\tilde{\beta}=\tilde{\beta}_n$ with $\|\tilde{\beta}\|_2=1$, we have
	\begin{align*}
		\tilde{\beta}^\top \hat{\Sigma}_n^2 \tilde{\beta}
		- \frac{1}{n}\tr(\hat{\Sigma}_n)\,\tilde{\beta}^\top \hat{\Sigma}_n \tilde{\beta}
		= \tilde{\beta}^\top \Sigma_n^2 \tilde{\beta} + o_{\mathbb{P}}(1).
	\end{align*}
\end{lemma}

\begin{proof}
We use the notation
\[\hat{m}_1=\frac{1}{p}\tr\bigl(\hat{\Sigma}_n\bigr),\quad\hat{m}_2=\frac{1}{p}\tr\bigl(\hat{\Sigma}_n^2\bigr), \quad m_1=\frac{1}{p}\tr\bigl(\Sigma_n\bigr) \quad \text{and}\quad m_2=\frac{1}{p}\tr\bigl(\Sigma_n^2\bigr).
\]
Since $\gamma_n=o(n^{1/2})$ and using the second statement of Lemma~\ref{momqf1}, we obtain
\begin{align*}
	\tilde{m}_2
	&= \hat{m}_2-\gamma_n \hat{m}_1^2 \\[0.3em]
	&= \Bigl(\hat{m}_2- m_2-\gamma_n m_1^2\Bigl) + m_2+\gamma_n m_1^2 
	- \gamma_n\Bigl(
	\hat{m}_1-m_1+m_1\Bigr)^2 \\[0.3em]
	&= m_2 -2\gamma_n m_1\Bigl(\hat{m}_1-m_1\Bigr)-\gamma_n\Bigl(\hat{m}_1-m_1\Bigr)^2
	+ o_{\mathbb{P}}(1).
\end{align*}
By the first statement of Lemma~\ref{momqf1} and since 
$m_1 \le C$ by Assumption~\ref{A2}, we have
\begin{align*}
	2 \gamma_n \, m_1
	\Bigl( \hat{m}_1-m_1 \Bigr)
	= O_{\mathbb{P}}\!\left(\frac{\sqrt{p}}{n^{3/2}}\right)\quad\text{and}\quad
	\gamma_n \Bigl(\hat{m}_1-m_1 \Bigr)^2
	= O_{\mathbb{P}}\!\left(\frac{1}{n^2}\right).
\end{align*}
Since $\gamma_n = o(n^{1/2})$, both terms are $o_{\mathbb{P}}(1)$.  
Combining these arguments yields
\[
\tilde{m}_2
= m_2 + o_{\mathbb{P}}(1).
\]
Similarly, we establish the second assertion using Lemma~\ref{momqf1} and
Lemma~\ref{momqf2}. Observe that
\begin{align*}
	\tilde{\beta}^\top \hat{\Sigma}_n^2 \tilde{\beta}
	- \gamma_n\hat{m}_1\,\tilde{\beta}^\top \hat{\Sigma}_n \tilde{\beta}\,
	&=
	\Bigl(
	\tilde{\beta}^\top \hat{\Sigma}_n^2 \tilde{\beta}
	- \tilde{\beta}^\top \Sigma_n^2 \tilde{\beta}
	- \gamma_n\,m_1\tilde{\beta}^\top \Sigma_n \tilde{\beta}
	\Bigr)
	\\
	&\quad
	+ \tilde{\beta}^\top \Sigma_n^2 \tilde{\beta}
	+ \gamma_n\, m_1 \frac{1}{n}\tilde{\beta}^\top \Sigma_n \tilde{\beta}
	\\
	&\quad
	- \gamma_n\!
	\Bigl(\hat{m}_1-m_1+m_1
	\Bigl)
	\left(
	\tilde{\beta}^\top \hat{\Sigma}_n \tilde{\beta}
	- \tilde{\beta}^\top \Sigma_n \tilde{\beta}
	+ \tilde{\beta}^\top \Sigma_n \tilde{\beta}
	\right).
\end{align*}
By Lemma~\ref{momqf1} and Lemma~\ref{momqf2}, the following bounds hold:
\begin{align*}
	\gamma_n\,\Bigl(\hat{m}_1-m_1\Bigr)
	\,\tilde{\beta}^\top \Sigma_n \tilde{\beta}
	&= O_{\mathbb{P}}\!\left(\frac{\gamma_n}{(pn)^{1/2}}\right), \\
	\gamma_n\, m_1\left(
	\tilde{\beta}^\top \hat{\Sigma}_n \tilde{\beta}
	- \tilde{\beta}^\top \Sigma_n \tilde{\beta}
	\right)
	&= O_{\mathbb{P}}\!\left(\frac{\gamma_n}{n^{1/2}}\right), \\
	\gamma_n\,\Bigl(\hat{m}_1-m_1\Bigr)
	\left(
	\tilde{\beta}^\top \hat{\Sigma}_n \tilde{\beta}
	- \tilde{\beta}^\top \Sigma_n \tilde{\beta}
	\right)
	&= O_{\mathbb{P}}\!\left(\frac{\gamma_n}{p^{1/2}n}\right).
\end{align*}
Since $\gamma_n = o(n^{1/2})$, each of the three terms is $o_{\mathbb{P}}(1)$.
Combining these arguments completes the proof.

\end{proof}



\end{document}